\definecolor{puces}{cmyk}{0,0,0,.5} 
\newcommand{\titre}{}
\numberwithin{equation}{subsubsection}
\theoremstyle{plain}
\newtheorem{thm}{Theorem}[section]
\newtheorem{defi}[thm]{Definition}
\newtheorem{proposition}[thm]{Proposition}
\newtheorem{lem}[thm]{Lemma}
\newtheorem{coro}[thm]{Corollary}
\newtheorem{conj}[thm]{Conjecture}
\theoremstyle{remark}
\newtheorem{rem}[thm]{Remark}
\newtheorem{ex}[thm]{Example}
\newenvironment{demo}{\noindent{\textbf{Proof.}}}{\hfill \qedsymbol}
\newcommand{\C}{\mathbb{C}}
\newcommand{\R}{\mathbb{R}}
\newcommand{\qq}{\mathbb{Q}}
\newcommand{\FF}{\mathbb{F}}
\newcommand{\ZZ}{\mathbb{Z}}
\newcommand{\Spec}{\mathrm{Spec}}
\newcommand{\A}{\mathbb{A}}
\newcommand{\HH}{\mathrm{H}}
\newcommand{\ocal}{\mathcal{O}}
\newcommand{\oscr}{\mathscr{O}}
\newcommand{\B}{\mathrm{B}}
\newcommand{\U}{\mathrm{U}}
\newcommand{\F}{\mathbb{F}}
\newcommand{\ov}{\overline}
\newcommand{\mf}{\mathfrak}
\newcommand{\mr}{\mathrm}
\newif\iffinalrun
  \newcommand{\need}[1]{}
  \newcommand{\mar}[1]{}
  \newcommand{\need}[1]{{\tiny *** #1}}
\newcommand{\mar}[1]{\marginpar{\raggedright\tiny FIXME: #1 }}\fi
\title{Hecke operators and the coherent cohomology of Shimura varieties}
\author{Najmuddin Fakhruddin}
\address{School of Mathematics, Tata Institute of Fundamental
  Research, Homi Bhabha Road, Mumbai 400005, INDIA}
\email{naf@math.tifr.res.in}
\author{Vincent Pilloni}
\address{CNRS, Ecole Normale Sup\'erieure de Lyon, Lyon, FRANCE}
\email{vincent.pilloni@ens-lyon.fr}
\date{}
\email{}
\begin{document}

\maketitle

\begin{abstract} We consider the problem of defining an action of
  Hecke operators on the coherent cohomology of certain integral
  models of Shimura varieties. We formulate a general conjecture
  describing which Hecke operators should act integrally and solve the
  conjecture in certain cases. As a consequence, we obtain $p$-adic
  estimates of Satake parameters of certain non-regular self dual
  automorphic representations of $\mathrm{GL}_n$.
\end{abstract}

\tableofcontents

\section{Introduction}

This paper studies the problem of defining an   action of Hecke operators on certain natural integral  coherent cohomology of Shimura varieties.

Let us start by describing the situation for modular curves. Let
$N \geq 4$ be an integer, and let $X \rightarrow \Spec~\ZZ[1/N]$ be
the compactified modular curve of level $\Gamma_1(N)$. Let $k \in \ZZ$
and let $\omega^k$ be the sheaf of weight $k$ modular forms. Let $p$
be a prime number, $(p,N) =1$.  When $k \geq 1$, there is a familiar
Hecke operator $T_p$ acting on the $\C$-vector space of weight $k$
modular forms. On $q$-expansions, the operator $T_p$ is given (on
forms of nebentypus a character
$\chi : \ZZ/N\ZZ^\times \rightarrow \overline{\ZZ}^\times$) by the
formula
$T_p(\sum_{n \geq 0} a_n q^n ) = \sum_{n\geq 0} a_{np} q^n +
p^{k-1}\chi(p) a_n q^{np}$.  This formula is integral and the
$q$-expansion principle implies that the action of $T_p$ actually
arises from an action on $\HH^0(X, \omega^k)$. We now give a more
geometric construction of $T_p$.  Assume first that we work over
$\ZZ[1/Np]$.  Then, viewing a modular form $f$ as a rule on triples
$(E, \alpha_N, \omega)$ where $E$ is an elliptic curve, $\alpha_N$ is
a point of order $N$, and $\omega$ is a nowhere vanishing differential
form (and satisfying a growth condition near the cusps), there is a
geometric formula defining $T_p$ \cite[Formula
1.11.0.2]{MR0447119}:
$$ T_p(f(E, \alpha_N, \omega)) = \frac{1}{p} \sum_{H \subset E[p]} f(E/H,  \alpha_N', \omega')$$ where $H \subset E[p]$ runs over all subgroups of $E[p]$ of order $p$, and if we let $\pi_H : E \rightarrow E/H$ be the  isogeny then $\pi_H(\alpha_N) = \alpha'_N$ and $\pi_H^\star \omega' = \omega$. 

The above formula does not really make sense over $\ZZ[1/N]$, but a slight modification of it will be meaningful. We  explain this modification. 

There is a correspondence $X_0(p)$ over $X$ corresponding to the  level $\Gamma_1(N) \cap \Gamma_0(p)$ (so $X_0(p)$ is a compactification of the moduli of triples $(E, \alpha_N, H)$ where $H \subset E[p]$ is a subgroup of order $p$), and there are two projections :
 $ p_1, p_2 : X_0(p) \rightarrow X$ (induced by $(E,H) \mapsto E$ and $(E,H) \mapsto E/H$). Then $T_p$ originates from a cohomological correspondence $T_p : p_2^\star \omega^k \rightarrow p_1^! \omega^k$, where $p_1^!$ is the right adjoint functor  to $(p_1)_\star$.

 The cohomological correspondence $T_p$ is obtained using the
 differential of the universal isogeny
 $\pi_H^\star : p_2^\star \omega \rightarrow p_1^\star \omega$, the
 trace of $p_1$, and a suitable normalization by a power of $p$ that
 makes everything integral (this corresponds to the coefficient
 $\frac{1}{p}$ in the displayed formula).  More precisely, we first
 define a (rational if $k \leq 0$) map over $X_0(p)$,
 $$T_p^{naive} : p_2^\star \omega^k \stackrel{(\pi_H^\star)^{\otimes
     k}}\longrightarrow p_1^\star \omega^k
 \stackrel{\mathrm{tr_{p_1}}}\longrightarrow p_1^! \omega^k$$ and then
 set $T_p = p^{-1} T_p^{naive}$ when $k\geq 1$ and
 $T_p = p^{-k} T_p^{naive}$ if $k \leq 1$.

The first basic result for elliptic modular forms is  the following
(see also \cite[\S 4.5]{MR2311664} and \cite[Proposition 3.11]{MR3725733}):
\begin{proposition} For any $k \in \ZZ$, we have a   cohomological correspondence $T_p : p_2^\star \omega^k \rightarrow p_1^! \omega^k$ over $X_0(p)$, inducing a Hecke operator 
$T_p \in \mathrm{End} ( \mathrm{R}\Gamma(X, \omega^k))$
\end{proposition}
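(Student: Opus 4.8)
First, I would reduce the proposition to the construction and integrality of a map of sheaves $T_p^{naive}: p_2^\star \omega^k \rightarrow p_1^! \omega^k$ on $X_0(p)$ over $\ZZ[1/N]$, since once this is in hand the Hecke operator on $\RR\Gamma(X,\omega^k)$ is obtained by the standard formalism of cohomological correspondences: apply $\RR(p_1)_\star$, use the adjunction $\RR(p_1)_\star p_1^! \to \mathrm{id}$ (the trace map), and precompose with the pullback $\RR\Gamma(X,\omega^k) \to \RR\Gamma(X_0(p), p_2^\star\omega^k)$ along $p_2$. The only subtlety at this step is that $p_1$ (and $p_2$) must be finite flat so that $p_1^!$ is concretely computable and $\RR(p_1)_\star = (p_1)_\star$; this is classical for $X_0(p) \to X$ once $p$ is inverted, and remains true at $p$ after passing to suitable compactifications — I would cite the standard moduli-theoretic facts about the $\Gamma_0(p)$-correspondence and its compactification rather than reprove them.

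Next, I would carry out the local construction of $T_p^{naive}$ away from the cusps, i.e. over the open modular curve. On the locus classifying $(E,\alpha_N,H)$ with $\pi_H: E \to E/H$ the universal isogeny, the pullback of $\omega$ along $\pi_H$ gives a map $\pi_H^\star: p_2^\star\omega \to p_1^\star\omega$ of invertible sheaves; its $k$-th tensor power is $(\pi_H^\star)^{\otimes k}: p_2^\star\omega^k \to p_1^\star\omega^k$ (a genuine map for $k\geq 0$, and a priori only a rational map for $k<0$, where one must check poles are controlled). Then I compose with the trace isomorphism/map $\mathrm{tr}_{p_1}: p_1^\star\omega^k \to p_1^!\omega^k$; since $p_1$ is finite flat of degree $p+1$ and a local complete intersection, $p_1^!\mathcal{F} = \mathcal{H}om_{\ocal_X}((p_1)_\star\ocal_{X_0(p)}, \ocal_X)\otimes p_1^\star\mathcal{F}$ in a suitable sense, so $\mathrm{tr}_{p_1}$ is literally the canonical section. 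The real content is the normalization: I must check that $p^{-1}T_p^{naive}$ (for $k\geq 1$), resp. $p^{-k}T_p^{naive}$ (for $k\leq 1$), is integral, i.e. lands in $p_1^!\omega^k$ and not merely $p_1^!\omega^k[1/p]$. This is a purely local computation in the formal/étale neighborhood of the supersingular points of $X_0(p)$ in characteristic $p$, where $X_0(p)\to X$ degenerates: there the differential of the universal isogeny $\pi_H^\star$ vanishes to order $1$ (on the "wrong" component the isogeny is Frobenius-like), contributing a factor of $p$ in the wrong direction, and one matches this against the ramification of $p_1$. The point $k=1$ is the crossover where both normalizations agree, and one checks $p^{-1}T_p^{naive} = p^{-k}T_p^{naive}$ there.

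The most delicate point is extending everything across the cusps, i.e. producing the map $T_p^{naive}$ on the \emph{compactified} curves $X_0(p) \to X$ and verifying integrality of the normalized operator there too. Near the cusps one argues via the Tate curve and the theory of $q$-expansions: the two cusps of $X_0(p)$ above a cusp of $X$ correspond to the two ways of choosing $H$ (the canonical subgroup $\mu_p$, giving the "multiplication by $p$" isogeny, versus the étale $\ZZ/p\ZZ$, giving the quotient $\mathrm{Tate}(q)\to\mathrm{Tate}(q^p)$), and one must check the formula $T_p(\sum a_n q^n) = \sum a_{np}q^n + p^{k-1}\chi(p)a_n q^{np}$ matches the geometric normalization — the factor $p^{k-1}$ on the second term is exactly the pullback of $\omega^k$ along the degree-$p$ isogeny $\mathrm{Tate}(q)\to\mathrm{Tate}(q^p)$ after normalizing, and integrality of $T_p$ on cohomology then follows from the $q$-expansion principle for the compactified curve together with flatness of $X$ over $\ZZ[1/N]$. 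I would organize the cusp analysis by comparing $p_1^!\omega^k$ and $p_1^\star\omega^k$ in terms of the dualizing sheaves $\omega_{X_0(p)/\ZZ[1/N]}$ and $\omega_{X/\ZZ[1/N]}$ (both of which have explicit descriptions in terms of the log-differentials with the cusps as log poles), reducing the integrality of $\mathrm{tr}_{p_1}$ near the cusps to the combinatorics of the ramification of $p_1$ at the two cusps — where $p_1$ is unramified at one cusp and tamely ramified of degree $p$ at the other (over $\ZZ[1/Np]$), with the ramification jumping at $p=p$. Granting these local computations, the global cohomological correspondence and hence $T_p \in \End(\RR\Gamma(X,\omega^k))$ follows formally.
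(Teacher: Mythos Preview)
Your approach is valid for modular curves but takes a different route from the paper. The paper does not prove this proposition directly in the introduction; it is stated with references to Conrad and Emerton--Reduzzi--Xiao, and the paper's own proof comes later as the special case $g=1$, $F=\qq$ of Theorem \ref{main-thm-symplectic}, established via local models in Section \ref{section-local-model-symp}.

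The key differences:

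(1) You rely on finite flatness of $p_1$ to get a concrete description of $p_1^!$. This is correct for modular curves (since $X_0(p)$ is regular, miracle flatness applies), but it does not generalize: for higher-dimensional Shimura varieties the parahoric-level integral models are typically only Cohen--Macaulay and $p_1$ is not flat. The paper instead builds the fundamental class $\oscr_{X_0(p)} \to p_1^!\oscr_X$ using Proposition \ref{prop-trace}, which only requires the source to be CM over the base and the target smooth.

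(2) You locate the integrality check ``at supersingular points''; the paper performs it at the \emph{generic points} of the two irreducible components of $X_0(p)_{\FF_p}$, which lie over the ordinary locus, via the local-model computations Proposition \ref{prop-formula} (for $\det(\mathrm{d}p_1)$) and Lemma \ref{lem-weight-formula} (for $\pi_H^\star$). Since divisibility by $p$ of a section of a line bundle on a regular scheme is detected at height-one primes, these are the natural loci. For $g=1$ the two valuations of $T_p^{naive}$ come out to $1$ (on the component where $p_1$ is purely inseparable and $\pi_H^\star$ is an isomorphism) and $k$ (on the component where $p_1$ is an isomorphism and $\pi_H=F$), yielding the normalization $p^{-\inf\{1,k\}}$. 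Working at a supersingular point you see both branches at once and can extract the same two valuations, so your approach is not wrong, just less direct; but the phrase ``where $X_0(p)\to X$ degenerates'' is misleading since $p_1$ is finite flat everywhere.

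(3) Your $q$-expansion analysis at the cusps is not needed as a separate step. The cohomological correspondence is already defined on the toroidal compactifications over $\qq$ (Section \ref{section-coho-vec-bund-Hecke}); since the boundary is flat over $\ZZ_p$, the cusps in characteristic $p$ have codimension $2$, and the CM property of $p_1^!\omega^k$ extends the map from codimension $1$ automatically (see the argument at the start of Section \ref{proof-main-symp}).

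What your argument buys: for $\mathrm{GL}_2$ it is elementary and avoids local models entirely. What the paper's argument buys: a uniform method that works for $\mathrm{Res}_{F/\qq}\mathrm{GSp}_{2g}$ and unitary groups, where finite flatness of $p_1$ fails.
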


Given the cohomological correspondence, the operator on the cohomology is simply obtained using pullback and pushforward as follows: 
$$T_p :  \HH^\star(X, \omega^k) \rightarrow \HH^\star(X_0(p), p_2^\star \omega^k) \stackrel{T_p} \longrightarrow  \HH^\star(X_0(p), p_1^! \omega^k)\rightarrow  \HH^\star(X, \omega^k).$$
The action of $T_p$ on formal $q$-expansions (of level $\Gamma_1(N)$ and Nebentypus $\chi$) is given by :
\begin{itemize}
\item $T_p(\sum_{n \geq 0} a_n q^n ) = \sum_{n\geq 0} a_{np} q^n + p^{k-1}\chi(p) a_n q^{np}$ if $k \geq 1$,
\item $T_p(\sum_{n \geq 0} a_n q^n ) = \sum_{n\geq 0} p^{1-k}a_{np} q^n + \chi(p) a_n q^{np}$ if $k \leq 1$,
\end{itemize}
therefore $T_p$ appears to be optimally integral.




\bigskip

One would like to generalize this to other Shimura varieties. To do
this, we introduce some notations and recall the fundamental results
concerning the cohomology of automorphic vector bundles on Shimura
varieties. A standard reference for this material is
\cite{harris-ann-arb}.

Let $(G,X)$ be a Shimura datum, let $K \subset G(\mathbb{A}_f)$ be a
neat compact open subgroup, and let $Sh_K $ be the associated Shimura
variety. This is a smooth scheme defined over the reflex field $E$.
Over $Sh_K$ there is a large supply of automorphic vector bundles
$\mathcal{V}_{\kappa,K}$, naturally parametrized by weights $\kappa$
of a maximal torus in $G$, dominant for the compact roots. For a
choice of polyhedral cone decomposition $\Sigma$, one can construct a
toroidal compactification $Sh^{tor}_{K,\Sigma}$ of $Sh_K$, such that
$D_{K,\Sigma} = Sh^{tor}_{K,\Sigma} \setminus Sh_{K,\Sigma}$ is a
Cartier divisor.
Moreover, there is a canonical extension
$\mathcal{V}_{\kappa, K,\Sigma}$ of the vector bundle
$\mathcal{V}_{\kappa, K}$, as well as a sub-canonical extension
$\mathcal{V}_{K,\Sigma}(-D_{K,\Sigma})$.

The coherent cohomology complexes we are interested in (which generalize
the classical notion of modular forms) are:
$$\mathrm{R}\Gamma(Sh^{tor}_{K,\Sigma}, \mathcal{V}_{\kappa,
  K,\Sigma})~~ \textrm{and} ~\mathrm{R}\Gamma(Sh^{tor}_{K,\Sigma},
\mathcal{V}_{\kappa, K,\Sigma}(-D_{K,\Sigma})).$$ These cohomology
complexes are independant of $\Sigma$, they have good functorial
properties in the level $K$ and they carry an action of the Hecke
algebra
$\mathcal{H}_K = \mathcal{C}^\infty_c( K \backslash G(\mathbb{A}_f)/K,
\ZZ)$.

Over $\C$ this coherent cohomology can be computed via the
$(\mathfrak{p},K)$-cohomology of the space of automorphic forms on the
group $G$ \cite{JunSu}. Automorphic representations contributing to
these cohomology groups therefore possess a rational structure.  We
now fix a prime $p$.  In order to study $p$-adic properties of
automorphic forms contributing to the coherent cohomology, we
introduce integral structures on
$\mathrm{R}\Gamma(Sh^{tor}_{K,\Sigma}, \mathcal{V}_{\kappa,
  K,\Sigma})$ and
$\mathrm{R}\Gamma(Sh^{tor}_{K,\Sigma}, \mathcal{V}_{\kappa,
  K,\Sigma}(-D_{K,\Sigma}))$.

Let $\lambda$ be a place of $E$ above $p$. If the Shimura datum
$(G,X)$ is of abelian type, $G$ is unramified at $p$, and $K= K^pK_p$,
where $K^p \subset G(\mathbb{A}_f^p)$ and $K_p \subset G(\qq_p)$ is
hyperspecial, we have natural integral models $\mathfrak{Sh}_{K}$ for
$Sh_{K}$, as well as an integral model of $\mathcal{V}_{\kappa, K}$,
over $\Spec~\ocal_{E, \lambda}$ \cite{MR1304906}, \cite{MR1124982},
\cite{MR2669706}, \cite{MR3569319}. Moreover, there is a theory of
toroidal compactifications (at least in the Hodge case)
$\mathfrak{Sh}^{tor}_{K,\Sigma}$ of $\mathfrak{Sh}_{K}$, with integral
canonical extension $\mathcal{V}_{\kappa, K,\Sigma}$, and integral
sub-canonical extension
$\mathcal{V}_{\kappa,
  K,\Sigma}(-D_{K,\Sigma})$
\cite{MR1083353}, \cite{MR3186092}, \cite{MR2968629},
\cite{MR3948111}.

We are therefore led to consider the cohomology complexes
$\mathrm{R}\Gamma(\mathfrak{Sh}^{tor}_{K,\Sigma}, \mathcal{V}_{\kappa,
  K,\Sigma})$ as well as
$\mathrm{R}\Gamma(\mathfrak{Sh}^{tor}_{K,\Sigma}, \mathcal{V}_{\kappa,
  K,\Sigma}(-D_{K,\Sigma}))$.  Our work investigates the question of
extending the action of the Hecke algebra $\mathcal{H}_K$ to these
cohomology groups. The action of the prime to $p$ Hecke algebra
extends without much difficulty, so our main task is to investigate
the action of (a suitable sub-algebra of) the local Hecke algebra
$\mathcal{C}^\infty_c( K_p \backslash G(\qq_p)/K_p,
\qq)$ on
$\mathrm{R}\Gamma(\mathfrak{Sh}^{tor}_{K,\Sigma}, \mathcal{V}_{\kappa,
  K,\Sigma})$.

We formulate a conjecture (see Conjecture \ref{conj3}, as well as
Conjectures \ref{conj1} and \ref{conj2}) precisely describing a sub
$\ZZ$-algebra $\mathcal{H}_{p,\kappa, \iota}^{int}$ (where $\iota$ is
an isomorphism of $\C$ and $\ov{\qq}_p$) of
$\mathcal{C}^\infty_c( K_p \backslash G(\qq_p)/K_p, \qq)$ which should
act on the cohomology complexes
$\mathrm{R}\Gamma(\mathfrak{Sh}^{tor}_{K,\Sigma},
\mathcal{V}_{K,\Sigma})$ and
$\mathrm{R}\Gamma(\mathfrak{Sh}^{tor}_{K,\Sigma},
\mathcal{V}_{K,\Sigma}(-D_{K,\Sigma}))$.

The definition of  $\mathcal{H}_{p,\kappa,\iota}^{int}$ is given in Definition \ref{defi-hecke-algebra}, with the help of the  Satake basis $[V_\lambda]$ of $\mathcal{H}_p$  (where $\lambda$ runs through the set of dominant and Galois invariant chararacters of the dual group $\hat{G}$ of $G$) normalized by a certain power of $p$ determined by $\lambda$ and the weight $\kappa$.

In the modular curve case, the description of this sub-algebra
precisely reflects the normalizing factors $p^{-\inf\{1,k\}}$ in the
definition of the $T_p$-operator on the cohomology in weight $k$.

This conjecture is a translation for the coherent cohomology of
Shimura varieties of results obtained by V.~Lafforgue on the Betti
cohomology of locally symmetric spaces in \cite{MR2869300}. It is
inspired by Katz--Mazur inequality: For $X$ a proper smooth scheme
defined over $\ZZ_p$, Katz and Mazur gave $p$-adic estimates for the
eigenvalues of the geometric Frobenius acting on the cohomology
$\HH^i(X_{\overline{\qq}_p}, \qq_\ell)$ ($\ell \neq p$). The estimates
say that $p$-adic Newton polygon of the characteristic polynomial of
Frobenius lies above the Hodge polygon (determined by the filtration
on the de Rham cohomology of $X_{{\qq}_p}$).

One believes that similar estimates hold for algebraic automorphic
representations because of the conjectural correspondence between
motives and automorphic representations \cite{MR1044819},
\cite{MR3444225}. Roughly speaking, the weight $\kappa$ determines the
Hodge polygon, while the characters of $\mathcal{H}_{p}$ determine the
characteristic polynomials of Frobenii. We postulate the Hodge--Newton
inequality to determine the precise shape of the maximal sub-algebra
$\mathcal{H}_{p,\kappa, \iota}^{int} \subset \mathcal{H}_p$ which
should act integrally on the cohomology.

We then try to prove our conjecture in certain special cases.  To the
charateristic functions of  {all} double cosets
$K_p g K_p$ in the local Hecke algebra, one can associate Hecke
correspondences $p_1, p_2: Sh_{K\cap g K g^{-1}} \rightarrow Sh_K$
over $Sh_K$. These correspondences rarely admit integral models whose
geometry is understood, except when $g$ is associated with a minuscule
coweight. In this case, $K_p\cap g K_p g^{-1}$ is a parahoric subgroup
and there is a good theory of integral models whose local geometry is
described by the local model theory.  We are thus led to work with
Hecke operators associated to minuscule coweights (this is a serious
restriction).

At this point a second obstacle arises: the projections $p_i$ almost
never extend integrally to finite flat morphisms. This means that
defining the necessary trace maps in cohomology is in principle
complicated. We develop, using Grothendieck--Serre duality
\cite{Hartshorne}, a formalism of cohomological correspondences in
coherent cohomology which solves the problem under some assumptions on
the correspondences. In particular, we assume that our correspondences
are given by Cohen--Macaulay schemes. This suffices for our pusposes
since using the theory of local models (see, e.g., \cite{PRS}), one
can often prove that integral models of Shimura varieties with
parahoric level structure are Cohen--Macaulay.

In order to prove Conjecture \ref{conj3}, our strategy is to switch to
the local model, where we can make all the computations, and then
transfer back the information to the Shimura variety.

We completely solve our conjecture for Hilbert modular varieties
($G = \mathrm{Res}_{F/\qq} \mathrm{GL}_2$), when $p$ is unramified in
the totally real field $F$. To formulate the result precisely in this
case, let $E'$ be the finite extension of $E=\qq$ equal to the Galois
closure of $F$.  Weights for Hilbert modular forms are tuples of
integers $\kappa = ((k_\sigma)_{\sigma \in \mathrm{Hom}(F, E')}; k)$
where $k_\sigma$ and $k$ all have the same parity.  We let
$\iota : E' \rightarrow \overline{\qq}_p$ be an embedding. Let
$p = \prod_i \mathfrak{p}_i$ be the decomposition of $p$ in $\ocal_F$
has a product of prime ideals. 

\begin{thm}[Theorem \ref{main-thm-symplectic} and \S
  \ref{sectionHilbert}] The Hecke algebra
  $\mathcal{H}_{p, \kappa, \iota}^{int} = \otimes_i
  \ZZ[T_{\mathfrak{p}_i}, S_{\mathfrak{p}_i},
  S_{\mathfrak{p}_i}^{-1}]$ acts on
  $\mathrm{R}\Gamma(\mathfrak{Sh}^{tor}_{K,\Sigma},
  \mathcal{V}_{\kappa, K,\Sigma})$ and
  $\mathrm{R}\Gamma(\mathfrak{Sh}^{tor}_{K,\Sigma},
  \mathcal{V}_{\kappa, K,\Sigma}(-D_{K,\Sigma}))$.
\end{thm}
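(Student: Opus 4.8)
The plan is to transpose to the Hilbert setting the argument behind Theorem~\ref{main-thm-symplectic}: for each prime $\mathfrak{p}_i$ above $p$, build the cohomological correspondence attached to the minuscule ($\Gamma_0$-type) Hecke operator, check that the integral model carrying it is Cohen--Macaulay so that the Grothendieck--Serre duality formalism of cohomological correspondences developed below applies, and fix the normalizing power of $p$ by switching to the local model. First I would reduce to a single prime. The prime-to-$p$ Hecke algebra acts on both complexes without difficulty, and for $i\ne j$ the correspondences defining $T_{\mathfrak{p}_i}$ and $T_{\mathfrak{p}_j}$ (resp.\ $S_{\mathfrak{p}_j}$) are supported at disjoint primes, so their fibre products over $\mathfrak{Sh}_K$ again carry a parahoric structure at $\mathfrak{p}_i\mathfrak{p}_j$, are again Cohen--Macaulay, and the corresponding operators commute. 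It therefore suffices, for a fixed $\mathfrak{p}=\mathfrak{p}_i$ of residue degree $f$, to realize $T_{\mathfrak{p}}$ and $S_{\mathfrak{p}}^{\pm 1}$ as commuting endomorphisms of $\mathrm{R}\Gamma(\mathfrak{Sh}^{tor}_{K,\Sigma},\mathcal{V}_{\kappa,K,\Sigma})$ and of $\mathrm{R}\Gamma(\mathfrak{Sh}^{tor}_{K,\Sigma},\mathcal{V}_{\kappa,K,\Sigma}(-D_{K,\Sigma}))$, inducing the classical operators after inverting $p$. The operator $S_{\mathfrak{p}}$, with the normalization of Definition~\ref{defi-hecke-algebra}, is realized by an automorphism of $\mathfrak{Sh}^{tor}_{K,\Sigma}$ (and of its entire tower, respecting the boundary stratification) given by twisting the universal abelian scheme by the invertible $\ocal_F$-module $\mathfrak{p}$, together with an isomorphism of the corresponding integral automorphic sheaves; being an automorphism, it and its inverse act integrally and normalize the $T_{\mathfrak{p}}$-correspondence, which gives the factor $\ZZ[S_{\mathfrak{p}_i},S_{\mathfrak{p}_i}^{-1}]$.

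Next I would introduce $\mathfrak{Sh}_0(\mathfrak{p})$, the integral model over $\ocal_{E,\lambda}$ of the problem of adding to $\mathfrak{Sh}_K$ a finite locally free $\ocal_F$-submodule scheme $H\hookrightarrow A[\mathfrak{p}]$ of rank $\mathrm{Nm}(\mathfrak{p})=p^f$ isotropic for the polarization pairing, with its two finite projections $p_1$ (forget $H$) and $p_2$ ($A\mapsto A/H$) to $\mathfrak{Sh}_K$. Using the theory of local models for this parahoric level with $p$ unramified in $F$ (Pappas--Rapoport; see \cite{PRS}), the completed local rings of $\mathfrak{Sh}_0(\mathfrak{p})$ are, \'etale-locally, complete intersections cut out inside a regular ring by $f$ equations $x_\sigma y_\sigma=\varpi$ (with $\sigma$ running over the embeddings of $F$ above $\mathfrak{p}$ and $\varpi$ a uniformizer); hence $\mathfrak{Sh}_0(\mathfrak{p})$ is flat and Cohen--Macaulay over $\ocal_{E,\lambda}$ with reduced special fibre, locally a union of $2^f$ smooth components meeting along the deeper Kottwitz--Rapoport strata. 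The duality formalism then applies: $p_1$ is finite with Cohen--Macaulay source and regular target, so for every coherent $\mathcal{F}$ on $\mathfrak{Sh}_0(\mathfrak{p})$ the complex $p_1^!\mathcal{F}=p_1^\star\mathcal{F}\otimes\omega_{p_1}$ is concentrated in a single degree, there is a trace map $\mathrm{tr}_{p_1}\colon p_1^\star\mathcal{F}\to p_1^!\mathcal{F}$ and a trace $\mathrm{R}(p_1)_\star p_1^!\mathcal{F}\to\mathcal{F}$; and since $p_1,p_2$ extend to toroidal compactifications taking boundary to boundary, the compactified correspondence behaves the same way for both the canonical and the sub-canonical extensions.

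The universal $\mathfrak{p}$-isogeny $\pi\colon A\to A/H$ over $\mathfrak{Sh}_0(\mathfrak{p})$ induces, via its effect on Hodge bundles, a map $p_2^\star\omega\to p_1^\star\omega$ and hence $p_2^\star\mathcal{V}_{\kappa,K,\Sigma}\to p_1^\star\mathcal{V}_{\kappa,K,\Sigma}$, which extends over the toroidal boundary compatibly with the canonical and sub-canonical extensions (using the standard description of the degeneration of $\pi$). Composing with $\mathrm{tr}_{p_1}$ produces $T^{naive}_{\mathfrak{p}}\colon p_2^\star\mathcal{V}_{\kappa,K,\Sigma}\to p_1^!\mathcal{V}_{\kappa,K,\Sigma}$, a priori defined after inverting $p$ (and already integral when all $k_\sigma\ge 1$). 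One then sets $T_{\mathfrak{p}}=p^{-m(\kappa,\mathfrak{p})}T^{naive}_{\mathfrak{p}}$, where $m(\kappa,\mathfrak{p})$ is the exponent prescribed by Definition~\ref{defi-hecke-algebra} --- for $F=\qq$ this is $\inf\{1,k\}$, recovering the normalization recalled in the introduction --- with the identical construction for $\mathcal{V}_{\kappa,K,\Sigma}(-D_{K,\Sigma})$. Granting that $T_{\mathfrak{p}}$ is a morphism of the integral sheaves, it induces an endomorphism of each $\mathrm{R}\Gamma$ by pull-back along $p_2$, then $T_{\mathfrak{p}}$, then the trace along $p_1$ --- exactly as in the proposition recalled for modular curves --- and the expected action on $q$-expansions is read off from the toroidal boundary charts.

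The crux, and what I expect to be the main obstacle, is to prove that $p^{-m(\kappa,\mathfrak{p})}$ is precisely the right normalization: that it makes $T^{naive}_{\mathfrak{p}}$ integral and that no smaller power does. This is exactly where one switches to the local model. Over the completion of $\mathfrak{Sh}_0(\mathfrak{p})$ the Hodge filtration of $A$, the map on Hodge bundles induced by $\pi$, and the relative dualizing sheaf $\omega_{p_1}$ are pulled back from universal objects on $M^{loc}$, where they admit explicit matrix descriptions factor by factor over the $f$ embeddings above $\mathfrak{p}$: on each factor the degeneration of $\pi$ is governed by a single equation cutting out a component of the special fibre, and the Jacobian of $p_1$ restricted to $M^{loc}$ is computed directly. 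Propagating the $p$-adic valuation through the tensor construction defining $\mathcal{V}_{\kappa,K,\Sigma}$ --- weight $k_\sigma$ in the $\sigma$-direction plus the central twist by $k$ --- and over the several components of the special fibre of $\mathfrak{Sh}_0(\mathfrak{p})$, one finds that the minimal valuation of $T^{naive}_{\mathfrak{p}}$ equals $m(\kappa,\mathfrak{p})$. The delicate features are that $p_1$ is not flat over $\ocal_{E,\lambda}$ --- handled by the Cohen--Macaulay formalism, which keeps $p_1^!$ a sheaf and the trace well behaved along the non-flat locus --- and that the special fibre is reducible, so integrality must be checked on every component and the normalization is governed by the worst one; pushing this valuation computation back from $M^{loc}$ to $\mathfrak{Sh}_0(\mathfrak{p})$ and across the toroidal boundary is where most of the work lies. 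Everything else --- commutation of $T_{\mathfrak{p}_i}$ with $S_{\mathfrak{p}_i}$ and with $T_{\mathfrak{p}_j}$, and agreement with the classical Hecke operators after inverting $p$ --- follows from the formal properties of the correspondence formalism and from the known structure of the spherical Hecke algebra over $\qq$.
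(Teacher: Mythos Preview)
Your proposal follows the same strategy as the paper: Cohen--Macaulay integral model at Siegel-parahoric level, the cohomological correspondence formalism of Section~\ref{residue}, and the normalizing power of $p$ computed on the local model (Proposition~\ref{prop-formula} and Lemma~\ref{lem-weight-formula} with $g=1$, transported back via Theorem~\ref{thm-main-local-model}). One imprecision worth flagging: the datum $(\mathrm{Res}_{F/\qq}\mathrm{GL}_2,X)$ is only of abelian type, so $\mathfrak{Sh}_K$ does not carry a universal abelian scheme directly; the paper works on the PEL cover $\widetilde{\mathfrak{Sh}}_K$, builds the correspondence and the map on Hodge bundles there, and then descends along the free $\Delta$-action (see \S\ref{proof-main-symp}) --- your appeal to the ``universal $\mathfrak{p}$-isogeny'' and to $S_{\mathfrak{p}}$ as a twist of the universal abelian scheme implicitly presupposes this passage.
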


Under the assumption that the weight $\kappa$ belongs to a certain
cone (conjecturally the ample cone), the theorem was first proved in
\cite[Proposition 3.11]{MR3725733} by different methods.

We can be explicit about normalization factors. Let
$I_i = \{ \sigma \in \mathrm{Hom}(F, E'),~\iota \circ
\sigma(\mathfrak{p}_i) \subset \mathfrak{m}_{\ov{\ZZ}_p}\}$ and
let
$$T_{\mathfrak{p}_i}^{naive} = \mathrm{GL}_{2}(
\ocal_{F_{\mathfrak{p}_i}}) \mathrm{diag} ( \mathfrak{p}^{-1}_i,1)
\mathrm{GL}_{2}( \ocal_{F_{\mathfrak{p}_i}})$$
and
$$ S_{\mathfrak{p}_i}^{naive} = \mathrm{GL}_{2}(
\ocal_{F_{\mathfrak{p}_i}}) \mathrm{diag} (\mathfrak{p}^{-1}_i,
\mathfrak{p}^{-1}_i) \mathrm{GL}_{2}( \ocal_{F_{\mathfrak{p}_i}})$$ be the
naive, ``unnormalized''  version of the Hecke operators attached to the
familiar double classes (the reason we have to use these double classes and not their inverse is that we set up the theory in such a way that the  Hecke algebra acts naturally on the left and not on the right on the cohomology). Our normalized operators are:
$$T_{\mathfrak{p}_i} = p^{\sum_{\sigma \in I_i} \sup\{ \frac{k_\sigma
    +k}{2}-1, \frac{k-k_\sigma}{2}\}} T_{\mathfrak{p}_i}^{naive}$$
and
$$S_{\mathfrak{p}_i} = p^{\sum_{\sigma \in I_i} k}
S_{\mathfrak{p}_i}^{naive}.$$

For more general Shimura varieties of symplectic type we only have
partial results because there is essentially only one minuscule
coweight (see Theorem \ref{main-thm-symplectic}). However,  the situation is
better for unitary Shimura varieties.  Let $F$ be a totally real
field, and let $L$ be a totally imaginary quadratic extension of $F$. We let
$G \subset \mathrm{Res}_{L/\qq} \mathrm{GL}_n$ be a unitary group of
signature $(p_\tau, q_\tau)_{\tau : F \hookrightarrow \mathbb{C}}$. We assume that $p$ is unramified in $L$ and we let  $\mathfrak{Sh}^{tor}_{K,\Sigma}$ be a toroidal compactification of the  (smooth) integral model of the unitary Shimura variety attached to $G$.

\begin{thm}[Theorem \ref{main-thm-unitary}] Assume that all finite
  places $v \mid p$ in $F$ split in $L$. Let
  $\mathcal{H}_{p, \kappa, \iota}^{int} = \otimes_{0 \leq i \leq m}
  \ZZ[ T_{\mathfrak{p}_i,j},~0 \leq j \leq n,
  T_{\mathfrak{p}_i,0}^{-1}, T_{\mathfrak{p}_i,n}^{-1}]$ be the
  normalized integral Hecke algebra in weight $\kappa$ (where the
  $T_{\mathfrak{p}_i,j}$ are standard generators of this Hecke
  algebra).  All the operators $T_{\mathfrak{p}_i,j}$ act on
  $\mathrm{R}\Gamma(\mathfrak{Sh}^{tor}_{K,\Sigma},
  \mathcal{V}_{\kappa, K,\Sigma})$ and
  $\mathrm{R}\Gamma(\mathfrak{Sh}^{tor}_{K,\Sigma},
  \mathcal{V}_{\kappa, K,\Sigma}(-D_{K,\Sigma}))$.  In particular,
  $\mathrm{Im} \big(\HH^i(\mathfrak{Sh}^{tor}_{K,\Sigma},
  \mathcal{V}_{\kappa, K,\Sigma}) \rightarrow
  \HH^i(\mathfrak{Sh}^{tor}_{K,\Sigma}, \mathcal{V}_{\kappa,
    K,\Sigma}) \otimes_{\ZZ_p}\qq_p \big)$ is a lattice which is
  stable under the action of $\mathcal{H}_{p, \kappa, \iota}^{int}$.
\end{thm}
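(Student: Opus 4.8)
The plan is to realise each standard generator $T_{\mathfrak{p}_i,j}$ of $\mathcal{H}^{int}_{p,\kappa,\iota}$ as an integral cohomological correspondence and to feed it into the duality formalism for coherent cohomology developed in the earlier sections. Since every place $v\mid p$ of $F$ splits in $L$, the group $G_{\qq_p}$ is, up to a central factor and the prime-to-$p$ part, $\prod_{0\le i\le m}\mathrm{GL}_n(F_{\mathfrak{p}_i})$, and the standard generator $T_{\mathfrak{p}_i,j}$ ($0\le j\le n$) is, up to the normalising power of $p$ fixed in Definition~\ref{defi-hecke-algebra}, the double coset attached to the cocharacter $\mu_{i,j}=(1^{(j)},0^{(n-j)})$ of $\mathrm{GL}_n(F_{\mathfrak{p}_i})$. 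The decisive point --- and the reason the unitary case is more favourable than the general symplectic one --- is that \emph{all} the $\mu_{i,j}$ are minuscule coweights. Hence $K_p\cap gK_pg^{-1}$ is a (maximal) parahoric subgroup, the Hecke correspondence $p_1,p_2\colon Sh_{K^{(i,j)}}\to Sh_K$ has parahoric level at $\mathfrak{p}_i$, and it admits an integral model $\mathfrak{Sh}_{K^{(i,j)}}$ parametrising isogenies $\pi\colon A\to A'$ of PEL type $\mu_{i,j}$ at $\mathfrak{p}_i$, with a toroidal compactification $\mathfrak{Sh}^{tor}_{K^{(i,j)}}$ over which both projections extend (after refining $\Sigma$). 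With the conventions of Definition~\ref{defi-hecke-algebra} one has $p_1(A,\pi)=A$, $p_2(A,\pi)=A'$, and the trace will be taken along $p_1$.

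First I would record the Cohen--Macaulay property of the correspondence. By the local model diagram, $\mathfrak{Sh}^{tor}_{K^{(i,j)}}$ is \'etale-locally isomorphic to a local model $M^{loc}_{i,j}$ (times a toric chart near the boundary), and for split $v$ these are the ``linear'' local models for $\mathrm{GL}_n$ attached to the two-step lattice chain $\{0,j\}$; by the known results on such local models (see \cite{PRS}) they are flat over $\ocal_{E,\lambda}$, reduced, normal and Cohen--Macaulay. Hence $\mathfrak{Sh}^{tor}_{K^{(i,j)}}$ is flat and Cohen--Macaulay of the same dimension as the smooth scheme $\mathfrak{Sh}^{tor}_{K,\Sigma}$, so the formalism of cohomological correspondences from the earlier sections applies: $p_1$ is proper and generically finite, $p_1^!\mathcal{F}$ is a coherent sheaf concentrated in degree zero for any vector bundle $\mathcal{F}$, there is a trace morphism $\mathrm{tr}_{p_1}\colon p_1^\star\mathcal{F}\to p_1^!\mathcal{F}$ which is an isomorphism over the generic fibre, and the Grothendieck--Serre counit $\mathrm{R}(p_1)_\star p_1^!\to\mathrm{id}$ provides the pushforward in cohomology.

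Next, construct the operators. The universal isogeny induces $\pi^\star\colon p_2^\star\omega\to p_1^\star\omega$ on Hodge bundles, hence, via the functor producing $\mathcal{V}_\kappa$ from $\omega$, a map $(\pi^\star)_\kappa\colon p_2^\star\mathcal{V}_{\kappa,K,\Sigma}\to p_1^\star\mathcal{V}_{\kappa,K,\Sigma}$; it extends over the toroidal compactifications and is compatible with the boundary divisors, so one also gets a map of sub-canonical extensions. Composing with $\mathrm{tr}_{p_1}$ produces an a priori rational cohomological correspondence $T^{naive}_{\mathfrak{p}_i,j}\colon p_2^\star\mathcal{V}_{\kappa,K,\Sigma}\to p_1^!\mathcal{V}_{\kappa,K,\Sigma}$ (and likewise for $\mathcal{V}_{\kappa,K,\Sigma}(-D_{K,\Sigma})$). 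To pin down the integral normalisation I would transport the computation to $M^{loc}_{i,j}$, where $\omega$ restricts to the tautological bundle, $\pi^\star$ to the explicit inclusion of lattices defining the chain, and $\mathrm{tr}_{p_1}$ is computable on an explicit Cohen--Macaulay affine chart; this isolates the unique exponent $N=N(\kappa,\mu_{i,j},I_i)$ such that $p^{N}T^{naive}_{\mathfrak{p}_i,j}$ is a genuine integral cohomological correspondence, and one checks that $N$ is the exponent predicted by Conjecture~\ref{conj3} (the Hodge--Newton normalisation), so that $T_{\mathfrak{p}_i,j}:=p^{N}T^{naive}_{\mathfrak{p}_i,j}$ restricts, after inverting $p$, to the classical Hecke operator. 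For $j=0$ the correspondence is the identity and $N=0$; for $j=n$ the cocharacter is central and the correspondence is an isomorphism of Shimura varieties; in both cases $T_{\mathfrak{p}_i,j}$ is invertible, which accounts for $T_{\mathfrak{p}_i,0}^{-1}$ and $T_{\mathfrak{p}_i,n}^{-1}$ in $\mathcal{H}^{int}_{p,\kappa,\iota}$.

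Finally, each integral cohomological correspondence yields, by $p_2^\star$ followed by the correspondence map followed by the Grothendieck--Serre trace $\mathrm{R}(p_1)_\star p_1^!\to\mathrm{id}$, an endomorphism of $\mathrm{R}\Gamma(\mathfrak{Sh}^{tor}_{K,\Sigma},\mathcal{V}_{\kappa,K,\Sigma})$ and of $\mathrm{R}\Gamma(\mathfrak{Sh}^{tor}_{K,\Sigma},\mathcal{V}_{\kappa,K,\Sigma}(-D_{K,\Sigma}))$; these commute --- on rational coherent cohomology they are the classical Hecke operators, and integrally one checks commutativity by composing the Hecke correspondences --- and they are compatible with the localisation map to rational cohomology, so the generators assemble into an action of $\mathcal{H}^{int}_{p,\kappa,\iota}$ on both complexes. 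The last assertion is then formal: $\mathfrak{Sh}^{tor}_{K,\Sigma}$ being proper, $\HH^i(\mathfrak{Sh}^{tor}_{K,\Sigma},\mathcal{V}_{\kappa,K,\Sigma})$ is a finitely generated $\ZZ_p$-module, its image in $\HH^i\otimes_{\ZZ_p}\qq_p$ is $p$-torsion free and hence a lattice, and it is stable under $\mathcal{H}^{int}_{p,\kappa,\iota}$ because the operators were built on the integral complex compatibly with localisation. I expect the main obstacle to be the normalisation computation on the local model, together with the bookkeeping needed to transfer it back to the Shimura variety: one must verify that the Hodge bundle and the isogeny morphism descend through the local model diagram compatibly with both projections, and that the fundamental class and the Cohen--Macaulay hypothesis behave well along the toroidal boundary, so that the sub-canonical extension is handled on exactly the same footing as the canonical one.
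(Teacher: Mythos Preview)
Your overall strategy --- constructing each $T_{\mathfrak{p}_i,j}$ as an integral cohomological correspondence via the parahoric-level integral model, establishing the Cohen--Macaulay property from the linear local models $M_{\{0,j\}}$, computing the normalising power of $p$ on the local model and transferring back through the local model diagram --- is exactly the paper's approach; the paper carries out the local-model computation in Section~\ref{sect-local-model-unitary} (Proposition~\ref{prop-formula2} on $\mathrm{d}p_1$, Lemma~\ref{lem-weight-formula2} on $\alpha^\star$, and Proposition~\ref{norm-unitary} for the normalisation), and then says the transfer to the Shimura variety is identical to the symplectic case.

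There is, however, a genuine gap in your proposal: the commutativity claim. You write that the operators ``commute --- on rational coherent cohomology they are the classical Hecke operators, and integrally one checks commutativity by composing the Hecke correspondences --- and \dots\ assemble into an action of $\mathcal{H}^{int}_{p,\kappa,\iota}$ on both complexes.'' This is precisely what the paper \emph{cannot} prove and explicitly flags (see the Remark following Theorem~\ref{main-thm-unitary} and the Remark in the introduction). The obstruction is that the composite of two minuscule Hecke correspondences is supported on a correspondence attached to a non-minuscule coweight, for which the required geometry of integral models (in particular Cohen--Macaulayness and an explicit local model) is not available; so ``composing the Hecke correspondences'' does not yield an integral identity. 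Rational commutativity does not help, because the integral complex may have torsion and the argument needs the relation $T_{\mathfrak{p}_i,j}T_{\mathfrak{p}_i,j'}=T_{\mathfrak{p}_i,j'}T_{\mathfrak{p}_i,j}$ to hold as maps of the perfect complex, not merely after inverting $p$.

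Note also that the theorem as stated does not assert an algebra action on $\mathrm{R}\Gamma$: it asserts only that each individual operator acts on the complex, and that the lattice $\mathrm{Im}\big(\HH^i\to\HH^i\otimes\qq_p\big)$ is stable under $\mathcal{H}^{int}_{p,\kappa,\iota}$. The second statement follows because each generator preserves the lattice and the algebra structure on the lattice is inherited from the (commutative) rational Hecke algebra acting on $\HH^i\otimes\qq_p$; no integral commutativity on the complex is needed for this. So your construction of the individual $T_{\mathfrak{p}_i,j}$ is fine and suffices for the theorem, but you should drop the claim that they assemble into an algebra map on $\mathrm{R}\Gamma$.
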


\begin{rem} The limitation of  this last result is that we have not been able to prove that the operators $T_{\mathfrak{p}_i,j}$ commute with each other in general. 
\end{rem}

Let $L$ be a CM or a totally real number field. One can realize
regular algebraic essentially (conjugate) self-dual cuspidal
automorphic representations of $\mathrm{GL}_n/L$ in the Betti
cohomology of Shimura varieties. The interest of coherent cohomology
is that it captures more automorphic representations.  Namely, one can
weaken the regularity condition to a condition that we call weakly
regular odd (the oddness property is automatically satisfied in the
regular case).  Weakly regular, algebraic, odd, essentially (conjugate) self dual, cuspidal automorphic representation on $\mathrm{GL}_n/L$
admit a compatible system of Galois representations, but at the moment
these compatible systems are not known to be de Rham in general (and
local-global compatibility is not known). We can nevertheless prove
the following result which is to be viewed as the Katz--Mazur
inequality.

\begin{thm}[Theorem \ref{odd}]
  Let $\pi$ be a weakly regular, algebraic, odd, essentially (conjugate) self dual,  cuspidal automorphic representation of $\mathrm{GL}_n/L$ with  infinitesimal character $\lambda = (\lambda_{i, \tau}, 1\leq i \leq n, \tau \in \mathrm{Hom} (L, \overline{\qq}))$ and $\lambda_{1,\tau} \geq \cdots \geq \lambda_{n,\tau}$.  Let $p$ be a prime unramified in $L$ and $w$ be a finite place  of $L$ dividing $p$. Assume also that $\pi_w$ is spherical, and corresponds to a semi-simple conjugacy class $\mathrm{diag}(a_1,\cdots, a_n) \in \mathrm{GL}_n(\overline{\qq})$ by the Satake isomorphism. We let $\iota: \overline{\qq} \rightarrow \overline{\qq}_p$ be an embedding and $v$ the associated $p$-adic valuation normalized by $v(p)=1$.  After permuting we assume that $v(a_1) \leq \cdots \leq v(a_n)$. Let $I_w \subset \mathrm{Hom} (L, \overline{\qq})$ be the set of embeddings $\tau$ such that $\iota \circ \tau$ induces the $w$-adic valuation on $L$. 
 Then  we have $$\sum_{i=1}^k v(a_i) \geq \sum_{\tau \in I_w}
 \sum_{\ell=1}^k - \lambda_{\ell, \tau},$$ for $1 \leq k \leq n$, with equality if $k= n$. 
 \end{thm}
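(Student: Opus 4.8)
The plan is to realize $\pi$ in the coherent cohomology of a unitary Shimura variety, invoke the integral action of the Hecke operators $T_{\mathfrak p_i,j}$ from Theorem~\ref{main-thm-unitary}, and convert the resulting $p$-integrality of the associated Hecke eigenvalues into the stated inequality by a Newton-polygon argument.

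\textbf{Step 1 (reduction to the unitary case and transfer).} If $L$ is totally real, I would first base change $\pi$ along $L'=L\cdot K$ for an imaginary quadratic field $K/\qq$ chosen so that $\pi_{L'}$ remains cuspidal: then $\pi_{L'}$ is essentially conjugate self-dual for $L'/L$, is still weakly regular odd, and for a place $w'\mid w$ of $L'$ the Satake valuations, the infinitesimal character, and the cardinality of $I_{w'}$ all scale by $f(w'/w)$ (as $p$ is unramified), so the inequality for $\pi_{L'}$ at $w'$ is $f(w'/w)$ times the one for $\pi$ at $w$ and the latter follows. Hence we may assume $L$ is CM with maximal totally real subfield $F$, and (after a further solvable base change if needed) that the place $v\mid p$ of $F$ below $w$ splits in $L$, so that Theorem~\ref{main-thm-unitary} applies. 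Using the endoscopic classification for unitary groups, descend $\pi$ — up to a twist by an algebraic Hecke character, which only shifts the central normalization — to a cuspidal automorphic representation $\Pi$ of a unitary similitude group $G/\qq$ whose signature corresponds to a CM type refining $I_w$. The decisive point is that ``$\pi$ weakly regular odd'' is precisely the archimedean condition guaranteeing that $\Pi_\infty$ is coherently cohomological, i.e. has nonzero $(\mathfrak g,K_\infty)$-cohomology with coefficients in the Harish-Chandra module underlying $\mathcal V_\kappa$ for a weight $\kappa$ attached to $\lambda$; thus $\Pi$ contributes to $\HH^i(\mathfrak{Sh}^{tor}_{K,\Sigma},\mathcal V_{\kappa,K,\Sigma})\otimes\qq_p$ for some $i$, and to both the canonical and the sub-canonical extension, since $\Pi$ is cuspidal and hence occurs in interior cohomology. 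Finally, $v$ splits in $L$, so $G(\qq_p)$ has a factor $\mathrm{GL}_n(F_v)=\mathrm{GL}_n(L_w)$ on which $\Pi$ is spherical with Satake parameters $(a_1,\dots,a_n)$ (those of $\pi_w$), by base-change compatibility of the unramified local Langlands correspondence.

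\textbf{Step 2 (integrality of Hecke eigenvalues).} By Theorem~\ref{main-thm-unitary}, the normalized operators $T_{\mathfrak p_i,j}$ (with $\mathfrak p_i=v$) preserve the lattice $\mathrm{Im}\big(\HH^i(\mathfrak{Sh}^{tor}_{K,\Sigma},\mathcal V_{\kappa,K,\Sigma})\to\HH^i\otimes\qq_p\big)$, so their characteristic polynomials on $\HH^i\otimes\overline\qq_p$ are monic with coefficients in $\overline\ZZ_p$; in particular the eigenvalue of each $T_{\mathfrak p_i,j}$ on the $\Pi$-isotypic part has non-negative $v$-valuation. Unwinding the Satake isomorphism for $\mathrm{GL}_n(F_v)$, the eigenvalue of $T_{\mathfrak p_i,j}^{naive}$ on that part is a power of $q_i=|\ocal_F/\mathfrak p_i|$ times an elementary symmetric function $e_j$ of the $a_i$ (or of their inverses), and $T_{\mathfrak p_i,j}=p^{c_j}T_{\mathfrak p_i,j}^{naive}$ with $c_j$ the normalizing exponent of Definition~\ref{defi-hecke-algebra}. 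Translating this normalization through the dictionary between $\kappa$ and $\lambda$ and between $I_w$ and the CM type — the bookkeeping that must be done carefully — integrality of $T_{\mathfrak p_i,j}$ becomes $v\big(e_k(a_1,\dots,a_n)\big)\ge H_k$ for $0\le k\le n$, where (ordering $v(a_1)\le\cdots\le v(a_n)$)
\[
H_k:=\sum_{\tau\in I_w}\ \sum_{\ell=1}^{k}\,(-\lambda_{\ell,\tau})
\]
is the partial sum of the Hodge polygon determined by $\kappa$; and integrality of the invertible operators $T_{\mathfrak p_i,0}^{-1}$ and $T_{\mathfrak p_i,n}^{-1}$ upgrades this to equality at $k=0$ and $k=n$, in particular $\sum_i v(a_i)=v(e_n)=H_n$.

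\textbf{Step 3 (Newton polygon and conclusion).} The lower convex hull $\mathrm{Newt}$ of the points $\big(k,v(e_k(a_1,\dots,a_n))\big)$, $0\le k\le n$, satisfies $\mathrm{Newt}(k)=\sum_{i=1}^k v(a_i)$, and at each of its vertices $k^\ast$ one has $\mathrm{Newt}(k^\ast)=v\big(e_{k^\ast}(a_1,\dots,a_n)\big)\ge H_{k^\ast}$ by Step~2, while $\mathrm{Newt}(0)=H_0=0$ and $\mathrm{Newt}(n)=H_n$. Since $k\mapsto H_k$ is convex (because $\lambda_{1,\tau}\ge\cdots\ge\lambda_{n,\tau}$ makes $-\lambda_{\ell,\tau}$ non-decreasing in $\ell$), on each interval between consecutive Newton vertices $\mathrm{Newt}$ is a chord lying above the corresponding chord of $H$, which in turn lies above $H$ itself; hence $\mathrm{Newt}(k)\ge H_k$ for all $k$, i.e. $\sum_{i=1}^k v(a_i)\ge\sum_{\tau\in I_w}\sum_{\ell=1}^k(-\lambda_{\ell,\tau})$ for $1\le k\le n$, with equality at $k=n$. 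This is the theorem.

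\textbf{Main obstacle.} The serious point is Step~1: producing $\Pi$ on a unitary Shimura variety with prescribed spherical component at $w$ and contributing to coherent cohomology in a weight matching $\lambda$. This rests on (a) the endoscopic classification for unitary groups, to descend $\pi$ and pin down $\Pi_\infty$ and $\Pi_v$; (b) the identification of ``weakly regular odd'' with the precise archimedean condition for coherent cohomologicality, a computation involving (limits of) discrete series and $(\mathfrak g,K_\infty)$-cohomology; and (c) a non-vanishing input ensuring that $\Pi$ genuinely occurs. Much of this is available in the literature underlying the construction of Galois representations for such $\pi$, but tracking the normalizations is delicate. The secondary technical difficulty is the exact matching in Step~2 between the normalizing exponents of $\mathcal H^{int}_{p,\kappa,\iota}$ and the Hodge polygon $H$, which is exactly where the $\iota$-twisted Satake basis and its $p$-power normalization in Definition~\ref{defi-hecke-algebra} are used.
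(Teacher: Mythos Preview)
Your approach matches the paper's: reduce to the CM case, descend to a unitary group, realize in coherent cohomology, and apply the integral Hecke action from Theorem~\ref{main-thm-unitary} to obtain the Newton--Hodge inequality. Your Step~3 Newton-polygon argument is a direct unwinding of what the paper packages abstractly as Conjecture~\ref{conj1} via Lemma~\ref{lem-lafforgue}; both are correct. There are, however, two points where your sketch glosses over work the paper does carefully.

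\textbf{The $\mathrm{U}(n)$ versus $\mathrm{GU}(n)$ problem.} Mok's endoscopic classification descends $\pi$ (after twisting to be conjugate self-dual) to a cuspidal $\tilde\pi$ on the \emph{unitary} group $\mathrm{U}(n)/F$, not on a similitude group. But the Shimura variety, and hence Theorem~\ref{main-thm-unitary} and Corollary~\ref{coro-conj2-unit}, are for $\mathrm{GU}(n)$. Your phrase ``descend to a unitary similitude group'' hides a genuine step: one must extend $\tilde\pi$ to a cuspidal $\tilde\pi'$ on $\mathrm{GU}(n)$ with a chosen (algebraic, unramified at $p$) extension $c'$ of the central character. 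The paper does this by showing the restriction map $\mathfrak A_{cusp}(\mathrm{GU}(n))_{c'}\to\mathfrak A_{cusp}(\mathrm{U}(n))_c$ is surjective, via an explicit construction whose well-definedness reduces to $\qq^\times\cap\nu(\tilde Z(\mathbb A_\qq))\subset\nu(\tilde Z(\qq))$ and hence to Hasse's norm theorem for a quadratic extension. It is for this argument that the paper first arranges an additional reduction you omit: that $L=L_0F$ for an imaginary quadratic $L_0/\qq$.

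\textbf{All primes above $p$ must split.} Theorem~\ref{main-thm-unitary} requires \emph{every} prime of $F$ above $p$ to split in $L$, not just the one below $w$. Your ``after a further solvable base change if needed, the place $v\mid p$ below $w$ splits'' is not enough. The paper builds a tailored totally real quadratic extension $F'/F$, split at exactly those $v\mid p$ that already split in $L$ and inert at the others, then replaces $L$ by $F'L$; this forces all primes above $p$ in the new $F'$ to split in the new CM field while keeping $p$ unramified and the base change cuspidal.
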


 \subsection{Organisation of the paper} In Section \ref{residue} we
 develop a formalism of cohomological correspondences in coherent
 cohomology. In Section \ref{section-Satake} we give a number of
 classical results concerning the structure of the local spherical
 Hecke algebra of an unramified group. In Section
 \ref{section-Shimura}, we introduce Shimura varieties and their
 coherent cohomology and formulate Conjecture \ref{conj3} on the
 action of the integral Hecke algebra on the integral coherent
 cohomology. In Section \ref{section-symplectictype} and
 \ref{section-local-model-symp} we consider the case of Shimura
 varieties of symplectic type and their local models. In Section
 \ref{section-unitary} and \ref{sect-local-model-unitary} we consider
 the case of Shimura varieties of unitary type and their local
 models. Finally, the last section deals with applications to
 automorphic representations and Galois representations.

 \subsection{Acknowledgements} We would like to thank Arvind Nair, George Boxer and
 Sandeep Varma for useful conversations. We would like to thank
 Olivier Taibi for directing us to the paper \cite{MR2869300} and for
 many useful exchanges and suggestions. We would like to thank the TIFR and the ENS of
 Lyon for hosting us during this project. The first author would also
 like to thank Laurent Fargues for supporting his visit to Lyon in
 2018 during the GeoLang programme via his ERC grant. The second
 author has been supported by the ANR Percolator, and the
 ERC-2018-COG-818856-HiCoShiVa.

\section{Correspondences and coherent cohomology}\label{residue}
\subsection{Preliminaries on residues and duality}

We start by recalling some results of Grothendieck duality theory for
coherent cohomology. The original reference for this, which we use
below, is \cite{Hartshorne}; although this and \cite{conrad-bc}, which
is based on it, suffice for our purposes, the more abstract approaches
of \cite{verdier-base-change}, \cite{neeman-duality} and \cite{lipman}
give more general results with arguably more efficient proofs. In
particular, the latter two references show that the noetherian and
finite Krull dimension hypotheses of \cite{Hartshorne} can be
eliminated and that most results extend to the unbounded derived
category.

For a scheme $X$ we let $\mathbf{D}_{qcoh}(\oscr_X)$ be the
subcategory of the derived category $\mathbf{D}(\oscr_X)$ of
$\oscr_X$-modules whose objects have quasi-coherent cohomology
sheaves. We let $\mathbf{D}^{+}_{qcoh}(\oscr_X)$
(resp. $\mathbf{D}^{-}_{qcoh}(\oscr_X)$) be the full subcategory of
$\mathbf{D}_{qcoh}(\oscr_X)$ whose objects have $0$ cohomology sheaves
in sufficiently negative (resp. positive) degree. We let
$\mathbf{D}^{b}_{qcoh}(\oscr_X)$ be the full subcategory of
$\mathbf{D}_{qcoh}(\oscr_X)$ whose objects have $0$ cohomology sheaves
for all but finitely many degrees. We remark that if $X$ is locally
notherian $\mathbf{D}^{+}_{qcoh}(\oscr_X)$ is also the derived
category of the category of bounded below complexes of quasi-coherent
sheaves on $X$ \cite[I, Corollary 7.19]{Hartshorne}.  We let
$\mathbf{D}^{b}_{qcoh}(\oscr_X)_{fTd}$ be the full subcategory of
$\mathbf{D}^{b}_{qcoh}(\oscr_X)$ whose objects are quasi-isomorphic to
bounded complexes of flat sheaves of $\oscr_X$-modules \cite[II,
Definition 4.13]{Hartshorne}. We fix for the rest of this section a
noetherian affine scheme
$S$.

\subsubsection{Embeddable morphisms} Let $X$, $Y$ be two $S$-schemes
and $f : X \rightarrow Y$ be a morphism of $S$-schemes.  The morphism
$f$ is embeddable if there exists a smooth $S$-scheme $P$ and a finite
map $ i : X \rightarrow P \times_S Y$ such that $f$ is the composition
of $i$ and the second projection \cite[p.~189]{Hartshorne}. The
morphism $f$ is projectively embeddable if it is embeddable and $P$
can be taken to be a projective space over $S$
\cite[p.~206]{Hartshorne}.

\subsubsection{The functor $f^!$}
For $f : X \rightarrow Y$ a morphism of $S$-schemes, there is a
functor
$Rf_\star : \mathbf{D}_{qcoh}(\oscr_X) \rightarrow
\mathbf{D}_{qcoh}(\oscr_Y)$. By \cite[III, Theorem 8.7]{Hartshorne} if $f$ is
embeddable, we can define a functor
$f^! : \mathbf{D}^+_{qcoh}(\oscr_Y) \rightarrow
\mathbf{D}^+_{qcoh}(\oscr_X)$. If $f$ is projectively embeddable, by
\cite[III, Theorem 10.5]{Hartshorne} there is a natural transformation (trace
map) $Rf_\star f^! \Rightarrow Id$ of endofunctors of
$\mathbf{D}^+_{qcoh}(\oscr_Y)$.  Moreover, by \cite[III, Theorem 11.1]{Hartshorne} this natural transformation induces a duality isomorphism:

\begin{equation} \label{eq:duality}
  \mathrm{Hom}_{\mathbf{D}_{qcoh}(\oscr_X)} ( \mathscr{F}, f^! \mathscr{G}) 
\stackrel{\sim}\rightarrow \mathrm{Hom}_{\mathbf{D}_{qcoh}(\oscr_Y)}(
R f_\star \mathscr{F}, \mathscr{G})
\end{equation}
for $\mathscr{F} \in
\mathbf{D}^{-}_{qcoh}(\oscr_X)$ and $\mathscr{G} \in \mathbf{D}^+_{qcoh}(\oscr_Y)$.

The functor $f^!$ for embeddable morphism enjoys many good
properties. Let us record one that will be crucially used.
\begin{proposition}\label{prop-finite-tor-dim} \cite[III, Proposition
  8.8]{Hartshorne} If $\mathscr{F} \in \mathbf{D}^+_{qcoh}(\oscr_Y)$
  and $\mathscr{G} \in \mathbf{D}^b_{qcoh}(\oscr_Y)_{fTd}$, there is a
  functorial isomorphism
  $f^!  \mathscr{F} \otimes^L Lf^\star \mathscr{G} = f^!( \mathscr{F}
  \otimes^L \mathscr{G})$.
\end{proposition}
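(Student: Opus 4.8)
The plan is to reduce to the two building blocks out of which $f^!$ is constructed for embeddable morphisms, namely the case of a smooth morphism and the case of a finite morphism, and to verify the projection-type formula in each case separately. Recall that, by definition, if $f$ factors as $X \xrightarrow{i} P \times_S Y \xrightarrow{\mathrm{pr}_2} Y$ with $i$ finite and $P$ smooth over $S$, then $f^! = i^! \circ \mathrm{pr}_2^!$, where $\mathrm{pr}_2^!(-) = \mathrm{pr}_2^\star(-) \otimes \omega_{P}[\dim P]$ (twist by the relative dualizing sheaf of the smooth projection, which is a line bundle placed in a single degree) and $i^!(-) = \overline{i}^\flat(-)$ is computed via $R\mathcal{H}om$ into the structure sheaf along the finite map. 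So the first step is to observe that it suffices to prove the formula for $f$ smooth and for $f$ finite, and that one then composes: given $f^!\mathscr F \otimes^L Lf^\star\mathscr G = i^!\mathrm{pr}_2^!\mathscr F \otimes^L Li^\star L\mathrm{pr}_2^\star\mathscr G$, one applies the finite case to the outer $i^!$ (noting $L\mathrm{pr}_2^\star\mathscr G$ still lies in $\mathbf D^b_{qcoh}(\oscr_{P\times_S Y})_{fTd}$, since flatness is preserved by the flat base change $\mathrm{pr}_2$) and then the smooth case to $\mathrm{pr}_2^!$. Here one uses that $Li^\star L\mathrm{pr}_2^\star = L(\mathrm{pr}_2\circ i)^\star = Lf^\star$ and similarly for the $!$-functors, so the composition is bookkeeping once the two base cases are in hand.

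For $f$ smooth, the formula is essentially formal: $f^!\mathscr F = Lf^\star\mathscr F \otimes \omega_f[\,d\,]$ with $\omega_f$ a line bundle, so
\begin{equation*}
  f^!\mathscr F \otimes^L Lf^\star\mathscr G
  = (Lf^\star\mathscr F \otimes \omega_f[d]) \otimes^L Lf^\star\mathscr G
  = (Lf^\star\mathscr F \otimes^L Lf^\star\mathscr G) \otimes \omega_f[d]
  = Lf^\star(\mathscr F \otimes^L \mathscr G) \otimes \omega_f[d]
  = f^!(\mathscr F \otimes^L \mathscr G),
\end{equation*}
using only the symmetric monoidal compatibility of $Lf^\star$ with $\otimes^L$ and the fact that tensoring with an invertible complex commutes with everything. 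For $f$ finite, write $i$ for $f$; the content is the statement that for a finite morphism $i: X \to Y$ one has $i^!\mathscr F \otimes^L Li^\star \mathscr G \cong i^!(\mathscr F \otimes^L \mathscr G)$ when $\mathscr G$ has finite Tor-dimension. Working locally and using that $i^!$ for finite $i$ is $Ri^{-1}R\mathcal{H}om_{\oscr_Y}(i_\star\oscr_X, \mathscr F)$, this is a version of the standard ``$R\mathcal{H}om$ commutes with a perfect-complex twist'' identity: $R\mathcal{H}om(\mathscr E, \mathscr F \otimes^L \mathscr G) \cong R\mathcal{H}om(\mathscr E, \mathscr F) \otimes^L \mathscr G$ when $\mathscr G$ is perfect (equivalently here, of finite Tor-dimension and with coherent cohomology over the noetherian base), applied to $\mathscr E = i_\star\oscr_X$, combined with the projection formula $i_\star(i^!\mathscr F \otimes^L Li^\star\mathscr G) \cong i_\star i^!\mathscr F \otimes^L \mathscr G$. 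The finite Tor-dimension hypothesis on $\mathscr G$ is exactly what is needed to make $L i^\star \mathscr G$ bounded and to push the $\otimes^L \mathscr G$ through the $R\mathcal{H}om$; this is where the hypothesis is used and cannot be dropped.

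The main obstacle is checking that the isomorphisms produced in the smooth and finite cases are \emph{compatible with composition}, i.e.\ that gluing them along the factorization $f = \mathrm{pr}_2 \circ i$ yields a well-defined isomorphism independent of the chosen embedding, and that it is functorial in $\mathscr F$ and $\mathscr G$. This is precisely the kind of coherence verification that \cite[III]{Hartshorne} carries out for the basic functorialities of $f^!$ (transitivity isomorphisms, base change, etc.), and the cleanest route is simply to cite \cite[III, Proposition 8.8]{Hartshorne} for the statement and compatibilities, rather than reconstruct the diagram chases; the more abstract treatments in \cite{neeman-duality}, \cite{lipman} deduce it from the closed monoidal structure on $\mathbf D_{qcoh}$ and the fact that $f^!$ is a ``twisted'' pullback, which makes the projection formula and its coherences automatic. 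For the purposes of this paper it is enough to record the isomorphism together with its functoriality, both of which are contained in the cited reference.
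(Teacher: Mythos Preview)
The paper gives no proof of this proposition at all: it is simply stated with the citation \cite[III, Proposition 8.8]{Hartshorne} and used as a black box. Your sketch is in fact a faithful outline of Hartshorne's own argument for that proposition (reduction to the smooth and finite building blocks of an embeddable morphism, then a formal tensor computation in the smooth case and an $R\mathcal{H}om$/projection-formula argument in the finite case), so you have supplied more than the paper does. For the purposes of matching the paper, a one-line citation to \cite[III, Proposition 8.8]{Hartshorne} would have sufficed.
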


\subsection{Cohen--Macaulay, Gorenstein and lci morphisms}
\subsubsection{Cohen--Macaulay morphisms} \label{CM-maps}

Recall that a morphism $h:X \to S$ is called Cohen--Macaulay
(abbreviated to \emph{CM} for the rest of this section) if $h$ is
flat, locally of finite type and has CM fibres.

\begin{lem}\label{lem-CM} For a CM morphism of pure relative dimension $n$,
$h^!\oscr_S = \omega_{X/S}[n]$, where $\omega_{X/S}$ is a coherent
sheaf which is flat over $S$.
Assume moreover that $S$ is CM. Then $\omega_{X/S}$ is a $CM$ sheaf. 
\end{lem}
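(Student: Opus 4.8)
The plan is to work locally on $X$ and $S$ and reduce everything to a statement about modules over noetherian local rings. Since the assertions are local on $X$, we may assume $S = \Spec A$ and $X = \Spec B$ with $A$ noetherian and $B$ a flat, finite-type $A$-algebra all of whose fibres are CM of pure dimension $n$. The first step is to compute $h^! \oscr_S$. Since $h$ is of finite type and $S$ is affine (hence $h$ is separated of finite type to an affine, thus embeddable after choosing generators of $B$ over $A$ realizing $X$ as a closed subscheme of some $\A^N_S$), the functor $h^!$ is defined. Factoring $h$ through $X \hookrightarrow \A^N_S \to S$, we have $h^! \oscr_S = i^!(\oscr_{\A^N_S}[N])$, where $i$ is the closed immersion, and for the finite (indeed closed) morphism $i$ one has $i^! \mathscr{G} = \overline{i}^\star R\mathcal{H}om_{\oscr_{\A^N_S}}(i_\star \oscr_X, \mathscr{G})$. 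The key input is that a CM morphism of pure relative dimension $n$, being flat with CM fibres, has the property that $i_\star \oscr_X$ is, locally, a perfect complex over $\oscr_{\A^N_S}$ concentrated in a single cohomological degree $N - n$ after applying $R\mathcal{H}om$; concretely, the fibrewise CM condition together with flatness over $A$ says that for each point the local ring $B_\mathfrak{q}$ is CM of the expected dimension, so $\mathrm{Ext}^j_{\oscr_{\A^N_S}}(i_\star \oscr_X, \oscr_{\A^N_S})$ vanishes for $j \neq N-n$ and the single nonzero $\mathrm{Ext}$ sheaf, call it $\omega_{X/S}$, sits in degree $N - n$. Thus $h^! \oscr_S = \omega_{X/S}[n]$ with $\omega_{X/S}$ a coherent sheaf.

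Next I would prove flatness of $\omega_{X/S}$ over $S$. For this I use the compatibility of $\omega_{X/S}$ with base change on $S$: given $A \to A'$, base-changing the (local) finite free resolution of $i_\star \oscr_X$ over $\oscr_{\A^N_S}$ remains a resolution because $B$ is $A$-flat, and since the single surviving $\mathrm{Ext}$-sheaf is in the top relevant degree, forming $\mathrm{Ext}^{N-n}$ commutes with the base change $-\otimes_A A'$; equivalently, the higher $\mathrm{Tor}^A_j(\omega_{X/S}, A')$ vanish for $j > 0$ because the relevant cohomology of the dualizing complex is concentrated where the base-change spectral sequence degenerates. Applying this with $A' = A/\mathfrak{m}_\mathfrak{p}$ for primes $\mathfrak{p}$ of $A$ and invoking the local criterion for flatness (or simply: a finitely generated $B$-module whose formation commutes with arbitrary base change on $A$ and which is fibrewise, over each closed fibre, the dualizing module hence nonzero and CM of the right dimension) gives that $\omega_{X/S}$ is $S$-flat. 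The cleanest formulation is: $h^!$ commutes with flat (indeed arbitrary) base change on $S$ for embeddable morphisms (this is in \cite[III]{Hartshorne}), so $\omega_{X/S} \otim^L_A \kappa(\mathfrak{p}) = \omega_{X_\mathfrak{p}/\kappa(\mathfrak{p})}$ is concentrated in degree $0$, which forces the vanishing of $\mathrm{Tor}$ and hence flatness.

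For the last assertion, assume $S$ is CM. Flatness of $h$ plus $S$ CM plus CM fibres gives that the total space $X$ is CM (a standard fact: if $A \to B$ is flat local with $A$ and the closed fibre $B/\mathfrak{m}_A B$ both CM, then $B$ is CM, and $\operatorname{depth} B_\mathfrak{q} = \operatorname{depth} A_\mathfrak{p} + \operatorname{depth}(B_\mathfrak{q}/\mathfrak{p}B_\mathfrak{q})$). Now $\omega_{X/S}$, being $h^!\oscr_S$ up to shift, is the relative dualizing sheaf; over the CM scheme $X$ one checks it is a maximal Cohen--Macaulay $\oscr_X$-module by a depth count. Concretely, locally at a point $\mathfrak{q}$ of $X$ lying over $\mathfrak{p}$ of $S$, one has $\omega_{X/S, \mathfrak{q}} \otimes_{A_\mathfrak{p}} \kappa(\mathfrak{p})$ equal to the dualizing module of the CM fibre, which is MCM over the fibre ring; combined with $A_\mathfrak{p}$-flatness of $\omega_{X/S}$ and the depth formula for flat extensions, $\operatorname{depth}_{B_\mathfrak{q}} \omega_{X/S,\mathfrak{q}} = \operatorname{depth} A_\mathfrak{p} + \operatorname{depth}(\text{fibre module}) = \dim A_\mathfrak{p} + \dim(\text{fibre}) = \dim B_\mathfrak{q}$, so $\omega_{X/S}$ is CM.

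The main obstacle I anticipate is the interplay between the base-change behaviour of $h^!$ and the flatness of $\omega_{X/S}$: one must be careful that $h^!$ as defined in \cite{Hartshorne} a priori only behaves well with respect to base change for suitable (e.g. flat, or tor-independent) morphisms, whereas to deduce flatness one wants to control $\omega_{X/S} \otimes^L_A \kappa(\mathfrak{p})$ for all points $\mathfrak{p}$. The resolution is to use that $i_\star\oscr_X$ admits, locally, a finite free resolution over the $S$-smooth ambient space (because $X$ is $S$-flat and of finite type, hence $i_\star\oscr_X$ is $\oscr_S$-flat and finitely presented over the regular-over-$S$ ambient ring, giving a finite resolution locally), so that $R\mathcal{H}om$ into $\oscr_{\A^N_S}$ is computed by an honest finite complex of vector bundles whose formation trivially commutes with any base change on $S$; the fibrewise CM hypothesis then pins the cohomology to a single degree on every fibre, forcing the complex (after the shift) to be quasi-isomorphic to the module $\omega_{X/S}$ placed in degree $0$ and that module to be $S$-flat. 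Everything else is bookkeeping with depths and the standard flat-base-change properties of CM-ness.
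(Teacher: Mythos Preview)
Your argument is correct. The paper's proof differs mainly in that it outsources the first assertion entirely to \cite[Theorem 3.5.1]{conrad-bc}, whereas you sketch a direct proof via the factorization through $\A^N_S$ and Ext computations; your approach is self-contained, the paper's is shorter. For the CM assertion the two arguments coincide: reduce to the fibres by base-change compatibility and $S$-flatness of $\omega_{X/S}$, note that $\omega_{X_s/s}$ is the dualizing module of the CM scheme $X_s$ and hence maximal CM (the paper phrases this via dualizing complexes and local duality over a field, citing \cite{bruns1998cohen}), and then apply the depth formula for $A$-flat modules---this is exactly \cite[Proposition 6.3.1]{ega-4-ii}, which the paper invokes.

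One phrasing to tighten: in your first paragraph you deduce the Ext vanishing from ``$B_\mathfrak{q}$ is CM of the expected dimension'', but $B_\mathfrak{q}$ need not be CM at that stage (there is no hypothesis yet on $A$), and in any case CM-ness of $B_\mathfrak{q}$ is not the reason the Ext over $\oscr_{\A^N_S}$ is concentrated in one degree. The correct mechanism---perfection of $i_\star\oscr_X$ over the $S$-smooth ambient $\oscr_{\A^N_S}$, with the fibrewise CM hypothesis pinning the dual complex to a single degree on each fibre and $S$-flatness propagating this globally---is exactly what your final paragraph says, so just align the earlier sentence with that. (Also, strictly $p^!\oscr_S = \Omega^N_{\A^N_S/S}[N]$ rather than $\oscr_{\A^N_S}[N]$, but this line bundle is trivial so it does not affect the concentration argument.)
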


\begin{proof} For the first point, see \cite[ Theorem
  3.5.1]{conrad-bc}. For the second point, first observe that if $S$
  is Gorenstein (e.g., $S$ is the spectrum of a field), so
  $\oscr_S[0]$ is a dualizing complex for $S$, then
  $\omega_{X/S}[n] = h^! \oscr_S$ is a dualizing complex for $X$
  \cite[V, \S \S 2,10]{Hartshorne}. Using local duality \cite[Theorem
  1.2.8]{bruns1998cohen}, it follows from \cite[Corollary
  3.5.11]{bruns1998cohen} that $\omega_{X/S}$ is a CM sheaf.  In
  general, for a CM map $h$ as above, the formation of $\omega_{X/S}$
  is compatible with base change \cite[Theorem 3.6.1]{conrad-bc}, so
  it follows from \cite[Proposition 6.3.1]{ega-4-ii} that
  $\omega_{X/S}$ is a CM sheaf.

\end{proof}

\subsubsection{Gorenstein morphisms} A local ring $A$ is Gorenstein if it
satisfies the equivalent properties of \cite[V, Theorem
9.1]{Hartshorne} (for instance, if $A$ is a dualizing complex for
itself). Note that local complete intersection rings, in particular
regular rings, are Gorenstein. Furthermore, Gorenstein rings are CM.
A locally noetherian scheme is Gorenstein if all its local rings are
Gorenstein. Finally, a morphism $h:X \to S$ is Gorenstein if it is
flat and all its fibres are Gorenstein.

\begin{lem} For a proper Gorenstein morphism $h:X \to S$ of
  pure relative dimension $n$, $h^!\oscr_S = \omega_{X/S}[n]$, where
  $\omega_{X/S}$ is an invertible sheaf on $X$.
  \end{lem}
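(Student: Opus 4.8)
The claim is that for a proper Gorenstein morphism $h:X\to S$ of pure relative dimension $n$, we have $h^!\oscr_S = \omega_{X/S}[n]$ for some invertible sheaf $\omega_{X/S}$. The plan is to leverage the preceding Cohen--Macaulay lemma: a Gorenstein morphism is in particular CM (Gorenstein rings are CM), so Lemma \ref{lem-CM} already gives $h^!\oscr_S = \omega_{X/S}[n]$ with $\omega_{X/S}$ a coherent sheaf, flat over $S$. The only thing left to prove is that this $\omega_{X/S}$ is in fact invertible. The key observation is that invertibility can be checked fiberwise: since $\omega_{X/S}$ is flat over $S$ and of finite presentation, it is invertible if and only if each fiber $\omega_{X/S}\otimes_{\oscr_S}\kappa(s)$ is an invertible sheaf on $X_s$ (a finitely presented flat module whose fibers are all locally free of rank one is locally free of rank one — Nakayama plus the local criterion for flatness).

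First I would reduce to the case where $S = \Spec k$ is the spectrum of a field. Here one must use base-change compatibility: by \cite[Theorem 3.6.1]{conrad-bc} (already invoked in the proof of Lemma \ref{lem-CM}) the formation of $\omega_{X/S}$ for a CM morphism commutes with base change, so $\omega_{X/S}\otimes_{\oscr_S}\kappa(s) = \omega_{X_s/\kappa(s)}$. Thus it suffices to show: if $h: X \to \Spec k$ is proper Gorenstein of pure dimension $n$, then the coherent sheaf $\omega_{X/k}$ with $h^!k = \omega_{X/k}[n]$ is invertible. Over a field, $k[0]$ is a dualizing complex for $k$, hence $h^!k = \omega_{X/k}[n]$ is a dualizing complex for $X$ by \cite[V, \S\S 2, 10]{Hartshorne}. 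Now I would argue locally on $X$: at a point $x \in X$, the local ring $\ocal_{X,x}$ is Gorenstein by hypothesis, which by definition (\cite[V, Theorem 9.1]{Hartshorne}) means $\ocal_{X,x}$ is a dualizing complex for itself, i.e. the normalized dualizing complex of $\ocal_{X,x}$ is concentrated in one degree and equals a free module of rank one. Comparing with the dualizing complex $\omega_{X/k}[n]$ (dualizing complexes being unique up to shift and twist by an invertible sheaf, \cite[V, \S 3]{Hartshorne}), and using that $X$ is CM of pure dimension $n$ so that $\omega_{X/k}[n]$ is already concentrated in the single degree $-n$, one concludes that the stalk $(\omega_{X/k})_x$ is a free $\ocal_{X,x}$-module of rank one. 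Since this holds at every point and $\omega_{X/k}$ is coherent, it is invertible.

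An alternative, perhaps cleaner, route for the field case: use local duality as in the proof of Lemma \ref{lem-CM} to identify, at each $x \in X$, the stalk $(\omega_{X/k})_x$ (up to completion) with the Matlis dual of the top local cohomology $\HH^{\dim \ocal_{X,x}}_{\mf m_x}(\ocal_{X,x})$; the ring being Gorenstein is exactly equivalent (\cite[Theorem 3.2.10]{bruns1998cohen} or \cite[Corollary 3.5.11]{bruns1998cohen}) to this canonical module being free of rank one, i.e. to $(\omega_{X/k})_x$ being free of rank one over $\ocal_{X,x}$. Either way, faithfully flat descent of the completion and the already-established coherence and $S$-flatness of $\omega_{X/S}$ finish the argument.

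The main obstacle, such as it is, is purely bookkeeping: making sure the base-change isomorphism $\omega_{X/S}\otimes\kappa(s) \cong \omega_{X_s/\kappa(s)}$ is the one coming from \cite{conrad-bc} and that it is compatible with the identifications $h^!\oscr_S = \omega_{X/S}[n]$ on both sides, and correctly invoking the characterization of Gorenstein local rings as those which are dualizing complexes for themselves. There is no serious analytic or geometric difficulty; the content is entirely the translation between the global statement "$\omega_{X/S}$ invertible" and the local commutative-algebra fact "a Gorenstein local ring has canonical module free of rank one," bridged by fiberwise checking of invertibility for an $S$-flat coherent sheaf.
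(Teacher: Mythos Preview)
Your proposal is correct and follows essentially the same route as the paper: reduce via Lemma~\ref{lem-CM} to a coherent $S$-flat sheaf, use base change (\cite[Theorem 3.6.1]{conrad-bc}) to reduce invertibility to the fibres, identify $\omega_{X_s/s}$ as a dualizing sheaf and invoke the Gorenstein hypothesis to see it is invertible, then conclude globally from $S$-flatness. The only cosmetic difference is that the paper cites \cite[V, Theorem 3.1]{Hartshorne} directly for ``Gorenstein $\Rightarrow$ dualizing sheaf invertible'' and \cite[Th\'eor\`eme 11.3.10]{MR0217086} for the fibrewise flatness step, whereas you unpack these via uniqueness of dualizing complexes and Nakayama respectively.
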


\begin{proof} A Gorenstein morphism is CM so by Lemma \ref{lem-CM},
  $h^!\oscr_S = \omega_{X/S}[n]$, where $\omega_{X/S}$ is a coherent
  sheaf on $X$. Since for a CM map $h$ the formation of $\omega_{X/S}$
  is compatible with base change \cite[Theorem 3.6.1]{conrad-bc}, for
  any point $s \in S$ $\omega_{X/S} \vert_{X_s} = \omega_{X_s/s}$. By
  \cite[V, Proposition 2.4, Theorem 8.3]{Hartshorne} (and the remark
  after Theorem 8.3), $\omega_{X_s/s}$ is a dualizing sheaf for
  $X_s$. By \cite[V, Theorem 3.1]{Hartshorne}, the Gorenstein
  hypothesis on $X_s$ implies that $\omega_{X_s/s}$ is an invertible
  sheaf. Since $\omega_{X/S}$ is flat over $S$ by Lemma \ref{lem-CM},
  it is flat over $X$ by the fibrewise flatness criterion
  \cite[Th\'eor\`eome 11.3.10]{MR0217086}, so it is an invertible
  sheaf over $X$.
\end{proof}

\subsubsection{Local complete intersection morphisms} A morphism
$h : X \rightarrow S$ is called a local complete intersection
(henceforth \emph{lci}) morphism if locally on $X$ we have a
factorization $ h : X \stackrel{i}\rightarrow Z \rightarrow S$ where
$i$ is a regular immersion \cite[D\'efinition 16.9.2]{MR0217086}, and
$Z$ is a smooth $S$-scheme. If $h$ is lci, then the cotangent complex
of $h$, denoted by $\mathbb{L}_{X/S}$, is a perfect complex
concentrated in degree $-1$ and $0$ \cite[Proposition
3.2.9]{MR0491680}. Its determinant in the sense of \cite{MR0437541} is
denoted by $\omega_{X/S}$.

\begin{lem} If $h : X \rightarrow S$ is an embeddable morphism and a local complete intersection of pure relative dimension $n$, then $h^! \oscr_{X} = \omega_{X/S}[n]$ where $\omega_{X/S}$ is the determinant of the cotangent complex $\mathbb{L}_{X/S}$.
\end{lem}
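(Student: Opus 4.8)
The plan is to reduce the statement to the case of a regular immersion composed with a smooth morphism, where both factors are understood, and then check that the resulting identification of $h^!\oscr_X$ (note: the statement as typeset says $h^!\oscr_X$, but the intended object is $h^!\oscr_S$, which is what we compute) is independent of the chosen factorization. Since $f^!$ for embeddable morphisms is defined locally on $X$ via such factorizations and glues (by the compatibility in \cite[III, \S 8]{Hartshorne}), it suffices to work with a fixed local factorization $h : X \stackrel{i}\rightarrow Z \stackrel{g}\rightarrow S$ with $i$ a regular immersion and $g$ smooth of some relative dimension $d$, so that $i$ has codimension $d - n$.

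First I would recall the two building blocks. For $g$ smooth of pure relative dimension $d$, one has $g^!\oscr_S = \omega_{Z/S}[d] = \det \Omega^1_{Z/S}[d]$, where $\Omega^1_{Z/S}$ is locally free of rank $d$; this is \cite[III, \S 2]{Hartshorne}. For $i$ a regular immersion of pure codimension $c = d-n$ with conormal sheaf $\mathcal{N}_{X/Z}^\vee = \mathcal{I}/\mathcal{I}^2$ locally free of rank $c$, one has $i^!\mathscr{F} = Li^\star\mathscr{F} \otimes \det(\mathcal{N}_{X/Z})[-c]$, again from \cite[III, \S 7]{Hartshorne}. Composing, $h^!\oscr_S = i^! g^!\oscr_S = Li^\star(\omega_{Z/S})[d] \otimes \det(\mathcal{N}_{X/Z})[-c] = \big(i^\star\det\Omega^1_{Z/S} \otimes \det\mathcal{N}_{X/Z}\big)[n]$, an invertible sheaf placed in degree $n$. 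The remaining point is to identify this invertible sheaf with $\det \mathbb{L}_{X/S}$: for the local factorization there is the fundamental exact triangle $Li^\star\mathbb{L}_{Z/S} \to \mathbb{L}_{X/S} \to \mathbb{L}_{X/Z} \to$, with $\mathbb{L}_{Z/S} = \Omega^1_{Z/S}$ (smooth) and $\mathbb{L}_{X/Z} = \mathcal{I}/\mathcal{I}^2[1]$ (regular immersion); taking determinants, which are multiplicative on triangles \cite{MR0437541}, gives $\det\mathbb{L}_{X/S} = i^\star\det\Omega^1_{Z/S} \otimes \det(\mathcal{I}/\mathcal{I}^2[1]) = i^\star\det\Omega^1_{Z/S} \otimes (\det\mathcal{N}_{X/Z})$, since inverting the shift inverts the determinant and $\mathcal{I}/\mathcal{I}^2[1]$ contributes $(\det \mathcal{I}/\mathcal{I}^2)^{-1}$ in degree $1$, i.e. $\det\mathcal{N}_{X/Z}$. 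This matches exactly.

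The main obstacle is the well-definedness: $\omega_{X/S} = \det\mathbb{L}_{X/S}$ is intrinsic, but both the formula for $h^!\oscr_S$ via a factorization and the computation above a priori depend on the choice of $Z$, $g$, $i$. On the $h^!$ side this is handled by the general theory in \cite{Hartshorne} (the isomorphisms $i^!g^! \cong h^!$ are canonical and compatible with refining open covers), so the sheaf-theoretic object is well defined; on the cotangent-complex side the triangle and the multiplicativity of $\det$ are functorial, so the identification $\det\mathbb{L}_{X/S} \cong i^\star\det\Omega^1_{Z/S}\otimes\det\mathcal{N}_{X/Z}$ is the standard one and is independent of the factorization. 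One should check that these two canonical identifications are compatible on overlaps of local factorizations — this is essentially the observation that the construction of the dualizing sheaf for lci morphisms via $\det\mathbb{L}$ is designed precisely to be the trivialization-independent incarnation of the local $i^!g^!$ computation; I would either cite this compatibility (it is implicit in the lci case of Grothendieck duality, cf. also \cite{conrad-bc}) or spell it out using that any two factorizations are dominated by a common one after further localization. Granting this, the lemma follows, and in particular $\omega_{X/S}$ is an invertible sheaf, consistent with the Gorenstein case treated above (an lci morphism being Gorenstein).
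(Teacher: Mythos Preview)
Your proof is correct and follows exactly the approach the paper has in mind: the paper's proof is the single sentence ``This follows from the very definition of $h^!$ given in \cite[III, Theorem 8.7]{Hartshorne},'' and your argument is precisely the unpacking of that definition via a factorization $X \xrightarrow{i} Z \xrightarrow{g} S$ together with the identification of the resulting line bundle with $\det\mathbb{L}_{X/S}$. You also correctly flag the typo ($h^!\oscr_S$, not $h^!\oscr_X$).
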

\begin{demo} This follows from the very definition of $h^!$ given in
  \cite[III, Theorem 8.7]{Hartshorne}. 
\end{demo} 

\subsection{An application: construction of a trace map} 

\begin{proposition} Let $X, Y$ be two embeddable $S$-schemes and let
  $f : X \rightarrow Y$ be an embeddable morphism.  Assume that
  $X \rightarrow S$ is $CM$ and that $Y \rightarrow S$ is
  Gorenstein. Assume that $X$ and $Y$ have the same pure relative
  dimension over $S$. Then $f^! \oscr_Y : = \omega_{X/Y}$ is a
  coherent sheaf. If moreover $S$ is assumed to be CM, then
  $\omega_{X/S}$ is a CM sheaf. If $X$ and $Y$ are smooth over $S$,
  $\omega_{X/S} = \det \Omega^{1}_{X/S} \otimes_{f^{-1}\oscr_Y}
  (f^{-1}\det \Omega^{1}_{Y/S})^{-1}$.
\end{proposition}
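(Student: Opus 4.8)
The strategy is to reduce everything to the behaviour of the functor $f^!$ under the factorization $X \xrightarrow{f} Y \to S$ together with the two lemmas just proved. First I would use the composition formula for the upper-shriek functor: since $f$ is embeddable and $Y \to S$ is embeddable (being Gorenstein, hence CM, hence of finite type; more precisely by hypothesis $Y$ is an embeddable $S$-scheme), and $g \circ f = h$ with $g: Y \to S$, we have $h^! = f^! \circ g^!$ on $\mathbf{D}^+_{qcoh}$. By the Gorenstein lemma, $g^! \oscr_S = \omega_{Y/S}[n]$ with $\omega_{Y/S}$ an \emph{invertible} sheaf (here using that $X$ and $Y$ have the same pure relative dimension $n$ over $S$, and $Y \to S$ is proper — wait, properness is needed in the Gorenstein lemma; if properness of $Y\to S$ is not assumed one argues instead that $\omega_{Y/S}$ is invertible fibrewise and flat, exactly as in the Gorenstein lemma's proof, which does not really need properness for the invertibility statement). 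Hence
$$h^! \oscr_S = f^!(g^! \oscr_S) = f^!(\omega_{Y/S}[n]).$$

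Next I would apply Proposition \ref{prop-finite-tor-dim} (the projection-type formula $f^!\mathscr{F} \otimes^L Lf^\star \mathscr{G} = f^!(\mathscr{F} \otimes^L \mathscr{G})$). Taking $\mathscr{F} = \oscr_Y$ and $\mathscr{G} = \omega_{Y/S}$, which lies in $\mathbf{D}^b_{qcoh}(\oscr_Y)_{fTd}$ because $\omega_{Y/S}$ is invertible hence flat, we get
$$f^!(\omega_{Y/S}) = f^!(\oscr_Y) \otimes^L Lf^\star \omega_{Y/S} = f^!(\oscr_Y) \otimes_{\oscr_X} f^\star\omega_{Y/S},$$
the last derived tensor being underived since $f^\star \omega_{Y/S}$ is invertible. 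On the other hand, since $X \to S$ is CM of pure relative dimension $n$, Lemma \ref{lem-CM} gives $h^! \oscr_S = \omega_{X/S}[n]$ with $\omega_{X/S}$ a coherent sheaf flat over $S$. Combining, $\omega_{X/S}[n] = f^!(\oscr_Y) \otimes_{\oscr_X} f^\star\omega_{Y/S}[n]$, so $f^!\oscr_Y = \omega_{X/S} \otimes_{\oscr_X} (f^\star\omega_{Y/S})^{-1}$ is a coherent sheaf concentrated in degree $0$; I define this to be $\omega_{X/Y}$. For the CM assertion when $S$ is CM, this is immediate from the second part of Lemma \ref{lem-CM} applied to $X \to S$: $\omega_{X/S}$ is a CM sheaf, with no further argument needed (the statement asks only about $\omega_{X/S}$, not $\omega_{X/Y}$).

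Finally, for the smooth case: when $X$ and $Y$ are smooth over $S$ of relative dimension $n$, the morphisms $X \to S$ and $Y \to S$ are in particular Gorenstein (indeed lci), and by the lci lemma $\omega_{X/S} = \det \Omega^1_{X/S}$ and $\omega_{Y/S} = \det \Omega^1_{Y/S}$ as invertible sheaves. Substituting into the formula $\omega_{X/Y} = \omega_{X/S} \otimes_{\oscr_X}(f^\star \omega_{Y/S})^{-1}$ gives
$$\omega_{X/S} = \det\Omega^1_{X/S} \otimes_{f^{-1}\oscr_Y} (f^{-1}\det\Omega^1_{Y/S})^{-1}$$
as claimed. (Strictly, the left-hand side of the displayed formula in the statement should presumably read $\omega_{X/Y}$ rather than $\omega_{X/S}$ — I would note this and interpret it as such.)

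The main obstacle is bookkeeping around hypotheses rather than any deep point: one must check that $f: X \to Y$ is embeddable \emph{and} that $g: Y \to S$ is embeddable so that the composition formula $h^! = f^! g^!$ is legitimate in the Hartshorne framework (for which one needs embeddability, and for the trace/duality one even needs projective embeddability, though for the identity $h^! = f^! g^!$ embeddability suffices by \cite[III, Theorem 8.7]{Hartshorne}), and that all objects stay in the right boundedness categories ($\mathbf{D}^+_{qcoh}$ for $f^!$, $\mathbf{D}^b_{qcoh}(-)_{fTd}$ for the flat factor in Proposition \ref{prop-finite-tor-dim}). The only genuinely substantive input — that $\omega_{Y/S}$ is invertible and $\omega_{X/S}$ is CM — has already been isolated in the preceding lemmas, so the proof is essentially a matter of assembling them correctly.
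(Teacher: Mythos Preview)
Your proof is correct and follows essentially the same approach as the paper: both use the composition formula $h^! = f^! g^!$, apply the Gorenstein lemma to identify $g^!\oscr_S = \omega_{Y/S}[n]$ with $\omega_{Y/S}$ invertible, invoke Proposition \ref{prop-finite-tor-dim} to extract the invertible factor, and compare with $h^!\oscr_S = \omega_{X/S}[n]$ from Lemma \ref{lem-CM}. Your observation that the displayed formula in the smooth case should read $\omega_{X/Y}$ rather than $\omega_{X/S}$ is also correct.
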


\begin{proof} 
  We have $h^! \oscr_{S} = \omega_{X/S}[n]$. On the other hand,
\begin{eqnarray*}
h^! \oscr_{S}& = &f^!(g^! \oscr_S)\\
& =& f^! (\omega_{Y/S}[n]) \\
& = & f^!( \oscr_Y \otimes \omega_{Y/S}[n]) \\
&= &f^! ( \oscr_Y) \otimes f^\star \omega_{Y/S}[n].
\end{eqnarray*}
\end{proof}

We observe that by adjunction we have a universal trace map $R f_\star \omega_{X/Y} \rightarrow \oscr_{Y}$. In particular for any section $s \in \HH^0( X, \omega_{X/Y})$ (or equivalently morphism $\oscr_{X} \rightarrow \omega_{X/Y}$), we get a corresponding trace $\mathrm{Tr}_s : R f_\star \oscr_{X} \rightarrow \oscr_Y$. 

\begin{proposition}\label{prop-trace} Let $X, Y$ be two embeddable $S$-schemes and let $f : X \rightarrow Y$
  be an embeddable morphism.  Assume that:
\begin{enumerate}
\item   $X \rightarrow S$ is $CM$, 
\item $Y \rightarrow S$ is Gorenstein, 
\item $S$ is $CM$, 
\item   $X$ and $Y$ have the  same pure relative dimension over $S$,
\item there are open sets $V \subset X$, $U \subset Y$ such that $f(V) \subset U$,
  $U$ and $V$ smooth over $S$ and $X \backslash V$ is of codimension $2$ in $X$. 
 \end{enumerate}
  Then there is a canonical morphism $\Theta : \oscr_{X} \rightarrow \omega_{X/Y}$ called the fundamental class. 
 Moreover if $W \subset Y$ is an open subscheme and $X\times_Y W \rightarrow W$ is finite flat, then the trace $\mathrm{Tr}_\Theta$ restricted to $W$ is the usual trace map for the finite flat morphism $X\times_Y W \rightarrow W$.
 \end{proposition}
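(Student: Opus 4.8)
The plan is to construct the fundamental class $\Theta$ over the smooth locus $V$, where everything is classical, and then extend it across the bad locus $X\setminus V$ using the codimension-$2$ hypothesis together with the Cohen--Macaulay property of $\omega_{X/S}$.

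First I would work on the open set $V$. By hypothesis (5), $V$ and $U$ are smooth over $S$, and $f$ restricts to a morphism $V\to U$ of smooth $S$-schemes of the same relative dimension. By the previous proposition, $f^!\oscr_U = \omega_{V/U} = \det\Omega^1_{V/S}\otimes_{f^{-1}\oscr_U}(f^{-1}\det\Omega^1_{U/S})^{-1}$. The differential of $f$ gives a map $f^\star\Omega^1_{U/S}\to\Omega^1_{V/S}$, hence on top exterior powers a map $f^\star\det\Omega^1_{U/S}\to\det\Omega^1_{V/S}$, which is the same as a section $\Theta_V\in\HH^0(V,\omega_{V/U})$, i.e. a morphism $\oscr_V\to\omega_{V/U}$. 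This is the fundamental class on the smooth locus.

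Next I would extend $\Theta_V$ to all of $X$. The key input is that $\omega_{X/S}$ is a CM sheaf (by the previous proposition, using that $S$ is CM), and in fact $\omega_{X/Y}=\omega_{X/S}\otimes f^\star\omega_{Y/S}^{-1}$ is also CM since $\omega_{Y/S}$ is invertible ($Y\to S$ Gorenstein). A section of a coherent sheaf is the same as a map from $\oscr_X$; I want to extend the map $\oscr_V\to\omega_{V/Y}$ over $X\setminus V$. Since $X\setminus V$ has codimension $\geq 2$ in $X$ and $\omega_{X/Y}$ is a CM sheaf (hence has depth $\geq 2$ along every point of $X\setminus V$, as $X$ is CM of the relevant dimension there — more precisely $\mathrm{depth}_{Z}\omega_{X/Y}\geq 2$ for $Z=X\setminus V$), the restriction map $\HH^0(X,\omega_{X/Y})\to\HH^0(V,\omega_{X/Y}|_V)$ is an isomorphism (this is the standard fact that for a coherent sheaf $\mathscr{F}$ with $\mathrm{depth}_Z\mathscr{F}\geq 2$ one has $\mathscr{F}\iso j_\star(\mathscr{F}|_{X\setminus Z})$, $j$ the open immersion, via the local cohomology exact sequence $\HH^0_Z=\HH^1_Z=0$). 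Hence $\Theta_V$ extends uniquely to $\Theta:\oscr_X\to\omega_{X/Y}$. The main obstacle here is checking that the depth hypothesis is met: one needs $\omega_{X/Y}$ to be CM and $X$ to be equidimensional enough near $X\setminus V$ so that codimension-$2$ translates into depth-$\geq 2$; this is exactly where the CM hypotheses (1) and (3) and the pure relative dimension hypothesis (4) are used, via Lemma \ref{lem-CM} and the preceding proposition.

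Finally, for the compatibility statement: suppose $W\subset Y$ is open with $X_W:=X\times_Y W\to W$ finite flat. Over $W$ the morphism $f$ is finite flat, hence in particular CM and embeddable, so $f^!\oscr_W=\omega_{X_W/W}$ with the Grothendieck trace $\mathrm{Tr}:Rf_\star\omega_{X_W/W}=f_\star\omega_{X_W/W}\to\oscr_W$; by general Grothendieck--Serre duality for finite flat morphisms this agrees with the usual trace pairing, and $\omega_{X_W/W}=\mathcal{H}om_{\oscr_W}(f_\star\oscr_{X_W},\oscr_W)$. Both $\Theta$ and the "usual trace" construction are determined by their restriction to the smooth locus $V\cap X_W$ (again by the codimension-$2$/CM argument, the space of maps $\oscr_{X_W}\to\omega_{X_W/W}$ injects into its restriction to the smooth locus), and over a dense open where $f$ is étale the fundamental class $\Theta$ is visibly the canonical "identity" section of $\mathcal{H}om_{\oscr_W}(f_\star\oscr_{X_W},\oscr_W)$ dual to $1$, since the differential of an étale map is an isomorphism. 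Hence $\mathrm{Tr}_\Theta$ restricted to $W$ coincides with the usual finite-flat trace. I expect the extension step to be the crux; the étale identification and the finite-flat comparison are standard once duality theory from \cite{Hartshorne} is in hand.
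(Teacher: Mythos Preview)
Your construction of $\Theta$ --- defining $\det(\mathrm{d}f)$ over the smooth locus $V$ and extending across $X\setminus V$ using that $\omega_{X/Y}$ is CM and the complement has codimension $\geq 2$ --- is exactly what the paper does.

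The gap is in your compatibility argument. You reduce to a ``dense open where $f$ is \'etale'', but such an open need not exist. Take $S=\Spec k$ with $k$ of characteristic $p$, $X=Y=\mathbb{A}^1_k$, and $f$ the Frobenius $x\mapsto x^p$. All hypotheses (1)--(5) hold with $V=X$, $U=Y$, and $W=Y$ works since $f$ is finite flat; yet $f$ is nowhere \'etale. (The proposition is still true here: $\mathrm{d}f=0$ so $\Theta=0$, and the finite-flat trace is also zero since $p=0$ in $k$.) So your identification of $\Theta$ with the trace section cannot be checked on an \'etale locus in general; this is a genuine missing idea, not just a detail.

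The paper handles this differently. After reducing to the case $X,Y$ smooth over $S$ and $f$ finite flat, it observes that $f$ is then lci (factor $X\hookrightarrow X\times_S Y\to Y$: the second map is smooth and the first is a section of a smooth morphism, hence a regular immersion). It identifies the cotangent complex $\mathbb{L}_{X/Y}$ with the two-term complex $[f^\star\Omega^1_{Y/S}\stackrel{\mathrm{d}f}{\to}\Omega^1_{X/S}]$, so that $\omega_{X/Y}=\det\mathbb{L}_{X/Y}$ and $\Theta=\det(\mathrm{d}f)$ sits inside $\Hom(\oscr_X,f^!\oscr_Y)$. The identification of this section with the usual trace is then read off from the residue symbol: the map $f_\star\omega_{X/Y}\to\oscr_Y$ is $\omega\mapsto\mathrm{Res}[\omega;t_1,\dots,t_n]$ for local generators $t_i$ of the ideal of $X$ in $X\times_S Y$, and property (R6) on p.~198 of \cite{Hartshorne} says precisely that $\det(\mathrm{d}f)$ maps to the usual trace. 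This argument is insensitive to separability.
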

 
 \begin{proof} It is enough to specify the fundamental class over $V$
   because it will extend to all of $X$ by Lemma \ref{lem-CM}.  Then
   over $V$, we have a map
   $\mathrm{d} f : f^{\star}\Omega^{1}_{U/S} \rightarrow
   \Omega^{1}_{V/S}$ and we define the fundamental class as the
   determinant
   $\det (\mathrm{d} f) \in \HH^0(V, \det \Omega^{1}_{V/S} \otimes
   (f^\star\det \Omega^{1}_{U/S})^{-1})$.  To prove the second claim,
   we can assume that $X,Y$ are smooth over $S$ and the map
   $X \rightarrow Y$ is finite flat. In this situation,
   $X\rightarrow Y$ is lci.  We claim that the cotangent complex
   $\mathbb{L}_{X/Y}$ is represented by the complex in degree $-1$ and
   $0$ :
   $[f^\star\Omega^1_{Y/S} \stackrel{\mathrm{d} f}\rightarrow
   \Omega^1_{X/S}]$, and that the determinant
   $\mathrm{det} (\mathrm{d}f ) \in \HH^0(X, \omega_{X/Y}) =
   \mathrm{Hom}(\oscr_{X}, f^! \oscr_{Y})$ is the trace map $tr_f$.
   We have a closed embedding $ i : X \hookrightarrow X\times_S Y$ of
   $X$ into the smooth $Y$-scheme $X\times_S Y$.  We have an exact
   sequence:
$$0 \rightarrow \mathcal{I}_Y  \rightarrow \oscr_{Y \times_S Y}
\rightarrow \oscr_Y \rightarrow 0$$ which gives after tensoring with
$\oscr_{X}$ above $\oscr_{Y}$
$$0 \rightarrow \mathcal{I}_X  \rightarrow \oscr_{X \times_S Y}
\rightarrow \oscr_X \rightarrow 0$$ where $\mathcal{I}_{X}$ is the
ideal sheaf of the immersion $i$. It follows that
$\mathcal{I}_X/\mathcal{I}_X^2 = \mathcal{I}_Y/\mathcal{I}_Y^2
\otimes_{\oscr_{Y}} \oscr_X = \Omega^1_{Y/S} \otimes_{\oscr_{Y}}
\oscr_X$.

On the other hand, $i^\star \Omega^{1}_{X \times_S Y/Y} =
\Omega^{1}_{X/S}$.  The cotangent complex is represented by $[
\mathcal{I}_{X}/\mathcal{I}_{X}^2 \rightarrow i^\star \Omega^{1}_{X
  \times_S Y/Y} ]$ which is the same as $[f^\star\Omega^1_{Y/S}
\rightarrow \Omega^1_{X/S}]$.

The morphism
$ f_\star \det \mathbb{L}_{X/Y} = \underline{\mathrm{Hom}} ( f_\star
\oscr_{X}, \oscr_Y) \rightarrow \oscr_Y$ is the residue map which
associates to $\omega \in f_\star \Omega^1_{X/S}$ and to
$(t_1, \cdots, t_n)$ local generators of the ideal $\mathcal{I}_X$
over $Y$ the function $\mathrm{Res} [\omega, t_1,...,t_n]$. It follows
from \cite[Property (R6), p.~198]{Hartshorne} that the determinant
of $[f^\star\Omega^1_{Y/S} \rightarrow \Omega^1_{X/S}]$ maps to the
usual trace map.
\end{proof}

\subsubsection{Fundamental class and divisors} We need a slight variant of the last proposition, where we also have some divisors.
\begin{proposition} Let $X, Y$ be two embeddable $S$-schemes and let
  $f : X \rightarrow Y$ be an embeddable morphism.  Let
  $D_X \hookrightarrow X$ and $D_Y \hookrightarrow Y$ be relative
  (with respect to $S$) effective Cartier divisors.

  Assume that:
\begin{enumerate}
\item   $X \rightarrow S$ is $CM$, 
\item $Y \rightarrow S$ is Gorenstein, 
\item $S$ is $CM$, 
\item   $X$ and $Y$ have the  same pure relative dimension over $S$,
\item there are open sets $V \subset X$, $U \subset Y$ such that $f(V) \subset U$,
  $U$ and $V$ are smooth over $S$ and $X \backslash V$ is of codimension $2$ in $X$,
\item the morphism $f$ restricts to a surjective map from $D_X$ to $D_Y$,
\item $D_X \cap V$ and $D_Y \cap U$ are relative normal crossings divisors.
 \end{enumerate}
 The fundamental class $\Theta : \oscr_X \rightarrow f^! \oscr_Y$
 constructed in Proposition \ref{prop-trace} induces a map:
 $$\oscr_X(-D_X) \rightarrow f^!  \oscr_Y(-D_Y).$$
   \end{proposition}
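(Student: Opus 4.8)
The plan is to build the map over the smooth locus $V$, where the fundamental class is literally $\det(\mathrm{d}f)$, and then extend it across the codimension $\geq 2$ complement $X\setminus V$.

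First I would reformulate the target. Since $\oscr_Y(-D_Y)$ is invertible it lies in $\mathbf{D}^{b}_{qcoh}(\oscr_Y)_{fTd}$, so Proposition~\ref{prop-finite-tor-dim} (applied with $\mathscr{F}=\oscr_Y$) gives a canonical identification
$$f^!\left(\oscr_Y(-D_Y)\right)\;=\;\omega_{X/Y}\otimes f^\star\oscr_Y(-D_Y),$$
where $\omega_{X/Y}=f^!\oscr_Y$ (no higher $\mathrm{Tor}$ occurs because we tensor a coherent sheaf with an invertible one). Twisting by $\oscr_X(D_X)$, a morphism $\oscr_X(-D_X)\to f^!(\oscr_Y(-D_Y))$ is the same datum as a global section of the coherent sheaf
$$\mathcal{E}\;:=\;\omega_{X/Y}\otimes f^\star\oscr_Y(-D_Y)\otimes\oscr_X(D_X).$$
Since $X\to S$ is CM of pure relative dimension $n$ (hypotheses (1),(4)) and $S$ is CM (hypothesis (3)), $\omega_{X/S}$ is a CM sheaf of full support by Lemma~\ref{lem-CM}; as $Y\to S$ is Gorenstein, $\omega_{X/Y}$ is an invertible twist of $\omega_{X/S}$, hence $\mathcal{E}$ is a CM sheaf of full support and in particular satisfies Serre's condition $S_2$. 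Therefore, by hypothesis (5) (the complement $X\setminus V$ has codimension $\geq 2$), restriction $\HH^0(X,\mathcal{E})\to\HH^0(V,\mathcal{E})$ is bijective, and it suffices to construct the section over $V$.

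Over $V$ the morphism $f$ restricts to a morphism $V\to U$ of smooth $S$-schemes of the same pure relative dimension $n$, and by the construction in Proposition~\ref{prop-trace} one has $\omega_{X/Y}|_V=\det\Omega^1_{V/S}\otimes(f^\star\det\Omega^1_{U/S})^{-1}$, the section of it corresponding to $\Theta$ being $\det(\mathrm{d}f)$, for $\mathrm{d}f\colon f^\star\Omega^1_{U/S}\to\Omega^1_{V/S}$. The crucial point is that, because $f$ carries $D_X\cap V$ into $D_Y\cap U$ (hypothesis (6)) while both are relative normal crossings divisors in smooth $S$-schemes (hypothesis (7)), the differential $\mathrm{d}f$ extends to a map of logarithmic differentials $f^\star\Omega^1_{U/S}(\log(D_Y\cap U))\to\Omega^1_{V/S}(\log(D_X\cap V))$: locally, the log generator $\mathrm{d}t/t$ attached to a branch of $D_Y$ is carried to $\mathrm{d}(f^\star t)/(f^\star t)$, and since $f^\star t$ is (a unit times) a product of local equations of branches of $D_X$, this has at most logarithmic poles along $D_X\cap V$, while regular generators go to regular forms. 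Taking top exterior powers and using $\det\Omega^1_{Z/S}(\log D)=\det\Omega^1_{Z/S}\otimes\oscr_Z(D)$ for a relative normal crossings divisor $D$ in a smooth $S$-scheme $Z$, the determinant of this map is a morphism
$$f^\star\det\Omega^1_{U/S}\otimes f^\star\oscr_U(D_Y)\ \longrightarrow\ \det\Omega^1_{V/S}\otimes\oscr_V(D_X),$$
and tensoring by $(f^\star\det\Omega^1_{U/S})^{-1}\otimes f^\star\oscr_U(-D_Y)\otimes\oscr_V(-D_X)$ turns it into a morphism
$$\oscr_X(-D_X)|_V\ \longrightarrow\ \det\Omega^1_{V/S}\otimes(f^\star\det\Omega^1_{U/S})^{-1}\otimes f^\star\oscr_U(-D_Y)\ =\ f^!\left(\oscr_Y(-D_Y)\right)|_V,$$
i.e.\ a section of $\mathcal{E}|_V$ which, unwinding the identifications, is $\det(\mathrm{d}f)=\Theta$ twisted along the two divisors. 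By the bijection established above it extends uniquely to a global section of $\mathcal{E}$, that is, to the sought morphism $\oscr_X(-D_X)\to f^!(\oscr_Y(-D_Y))$; since it restricts over $V$ to the twist of $\Theta$, it is the map induced by the fundamental class.

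The step I expect to be the main obstacle is the local computation underlying the logarithmic extension of $\mathrm{d}f$: one has to check that along every prime divisor $P$ of $V$ the vanishing orders of $\det(\mathrm{d}f)$ and of a local equation of $D_X$ together absorb the order of vanishing of $f^\star D_Y$ --- equivalently, that $f$ defines a morphism of log pairs $(V,D_X\cap V)\to(U,D_Y\cap U)$, so that $f^{-1}(D_Y)\cap V$ is supported on $D_X\cap V$. This is exactly where hypotheses (6) and (7) are used, together with (in the intended applications to Hecke correspondences) the compatibility of the toroidal boundaries on the correspondence and on the Shimura variety, which makes the boundary of the correspondence the preimage of the boundary below; the extra ramification contributed by components of $D_X$ that $f$ contracts then has to be controlled by the explicit normal crossings computation.
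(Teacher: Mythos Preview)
Your proof is correct and follows essentially the same route as the paper: reduce to the smooth locus $V$ via the CM/$S_2$ extension property, construct the map there as the determinant of the logarithmic differential $f^\star\Omega^1_{U/S}(\log D_Y)\to\Omega^1_{V/S}(\log D_X)$, and unwind. The paper's proof is terser (it compresses the extension step into ``we may assume $X$ and $Y$ are smooth'') and simply asserts the existence of the log differential map, so the obstacle you flag about $f^{-1}(D_Y)\cap V\subset D_X\cap V$ is a point the paper does not address.
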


\begin{demo} We may assume that $X$ and $Y$ are smooth, $D_X$ and
  $D_Y$ are relative normal crossing divisors. In that case, we have a
  well defined differential map $ \mathrm{df} : f^\star
  \Omega^1_{Y/S}(\log D_Y) \rightarrow \Omega^1_{X/S}( \log
  D_X)$. Taking the determinant yields $\det \mathrm{df} : f^\star
  \det \Omega^1_{Y/S}(D_Y) \rightarrow \det \Omega^1_{X/S}(D_X)$ or
  equivalently $\det \mathrm{df} : \oscr_{X}(-D_X) \rightarrow
  f^!\oscr_{Y}(-D_Y)$.  
\end{demo}

\subsection{Cohomological correspondences} 
Let $X$, $Y$ be two $S$-schemes.

\begin{defi} A correspondence $C$ over $X$ and $Y$ is a diagram of
  $S$-morphisms :
\begin{eqnarray*}
\xymatrix{ & C \ar[rd]^{p_1}  \ar[ld]_{p_2} & \\
X & & Y }
\end{eqnarray*}
where $X$, $Y$, $C$ have the same pure relative dimension over $S$ and
the morphisms $p_1$ and $p_2$ are projectively embeddable.
\end{defi}

\begin{rem}  In practice, the maps $p_1$, $p_2$ will often be surjective and
  generically finite.
\end{rem}

Let $\mathscr{F} \in \mathbf{D}^{-}_{qcoh}(\oscr_X)$ and
$\mathscr{G} \in \mathbf{D}^+_{qcoh}(\oscr_Y)$.

\begin{defi} A cohomological correspondence from $\mathscr{F}$ to
  $\mathscr{G}$ is the data of a correspondence $C$ over $X$ and $Y$
  and a map $ T : R(p_1)_\star Lp_2^\star \mathscr{F}
  \rightarrow \mathscr{G}$.
\end{defi}

The map $T$ can be seen, by \eqref{eq:duality}, as a map
$Lp_2^\star \mathscr{F} \rightarrow p_1^! \mathscr{G}$.
Note that if $\mathscr{F}$ and $\mathscr{G}$ are coherent sheaves (in
degree $0$), then $Lp_2^\star \mathscr{F}$ is concentrated in degrees
$\leq 0$ and $p_1^!\mathscr{G}$ is concentrated in degrees $\geq 0$
(as follows from the construction in \cite[III, Theorem 8.7]{Hartshorne}), so
any map $L p_2^\star \mathscr{F} \rightarrow p_1^! \mathscr{G}$ factors
uniquely through the natural map 
$L p_2^*\mathscr{F} \to p_2^* \mathscr{F}$.
It gives rise to a map, still denoted by $T$, on cohomology:
$$  \mathrm{R} \Gamma(X, \mathscr{F}) \stackrel{Lp_2^\star}\rightarrow \mathrm{R}\Gamma(C, p_2^\star \mathscr{F}) = \mathrm{R}\Gamma(Y, \mathrm{R}(p_1)_\star p_2^\star \mathscr{F}) \stackrel{T}\rightarrow \mathrm{R}\Gamma( Y, \mathscr{G}).$$

\begin{ex} \label{ex:corres} Let $C$ be a correspondence over $X$ and
  $Y$. We assume that the map $p_1 : C \rightarrow Y$ satisfies the
  assumptions of Proposition \ref{prop-trace}, so there is a
  fundamental class $\oscr_C \rightarrow p_1^!\oscr_Y$.  Let
  $\mathscr{F}$ and $\mathscr{G}$ be locally free sheaves of finite
  rank over $X$ and $Y$. We assume that there is a map
  $p_2^\star \mathscr{F} \rightarrow p_1^\star \mathscr{G}$.  Twisting
  the fundamental class by $p_1^\star \mathscr{G}$, we get a morphism
  $ p_1^\star \mathscr{G} \rightarrow p_1^! \oscr_{Y} \otimes
  p_1^\star \mathscr{G}$ and using the isomorphism of Proposition
  \ref{prop-finite-tor-dim}, we get a map
  $ p_1^\star \mathscr{G} \rightarrow p_1^!  \mathscr{G}$.  Composing
  everything we obtain
  $T : p_2^\star \mathscr{F} \rightarrow p_1^! \mathscr{G}.$
\end{ex}

\section{The Satake isomorphism for unramified reductive groups}\label{section-Satake}

\subsection{The dual group}\label{sec-dual-group}  
Let $K$ be a nonarchimedean local field, $\ocal_K$ its ring of
integers, and $\pi \in \ocal_K$ a uniformizing element. Let $q$ be the
cardinality of the residue field $\ocal_K/(\pi)$.  Let $G$ be a
reductive group over $\Spec~\ocal_K$. We assume that $G$ is
quasi-split and splits over an unramified extension of $K$, and fix a
Borel subgroup $B$ and a maximal torus $T \subset B$. We denote by
$X_\star (T)$ and $X^\star(T)$ the groups of cocharacters and
characters of $T$. These groups carry an action of
$\Gamma = \hat{\ZZ}$, the unramified Galois group. We denote by
$T_d \subset T$ the maximal split torus inside $T$. Its cocharacter
group is $X_\star(T)^\Gamma$ and we have
$X^{\star}(T_d)_{\R} = (X^{\star}(T)_{\R})_{\Gamma}$.

We now introduce the Langlands dual of $G$, following \cite[Chapter
I]{MR546608}.  Let $\hat{G}$ be the dual group of $G$, defined over
$\ov{\qq}$, and let $\hat{T}$ be a maximal torus in $\hat{G}$ (so that
$X_\star (\hat{T}) = X^\star(T)$). We denote by $\Phi^+$ the set of
positive roots for $B$ in $X^\star(T)$ and by
$\hat{\Phi}^+ \subset X_{\star}(T) = X^{\star}(\hat{T})$ the
corresponding set of positive coroots. This is the set of positive
roots for a Borel subgroup $\hat{B}$ of $\hat{G}$ which contains
$\hat{T}$.

We let $W$ be the Weyl group of $G$ and $\hat{G}$ and we denote by
$w_0$ the longest element in $W$ (which maps the Borel $B$ to the
opposite Borel). We let $W_d$ be the subgroup of $W$ which stabilizes
$T_d$; this is also the fixed point set of the action of $\Gamma$ on
$W$ \cite[\S 6.1]{MR546608}.

The set of dominant weights for $G$ is denoted by $P^+$. This is the
cone in $X^\star(T)$ of characters $\lambda$ such that
$\langle \lambda, \alpha \rangle \geq 0$ for all
$\alpha \in \hat{ \Phi}^+$.  We let $P^+_\R \subset X^\star(T)_{\R}$
be the positive Weyl chamber which is the $\mathbb{R}_{\geq 0}$-linear
span of $P^+$. The cone $P^+_\R$ is a fundamental domain for the
action of $W$ on $X^\star(T) \otimes \R$.  We have similar notations
for $\hat{G}$. We denote by $\rho$ the half sum of elements of
$\Phi^+$.

We choose a pinning to define an action of $\Gamma$ on $\hat{G}$,
preserving $\hat{B}$ and $\hat{T}$, and we then let
${}^LG = \hat{G} \rtimes \Gamma$.

\subsection{Hecke algebras}\label{section-Hecke-algebras}

Whenever we have a unimodular locally compact group $\mathcal{G}$ and
a compact open subgroup $\mathcal{K}$, we denote by
$\mathcal{H}(\mathcal{G}, \mathcal{K})$ the algebra of compactly
supported left and right $\mathcal{K}$-invariant functions from
$\mathcal{G}$ to $\ZZ$. The product in
$\mathcal{H}(\mathcal{G}, \mathcal{K})$ is the convolution product
(the Haar measure on $\mathcal{G}$ is normalized by
$vol(\mathcal{K}) = 1$, so $\mathbf{1}_{\mathcal{K}}$ is the unit in
$\mathcal{H}(\mathcal{G}, \mathcal{K})$).

We have an isomorphism of algebras
$\mathcal{H}(T_d(K), T_d(\ocal_K))= \ZZ[X_\star(T_d)]$. Moreover, the
restriction map
$\mathcal{H}(T(K), T(\ocal_K)) \rightarrow \mathcal{H}(T_d(K),
T_d(\ocal_K))$ is an isomorphism of algebras because $T(K)/T(\ocal_K)
= T_d(K)/T_d(\ocal_K)$  by \cite[\S 9.5]{MR546608}.

Since $G$ is quasi-split, the centralizer of $T_d$ in $G$ is $T$, so
$W_d$ is equal to $N(T_d)/T$. We have a Levi decomposition $B= T N$.
Let $\delta$ be the modulus character for $B$.

We now study $ \mathcal{H} :=\mathcal{H}(G(K), G(\ocal_K))$, the
spherical Hecke algebra for $G$.  The characteristic functions
$T_\lambda = \mathbf{1}_{G(\ocal_K) \lambda(\pi) G(\ocal_K)}$ for
$\lambda \in (\hat{P}^+)^\Gamma$ form a $\ZZ$-basis of $\mathcal{H}$
by the Cartan decomposition.

We define the Satake transform: 
\begin{eqnarray*}
  \mathcal{S} :  \mathcal{H} \otimes \ZZ[q^{\frac{1}{2}}, q^{-\frac{1}{2}}] &\rightarrow&  \mathcal{H}(T(K), T(\ocal_K)) \otimes \ZZ[q^{\frac{1}{2}}, q^{-\frac{1}{2}}] \\
  f & \mapsto & \delta(t)^{\frac{1}{2}} \int_{N(K)} f(tn) dn 
\end{eqnarray*}
where the measure on $N(K)$ is normalized so that
$N(K) \cap G(\ocal_K)$ has measure $1$.  It induces an isomorphism
\cite[ Theorem 4.1]{cartier}:
 $$\mathcal{S} :  \mathcal{H} \otimes \ZZ[q^{\frac{1}{2}}, q^{-\frac{1}{2}}] \simeq  \ZZ[q^{\frac{1}{2}}, q^{-\frac{1}{2}}][X_\star(T_d)]^{W_d}.$$
 
 For any $\lambda \in (\hat{P}^+)^\Gamma$ let $V_\lambda$ be the
 irreducible representation of $\hat{G}$ with highest weight
 $\lambda$. Since $\Gamma$ preserves a pinning, the action of
 $\hat{G}$ on $V_{\lambda}$ extends to an action of ${}^LG$. This
 extension is determined uniquely by the condition that $\Gamma$
 preserves a highest weight vector and any other extension differs
 from it by tensoring with a character of $\Gamma$.  We continue to
 denote this extended representation by $V_{\lambda}$.  If we consider (as we may)
 $\hat{G}$ and ${}^LG$ as being defined over $\qq$, then this
 extension of $V_{\lambda}$ is also defined over $\qq$.

 Let $[V_\lambda]$ be the the trace of the representation $V_\lambda$
 on $\hat{T} \rtimes \sigma$, where $\sigma$ is the generator of
 $\Gamma$ given by Frobenius. For $\lambda \in (\hat{P}^+)^{\Gamma}$,
 let $[\lambda]$ denote the formal sum of the elements in the
 $W_d$-orbit of $\lambda$, viewed as an element of
 $\ZZ[X_\star(T_d)]^{W_d}$.
 
 \begin{lem} \label{lem:trace}

   The trace $[V_\lambda]$ belongs to
   $\ZZ[X_\star(T_d)]^{W_d}$. Moreover,
 
   $[V_\lambda] = [\lambda] + \sum_{\mu < \lambda} a_\mu[\mu]$ with
   $\mu \in (\hat{P}^+)^\Gamma$ and
   $a_\mu \in \ZZ.$\footnote{It can be shown that $a_{\mu} \geq 0$ but
     we do not need this.}
 \end{lem}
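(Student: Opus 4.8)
The plan is to analyze $[V_\lambda]$ via the weight decomposition of the ${}^LG$-representation $V_\lambda$, descending from $X^\star(\hat T) = X_\star(T)$ to its $\Gamma$-coinvariants, which by the identifications recalled in \S\ref{sec-dual-group} is exactly $X_\star(T_d)$ (rationally; integrally one must be slightly careful, see below). First I would observe that since the extended action of ${}^LG = \hat G \rtimes \Gamma$ on $V_\lambda$ is defined over $\qq$ and $\Gamma$ is (pro)cyclic generated by $\sigma$, the trace of $\hat t \rtimes \sigma$ on $V_\lambda$, as a function of $\hat t \in \hat T$, is a $\ZZ$-linear combination of characters of $\hat T$. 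Concretely, decompose $V_\lambda = \bigoplus_{\nu} V_\lambda[\nu]$ into $\hat T$-weight spaces; the operator $\sigma$ permutes the weight spaces according to the $\Gamma$-action on $X^\star(\hat T) = X_\star(T)$, sending $V_\lambda[\nu]$ to $V_\lambda[\sigma\nu]$. Hence only weights $\nu$ fixed by $\sigma$ (i.e.\ by $\Gamma$) contribute to the trace of $\hat t \rtimes \sigma$, and the contribution of the $\sigma$-fixed part of each $V_\lambda[\nu]$ is $\nu(\hat t)$ times the trace of $\sigma$ on that space. Since $\sigma$ has finite order on the relevant finite-dimensional spaces and the whole setup is defined over $\qq$, these traces are rational integers, and $\nu \in X_\star(T)^\Gamma = X_\star(T_d)$. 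This already shows $[V_\lambda] \in \ZZ[X_\star(T_d)]$.

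Next I would upgrade "$\in \ZZ[X_\star(T_d)]$" to "$\in \ZZ[X_\star(T_d)]^{W_d}$". The key point is that $W_d = N(T_d)/T$ acts on $V_\lambda$ via representatives that can be chosen in the fixed subgroup $({}^LG)$ appropriately — more precisely, $W_d$ is the $\Gamma$-fixed subgroup of $W$, and for $w \in W_d$ one can lift $w$ to an element $n_w \in \hat G$ normalizing $\hat T$ and commuting with the $\Gamma$-action up to $\hat T$ (this is where the pinning, preserved by $\Gamma$, is used: Tits-style representatives of $W_d$ lie in the $\sigma$-fixed group). Conjugating $\hat t \rtimes \sigma$ by such $n_w$ sends it to $(w\hat t)\rtimes \sigma$ up to an inner twist that does not change the trace, so $[V_\lambda](w\hat t \rtimes \sigma) = [V_\lambda](\hat t \rtimes \sigma)$, giving $W_d$-invariance. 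Alternatively, and perhaps more cleanly, one notes that $\sigma$-fixed weights come in $W_d$-orbits with equal multiplicities-weighted-by-$\sigma$-trace, because $\sigma$ and $W_d$ commute in $W$; I would phrase the argument in whichever of these two ways is shortest.

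For the second assertion, the leading-term statement, I would argue by the standard highest-weight combinatorics. The highest weight $\lambda \in (\hat P^+)^\Gamma$ is $\sigma$-fixed, occurs in $V_\lambda$ with multiplicity one, and the line $V_\lambda[\lambda]$ is preserved by $\Gamma$ with $\sigma$ acting by $1$ (this is precisely the normalization condition fixing the extension of $V_\lambda$ to ${}^LG$: $\Gamma$ preserves a highest weight vector). Hence $\lambda$ contributes $[\lambda]$, i.e.\ the full $W_d$-orbit sum, with coefficient exactly $1$. Every other $\sigma$-fixed weight $\nu$ with nonzero contribution satisfies $\nu \leq \lambda$ in the dominance order on $X_\star(T)$ (all weights of an irreducible of highest weight $\lambda$ do), hence its $W_d$-representative in $(\hat P^+)^\Gamma$ — which I'll call $\mu$ — satisfies $\mu < \lambda$ unless $\mu = \lambda$; grouping all contributing weights into $W_d$-orbits and using invariance from the previous step, $[V_\lambda] = [\lambda] + \sum_{\mu < \lambda, \mu \in (\hat P^+)^\Gamma} a_\mu [\mu]$ with $a_\mu \in \ZZ$. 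One small technical check: that the dominance order on $\hat T$-weights descends correctly to the order on $(\hat P^+)^\Gamma$ used in the statement, i.e.\ that a $\sigma$-fixed weight $\leq \lambda$ is $W_d$-conjugate to a dominant $\sigma$-fixed weight $\leq \lambda$ — this follows since $\lambda - \nu$ is a nonnegative combination of positive coroots and the $\Gamma$- and $W_d$-actions are compatible.

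The main obstacle I anticipate is not any single deep fact but the bookkeeping in the passage from $\hat T$-weights to $X_\star(T_d)$: ensuring integrality of the $\sigma$-traces on the individual (possibly $\sigma$-permuted) weight spaces, and ensuring that the $W_d$-orbit structure on $(\hat P^+)^\Gamma$ genuinely matches the orbit structure of $\sigma$-fixed $\hat T$-weights. The cleanest route is probably to realize everything over $\qq$ (as the excerpt notes one may), use that $\mathrm{tr}(\sigma \mid V_\lambda[\nu] \oplus V_\lambda[\sigma\nu] \oplus \cdots)$ for a $\sigma$-orbit of weights equals $\mathrm{tr}$ of $\sigma^{(\text{orbit length})}$ on a single space only when the orbit is a fixed point — so non-fixed orbits contribute $0$ to $[V_\lambda]$ automatically — and then invoke that a rational representation of the finite cyclic group $\langle \sigma \rangle$ (on the $\sigma$-fixed-weight spaces) has integer character. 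I would not belabor the positivity footnote ($a_\mu \geq 0$), which the statement explicitly disclaims.
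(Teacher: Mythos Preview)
Your proposal is correct and follows essentially the same approach as the paper: decompose into $\hat T$-weight spaces, observe that $\sigma$ permutes them so only $\Gamma$-fixed weights contribute, use that $\sigma$ has finite order on $\qq$-rational spaces to get integrality, and invoke the one-dimensionality of the highest weight space for the leading term. The paper's argument for $W_d$-invariance is your second alternative (weight spaces are permuted by $W$ and $\sigma$ commutes with $W_d$), stated in a single sentence; your first alternative via Tits representatives is also valid but unnecessary here.
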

 
 \begin{proof}
   We have a decomposition
   $V_\lambda = \oplus_{\mu \in X_\star(T)} V_\lambda^{\mu}$, where
   $V_{\lambda}^\mu$ is the weight $\mu$ eigenspace of $V_{\lambda}$
   for the action of $\hat{T}$. The action of $\sigma$ permutes the
   spaces $V_\lambda^{\mu}$, so if $\mu$ is not fixed by $\sigma$ then
   the trace of any element $t\rtimes \sigma$ restricted to
   $\sum_{\mu' \in O_{\Gamma}(\mu)} V_{\lambda}^{\mu'}$ is zero. Here
   $O_{\Gamma}(\mu)$ denotes the $\Gamma$-orbit of $\mu$. It follows
   that $[V_\lambda] \in \ov{\qq}[X_\star(T_d)]$. Since the weight
   spaces are permuted by $W$ and $\sigma$ acts trivially on $W_d$,
   $[V_\lambda] \in \ov{\qq}[X_\star(T_d)]^{W_d}$.

   As noted earlier, we may consider $\hat{G}$, ${}^LG$ and
   $V_{\lambda}$ as being defined over $\qq$.  For each
   $\mu \in X_\star(T_d)$, the eigenspace $V_\lambda^{\mu}$ is defined
   over $\qq$ and $\sigma$ acts on this space as a finite order
   operator, so its trace must be in $\ZZ$.

   Finally, since the highest weight space is one dimensional, we have
   $[V_\lambda] = [\lambda] + \sum_{\mu < \lambda} a_\mu [\mu]$
   $\mu \in (\hat{P}^+)^{\Gamma}$ with $a_\mu \in \ZZ$.
\end{proof}

\begin{coro}\label{cor:basis}%
  The elements $[V_\lambda]_{\lambda \in (\hat{P}^+)^\Gamma}$ provide
  a $\ZZ$-basis of $\ZZ[X_\star(T_d)]^{W_d}$.
\end{coro}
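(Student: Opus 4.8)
The plan is to deduce this directly from Lemma \ref{lem:trace} together with the fact that the orbit sums $[\mu]$, as $\mu$ ranges over $(\hat P^+)^\Gamma$, already form a $\ZZ$-basis of $\ZZ[X_\star(T_d)]^{W_d}$. First I would establish this last fact: the monoid $X_\star(T_d)$ maps to $X_\star(T)$, and by the identification $X_\star(T_d) = X_\star(T)^\Gamma$ together with $X^\star(T_d)_\R = (X^\star(T)_\R)_\Gamma$, the action of $W_d$ on $X_\star(T_d)$ is the restriction of the $W$-action, and $P^+_\R \cap X_\star(T_d)_\R$ (equivalently the intersection with the dominant cone $\hat P^+_\R$) is a fundamental domain for $W_d$ acting on $X_\star(T_d)_\R$. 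Hence each $W_d$-orbit in $X_\star(T_d)$ meets $(\hat P^+)^\Gamma$ in exactly one point, so the orbit sums $\{[\mu] : \mu \in (\hat P^+)^\Gamma\}$ are precisely the indicator-type basis of the free $\ZZ$-module $\ZZ[X_\star(T_d)]^{W_d}$ of $W_d$-invariant functions with finite support. (This is the standard fact that invariants of a group acting on a free module with finitely many orbits on the basis are free on the orbit sums.)

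Next I would invoke Lemma \ref{lem:trace}, which gives, for each $\lambda \in (\hat P^+)^\Gamma$,
\[
[V_\lambda] = [\lambda] + \sum_{\mu < \lambda} a_\mu [\mu], \qquad a_\mu \in \ZZ,
\]
the sum being over $\mu \in (\hat P^+)^\Gamma$ with $\mu < \lambda$ in the dominance order. This says that the change-of-basis matrix from $\{[V_\lambda]\}$ to $\{[\mu]\}$ is "unitriangular" with respect to the dominance partial order: it has $1$'s on the diagonal and is supported on pairs $(\lambda,\mu)$ with $\mu \le \lambda$. Since the dominance order restricted to the set of weights $\preceq \lambda$ is finite (only finitely many dominant weights $\mu$ satisfy $\mu \le \lambda$), one can order the relevant finite subsets compatibly with $<$ and invert this matrix over $\ZZ$: the inverse is again integral and unitriangular, expressing each $[\lambda]$ as a $\ZZ$-combination of the $[V_\mu]$ with $\mu \le \lambda$. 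Therefore $\{[V_\lambda]\}$ and $\{[\lambda]\}$ span the same $\ZZ$-submodule and are simultaneously $\ZZ$-bases.

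I expect the only genuinely substantive point to be the fundamental-domain statement for $W_d$ acting on $X_\star(T_d)$ — i.e., that $(\hat P^+)^\Gamma$ is a complete set of orbit representatives — which requires knowing that $W_d$ is exactly the subgroup of $W$ fixing $T_d$ (cited from \cite[\S 6.1]{MR546608}) and that intersecting the $W$-fundamental domain $P^+_\R$ with the fixed subspace $X_\star(T_d)_\R$ gives a $W_d$-fundamental domain there; this is a general fact about a reflection group acting on a subspace fixed by a subgroup, but it is the place where the unramified/quasi-split structure is really used. The unitriangularity argument and the passage from "unitriangular over a finite poset" to "invertible over $\ZZ$" are routine once that is in hand. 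One should also note finiteness: for fixed $\lambda$ there are only finitely many dominant $\mu \le \lambda$, so all sums and matrices involved are finite, and no completion or convergence issue arises.
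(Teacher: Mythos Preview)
Your proposal is correct and follows essentially the same approach as the paper: the paper's proof is a one-liner invoking Lemma \ref{lem:trace} together with the fact that $\{[\lambda]\}_{\lambda \in (\hat P^+)^\Gamma}$ is a $\ZZ$-basis of $\ZZ[X_\star(T_d)]^{W_d}$, and you have simply spelled out both ingredients (the orbit-sum basis and the unitriangularity/inversion argument) in more detail than the paper does.
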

\begin{proof}
  This follows immediately from Lemma \ref{lem:trace} and the fact
  that the set $\{[\lambda]\}_{\lambda \in (\hat{P}^+)^{\Gamma}}$ is a
  $\ZZ$-basis of $\ZZ[X_\star(T_d)]^{W_d}$.
\end{proof}

 We may now relate the two bases
 $\{[V_\lambda]\}_{\lambda \in (\hat{P}^+)^\Gamma}$ and
 $\{T_\lambda\}_{\lambda \in (\hat{P}^+)^\Gamma}$ of the unramified
 Hecke algebra.
 
 \begin{proposition}
 We have the following properties:
 \begin{enumerate} 
\item $\mathcal{S}(T_\lambda) = q^{\langle \lambda, \rho \rangle} [V_\lambda] + \sum_{ \mu < \lambda} b_{\lambda}(\mu) (q^{\langle \mu, \rho \rangle} [V_\mu] )$ for integers $b_{\lambda}(\mu)$,
\item $q^{\langle \lambda, \rho \rangle} [V_\lambda] = \mathcal{S}(T_\lambda) + \sum_{\mu < \lambda} d_{\lambda}(\mu) \mathcal{S}(T_\mu)$ for integers $d_{\lambda}(\mu)$. 
\end{enumerate}
\end{proposition}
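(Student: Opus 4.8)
The plan is to expand $\mathcal{S}(T_\lambda)$ in the monomial basis $\{[\mu]\}$, convert to the basis $\{[V_\mu]\}$ using Lemma~\ref{lem:trace} while keeping track of the powers of $q$, and then deduce (2) from (1) by inverting a triangular integral matrix. Part (2) is in this sense equivalent to part (1).

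First I would record the shape of $\mathcal{S}(T_\lambda)$ in the monomial basis. Since $\mathcal{S}(T_\lambda)$ lies in $\ZZ[q^{1/2},q^{-1/2}][X_\star(T_d)]^{W_d}$, its coefficient on $[\mu]$ (for $\mu$ dominant) is the value $\mathcal{S}(T_\lambda)(\mu(\pi)) = \delta(\mu(\pi))^{1/2}\int_{N(K)} T_\lambda(\mu(\pi)n)\,dn$. Substituting $n\mapsto \mu(\pi)^{-1}n\mu(\pi)$, which rescales the Haar measure on $N(K)$ by the modulus character, this equals $q^{\langle\mu,\rho\rangle}\,M_{\lambda,\mu}$ with $M_{\lambda,\mu}:=\mathrm{vol}\{n\in N(K): n\mu(\pi)\in G(\ocal_K)\lambda(\pi)G(\ocal_K)\}$; this set is a union of cosets $N(\ocal_K)n$ (as $N(\ocal_K)\subset G(\ocal_K)$), so $M_{\lambda,\mu}\in\ZZ_{\ge 0}$. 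The classical structure of the spherical Hecke algebra (\cite{cartier}) gives the rest: the sum is finite, $M_{\lambda,\mu}=0$ unless $\mu\le\lambda$, and $M_{\lambda,\lambda}=1$. Hence
\[
  \mathcal{S}(T_\lambda)=q^{\langle\lambda,\rho\rangle}[\lambda]+\sum_{\mu<\lambda}q^{\langle\mu,\rho\rangle}M_{\lambda,\mu}\,[\mu],\qquad M_{\lambda,\mu}\in\ZZ_{\ge 0}.
\]

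Next I would change to the basis $\{[V_\mu]\}$. Inverting the relation of Lemma~\ref{lem:trace} gives $[\mu]=[V_\mu]+\sum_{\nu<\mu}e_{\mu,\nu}[V_\nu]$ with $e_{\mu,\nu}\in\ZZ$, so the coefficient of $[V_\nu]$ in $\mathcal{S}(T_\lambda)$ is
\[
  q^{\langle\nu,\rho\rangle}M_{\lambda,\nu}+\sum_{\nu<\mu\le\lambda}q^{\langle\mu,\rho\rangle}M_{\lambda,\mu}\,e_{\mu,\nu}
  = q^{\langle\nu,\rho\rangle}\Big(M_{\lambda,\nu}+\sum_{\nu<\mu\le\lambda}q^{\langle\mu-\nu,\rho\rangle}M_{\lambda,\mu}\,e_{\mu,\nu}\Big).
\]
For $\nu<\mu$ in $(\hat P^+)^\Gamma$ the difference $\mu-\nu$ is a non-negative integral combination of positive coroots of $G$, so $\langle\mu-\nu,\rho\rangle$ is a non-negative integer and $q^{\langle\mu-\nu,\rho\rangle}$ is a positive integer; therefore the bracketed quantity is an integer, which I call $b_\lambda(\nu)$. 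Together with $M_{\lambda,\lambda}=1$ and $M_{\lambda,\mu}=0$ for $\mu\not\le\lambda$, this is precisely assertion (1): $\mathcal{S}(T_\lambda)=q^{\langle\lambda,\rho\rangle}[V_\lambda]+\sum_{\mu<\lambda}b_\lambda(\mu)\,q^{\langle\mu,\rho\rangle}[V_\mu]$ with $b_\lambda(\mu)\in\ZZ$.

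For assertion (2), the matrix $\big(b_\lambda(\mu)\big)_{\lambda,\mu\in(\hat P^+)^\Gamma}$ is unipotent and triangular for the partial order $\le$, which is locally finite on $(\hat P^+)^\Gamma$ (each down-set $\{\mu:\mu\le\lambda\}$ is finite); hence it is invertible over $\ZZ$, and its inverse $\big(d_\lambda(\mu)\big)$ is again unipotent, triangular and integral, which gives $q^{\langle\lambda,\rho\rangle}[V_\lambda]=\sum_{\mu\le\lambda}d_\lambda(\mu)\,\mathcal{S}(T_\mu)$ with $d_\lambda(\mu)\in\ZZ$. The one point that needs care is the first step: one must align the conventions of \S\ref{section-Hecke-algebras} so that absorbing the modulus character into the integral produces exactly the factor $q^{\langle\mu,\rho\rangle}$ (with non-negative exponent on dominant $\mu$), and invoke the standard facts that the Satake transform is supported on $\{\mu\le\lambda\}$ with leading coefficient exactly $q^{\langle\lambda,\rho\rangle}$. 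Granting those, the rest is bookkeeping with the dominance order; in particular no finer information about the Satake transform (such as the theory of $q$-analogues of weight multiplicities) is needed.
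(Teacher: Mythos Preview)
Your proof is correct and follows essentially the same route as the paper: expand $\mathcal{S}(T_\lambda)$ in the monomial basis $\{[\mu]\}$ with coefficients $q^{\langle\mu,\rho\rangle}\cdot(\text{integer})$, triangular with leading coefficient $1$ (the paper cites Satake and Bruhat--Tits for this, you cite Cartier), then convert to the $\{[V_\mu]\}$ basis via Lemma~\ref{lem:trace} and invert for part~(2). You make explicit the point that $q^{\langle\mu-\nu,\rho\rangle}\in\ZZ_{\ge 0}$ for $\nu<\mu$, which the paper uses only implicitly when it says ``(1) now follows from Lemma~\ref{lem:trace}''.
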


\begin{proof}
  The formulae (1) and (2) are clearly equivalent.  If $G$ is split,
  these formulae are (3.9) and (3.12) of \cite{gross-satake}, though a
  a complete proof is not given there so we provide the missing
  references:

  By (6.8) of \cite{satake}, for $\lambda$, $\mu$ dominant coweights
  in $X_{\star}(T_d)$,
  $\mathcal{S}(T_{\lambda})(\mu(\pi)) =
  \delta(\mu(\pi))a_{\lambda}(\mu)$, where $a_{\lambda}(\mu)$ is an
  integer. By \cite[Remark 2, p.~30]{satake},
  $a_{\lambda}(\lambda) = 1$ iff
  \begin{equation}
    (G(\ocal_K) \lambda(\pi) N(K)) \cap G(\ocal_K) \lambda(\pi)
    G(\ocal_K) = G(\ocal_K) \lambda(\pi).
  \end{equation}
  This holds by part (ii) of the proposition in (4.4.4) of
  \cite{bruhat-tits}. Part (i) of the same proposition implies that
  $a_{\lambda}(\mu) = 0$ unless $\mu \leq \lambda$. (We note that
  these results in the unramified quasi-split case follow from the
  case of split groups by base changing to an unramified extension
  over which the group splits, applying the results there, and then
  taking Galois invariants. Also, all cases that we actually use later
  are already proved in full in \cite{satake}.)
  
  Since $\delta(\mu(\pi)) = q^{\langle \mu, \rho \rangle}$ (as $B$ is
  a Borel subgroup), (1) now follows from Lemma \ref{lem:trace}.
\end{proof}

\subsection{Conjugacy classes}\label{sect-conj-classes} 

We denote by $\hat{T_d}$ the torus with cocharacter group $X^\star(T_d)$. Note that there is a map $\hat{T} \rightarrow \hat{T_d}$. It follows from the Satake isomorphism that there is a bijection: 
$$ \mathrm{Hom} ( \mathcal{H}, \overline{\qq})  = \hat{T_d}(\ov{\qq})/W_d$$
We have a surjective map $N_{\hat{G}}(\hat{T}) \rightarrow W$. Let us
denote by $\widetilde{W_d}$ the inverse image of $W_d$. By Lemma 6.4
and 6.5 of \cite{MR546608}, we have natural bijections
\begin{equation} \label{eq:that}
  \hat{T}(\ov{\qq}) \times \sigma / Int( \widetilde{W_d})
  \rightarrow  \hat{G}(\ov{\qq})^{ss}/\sigma-\mathrm{conj}
\end{equation}
and
\begin{equation} \label{eq:td}
  \hat{T}(\ov{\qq}) \times \sigma / Int( \widetilde{W_d}) \rightarrow
  \hat{T}_d(\ov{\qq})/W_d
\end{equation}
so we deduce that there is a bijection
$$ \mathrm{Hom} ( \mathcal{H}, \overline{\qq}) \stackrel{\sim}\rightarrow \hat{G}(\ov{\qq})^{ss}/\sigma-\mathrm{conj} $$
which associates to any $\chi : \mathcal{H} \rightarrow  \overline{\qq}$ a  semi-simple $\sigma$-conjugacy class $c \in \hat{G}(\ov{\qq})^{ss}$ characterized by the property that 

$$ \chi ([V_\lambda]) = \mathrm{Tr}(c \rtimes \sigma \vert V_\lambda)$$ for all $\lambda \in (\hat{P}^+)^\Gamma$.

\subsection{The Newton map}\label{sect-Newtonmap} Let us now fix a valuation $v$ on $\ov{\qq}$ extending the $p$-adic valuation of $\qq$ for some prime number $p$, and normalized by $v(p)=1$.  The valuation $v$ defines a homomorphism $\mathbb{G}_m(\ov{\qq}) \rightarrow \R$ and induces the Newton polygon map: 
$$ \mathrm{Newt}_v :  \hat{G}(\ov{\qq})^{ss}/\sigma-\mathrm{conj} =
\hat{T_d}(\ov{\qq})/W_d \stackrel{v}{\rightarrow} X^\star(T_d)_{\R}/W_d \stackrel{\phi}{\to} (P^+_{\R})^{\Gamma},$$
where the last map $\phi$ is defined as follows:
We have a canonical identification of $(X^{\star}(T)_{\R})^{\Gamma}$
with $(X^{\star}(T)_{\R})_{\Gamma} = X^{\star}(T_d)_{\R}$ induced by
the inclusion of $(X^{\star}(T)_{\R})^{\Gamma}$ in
$X^{\star}(T)_{\R}$.  Given $y \in X^{\star}(T_d)_{\R}/W_d$, let
$x \in (X^{\star}(T)_{\R})^{\Gamma}$ be a lift of $y$, let $w \in W$
be such that $w(x) \in P^+_{\R}$ and then define $\phi(y)$ to be the
$\Gamma$-average of $w(x)$, which is an element of
$(P^+_{\R})^{\Gamma}$. Namely, $$\phi(y) = \frac{1}{ \vert \Gamma/ \mathrm{Stab}_{\Gamma} w(x) \vert} \sum_{\gamma  \in  \Gamma/ \mathrm{Stab}_{\Gamma} w(x)} \gamma.w(x).$$ This is independent of the choice of the lift $x$ of $y$,
since any other lift is of the form $w'(x)$ with $w' \in W_d$.

\begin{lem} \label{lem:w_0}
  Let $c \in \hat{G}(\ov{\qq})^{ss}/\mathrm{conj}$ and $\lambda \in
  (\hat{P}^+)^{\Gamma}$. Then  the minimal valuation of an eigenvalue of $c
  \rtimes \sigma$ acting on $V_{\lambda}$ is equal to $\langle
  w_0(\lambda), \mathrm{Newt}_v(c) \rangle$.
\end{lem}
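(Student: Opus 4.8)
The plan is to reduce to an explicit computation with a torus representative and then unwind the construction of $\mathrm{Newt}_v$ from \S\ref{sect-Newtonmap}.

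\emph{Reduction to the torus.} Conjugating does not change the spectrum of $c\rtimes\sigma$ acting on $V_\lambda$, so by the surjectivity in \eqref{eq:that} we may assume $c = t \in \hat{T}(\ov{\qq})$. The group $\Gamma = \hat{\ZZ}$ acts on $\hat{G}$ (preserving a pinning) through a finite quotient, say of order $e$; since $\sigma^e$ then acts trivially on $\hat{G}$, it commutes with the $\hat{G}$-action on $V_\lambda$ and fixes a highest weight vector, hence acts as the identity on $V_\lambda$ by Schur's lemma. Therefore, as endomorphisms of $V_\lambda$, $(t\rtimes\sigma)^e = N$, where $N := t\,\sigma(t)\cdots\sigma^{e-1}(t) \in \hat{T}(\ov{\qq})$; as $\hat{T}$ is commutative, $N$ is $\Gamma$-fixed.

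\emph{Valuations of the eigenvalues.} Over $\ov{\qq}$, the eigenvalues of $(t\rtimes\sigma)^e$, counted with multiplicity, are the $e$-th powers of those of $t\rtimes\sigma$ (triangularize), so $\min_\alpha v(\alpha) = \tfrac1e\min_\beta v(\beta)$, where $\alpha$ runs over the eigenvalues of $t\rtimes\sigma$ on $V_\lambda$ and $\beta$ over those of $N$ on $V_\lambda$. Since $N$ acts through the weight decomposition $V_\lambda = \bigoplus_\mu V_\lambda^\mu$, its eigenvalues are the scalars $\mu(N)$ for $\mu$ a weight of $V_\lambda$. Let $\nu \in \mathrm{Hom}(X^\star(\hat{T}),\R) = X_\star(\hat{T})_\R = X^\star(T)_\R$ be $\tfrac1e$ times the homomorphism $v \circ N$; as $N$ is $\Gamma$-fixed, $\nu \in (X^\star(T)_\R)^\Gamma$, which is canonically identified with $X^\star(T_d)_\R$. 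Then $\tfrac1e v(\mu(N)) = \langle \mu, \nu\rangle$, so
\[
\min_\alpha v(\alpha) = \min_{\mu \in \mathrm{wt}(V_\lambda)} \langle \mu, \nu\rangle .
\]
Unwinding \eqref{eq:that}, \eqref{eq:td} and the definition of $\mathrm{Newt}_v$ (including the normalization by $\tfrac1e$, which matches the identification of invariants with coinvariants of $X^\star(T)_\R$), the class of $\nu$ in $X^\star(T_d)_\R/W_d$ is $v$ of the image of $c$ in $\hat{T}_d(\ov{\qq})/W_d$; hence, choosing $w \in W$ with $\nu^+ := w(\nu) \in P^+_\R$, the element $\mathrm{Newt}_v(c)$ is the $\Gamma$-average of $\nu^+$.

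\emph{Computing the minimum.} Since the set of weights of $V_\lambda$ is $W$-stable, $\min_\mu\langle\mu,\nu\rangle = \min_\mu\langle\mu,\nu^+\rangle$, and for the dominant $\nu^+$ this minimum is attained at the lowest weight $w_0(\lambda)$: every weight $\mu$ of $V_\lambda$ has $\mu - w_0(\lambda)$ a non-negative integral combination of positive coroots of $G$ (i.e. of $\hat{\Phi}^+$), which pair non-negatively with $\nu^+ \in P^+_\R$; hence $\langle\mu,\nu^+\rangle \geq \langle w_0(\lambda),\nu^+\rangle$. Finally $w_0(\lambda)$ is $\Gamma$-invariant — $\lambda$ is, and $\Gamma$ preserves $\hat{B}$ and so commutes with $w_0$ — so $\langle w_0(\lambda),-\rangle$ is $\Gamma$-invariant and $\langle w_0(\lambda),\nu^+\rangle = \langle w_0(\lambda),\mathrm{Newt}_v(c)\rangle$. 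Combining the three steps gives $\min_\alpha v(\alpha) = \langle w_0(\lambda),\mathrm{Newt}_v(c)\rangle$.

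I expect the delicate point to be the middle step: correctly matching $\nu$ with $\mathrm{Newt}_v(c)$ by chasing through the map $\hat{T}\to\hat{T}_d$, the invariants/coinvariants identification of $X^\star(T)_\R$, and the averaging map $\phi$, in particular confirming the normalizing factor $\tfrac1e$. The substantive input — that the lowest weight pairs minimally with a dominant cocharacter — is elementary.
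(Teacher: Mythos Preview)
Your proof is correct, and the ``delicate point'' you flag does work out: the $\Gamma$-average $\nu=\tfrac1e\sum_i\sigma^i(v\circ t)$ maps, under the inclusion $(X^\star(T)_\R)^\Gamma\hookrightarrow X^\star(T)_\R\twoheadrightarrow X^\star(T_d)_\R$, to the class of $v\circ t$, which is exactly the image of $c$ under the first two arrows defining $\mathrm{Newt}_v$.

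Your route differs from the paper's. After reducing to $t\in\hat T(\ov\qq)$, the paper makes a second reduction: using the isogeny $\hat T^{\Gamma}\to\hat T_d$ and \eqref{eq:td}, one may take $c\in\hat T^{\Gamma}(\ov\qq)$. Then on each $\Gamma$-orbit of weight spaces $\bigoplus_{\mu'\in O_\Gamma(\mu)}V_\lambda^{\mu'}$ the element $c$ acts by the single scalar $\mu(c)$ while $\sigma$ acts by a finite-order permutation, so the eigenvalues of $c\rtimes\sigma$ and of $c$ have the same valuations; the rest is the same lowest-weight computation you do. Your approach instead avoids the $\hat T^{\Gamma}$ reduction by the $e$-th power trick $(t\rtimes\sigma)^e=N\in\hat T^{\Gamma}$, trading that reduction for the bookkeeping with the factor $\tfrac1e$. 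Both reach the same endpoint; the paper's version is slightly more direct once $c\in\hat T^{\Gamma}$, while yours stays with an arbitrary torus representative and needs only elementary linear algebra (triangularization) plus Schur to kill $\sigma^e$.
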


\begin{proof}
  Since the set of eigenvalues is invariant under conjugation, we may
  assume by \eqref{eq:that} that $c \in \hat{T}(\ov{\qq})$. Let
  $\hat{T}^{\Gamma}$ be the largest subtorus of $\hat{T}$ on which
  $\Gamma$ acts trivially. The map $\hat{T}^{\Gamma} \to \hat{T}_d$
  induced by the inclusion of $\hat{T}^{\Gamma}$ in $\hat{T}$ is an
  isogeny, so the map
  $\hat{T}^{\Gamma}(\ov{\qq}) \to \hat{T}_d(\ov{\qq})$ is
  surjective. Thus, by \eqref{eq:td} we may assume that
  $c \in \hat{T}^{\Gamma}(\ov{\qq})$.

  Let $\mu$ be a weight of $\hat{T}$ occurring in $V_{\lambda}$. Then
  $\sigma$, hence also $c \rtimes \sigma$, preserves the subspace
  $\sum_{\mu' \in O_{\Gamma}(\mu)} V_{\lambda}^{\mu'}$ of
  $V_{\lambda}$. Since $c$ is invariant under $\Gamma$, $c$ acts on
  this space by the scalar $\mu(c)$. On the other hand $\sigma$ acts
  on this space by a finite order automorphism. It follows that all
  the eigenvalues of $c \rtimes \sigma$ on this space have valuation
  equal to $v(\mu(c))$.

  The choice of the element $c$ gives an obvious lift
  $x \in (X^{\star}(T)_{\R})^{\Gamma}$ of the image of $c$ in
  $X^\star(T_d)_{\R}/W_d$. Let $w \in W$ be such that
  $w(x) \in {P}^+_{\R}$. Then $v(\nu(w(c)) \geq 0$ for all positive
  coroots $\nu \in \hat{\Phi}^+$. The eigenvalue of smallest valuation
  of $w(c)$ on $V_{\lambda}$ is then clearly the one corresponding to
  the lowest weight $w_0(\lambda)$, so it has valuation
  $\langle w_0(\lambda), w(x) \rangle$. Since $\lambda$, hence
  $w_0(\lambda)$, and also the pairing $\langle \ , \rangle$, are
  invariant under $\Gamma$, we have
  \[
    \langle w_0(\lambda), w(x) \rangle = \langle w_0(\lambda),
    \mathrm{Newt}_v(c) \rangle \ .
   \]
   On the other hand, the set of eigenvalues of $w(c)$ and $c$ on
   $V_{\lambda}$ is the same. The lemma follows since we have seen
   that the set of valuations of the eigenvalues of $c$ and
   $c \rtimes \sigma$ on $V_{\lambda}$ is the same.
\end{proof}

Recall that for two elements $\nu_1 , \nu_2 \in {P}_\R^+$, we write
$\nu_1 \leq \nu_2$ if $\nu_2-\nu_1$ is a linear combination with
coefficients in $\R_{\geq 0}$ of elements of $\Phi^+$. We use the same
notation for the restriction of this ordering to
$(P_{\R}^+)^{\Gamma}$.

\begin{rem}  Assume that  $G$ is $\mathrm{GL}_n$, $T$ is the diagonal torus and $B$ is the upper triangular subgroup. We identify $X^\star(T)$ with $\ZZ^n$ and $P^+_{\R} = \{ (\lambda_n, \cdots, \lambda_1) \in \R^n,~\lambda_{n} \geq  \cdots \geq \lambda_1\}$. To any $\lambdaÊ\in P^+_{\R}$ we can associate a convex polygon in $\R^2$ whose vertices are the $(i, \lambda_1 + \cdots + \lambda_i)$ for $0 \leq i \leq n$.  If  $\mu, \nu \in P^+_{\R}$ , then  $\mu \leq \nu$  means that the polygon of $\mu$ is above the polygon of $\nu$ with the same ending point. 
\end{rem}

\begin{lem}\label{lem-lafforgue}
  Let $\nu \in ({P}_\R^+)^{\Gamma}$ and
  $c \in \hat{G}(\ov{\qq})^{ss}/\mathrm{conj}$. Then
  \[
    \mathrm{Newt}_v(c) \leq \nu
  \]
  if and only if for all $\lambda \in (\hat{P}^+)^{\Gamma}$
  \[
    v( \mathrm{Tr}(c \rtimes \sigma \vert V_\lambda) ) \geq \langle w_0(\lambda), \nu
\rangle . \]
\end{lem}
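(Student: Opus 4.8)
The plan is to deduce this from Lemma \ref{lem:w_0}, which computes the minimal valuation of an eigenvalue of $c \rtimes \sigma$ on $V_\lambda$ as $\langle w_0(\lambda), \mathrm{Newt}_v(c)\rangle$. First I would observe that since all eigenvalues of $c \rtimes \sigma$ on $V_\lambda$ have valuation $\geq \langle w_0(\lambda), \mathrm{Newt}_v(c)\rangle$, the trace (being a sum of such eigenvalues) automatically satisfies $v(\mathrm{Tr}(c\rtimes\sigma \mid V_\lambda)) \geq \langle w_0(\lambda), \mathrm{Newt}_v(c)\rangle$ --- here one uses the ultrametric inequality, noting that since all eigenvalues lie in $\ov{\qq}$ the valuation $v$ extends to all of $\ov{\qq}$ and $v(\sum x_i) \geq \min_i v(x_i)$. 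Therefore, if $\mathrm{Newt}_v(c) \leq \nu$, then since $w_0(\lambda)$ is antidominant (i.e. $-w_0(\lambda)$ is dominant) and $\nu - \mathrm{Newt}_v(c)$ is a nonnegative combination of positive roots, we get $\langle w_0(\lambda), \nu\rangle \leq \langle w_0(\lambda), \mathrm{Newt}_v(c)\rangle$ --- wait, the inequality goes the wrong way, so I need to be careful: pairing an antidominant weight with something larger in the root ordering decreases the pairing, hence $\langle w_0(\lambda), \nu \rangle \leq \langle w_0(\lambda), \mathrm{Newt}_v(c)\rangle \leq v(\mathrm{Tr}(c\rtimes\sigma\mid V_\lambda))$. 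This gives one implication.

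For the converse, I would argue contrapositively: suppose $\mathrm{Newt}_v(c) \not\leq \nu$. I want to produce a single $\lambda \in (\hat P^+)^\Gamma$ violating the trace inequality. The key point is that the minimal-valuation eigenspace for a well-chosen $\lambda$ should not suffer cancellation in the trace. Concretely, writing $\eta = \mathrm{Newt}_v(c)$, the condition $\eta \not\leq \nu$ means there is a fundamental (dominant) weight, or more robustly some dominant $\lambda$, with $\langle w_0(\lambda), \eta\rangle < \langle w_0(\lambda), \nu\rangle$; equivalently $\langle \lambda, w_0\nu - w_0\eta\rangle > 0$, and $w_0$ applied to a nonnegative root combination stays (up to sign) controlled, so $\eta \not\leq \nu$ is detected by some dominant weight pairing. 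Then for a suitable positive multiple $N\lambda$, using the analysis in the proof of Lemma \ref{lem:w_0}: after conjugating $c$ into $\hat T^\Gamma(\ov\qq)$, the eigenvalues of $c \rtimes \sigma$ on $V_{N\lambda}$ with minimal valuation all have the form (root of unity)$\cdot \mu(c)$ for weights $\mu$ with $v(\mu(c)) = \langle w_0(N\lambda), \eta\rangle$; I must rule out that these sum to something of strictly larger valuation. The cleanest fix is to replace $V_{N\lambda}$ by an appropriate virtual combination, or better, to use that the lowest weight space $V_\lambda^{w_0\lambda}$ is one-dimensional (as used in Lemma \ref{lem:trace}): if I choose $\lambda$ so that $w_0(\lambda)$ is the \emph{unique} weight of $V_\lambda$ on which $c$ acts with minimal valuation, then $V_\lambda^{w_0\lambda}$ contributes a single term $\zeta \cdot w_0(\lambda)(c)$ (a root of unity times a scalar of valuation $\langle w_0\lambda,\eta\rangle$) and the rest of the trace has strictly larger valuation, so the total trace has valuation exactly $\langle w_0(\lambda),\eta\rangle < \langle w_0(\lambda),\nu\rangle$, contradicting the hypothesis.

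Thus the crux is a combinatorial/representation-theoretic lemma: given $\eta \not\leq \nu$ in $(P^+_\R)^\Gamma$, find $\lambda \in (\hat P^+)^\Gamma$ such that (a) $\langle w_0(\lambda), \eta\rangle < \langle w_0(\lambda),\nu\rangle$, and (b) $w_0(\lambda)$ is the unique weight $\mu$ of $V_\lambda$ minimizing $\langle \mu, \eta\rangle$ (so no cancellation). For (a), the failure of $\eta \leq \nu$ is by definition the failure of $\nu - \eta$ to lie in the cone spanned by $\Phi^+$; since $(P^+_\R)^\Gamma$ is dual (under $\langle\,,\,\rangle$ restricted appropriately) to that cone inside the $\Gamma$-fixed space, there is a $\Gamma$-fixed dominant weight witnessing it, and one can take $\lambda$ (or a large multiple) accordingly. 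For (b), one notes that $\langle \mu, \eta\rangle$ for $\mu$ a weight of $V_\lambda$ is minimized on the $W$-orbit of the lowest weight, and is \emph{strictly} minimized at $w_0(\lambda)$ provided $\eta$ is regular; if $\eta$ is not regular one passes to the Levi where it is regular, or perturbs $\nu$ slightly and uses a limiting/density argument since the ordering $\leq$ is closed. I expect the main obstacle to be exactly this no-cancellation argument --- ensuring that for the chosen $\lambda$ the minimal-valuation part of the trace does not vanish --- and handling the non-regular $\eta$ case (via a Levi reduction or an approximation argument) will be the fiddly part.
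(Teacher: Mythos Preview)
Your argument for the implication $\mathrm{Newt}_v(c) \leq \nu \Rightarrow$ trace inequality is fine and matches the paper: ultrametric inequality plus Lemma \ref{lem:w_0} plus antidominance of $w_0(\lambda)$.

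For the converse, however, your contrapositive strategy has a real gap, exactly at the point you flag. Even if you find $\lambda$ with $\langle w_0(\lambda), \eta\rangle < \langle w_0(\lambda), \nu\rangle$, the minimal-valuation eigenvalues of $c\rtimes\sigma$ on $V_\lambda$ can genuinely cancel in the trace. Your proposed fixes do not work: the lowest weight space being one-dimensional only helps if $w_0(\lambda)$ is the \emph{unique} weight of minimal valuation, which fails whenever $\eta$ lies on a wall; perturbing $\nu$ does nothing about the regularity of $\eta$; and the Levi reduction, while plausible, would require you to know the statement for the Levi and to control how $\Gamma$ interacts with the Levi decomposition --- none of which you have set up.

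The paper (following Lafforgue) avoids the cancellation problem by a different mechanism: exterior powers. Given $\lambda$, let $\alpha_1,\dots,\alpha_r$ be the eigenvalues of $c\rtimes\sigma$ on $V_\lambda$ ordered by valuation, and let $i$ be the number of eigenvalues of minimal valuation $v(\alpha_1)$. On $\wedge^i V_\lambda$ the eigenvalues are the products $\alpha_{j_1}\cdots\alpha_{j_i}$, and the minimum valuation $i\,v(\alpha_1)$ is achieved \emph{uniquely} by $\alpha_1\cdots\alpha_i$; hence $v(\mathrm{Tr}(c\rtimes\sigma\mid\wedge^i V_\lambda)) = i\,v(\alpha_1)$ with no possibility of cancellation. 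On the other hand, $\wedge^i V_\lambda$ decomposes (as a ${}^LG$-module up to character twists) into $V_\mu$'s with $\mu \leq i\lambda$, so the hypothesis gives $v(\mathrm{Tr}(c\rtimes\sigma\mid\wedge^i V_\lambda)) \geq i\langle w_0(\lambda),\nu\rangle$. Combining, $\langle w_0(\lambda),\mathrm{Newt}_v(c)\rangle = v(\alpha_1) \geq \langle w_0(\lambda),\nu\rangle$ for \emph{every} $\lambda$, and cone duality finishes. This is the missing idea: rather than hunting for a single witnessing $\lambda$, pass to a tensor construction where the minimal eigenvalue is forced to be simple.
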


Note that the second inequality in the lemma does not depend on the
choice of extension of $V_{\lambda}$ that we have made above, since
any other choice differs from it by tensoring with a character of
$\Gamma$.
\begin{proof}
  When $G$ is split, this is Lemme 1.3 of \cite{MR2869300}; we show
  that Lafforgue's proof extends to our setting without much
  difficulty. By \eqref{eq:that} we may and do assume that
  $c \in \hat{T}(\ov{\qq})$.
  
  We first assume that for all $\lambda \in (\hat{P}^+)^{\Gamma}$,
  $v( \mathrm{Tr}(c \rtimes \sigma \vert V_\lambda) ) \geq \langle
  w_0(\lambda), \nu \rangle$.  For $\lambda \in (\hat{P}^+)^{\Gamma}$,
  let $r_{\lambda} = \dim(V_{\lambda})$ and consider the
  representations $\wedge^i V_{\lambda}$ for
  $1 \leq i \leq r_{\lambda}$. The action of $\sigma$ on $V_{\lambda}$
  maps $V_{\lambda}^\mu$ to $V_{\lambda}^{\sigma(\mu)}$ and so the
  analogous statement holds for the weight spaces of
  $\wedge^i V_{\lambda}$.  As a representation of $\hat{G}$, this
  breaks up as a sum of representations $V_{\mu}$ with
  $\mu \in \hat{P}^+$ and $\mu \leq i\lambda$. If the line generated
  by the highest weight vector of such a representation is not
  preserved by $\sigma$, which always holds if
  $\mu \notin (\hat{P}^+)^{\Gamma}$, then this summand of
  $\wedge^i V_{\lambda}$ is not preserved by $\sigma$ and the trace of
  $c$ on the sum of all such summands (which is preserved by $c$) is
  $0$. If $\mu \in (\hat{P}^+)^{\Gamma}$ and the line spanned by the
  highest weight vector is preserved by $\sigma$, then this
  $\hat{G}$-irreducible summand isomorphic to $V_{\mu}$ is preserved
  by ${}^LG$. It is not clear  whether this
  representation is always isomorphic to our chosen extension but, as
  noted above, the condition in the lemma is independent of the
  choice.

  Since the lowest weight occuring in all the representations above is
  $\geq w_0(i\lambda)$ and only the invariant $\mu$ contribute to the
  trace of $c$, we deduce from the assumption at the beginning that
  \begin{equation} \label{eq:tr} v( \mathrm{Tr}(c \rtimes \sigma \vert \wedge^i
    V_{\lambda}) ) \geq i \langle w_0(\lambda), \nu \rangle
  \end{equation}
for all $i \geq 0$.

Let $\alpha_1,\alpha_2,\dots,\alpha_{r_{\lambda}}$ be the eigenvalues
of $c$ acting on $V_{\lambda}$ ordered so that
$v(\alpha_1)\leq\dots\leq v(\alpha_{r_{\lambda}})$. Let
$i \in \{1,\dots,r_{\lambda}\}$ be such that
$v(\alpha_1)=\dots=v(\alpha_i) < v(\alpha_{i+1})$. Then
$ v( \mathrm{Tr}(c \rtimes \sigma \vert \wedge^i V_{\lambda}) ) = iv(\alpha_1)$ so we
deduce from \eqref{eq:tr} that
$v(\alpha_1) \geq \langle w_0(\lambda), \nu \rangle$. On the other
hand by Lemma \ref{lem:w_0},
$v(\alpha_1) = \langle w_0(\lambda), \mathrm{Newt}_v(c) \rangle$ since
$w_0(\lambda)$ is the lowest weight of $V_{\lambda}$. We thus have
\begin{equation} \label{eq:tr2}
  \langle w_0(\lambda), \mathrm{Newt}_v(c) \rangle \geq \langle
  w_0(\lambda), \nu \rangle
\end{equation}
for all $\lambda \in (P^+)^{\Gamma}$. Since
$\lambda \mapsto -w_0(\lambda)$ is a bijection of
$(\hat{P}^+)^{\Gamma}$ into itself and the cone generated by the
elements of $(\hat{P}^+)^{\Gamma}$ is dual to the $\Gamma$-invariants
of the cone generated by the positive coroots of $\hat{G}$, we deduce
that $ \mathrm{Newt}_v(c) \leq \nu$.

The converse, not explicitly stated in \cite{MR2869300}, is simpler:
If $\mathrm{Newt}_v(c) \leq \nu$, then $\nu - \mathrm{Newt}_v(c)$ is a
$\Gamma$-invariant element of the cone generated by the positive
coroots of $\hat{G}$. The result then follows from the formula
$ v( \mathrm{Tr}(c\rtimes \sigma \vert V_\lambda) )=v(\alpha_1) =
\langle w_0(\lambda), \mathrm{Newt}_v(c) \rangle$ used above and the
fact that $\langle w_0(\lambda), \alpha \rangle \leq 0$ for any
$\alpha \in \Phi^+$.
\end{proof}

\section{Shimura varieties}\label{section-Shimura}

\subsection{Shimura varieties in characteristic zero} Let $(G,X)$ be a
Shimura datum \cite[\S 1]{MR546620}. This means that $G$ is a
reductive group over $\qq$, and $X$ is a $G(\R)$-conjugacy class of
morphisms
$$ h : \mathrm{Res}_{\C/\R} \mathbb{G}_m \rightarrow G_{\R}$$ satisfying the following conditions :
\begin{enumerate}
\item Let $\mathfrak{g}$ be the Lie algebra of $G$. Then for any
  $h \in X$, the adjoint action determines on $\mathfrak{g}$ a Hodge
  structure of type $(-1,1)$, $(0,0)$, $(1,-1)$.
\item For all $h \in X$, $\mathrm{Ad}h(i)$ is a Cartan involution on $G^{ad}(\R)$. 
\item $G^{ad}$ has no factor $H$ defined over $\qq$ such that $H(\R)$ is compact. 
\end{enumerate}

\begin{ex} The most fundamental example is the Siegel Shimura datum
  $(\mathrm{GSp}_{2g}, \mathcal{H}^{±}_g )$. Let
  $g \in \mathbb{Z}_{\geq 1}$, let $V = \mathbb{Q}^{2g}$, and let
  $\psi$ be the symplectic form with $\psi(e_i, e_{2g+1-i}) = 1$ if
  $1 \leq i \leq g$ and $\psi(e_i, e_j) = 0$ if $i+j \neq 2g+1$. Let
  $\mathrm{GSp}_{2g}$ be the group of symplectic similitudes of
  $(V,\psi)$. We take
  $\mathcal{H}_g^{±} = \{ M \in \mathrm{M}_{g\times
    g}(\mathbb{C}),~M=~^tM,~\mathrm{Im}(M)~\textrm{is definite} \}$ to
  be the Siegel space. Observe that
  $\mathcal{H}_g^{±} = \mathcal{H}_g^+ \cup \mathcal{H}_g^-$, where
  $\mathcal{H}_g^+$ (resp.~$\mathcal{H}_g^-$) is defined by the
  condition $\mathrm{Im}(M)$ is positive (resp.~negative) definite.
  Then $\mathcal{H}_g^{±}$ is the
  $\mathrm{GSp}_{2g}(\mathbb{R})$-orbit of the morphism
  $$h_0 : \mathrm{Res}_{\C/\R} \mathbb{G}_m \rightarrow
  G_{\R},~~~~~a+ib \in \C^\times \mapsto
  \begin{pmatrix} 
    a 1_g & -b S_g \\
    b S_g& a 1_g \\
   \end{pmatrix}$$ where $1_g$ is the identity $g \times g$ matrix and $S_g$ is the $g\times g$ matrix with $1$'s on the anti-diagonal and $0$'s otherwise. 
\end{ex} 

Since  $\mathrm{Res}_{\C/\R}(\C) = \C^\times \times \C^\times$,  the choice of $h \in X$ determines, by projection to the first factor,  a cocharacter $\mu : \C^\times \rightarrow G_{\C}$.  We denote by $P_{\mu} \subset G_{\C}$ the parabolic defined by $P_{\mu} = \{g \in G_\C,~~\lim_{t \rightarrow + \infty} \mathrm{Ad}(\mu(t)) g~Ê\textrm{exists}\}$ and by $M_\mu = \mathrm{Cent}_{G_{\C}}(\mu)$ its Levi factor. 

Let $\mathrm{FL}_{G,X}$ be the Flag variety parametrizing parabolic
subgroups in $G$ of type $P_{\mu}$. The Borel embedding is the map
$X \hookrightarrow \mathrm{FL}_{G,X}$ given by $h \mapsto P_\mu$.
Using this map, we endow $X$ with a complex structure
\cite[Proposition 1.1.14]{MR546620}.

Let $K \subset G(\mathbb{A}_f)$ be a compact open subgroup. We let
$Sh_K(\mathbb{C}) = G(\qq) \backslash( X \times G(\mathbb{A}_f)/K)$ be
the associated Shimura variety.  This is a complex manifold as soon as
$K$ is neat and in fact it has a structure of algebraic variety over
$\C$.

The conjugacy classes of $\mu$, $P_{\mu}$ and $M_{\mu}$ are defined
over a number field $E = E(G,X)$ called the reflex field. We deduce
that $\mathrm{FL}_{G,X}$ is defined over $E$. Moreover, it has been
proven by Shimura, Deligne, Borovoi, Milne (see, e.g.,
\cite{MR717596}) that $Sh_K(\mathbb{C})$ has a canonical model
$Sh_K \rightarrow \Spec~E$.

\begin{ex} In the Siegel case
  $(\mathrm{GSp}_{2g}, \mathcal{H}^{±}_g )$, $Sh_K$ has a moduli
  interpretation: it is the moduli space of abelian varieties of
  dimension $g$, with a polarization and a level structure (prescribed
  by $K$).
\end{ex}

\subsection{Compactifications} 

For any choice $\Sigma$ of rational polyhedral cone decomposition, one
can construct a toroidal compactification $Sh_{K, \Sigma}^{tor}$ of
$Sh_K$. In general, this is a proper algebraic space over $E$ and we
have the property that
$D_{K, \Sigma} = Sh_{K, \Sigma}^{tor} \setminus Sh_K$ is a Cartier
divisor (see \cite{AMRT} and \cite{PINK}). Furthermore, for a suitable
choice of $\Sigma$, $Sh_{K, \Sigma}^{tor}$ is smooth and projective.

\subsection{Automorphic vector bundles}

Let $Z_s(G)$ be the greatest sub-torus of the center of $G$ which has no split subtorus over $\qq$ but which splits over $\R$.  Let $G^c = G/Z_s(G)$. 

\begin{rem} If $F$ is a totally real field and $G = \mathrm{Res}_{F/\qq} \mathrm{GL}_n$, with center $Z =  \mathrm{Res}_{F/\qq} \mathrm{GL}_1$,   then $Z_s(G)$ is  the kernel of the norm map $ Z \rightarrow \mathrm{GL}_1$. 
\end{rem}

One can define (\cite[III, \S 3]{MR1044823}) an analytic space
$P_K(\mathbb{C}) \rightarrow S_K(\mathbb{C})$, called the principal
$G^c$-bundle by:
$$P_K(\mathbb{C}) = G(\mathbb{Q}) \backslash X \times G^c(\mathbb{C})
\times G(\mathbb{A}_f)/ K.$$ There is a natural,
$G(\mathbb{C})$-equivariant map $P_K(\mathbb{C}) \rightarrow \mathrm{FL}_{G,X}$
defined by sending
$(x, g, g') \in G(\mathbb{Q}) \backslash X \times G^c(\mathbb{C})
\times G(\mathbb{A}_f)/ K$ to $g^{-1}x$. We therefore have a diagram
of analytic spaces:
\begin{eqnarray*}
\xymatrix{ & P_K(\mathbb{C}) \ar[rd] \ar[ld] & \\
Sh_K(\mathbb{C}) & & \mathrm{FL}_{G,X}}
 \end{eqnarray*}
By \cite[III, Theorem 4.3 a)]{MR1044823}, $P_K(\mathbb{C})$ is the analytification of an algebraic variety $P_K$ defined over $E$, and there is a diagram of schemes: 
\begin{eqnarray*}
\xymatrix{ & P_K \ar[rd]^{\alpha} \ar[ld]_\beta & \\
Sh_K & & \mathrm{FL}_{G,X}}
 \end{eqnarray*}
where $\alpha$ is  $G$-equivariant and $\beta$ is a $G^c$-torsor. 

\begin{ex} In the Siegel case, let $A \rightarrow  Sh_K$ be the universal abelian scheme (only well defined up to quasi-isogenies,  a representative of the isogeny class can be fixed by the choice of an integral $PEL$-datum).  Then $P_K$ is the $\mathrm{GSp}_{2g}$-torsor of trivializations of $\mathcal{H}_{1,dR}(A/Sh_K)$ and the map $\alpha$ is given by the Hodge filtration on $\mathcal{H}_{1,dR}(A/Sh_K)$.
\end{ex}

 Let $E'$ be a finite extension of $E$ such that the conjugacy class of $\mu$, $M_\mu$ and $P_\mu$ have  representatives defined over $E'$, and all finite dimensional  algebraic representations of $M_{\mu}$ are defined over $E'$. We now consider  all the schemes $P_K$, $Sh_K$, $\mathrm{FL}_{G,X} = G/P_\mu$ over $E'$. 
Denote by $VB_{G}(\mathrm{FL}_{G,X})$ the category of $G$-equivariant vector bundles on $\mathrm{FL}_{G,X}$ and by  $\mathrm{Rep}_{E'}(M_{\mu})$ the category of finite dimensional algebraic representations of $M_{\mu}$ on $E'$-vector spaces.  There is a functor 
 \begin{eqnarray*} \label{functorRepVB}
  \mathrm{Rep}_{E'}(M_{\mu}) &\rightarrow& VB_{G}(\mathrm{FL}_{G,X}) \\
  V & \mapsto& \mathcal{V}  \end{eqnarray*}
  which is defined by $\mathcal{V} = G \times V/\sim$ where $\sim$ is the equivalence relation $(gx,v) \sim (g, xv)$ for all $(g,v,x) \in G \times V \times P_\mu$, and we let $P_\mu$ act on $V$ through its projection $P_\mu \rightarrow M_\mu$.  
  
Let $VB(Sh_K)$ be the category of vector bundles over $Sh_K$ and $\mathrm{Rep}_{E'}(M_{\mu}/Z_s(G))$ the category of finite dimensional algebraic representations of $M_{\mu}/Z_s(G)$ on $E'$-vector spaces.  We deduce that there  is a functor 
 \begin{eqnarray*} 
  \mathrm{Rep}_{E'}(M_{\mu}/Z_s(G)) &\rightarrow& VB(Sh_{K}) \\
  V & \mapsto& \mathcal{V}_K  \end{eqnarray*}
which is  defined as follows: a representation $V$ of $M_{\mu}/Z_s(G)$ defines a representation of $P_{\mu}$ (by letting the unipotent radical act trivially) and therefore a $G$-equivariant vector bundle $\mathcal{V}$ over $\mathrm{FL}_{G,X}$. We can pull back  this vector bundle by the map $\alpha$ to $P_K$  and descend it to $Sh_K$ using $\beta$. 

\subsection{Automorphic vector bundles and compactifications}\label{sect-automorphicvb} 

By \cite[Theorem 4.2]{MR997249}, for a choice $\Sigma$ of rational
polyhedral cone decomposition, there is a $G^c$-torsor
$P_{K,\Sigma} \rightarrow Sh_{K,\Sigma}^{tor}$ extending $P_K$, and a
diagram:
\begin{eqnarray*}
\xymatrix{ & P_{K,\Sigma} \ar[rd]^{\alpha} \ar[ld]_\beta & \\
Sh^{tor}_{K,\Sigma} & & \mathrm{FL}_{G,X}}
 \end{eqnarray*}
where $\alpha$ is  $G$-equivariant and $\beta$ is a $G^c$-torsor.
Therefore, the functor $V \mapsto \mathcal{V}_K$ extends to a functor 
 \begin{eqnarray*} 
 \ \mathrm{Rep}_{E'}(M_{\mu}/Z_s(G)) &\rightarrow& VB(Sh^{tor}_{K, \Sigma}) \\
  V & \mapsto& \mathcal{V}_{K, \Sigma}.
  \end{eqnarray*}
and $\mathcal{V}_{K, \Sigma}$ is called the canonical extension of $\mathcal{V}_{K}$. Moreover, $\mathcal{V}_{K, \Sigma}(-D_{K,\Sigma})$ is called the sub-canonical extension of $\mathcal{V}_{K}$. 
\subsection{Cohomology  of vector bundles and Hecke operators}\label{section-coho-vec-bund-Hecke}

For any $\Sigma$, $K$ and $V \in \mathrm{Rep}(M_\mu/Z_s(G))$, we
identify the vector bundle $\mathcal{V}_{K,\Sigma}$ with its
associated locally free sheaf of sections, and we consider the
cohomology groups:
$$\HH^\star (Sh^{tor}_{K, \Sigma} , \mathcal{V}_{K,
  \Sigma})~~~\textrm{and}~~~\HH^\star (Sh^{tor}_{K, \Sigma} ,
\mathcal{V}_{K, \Sigma}(-D_{K, \Sigma})).$$ By \cite[Proposition
2.4]{MR1064864} these groups are independent of the choice of
$\Sigma$, and we therefore simplify notations and let
$\HH^i(K, V) = \HH^i (Sh^{tor}_{K, \Sigma} , \mathcal{V}_{K, \Sigma})$
and
$\HH^i_{cusp}(K, V) = \HH^i (Sh^{tor}_{K, \Sigma} , \mathcal{V}_{K,
  \Sigma}(-D_{K,\Sigma}))$ .

Let $\mathcal{H}_K$ be the Hecke algebra of functions
$G(\mathbb{A}_f ) \rightarrow \ZZ$, which are compactly supported and
bi $K$-invariant.  By \cite[Proposition 2.6]{MR1064864}, the Hecke
algebra $\mathcal{H}_K$ acts on the cohomology groups $\HH^i(K, V)$
and $\HH^i_{cusp}(K, V)$.

Observe that the Hecke algebra is generated by the characteristic
functions $T_g = \mathbf{1}_{K g K}$ for $g \in G(\mathbb{A}_f)$. We
spell out the action of $T_g$ by writing the corresponding
cohomological correspondence.  For any $g \in G(\mathbb{A}_f)$, we
have a correspondence (for suitable choices of polyhedral cone
decomposition $\Sigma$, $\Sigma'$ and $\Sigma''$):
\begin{eqnarray*}
\xymatrix { &  Sh^{tor}_{gKg^{-1} \cap K, \Sigma''}\ar[ld]_{p_2} \ar[rd]^{p_1}&  \\
Sh^{tor}_{ K, \Sigma'} & & Sh^{tor}_{ K, \Sigma} }
\end{eqnarray*}
where $p_1$ is simply the forgetful map (induced by the inclusion
$gKg^{-1} \cap K \subset K$), and $p_2$ is the composite of the action
map
$g : Sh^{tor}_{gKg^{-1} \cap K, \Sigma''} \rightarrow Sh^{tor}_{K \cap
  g^{-1}Kg, \Sigma'''}$ and the forgetful map
$ Sh^{tor}_{K \cap g^{-1}Kg, \Sigma'''} \rightarrow Sh^{tor}_{ K,
  \Sigma'}$ (induced by the inclusion $g^{-1}Kg \cap K \subset
K$). There is a corresponding cohomological correspondence
$T_g : p_2^\star \mathcal{V}_{K,\Sigma'} \rightarrow p_1^!
\mathcal{V}_{K,\Sigma}$ which is simply obtained by composing the
natural isomorphism
$p_2^\star \mathcal{V}_{K,\Sigma'} \rightarrow p_1^\star
\mathcal{V}_{K,\Sigma}$ (see \cite[\S 2.5]{MR1064864}) and the map
$p_1^\star \mathcal{V}_{K,\Sigma} \rightarrow p_1^!
\mathcal{V}_{K,\Sigma}$ which is deduced from the fundamental class
$p_1^\star \oscr_{Sh^{tor}_{ K, \Sigma}} \rightarrow p_1^!
\oscr_{Sh^{tor}_{ K, \Sigma}}$ (see Proposition \ref{prop-trace}).

\subsection{The infinitesimal character}\label{sec-grouptheoretic}

Let again $(G,X)$ be a Shimura datum. The reductive group $G$ is
defined over $\qq$. We have chosen an extension $E'$ of the reflex
field $E$ over which all representations of $M_\mu$ are defined. This
actually forces $G$ to split over $E'$. Let $S$ be a (split) maximal
torus in $G_{E'}$ and let $X^{\star}(S)$ be its character group. We
assume that $S \subset (M_\mu)_{E'}$.  The roots for $G$ that lie in
the Lie algebra of $M_\mu$ are by definition the compact roots. The
other roots are called non-compact.  We make a choice of positive
roots for $M_\mu$. We also make a choice of positive roots for $G$ by
declaring that the non-compact positive roots are those corresponding
to $\mathfrak{g}/\mathfrak{p}_\mu$.  We denote by $\rho$ the half sum
of all the positive roots. Let $\kappa$ be a highest weight for
$M_\mu$.

\begin{defi}  We define  the infinisitemal character of $\kappa$, denoted $\infty(\kappa)$, by the formula 
$$\infty(\kappa) =  - \kappa - \rho \in X^\star(S)_{\qq}.$$ 
\end{defi}

\begin{rem} This is a representative of the infinitesimal character of
  automorphic representations contributing to the cohomology
  $\HH^i(K,V_\kappa)$ or $\HH^i_{cusp}(K,V_\kappa)$ \cite[Proposition
  4.3.2]{MR1064864}.
\end{rem}

We now choose a prime $p$ such that the group $G_{\qq_p}$ is
unramified at $p$. As in Section \ref{sec-dual-group}, we denote by
$T \subset G_{\qq_p}$ a maximal torus, split over an unramified
extension of $\qq_p$ and contained in a Borel subgroup
$B \subset G_{\qq_p}$. We let $X^\star(T)$ be the character group of
$T$ and let $P^+ \subset X^\star(T)$ be the cone of dominant weights.
We fix an embedding $\iota : E' \hookrightarrow \overline{\qq}_p$.

The tori $S \times_{\Spec~E'} \Spec~\overline{\qq}_p$ and
$T \times_{\Spec~\qq_p} \Spec~\overline{\qq}_p$ are conjugated by some
element $g$ of $G(\overline{\qq}_p)$.  Conjugation by $g$ defines an
isomorphism $X^\star(S) \rightarrow X^\star(T)$; this isomorphism
depends on $g$, but the composite map
$X^\star(S) \rightarrow X^\star(T) \rightarrow X^\star(T)/W \simeq
P^+$ is independent of the choice of $g$.

We therefore get a canonical element $\infty(\kappa, \iota) \in P_{\R}^+$
(which only depends on $\iota$), which is by definition the image of
$\infty(\kappa)$ via the map $X^\star(S)_{\R} \rightarrow P_{\R}^+$.

 \subsection{Newton and Hodge polygons}
 
 We assume that $K = K_p K^p$ is neat and $K_p$ is hyperspecial. Let
 $V_\kappa$ be the irreducible representation of $M_\mu$ defined over
 $E'$ with highest weight $\kappa$.

 Let $\mathcal{H}_p = \mathcal{H}(G(\qq_p), K_p)$ be the Hecke algebra
 at $p$. It acts on the groups $\HH^i(K, V_\kappa)$ and
 $\HH^i_{cusp}(K, V_\kappa)$. We put
 $ \HH^i(K, V_\kappa)_{\overline{\qq}_p} = \HH^i(K,
 V_\kappa)\otimes_{E', \iota} \overline{\qq}_p$ and
 $\HH^i_{cusp}(K, V_\kappa)_{\overline{\qq}_p}= \HH^i_{cusp}(K,
 V_\kappa)\otimes_{E', \iota} \overline{\qq}_p$.

Using the results of Sections \ref{sect-conj-classes} and  \ref{sect-Newtonmap}, we have a Newton map: 
$$ \mathrm{Newt}_\iota :  \mathrm{Hom}(\mathcal{H}_p, \overline{\qq}_p) = \hat{G}(\overline{\qq}_p)/\sigma-\mathrm{conj} \rightarrow (P^+_{\mathbb{R}})^{\Gamma}.$$

\begin{conj}\label{conj1} Let $\chi : \mathcal{H}_p \rightarrow \overline{\qq}_p$  be a character    occuring in $\HH^i(K, V_\kappa)_{\overline{\qq}_p}$ or $\HH^i_{cusp}(K, V_\kappa)_{\overline{\qq}_p}$. Then we have  the inequality in $(P^+_{\R})^\Gamma$: 
$$ \mathrm{Newt}_{\iota}( \chi) \leq  \frac{1}{\vert
  \Gamma/\mathrm{Stab}_\Gamma(\infty(\kappa, \iota))\vert}
\sum_{\gamma \in \Gamma/\mathrm{Stab}_\Gamma(\infty(\kappa, \iota))}
-w_0(\gamma \cdot\infty(\kappa, \iota)) ,$$
where $w_0$ is the longest element of the Weyl group.
\end{conj}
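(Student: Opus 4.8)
The statement is a conjecture in the paper, so the following is a strategy for what a proof would have to look like, organized around the tools already assembled in the excerpt. The plan is to deduce the inequality from two inputs: (i) a comparison between coherent cohomology over $\overline{\qq}_p$ and the étale cohomology of the associated Shimura variety via a suitable $p$-adic Hodge theory / degeneration of a spectral sequence, and (ii) the Katz--Mazur inequality (Newton above Hodge) for the Frobenius on that étale cohomology, repackaged through Lemma \ref{lem-lafforgue}.

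\textbf{Step 1: reduce to a statement about a single Satake class and the Newton map.} By Lemma \ref{lem-lafforgue}, the desired inequality $\mathrm{Newt}_\iota(\chi) \leq \nu$, where $\nu$ is the $\Gamma$-average of $-w_0(\gamma\cdot\infty(\kappa,\iota))$, is \emph{equivalent} to the family of scalar inequalities
\[
v\bigl(\mathrm{Tr}(c_\chi \rtimes \sigma \mid V_\lambda)\bigr) \geq \langle w_0(\lambda),\nu\rangle \qquad \text{for all } \lambda \in (\hat P^+)^\Gamma,
\]
where $c_\chi$ is the semisimple $\sigma$-conjugacy class attached to $\chi$ in \S\ref{sect-conj-classes}. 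So the plan is to prove these $p$-adic lower bounds on the Satake parameters of automorphic representations $\pi$ occurring in $\HH^i(K,V_\kappa)_{\overline\qq_p}$ (or the cuspidal variant). Equivalently, one wants: the Hodge polygon determined by $\kappa$ (the right-hand side) lies below the Newton polygon of the Satake parameter at $p$.

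\textbf{Step 2: interpret the right-hand side as a Hodge polygon, and connect $\pi$ to a Galois representation.} The element $\infty(\kappa,\iota)$ is the infinitesimal character, which by the expected reciprocity is the Hodge--Tate--Sen weight datum of the conjectural Galois representation $\rho_\pi : \mathrm{Gal}(\overline{\qq}/E') \to {}^L G(\overline{\qq}_p)$ attached to $\pi$; the $\Gamma$-averaging and $w_0$ encode passing from the torus $S$ over $E'$ to the quasi-split torus $T$ over $\qq_p$ and recording the Hodge filtration at the place above $p$ (this is exactly the bookkeeping already done in \S\ref{sec-grouptheoretic}). The Satake class $c_\chi$ is, by Satake/Langlands, $\rho_\pi(\mathrm{Frob}_p)$ up to the usual normalization. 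Thus the target inequality becomes: \emph{the Newton polygon of crystalline Frobenius on $\rho_\pi$ at $p$ lies above the Hodge polygon determined by the Hodge--Tate weights} — precisely Katz--Mazur / Mazur's inequality for the (conjectural) crystalline $\rho_\pi$. When $\rho_\pi$ is realized in the $\ell$-adic cohomology $\HH^j_{\mathrm{et}}$ of a smooth proper model, this is a theorem; the content of the conjecture is to make it unconditional for coherent cohomology, where $\rho_\pi$ may not (yet) be known to exist or be de Rham.

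\textbf{Step 3 (the actual engine, and the main obstacle).} The honest route that does not presuppose $\rho_\pi$ is to integrate. One constructs, using the integral canonical extension $\mathcal{V}_{\kappa,K,\Sigma}$ over $\ocal_{E',\lambda}$ and the integral Hecke algebra $\mathcal{H}^{int}_{p,\kappa,\iota}$ of Definition \ref{defi-hecke-algebra}, an action of the normalized operators $[V_\lambda]$ twisted by $q^{\langle\lambda,\rho\rangle}$-type factors on $\mathrm{R}\Gamma(\mathfrak{Sh}^{tor}_{K,\Sigma},\mathcal{V}_{\kappa,K,\Sigma})$; the existence of such an integral action is exactly what forces each eigenvalue $\chi([V_\lambda])$ (suitably normalized) to be a $p$-adic integer, which unwinds to the displayed inequality. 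Thus Conjecture \ref{conj1} is essentially \emph{equivalent} to the integrality of the Hecke action asserted in Conjecture \ref{conj3}, and the paper's strategy (switch to the local model, which is Cohen--Macaulay, compute the cohomological correspondence there via the trace/residue formalism of \S\ref{residue}, and transfer back) is the way to get it in the minuscule cases. So the main obstacle — and the reason this is a conjecture — is: for a \emph{general} $\lambda\in(\hat P^+)^\Gamma$, the Hecke operator $[V_\lambda]$ is not minuscule, the corresponding Hecke correspondence $Sh_{K\cap gKg^{-1}}$ has bad (non-Cohen--Macaulay, even non-flat) integral models, and the trace maps $p_1^\star\mathcal{V} \to p_1^!\mathcal{V}$ cannot be constructed by the methods of \S\ref{residue}. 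One either needs a new construction of these integral cohomological correspondences (e.g.\ via the Hodge-theoretic comparison and a direct Katz--Mazur argument on the coherent side), or a reduction expressing general $[V_\lambda]$ in terms of minuscule ones after inverting nothing — neither of which is available in general.

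\textbf{What I would actually prove here.} Given the tools in the excerpt, the realistic deliverable is the \emph{equivalence} of Conjecture \ref{conj1} with the integral-action statement (Conjecture \ref{conj3}) — reducing the polygon inequality to the integrality of normalized Hecke operators via Lemma \ref{lem-lafforgue} and Corollary \ref{cor:basis} — together with the verification in the minuscule cases, where the cohomological correspondence of Example \ref{ex:corres} is built from the fundamental class of Proposition \ref{prop-trace} applied to a Cohen--Macaulay local model, and a local computation shows the normalizing powers of $p$ are exactly those predicted by $\infty(\kappa,\iota)$. That is the content of the later theorems quoted in the introduction; a proof of Conjecture \ref{conj1} in full generality is not expected from these methods.
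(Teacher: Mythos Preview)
You have correctly identified that this statement is a \emph{conjecture} in the paper, with no proof given; the paper only motivates it and relates it to stronger conjectures. Your outline of the intended strategy---reduce via Lemma~\ref{lem-lafforgue} (this is exactly the content of Lemma~\ref{lem-laff2}) to the integrality of the normalized Satake eigenvalues, then attempt to establish that integrality by constructing an action of $\mathcal{H}^{int}_{p,\kappa,\iota}$ on an integral lattice in cohomology---matches the paper's approach precisely, as does your identification of the obstacle (non-minuscule coweights, bad integral models of the correspondences).

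One correction worth making: you assert that Conjecture~\ref{conj1} is ``essentially equivalent'' to Conjecture~\ref{conj3}. This overstates the logical relationship. The paper establishes the one-way chain Conjecture~\ref{conj3} $\Rightarrow$ Conjecture~\ref{conj2} $\Rightarrow$ Conjecture~\ref{conj1}, and separately (Proposition~\ref{prop-imply-conj}) that Conjecture~\ref{conj1} is equivalent to the existence of \emph{some} $\mathcal{H}^{int}_{p,\kappa,\iota}$-stable $\overline{\ZZ}_p$-lattice in each cohomology group. But Conjecture~\ref{conj2} asserts that the \emph{specific} lattice arising from the integral model $\mathfrak{Sh}^{tor}_{K,\Sigma}$ is stable, and Conjecture~\ref{conj3} further asserts that the action lifts to the derived complex $\mathrm{R}\Gamma$. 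These are genuinely stronger statements: a priori the geometric lattice could fail to be stable while some other (abstract) lattice is, and stability on cohomology groups need not lift to the complex. So the reverse implication Conjecture~\ref{conj1} $\Rightarrow$ Conjecture~\ref{conj3} is neither proved nor expected to be automatic; the paper's partial results (Theorems~\ref{main-thm-symplectic}, \ref{main-thm-unitary}) go toward the stronger Conjecture~\ref{conj3} directly, and Conjecture~\ref{conj1} falls out as a consequence (cf.\ Corollary~\ref{coro-conj2-unit}).
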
 

We now explain that this conjecture is compatible with existing
conjectures on the existence and properties of Galois representations
attached to automorphic representations.  Our convention is that the Artin reciprocity law is normalized by sending uniformizing element to geometric Frobenius element and that the Hodge--Tate  weight of the cyclotomic character is $-1$.

This inequality is to be
viewed as an inequality between a Newton and a Hodge polygon.
According to the work of \cite{JunSu} (see also \cite{MR1064864}), all
the cohomology $\HH^i(K, V_\kappa)$ and $\HH^i_{cusp}(K, V_\kappa)$
can be represented by automorphic forms.  Let $\pi$ be an automorphic
representation contributing to these cohomology groups. We fix an
embedding $E \rightarrow \C$ and an isomorphism
$\iota : \C \rightarrow \overline{\qq}_p$ extending our embedding
$\iota : E \hookrightarrow \overline{\qq}_p$. The automorphic
representation $\pi$ is C-algebraic with infinitesimal character
$\infty(\kappa) = - \kappa-\rho$.

Let us assume that $\pi$ is also $L$-algebraic (for simplicity). Then,
according to \cite[Conjecture 3.2.2]{MR3444225}, there should be a
geometric Galois representation
$\rho_{\pi, \iota} : G_{\qq} \rightarrow ~^LG(\overline{\qq}_p)$
satisfying a list of conditions. In particular, it should be
crystalline at $p$ (because $\pi$ has spherical vectors at $p$) with
Hodge--Tate weights $-\infty(\kappa, \iota)$. (Note that our sign
convention concerning the Hodge--Tate weight of the cyclotomic
character is opposite to \cite{MR3444225}). Let
$\chi_{\pi_p} : \mathcal{H}_p \rightarrow \C$ be the character
describing the action of $\mathcal{H}_p$ on $\pi_p$.  The conjugacy
class of the crystalline Frobenius should be given by
$\iota \circ \chi_{\pi_p}$ via the Satake isomorphism.  Then the
inequality in $(P_{\R}^+)^\Gamma$:
$$ \mathrm{Newt}_{\iota}( \iota \circ \chi_{\pi_p}) \leq  \frac{1}{\vert \Gamma/\mathrm{Stab}_\Gamma(\infty(\kappa, \iota))\vert} \sum_{\gamma \in \Gamma/\mathrm{Stab}_\Gamma(\infty(\kappa, \iota))} -w_0(\gamma\cdot\infty(\kappa, \iota))$$
is the Katz--Mazur inequality (see also \cite[Lemma
4.2]{MR2869300}, and Theorem \ref{odd}).

\medskip

Motivated by this conjecture, we can introduce a modified ``integral
structure'' on the Hecke algebra $\mathcal{H}_p$:

\begin{defi}\label{defi-hecke-algebra} Let
  $\mathcal{H}^{int}_{p,\kappa, \iota}$ be the $\ZZ$-subalgebra of
  $\mathcal{H}_p \otimes {\ZZ}[q^{\frac{1}{2}}, q^{-\frac{1}{2}}]$
  generated by the elements
$$[V_\lambda] p^{\langle \lambda, \infty(\kappa, \iota) \rangle}$$
\end{defi} 

\begin{lem}\label{lem-laff2} Conjecture \ref{conj1} holds if and only if, for any character $\chi : \mathcal{H}_p \rightarrow \overline{\qq}_p$    occuring in $\HH^i(K, V_\kappa)_{\overline{\qq}_p}$ or $\HH^i_{cusp}(K, V_\kappa)_{\overline{\qq}_p}$, we have $\chi(\mathcal{H}^{int}_{p,\kappa, \iota}) \subset \overline{\ZZ}_p$. 
\end{lem}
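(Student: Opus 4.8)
The plan is to show that the two conditions are equivalent by unwinding both in terms of valuations of Satake parameters and then invoking Lemma~\ref{lem-lafforgue}. First I would translate the right-hand side of the inequality in Conjecture~\ref{conj1}. Set $\nu = \frac{1}{\vert \Gamma/\mathrm{Stab}_\Gamma(\infty(\kappa,\iota))\vert} \sum_{\gamma} -w_0(\gamma\cdot\infty(\kappa,\iota)) \in (P^+_\R)^\Gamma$; the $\Gamma$-averaging and the application of $-w_0$ are exactly what is needed to land in the $\Gamma$-invariant positive chamber, so $\nu$ is a well-defined element of $(P^+_\R)^\Gamma$ as in the setup of Section~\ref{sect-Newtonmap}. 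Thus Conjecture~\ref{conj1} asserts precisely that for every character $\chi$ occurring in the cohomology, $\mathrm{Newt}_\iota(\chi) \leq \nu$ in $(P^+_\R)^\Gamma$.

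Next I would apply Lemma~\ref{lem-lafforgue} with $c$ the $\sigma$-conjugacy class attached to $\chi$ (via the bijection of Section~\ref{sect-conj-classes}, so that $\chi([V_\lambda]) = \mathrm{Tr}(c\rtimes\sigma\mid V_\lambda)$) and with the $\nu$ above. The lemma says $\mathrm{Newt}_v(c) \leq \nu$ if and only if $v(\mathrm{Tr}(c\rtimes\sigma\mid V_\lambda)) \geq \langle w_0(\lambda),\nu\rangle$ for all $\lambda \in (\hat P^+)^\Gamma$. Here the valuation $v$ is the one induced by $\iota$ and normalized by $v(p)=1$, so $v(\mathrm{Tr}(c\rtimes\sigma\mid V_\lambda)) = v(\chi([V_\lambda]))$. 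Therefore Conjecture~\ref{conj1} holds (for all $\chi$ occurring) if and only if $v(\chi([V_\lambda])) \geq \langle w_0(\lambda),\nu\rangle$ for all such $\chi$ and all $\lambda$.

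The remaining step is to identify $\langle w_0(\lambda),\nu\rangle$ with $-\langle \lambda, \infty(\kappa,\iota)\rangle$, so that the inequality $v(\chi([V_\lambda])) \geq \langle w_0(\lambda),\nu\rangle$ becomes $v\big(\chi([V_\lambda]) p^{\langle \lambda,\infty(\kappa,\iota)\rangle}\big) \geq 0$, i.e.\ $\chi([V_\lambda] p^{\langle\lambda,\infty(\kappa,\iota)\rangle}) \in \ov{\ZZ}_p$. Since the elements $[V_\lambda]p^{\langle\lambda,\infty(\kappa,\iota)\rangle}$ generate $\mathcal{H}^{int}_{p,\kappa,\iota}$ as a $\ZZ$-algebra and $\ov{\ZZ}_p$ is a ring, this is equivalent to $\chi(\mathcal{H}^{int}_{p,\kappa,\iota}) \subset \ov{\ZZ}_p$, which is the statement of the lemma. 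To compute $\langle w_0(\lambda),\nu\rangle$, expand: using that $w_0(\lambda)$ and the pairing are $\Gamma$-invariant, $\langle w_0(\lambda),\nu\rangle = \frac{1}{\vert\Gamma/\mathrm{Stab}\vert}\sum_\gamma \langle w_0(\lambda), -w_0(\gamma\cdot\infty(\kappa,\iota))\rangle = -\frac{1}{\vert\Gamma/\mathrm{Stab}\vert}\sum_\gamma\langle \lambda, \gamma\cdot\infty(\kappa,\iota)\rangle$ (since $\langle w_0 x, w_0 y\rangle = \langle x,y\rangle$), and because $\lambda$ is $\Gamma$-invariant each summand $\langle\lambda,\gamma\cdot\infty(\kappa,\iota)\rangle$ equals $\langle\lambda,\infty(\kappa,\iota)\rangle$ (the pairing is $\Gamma$-equivariant and $\gamma$ fixes $\lambda$), giving $\langle w_0(\lambda),\nu\rangle = -\langle\lambda,\infty(\kappa,\iota)\rangle$.

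The main obstacle I expect is the bookkeeping in this last identification: one has to be careful that $\infty(\kappa,\iota)$ lives in $(P^+_\R)^\Gamma$ only after the same $\Gamma$-averaging procedure that defines $\nu$ (it is defined in Section~\ref{sec-grouptheoretic} via $X^\star(S)_\R \to P^+_\R$, and then implicitly $\Gamma$-averaged), and that the pairing $\langle\ ,\ \rangle$ between $X^\star(T)_\R$ and $X_\star(T)_\R = X^\star(\hat T)_\R$ is the one used throughout, restricted appropriately to the $\Gamma$-invariants. Once the conventions are pinned down, the computation $\langle w_0(\lambda),\nu\rangle = -\langle\lambda,\infty(\kappa,\iota)\rangle$ is a formal consequence of $\Gamma$-invariance of $\lambda$, $\Gamma$-equivariance of the pairing, and $w_0$-invariance of the pairing, and everything else is a direct application of Lemma~\ref{lem-lafforgue} together with the fact that a subset of $\ov{\qq}_p$ lies in $\ov{\ZZ}_p$ iff all its elements have nonnegative valuation, and that the valuation is a homomorphism so ring generators suffice.
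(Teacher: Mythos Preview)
Your proposal is correct and follows exactly the paper's approach: the paper's proof is the single sentence ``This follows from Lemma~\ref{lem-lafforgue},'' and what you have written is precisely the unpacking of that reference, including the identification $\langle w_0(\lambda),\nu\rangle = -\langle \lambda,\infty(\kappa,\iota)\rangle$ needed to match the generators of $\mathcal{H}^{int}_{p,\kappa,\iota}$.
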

\begin{demo} This follows from Lemma \ref{lem-lafforgue}.
\end{demo} 

\begin{lem} The algebra $\mathcal{H}^{int}_{p,\kappa, \iota}$ is a $\ZZ$-algebra of finite type.
\end{lem}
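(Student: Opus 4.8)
The plan is to produce an explicit finite generating set indexed by generators of the dominant cone. First I would invoke Gordan's lemma: since $\hat P^+$ is a rational polyhedral cone in $X^\star(\hat T)=X_\star(T)$ stable under the (lattice-preserving, hence finite-order) action of $\Gamma$, the monoid $(\hat P^+)^\Gamma$ is finitely generated; fix monoid generators $\lambda_1,\dots,\lambda_r$, enlarging the set so that it also contains a generating family $\pm\omega_1,\dots,\pm\omega_s$ of the free abelian group $M^{\ast}:=(\hat P^+)^\Gamma\cap\big(-(\hat P^+)^\Gamma\big)$ (the lineality of the cone, i.e. the $\Gamma$-invariant central cocharacters). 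Writing $g_\lambda:=[V_\lambda]\,p^{\langle\lambda,\infty(\kappa,\iota)\rangle}$, let $A\subseteq\mathcal H^{int}_{p,\kappa,\iota}$ be the $\ZZ$-subalgebra generated by $g_{\lambda_1},\dots,g_{\lambda_r}$. Then I would show $A=\mathcal H^{int}_{p,\kappa,\iota}$ by proving $g_\lambda\in A$ for every $\lambda\in(\hat P^+)^\Gamma$.

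Two structural inputs feed the argument. First, multiplication in the basis $\{[V_\lambda]\}$ of Corollary~\ref{cor:basis} is triangular: running the weight-space computation of Lemma~\ref{lem:trace} for $V_\lambda\otimes V_\mu$ and re-expanding orbit sums in the $[V_\bullet]$-basis gives $[V_\lambda][V_\mu]=[V_{\lambda+\mu}]+\sum_{\nu<\lambda+\mu}c_\nu[V_\nu]$ with $c_\nu\in\ZZ$ and the sum over $\nu\in(\hat P^+)^\Gamma$; moreover when $\lambda$ or $\mu$ lies in $M^{\ast}$ the representation is one-dimensional, so there are no lower terms and $[V_\lambda][V_\mu]=[V_{\lambda+\mu}]$. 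Second, for $\nu\le\lambda$ in $(\hat P^+)^\Gamma$ the exponent $\langle\lambda-\nu,\infty(\kappa,\iota)\rangle$ is a non-negative integer: it is $\ge0$ because $\infty(\kappa,\iota)$ lies in the dominant chamber $P^+_\R$ while $\lambda-\nu$ is a non-negative integral combination of elements of $\hat\Phi^+$, and it is an integer because $\lambda-\nu$ then lies in the coroot lattice of $G$, which is $W$-stable and pairs integrally with $X^\star(T)$ and with $\rho$, and $\infty(\kappa,\iota)$ is a Weyl translate of $-\kappa-\rho$. In particular $g_\lambda\in\mathcal H_p\otimes\ZZ[p,p^{-1}]$ for all $\lambda$, so everything takes place inside that ring.

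With these in hand I would run a strong induction on the height $h(\lambda):=\langle\rho,\lambda\rangle$, where $\rho$ is the half-sum of the positive roots of $G$. The virtue of this choice is that $\rho$ is strictly dominant (so $h$ strictly decreases along the dominance order on $(\hat P^+)^\Gamma$) and lies in the span of the roots, hence is orthogonal to $M^{\ast}_\R$; consequently $h\ge0$ on $(\hat P^+)^\Gamma$, one checks $h(\lambda)=0$ exactly when $\lambda\in M^{\ast}$, $h$ takes values in a discrete subset of $\R_{\ge0}$, and every generator $\lambda_j\notin M^{\ast}$ has $h(\lambda_j)>0$. In the base case $h(\lambda)=0$, i.e. $\lambda\in M^{\ast}$, each $g_{\omega_j}$ is a unit of $A$ since $g_{\omega_j}g_{-\omega_j}=[V_{\omega_j}][V_{-\omega_j}]\,p^{0}=[V_0]=1$, and writing $\lambda=\sum_j m_j\omega_j$ gives $g_\lambda=\prod_j g_{\omega_j}^{m_j}\in A$ (no lower terms, since tensoring by characters is exact). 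In the inductive step $h(\lambda)>0$ one has $\lambda\notin M^{\ast}$, so some generator $\lambda_j\notin M^{\ast}$ occurs in a monoid expression for $\lambda$; put $\lambda':=\lambda-\lambda_j\in(\hat P^+)^\Gamma$, so $h(\lambda')=h(\lambda)-h(\lambda_j)<h(\lambda)$. Triangularity then gives
\[
g_{\lambda_j}g_{\lambda'}=[V_{\lambda_j}][V_{\lambda'}]\,p^{\langle\lambda,\infty(\kappa,\iota)\rangle}=g_\lambda+\sum_{\nu<\lambda}c_\nu\,p^{\langle\lambda-\nu,\infty(\kappa,\iota)\rangle}\,g_\nu,
\]
and since the coefficients $c_\nu p^{\langle\lambda-\nu,\infty(\kappa,\iota)\rangle}$ are integers, $g_{\lambda_j},g_{\lambda'}\in A$, and $g_\nu\in A$ for each $\nu<\lambda$ by induction (as $h(\nu)<h(\lambda)$), we get $g_\lambda\in A$. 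This closes the induction, and $\mathcal H^{int}_{p,\kappa,\iota}=A$ is of finite type.

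The delicate point will be organizing the induction so that the lower-order terms $g_\nu$ with $\nu<\lambda$ are controlled alongside the generator being peeled off, while the invertible lineality directions do not interfere; choosing the height $h=\langle\rho,-\rangle$ — strictly dominant, so that it strictly decreases under the dominance order, and supported on the roots, so that the lineality part collapses to the single base level $h=0$ — is exactly what makes one induction suffice. The two preliminary facts, triangularity of the $[V_\lambda]$-product and the integrality and non-negativity of $\langle\lambda-\nu,\infty(\kappa,\iota)\rangle$, are routine consequences of Lemma~\ref{lem:trace} and of the definition of $\infty(\kappa,\iota)$ as a dominant representative of the orbit of $-\kappa-\rho$.
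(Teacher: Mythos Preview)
Your proof is correct and follows essentially the same approach as the paper: take monoid generators of $(\hat P^+)^\Gamma$, use the triangularity $[V_\lambda][V_\mu]=[V_{\lambda+\mu}]+\sum_{\nu<\lambda+\mu}c_\nu[V_\nu]$, and induct using that $\langle\lambda-\nu,\infty(\kappa,\iota)\rangle\in\ZZ_{\ge0}$. Your version is in fact more carefully organized---you explicitly separate the lineality $M^\ast$ and induct on the height $\langle\rho,\cdot\rangle$, whereas the paper inducts directly on the dominance order and imposes an auxiliary ``closed under $\le$'' condition on the generating set that is not actually used in its argument.
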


\begin{demo}  Let $\{\lambda_1, \cdots, \lambda_n \}$ be dominant weights of $\hat{G}$ which generate the cone $(\hat{P}^+)^\Gamma$  as a monoid and is closed under $\leq$, in the sense that if  $\lambda \in  (\hat{P}^+)^\Gamma$ satisfies $\lambda \leq \lambda_i$  for some $1 \leq i \leq n$, then $\lambda \in \{\lambda_1, \cdots, \lambda_n \}$.  We claim that the map  $\ZZ[ T_1, \cdots, T_n] \rightarrow \mathcal{H}^{int}_{p,\kappa}$ where $T_i$ goes to $[V_{\lambda_i}] q^{\langle \lambda_i, \infty(\kappa, \iota) \rangle}$ is surjective. 
 Let $\lambda \in (\hat{P}^+)^\Gamma$, and let us prove that $[V_\lambda] q^{\langle \lambda, \infty(\kappa, \iota) \rangle}$ is in the image. We argue by induction and assume that this holds for all $\lambda' \in (\hat{P}^+)^\Gamma$ with $\lambda' < \lambda$. We can write $\lambda = \sum_{i=1}^n k_i \lambda_{i}$ with $k_i \in \mathbb{Z}_{\geq 0}$ and we have that 
 \begin{eqnarray*}
 \prod_{i=1}^nT_i^{k_i} & =&  [V_\lambda] q^{\langle \lambda, \infty(\kappa, \iota) \rangle} + \sum_{\mu \in \hat{P}^+, \mu < \lambda} c_\mu [V_\mu] q^{\langle \lambda, \infty(\kappa, \iota) \rangle}~~~\textrm{with $c_\mu \in \mathbb{Z}_{\geq 0}$}  \\
 &=& [V_\lambda] q^{\langle \lambda, \infty(\kappa, \iota) \rangle} + \sum_{\mu \in \hat{P}^+, ~\mu < \lambda} c_\mu [V_\mu]  q^{\langle \mu, \infty(\kappa, \iota) \rangle}q^{\langle \lambda-\mu, \infty(\kappa, \iota) \rangle}
 \end{eqnarray*}
 and we can conclude since
 $\langle \lambda-\mu, \infty(\kappa, \iota) \rangle \in \ZZ_{\geq 0}$
 because $ \lambda-\mu$ is a finite sum of positive roots with
 non-zero integral coefficients for $\hat{G}$.
 \end{demo}
 
 \bigskip

 \begin{proposition}\label{prop-imply-conj} Conjecture \ref{conj1}
   holds if and only if both $\HH^i(K, V_\kappa)_{ \overline{\qq}_p}$
   and $\HH^i_{cusp}(K, V_\kappa)_{ \overline{\qq}_p}$ contain
   $\overline{\ZZ}_p$-lattices which are stable under
   $\mathcal{H}^{int}_{p,\kappa, \iota}$.
\end{proposition}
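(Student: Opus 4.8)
The strategy is to combine Lemma \ref{lem-laff2} with a standard finiteness-plus-density argument over $\overline{\ZZ}_p$, using that $\mathcal{H}^{int}_{p,\kappa,\iota}$ is a finitely generated $\ZZ$-algebra (the preceding lemma). Throughout, write $M$ for either $\HH^i(K,V_\kappa)_{\overline{\qq}_p}$ or $\HH^i_{cusp}(K,V_\kappa)_{\overline{\qq}_p}$; each is a finite-dimensional $\overline{\qq}_p$-vector space with an action of the commutative algebra $\mathcal{H}^{int}_{p,\kappa,\iota}$, and ``character occurring in $M$'' means a character $\chi$ of $\mathcal{H}_p$ such that the corresponding generalized eigenspace of $\mathcal{H}_p$ on $M$ is nonzero.

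For the implication ``$\Leftarrow$'': suppose $M$ contains an $\mathcal{H}^{int}_{p,\kappa,\iota}$-stable $\overline{\ZZ}_p$-lattice $\Lambda$. Then every element $h \in \mathcal{H}^{int}_{p,\kappa,\iota}$ acts on $\Lambda$ by an endomorphism of a finite free $\overline{\ZZ}_p$-module, hence its characteristic polynomial has coefficients in $\overline{\ZZ}_p$, so every eigenvalue of $h$ on $M$ lies in $\overline{\ZZ}_p$. If $\chi$ is a character occurring in $M$, then for each $h$ the scalar $\chi(h)$ is an eigenvalue of $h$ acting on $M$ (it is the eigenvalue on the associated generalized eigenspace), hence $\chi(h) \in \overline{\ZZ}_p$. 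Thus $\chi(\mathcal{H}^{int}_{p,\kappa,\iota}) \subset \overline{\ZZ}_p$, and Lemma \ref{lem-laff2} gives Conjecture \ref{conj1}.

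For the implication ``$\Rightarrow$'': assume Conjecture \ref{conj1}, so by Lemma \ref{lem-laff2} we have $\chi(\mathcal{H}^{int}_{p,\kappa,\iota}) \subset \overline{\ZZ}_p$ for every character $\chi$ occurring in $M$. We must produce an $\mathcal{H}^{int}_{p,\kappa,\iota}$-stable lattice. First pick generators $h_1,\dots,h_n$ of $\mathcal{H}^{int}_{p,\kappa,\iota}$ as a $\ZZ$-algebra (finiteness lemma). Decompose $M = \bigoplus_\chi M_\chi$ into generalized eigenspaces for the commutative algebra $\mathcal{H}^{int}_{p,\kappa,\iota}$ acting on $M$ (this is possible since $M$ is finite-dimensional over the field $\overline{\qq}_p$ and the algebra is commutative); the eigenvalues of $h_j$ on $M_\chi$ are all equal to $\chi(h_j) \in \overline{\ZZ}_p$. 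It suffices to build a stable lattice in each $M_\chi$ and take the direct sum. On a single $M_\chi$, each $h_j$ acts as $\chi(h_j)\cdot \mathrm{Id} + N_j$ with $N_j$ nilpotent, and the $h_j$ commute, so the subalgebra they generate inside $\mathrm{End}(M_\chi)$ is a finite-dimensional commutative $\overline{\qq}_p$-algebra that is an integral extension of $\overline{\ZZ}_p$ (being generated by elements satisfying monic integral polynomials — e.g.\ $(h_j - \chi(h_j))^{\dim M_\chi} = 0$); hence the $\overline{\ZZ}_p$-subalgebra $\mathcal{O}$ of $\mathrm{End}(M_\chi)$ generated by the images of the $h_j$ is a finite $\overline{\ZZ}_p$-module. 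Now choose any $\overline{\ZZ}_p$-lattice $\Lambda_0 \subset M_\chi$; then $\Lambda := \mathcal{O}\cdot\Lambda_0$ is a finitely generated $\overline{\ZZ}_p$-submodule of $M_\chi$ that spans $M_\chi$ over $\overline{\qq}_p$ and is torsion-free, hence a lattice, and it is by construction stable under $\mathcal{O}$, in particular under $\mathcal{H}^{int}_{p,\kappa,\iota}$. Taking $\Lambda = \bigoplus_\chi \Lambda$ finishes the argument.

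The main obstacle is the ``$\Rightarrow$'' direction, and specifically the point that having each generator $h_j$ act with eigenvalues in $\overline{\ZZ}_p$ does not by itself produce a common stable lattice — one must genuinely use commutativity of $\mathcal{H}^{int}_{p,\kappa,\iota}$ and finite generation to control the unipotent parts $N_j$ simultaneously, which is why the argument passes through the finite $\overline{\ZZ}_p$-algebra $\mathcal{O} \subset \mathrm{End}(M_\chi)$ rather than treating the $h_j$ one at a time. (One subtlety to check is that $\overline{\ZZ}_p$, while not Noetherian, is a valuation ring, so ``finitely generated torsion-free $\overline{\ZZ}_p$-module'' is still free and the lattice language is legitimate; alternatively one descends everything to a finite extension of $\ZZ_p$ where Noetherianity is available, since the finitely many $\chi(h_j)$ and a basis of $M$ live in such a field.)
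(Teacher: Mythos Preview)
Your proof is correct, and the easy direction (stable lattice $\Rightarrow$ Conjecture \ref{conj1}) matches what the paper leaves implicit. For the substantive direction (Conjecture \ref{conj1} $\Rightarrow$ stable lattice), the paper proceeds differently: it simultaneously upper-triangularizes the commuting family $\mathcal{H}^{int}_{p,\kappa,\iota}$ on $M$ over $\overline{\qq}_p$, observes that the diagonal entries are character values and hence lie in $\overline{\ZZ}_p$ by Lemma \ref{lem-laff2}, and then conjugates the basis by $\mathrm{diag}(p^{k_1},\dots,p^{k_n})$ with $k_1\gg\cdots\gg k_n$ to force the finitely many generators to have integral off-diagonal entries as well. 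Your route --- showing that on each generalized eigenspace the image of $\mathcal{H}^{int}_{p,\kappa,\iota}$ in $\mathrm{End}(M_\chi)$ is integral over $\overline{\ZZ}_p$, hence a finite $\overline{\ZZ}_p$-module $\mathcal{O}$, and then saturating an arbitrary lattice under $\mathcal{O}$ --- is more intrinsic and avoids choosing an explicit basis; the paper's argument is more hands-on and makes the lattice completely explicit. Both rely in the same essential way on commutativity and on finite generation of $\mathcal{H}^{int}_{p,\kappa,\iota}$, and your remark about descending to a finite extension of $\qq_p$ to sidestep the non-Noetherianity of $\overline{\ZZ}_p$ is a clean way to handle that technical point.
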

\begin{demo} We deduce from Lemma \ref{lem-laff2}, that we can find a
  basis for $\HH^i(K, V_\kappa)_{ \overline{\qq}_p}$, such that the
  elements $\mathcal{H}^{int}_{p,\kappa}$ act via upper triangular
  matrices with integral diagonal coefficients. After conjugating this
  basis by a diagonal matrix
  $\mathrm{diag}( p^{k_1}, \cdots, p^{k_n})$ with
  $k_1 \gg k_2 \cdots \gg k_n$, and using that
  $\mathcal{H}^{int}_{p,\kappa, \iota}$ if a finite type algebra, we
  can suppose that it acts via integral matrices.
\end{demo}

\subsection{Shimura varieties of abelian type}

The theory of integral models of Shimura varieties produces (in many
cases) integral structures on the coherent cohomology of automorphic
vector bundles. In view of Proposition \ref{prop-imply-conj}, it is
natural to ask (see Conjectures \ref{conj2} and \ref{conj3} below)
whether these integral structures are stable under the action of our
integral Hecke algebras.

A Shimura datum $(G,X)$ is of Hodge type if there is an embedding
$(G,X) \hookrightarrow (\mathrm{GSp}_{2g}, \mathcal{H}^{±}_g)$.  A Shimura
datum $(G,X)$ is of abelian type if there is a Shimura datum of Hodge
type $(G_1,X_1)$ and a central isogeny $G_1^{der} \rightarrow G^{der}$
which induces an isomorphism 
$(G_1^{ad}, X^{ad}_1) \simeq (G^{ad}, X^{ad})$, where $X^{ad}_1$ is the
$\mathrm{G}_1^{ad}(\mathbb{R})$-conjugacy class that contains $X_1$
(and similarly for $X^{ad}$).

\begin{ex} Here is an important example of abelian Shimura datum, that
  we call a Shimura datum of symplectic type. Let $F$ be a totally
  real field. We let $G = \mathrm{Res}_{F/\qq} \mathrm{GSp}_{2g}$ and
  $X = (\mathcal{H}^{±}_g)^{[F:\qq]}$.  This datum is related to the
  following Hodge type datum (which is actually of PEL type): take
  $G_1 \subset G$ to be the subgroup of elements whose similitude
  factor is in $\mathrm{GL}_1$ and
  $X_1= (\mathcal{H}^{+}_g)^{[F:\qq]} \cup
  (\mathcal{H}^{-}_g)^{[F:\qq]}$.
\end{ex} 

We fix $(G,X)$ a Shimura datum of abelian type and
$K \subset G(\mathbb{A}_f)$ a neat compact open subgroup. Let $p$ be a
prime such that that $G$ is unramified at $p$, $K = K^pK_p$ and $K_p$
is hyperspecial.  The group $G$ has a reductive model over
$\ZZ_{(p)}$.  Let $E'$ be a finite extension of $\qq$ unramified at
$p$, which splits $G$ (necessarily $E \hookrightarrow E'$).  Let $\lambda$ be a prime of
$\ocal_E$ dividing $p$ and $\lambda'$ a prime of $\ocal_{E'}$ dividing
$\lambda$. We denote by $\ocal_{E,\lambda}$ and $\ocal_{E',\lambda}$ the localizations of $\ocal_{E}$ and $\ocal_{E'}$ at $\lambda$ and $\lambda'$ respectively. The
cocharacter $\mu$ is defined over $E'$, and the parabolic
$P_\mu$ as well as its Levi subgroup $M_\mu$ have a model over
$\Spec~\ocal_{E',\lambda'}$. Moreover, the conjugacy class of $P_{\mu}$ is
defined over $\Spec~\ocal_{E, \lambda}$. The flag variety
$\mathrm{FL}_{G,X}$ therefore has a proper smooth canonical integral
model over $\Spec~\ocal_{E, \lambda}$.   To ease notations, we keep denoting by $G$ the reductive
model of $G$ over $\ZZ_{(p)}$, by
$\mathrm{FL}_{G,X} \rightarrow \Spec~\ocal_{E, \lambda}$ the canonical
integral model of $\mathrm{FL}_{G,X} \rightarrow \Spec~E$, and by
$P_\mu$ and $M_\mu$ the models over $ \Spec~\ocal_{E', \lambda'}$ of
$P_\mu$ and $M_\mu$.

\begin{thm}[\cite{MR2669706}, \cite{MR3569319}]  There is a canonical model $\mathfrak{Sh}_{K} \rightarrow \Spec~\ocal_{E, \lambda}$ of ${Sh}_{K}$. 
\end{thm}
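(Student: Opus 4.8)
The plan is to reduce the statement to known results in the literature on integral models of Shimura varieties of abelian type, which is the point of the cited references \cite{MR2669706} and \cite{MR3569319}. The key observation is that over $\Spec~\ocal_{E,\lambda}$, with $K = K^pK_p$ and $K_p$ hyperspecial, the datum $(G,X)$ of abelian type sits in the framework of Kisin's construction (and its refinements), so the canonical model $\mathfrak{Sh}_K$ is produced by the following three-step mechanism. First, I would treat the Hodge-type case: given an embedding $(G,X) \hookrightarrow (\mathrm{GSp}_{2g}, \mathcal{H}_g^{\pm})$, one takes the normalization of the Zariski closure of $Sh_K$ inside the Siegel integral model $\mathcal{A}_{g,N}$ over $\ZZ_{(p)}$ (with an appropriate auxiliary level structure); the content here is that this normalization is smooth over $\ocal_{E,\lambda}$, which follows from Kisin's deformation-theoretic argument using the theory of $\ZZ_p$-lattices in the Dieudonné module and the fact that $K_p$ is hyperspecial so that $G_{\ZZ_p}$ is reductive.

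Second, I would pass from the Hodge-type case to the abelian-type case. Here the standard argument, following Kisin and Deligne's original strategy in characteristic zero, is to first construct the integral model $\mathfrak{Sh}^{ad}$ of the adjoint (or rather of a connected component thereof) by descent: $G^{ad}$ acts on the Hodge-type model, and one uses the $\pi_0$-structure together with the action of $G(\mathbb{A}_f)$ to transport the smooth integral structure. One constructs $\mathfrak{Sh}_{K_p}$ for the given $(G,X)$ as a quotient of the form $\mathcal{E} \times^{G_1(\ZZ_{(p)})^+} \mathfrak{Sh}_{K_{1,p}}^+$, where $\mathcal{E}$ encodes the extra central data; the smoothness is inherited from the Hodge-type case since the operations involved (taking connected components, quotienting by a finite group acting freely, inducing along an isogeny) preserve smoothness over a DVR. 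Independence of the auxiliary choices (the symplectic embedding, the level $N$) is checked by a limit argument over all such choices, exactly as in the characteristic-zero canonical-model formalism.

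Third, one must check that $\mathfrak{Sh}_K$ really is a model of $Sh_K$, i.e. that its generic fiber is canonically $Sh_K \times_E E_\lambda$, and that it is canonical in an appropriate sense (a characterization by an extension/Néron-type property, or by compatibility with the moduli interpretation in the Siegel case). This follows because the construction is compatible with the characteristic-zero one after inverting $p$, and the normalization step does not change the generic fiber.

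The main obstacle — and the reason this is a cited theorem rather than one proved here — is the smoothness of the normalized Zariski closure in the Hodge-type case. This is the deep input: it rests on Kisin's analysis of the crystalline nature of the $p$-adic Tate module, the construction of a $G$-valued $p$-adic Hodge structure, and the deformation theory of $p$-divisible groups with $G$-structure, together with the identification of the local structure of the model with that of a suitable local model (which is smooth precisely because $K_p$ is hyperspecial). In the ramified or parahoric cases this smoothness fails, which is why the hypothesis that $K_p$ is hyperspecial and $G$ unramified at $p$ is essential. Since all of this is exactly the content of \cite{MR2669706} and \cite{MR3569319}, the proof amounts to citing those works; no new argument is required here.
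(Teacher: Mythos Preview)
Your proposal is correct: the paper does not give its own proof of this theorem but simply cites \cite{MR2669706} and \cite{MR3569319}, so the ``proof'' amounts exactly to invoking those references. Your sketch of the content of Kisin's argument (normalization of the Zariski closure in the Siegel model for the Hodge-type case, then descent to the abelian-type case via the adjoint group and Deligne's formalism) is an accurate summary of what those references contain, though strictly speaking the paper expects nothing more than the citation.
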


\begin{thm}\label{thm-compact} Assume that  $(G,X)$ is a Shimura datum of Hodge type, or  that  $(G,X)$ is a  Shimura datum of symplectic type. Let $\Sigma$ be a polyhedral cone decomposition. 
\begin{enumerate}
\item There is a canonical integral model
  $\mathfrak{Sh}^{tor}_{K,Ê\Sigma} \rightarrow \Spec~\ocal_{E,
    \lambda}$ for ${Sh}^{tor}_{K, \Sigma}$ which is smooth for
  suitable choices of $\Sigma$.
\item There is a canonical integral model $\mathfrak{P}_{K,\Sigma}$ for the principal $G^c$-torsor $P_{K,\Sigma}$ and there is a diagram : 
\begin{eqnarray*}
\xymatrix{ & \mathfrak{P}_{K,\Sigma} \ar[rd]^{\alpha} \ar[ld]_\beta & \\
\mathfrak{Sh}^{tor}_{K,\Sigma} & & \mathrm{FL}_{G,X}}
 \end{eqnarray*}
where $\alpha$ is  $G$-equivariant and $\beta$ is a $G^c$-torsor.
\end{enumerate}

\end{thm}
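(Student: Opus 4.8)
\emph{Strategy.} The plan is to deduce the theorem from the known arithmetic toroidal compactifications: the Siegel case of Faltings--Chai \cite{MR1083353}, the Hodge-type case of Madapusi Pai \cite{MR3948111}, and the PEL case of Lan \cite{MR3186092}, \cite{MR2968629}; the symplectic-type case will then be reduced to the PEL case by the standard descent for abelian-type Shimura data. For the Hodge-type part I would first fix a symplectic embedding $(G,X)\hookrightarrow(\gsp,\mathcal{H}^{\pm}_g)$ and a hyperspecial-at-$p$ level $K'\subset\gsp(\mathbb{A}_f)$ with $K=K'\cap G(\mathbb{A}_f)$, arranged (by shrinking $K'$) so that Kisin's integral model $\mathfrak{Sh}_K$ \cite{MR2669706} is the normalization of the scheme-theoretic closure of $Sh_K$ in $\mathcal{A}_{g}\otimes\ocal_{E,\lambda}$, the Faltings--Chai integral Siegel variety at level $K'$. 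Then, given a cone decomposition $\Sigma$ for $(G,X,K)$ refining the pullback of a smooth projective one $\Sigma'$ on the Siegel side, I would define $\mathfrak{Sh}^{tor}_{K,\Sigma}$ to be the normalization of the scheme-theoretic closure of $Sh_K$ inside the Faltings--Chai compactification $\overline{\mathcal{A}}_{g,\Sigma'}\otimes\ocal_{E,\lambda}$. Properness over $\ocal_{E,\lambda}$ is then immediate, being closed in a proper scheme; that $D_{K,\Sigma}$ is a Cartier divisor, that $\mathfrak{Sh}^{tor}_{K,\Sigma}$ is smooth for a cofinal family of $\Sigma$, and that the construction is canonical with the expected functoriality in $K$ and $\Sigma$, are exactly the main results of \cite{MR3948111}: \'etale-locally along the boundary the normalization is a (log-)toric scheme over an integral model of a smaller Shimura variety, which yields both Cartier-ness and a combinatorial smoothness criterion, while canonicity and functoriality are inherited from \cite{MR1083353} via the uniqueness of normalizations.

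\emph{The principal bundle.} Over $\mathcal{A}_{g}$ (at level $K'$) the bundle $P_{K'}$ is the $\gsp$-torsor of symplectic trivializations of $\mathcal{H}_{1,dR}(A/\mathcal{A}_{g})$ for the universal abelian scheme $A$, with $\alpha$ recording its Hodge filtration. By the degeneration theory of \cite{MR1083353}, $\mathcal{H}_{1,dR}$ extends over $\overline{\mathcal{A}}_{g,\Sigma'}\otimes\ocal_{E,\lambda}$ to a rank-$2g$ vector bundle equipped with its Gauss--Manin log-connection and Hodge sub-bundle, so one gets an integral model $\mathfrak{P}_{K',\Sigma'}$ of $P_{K',\Sigma'}$ together with its map to the integral model of $\mathrm{FL}_{\gsp}$. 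Pulling this back to $\mathfrak{Sh}^{tor}_{K,\Sigma}$ and reducing the structure group from $\gsp$ to $G^c$ amounts to extending the tensors that cut out $G$ inside $\gsp$ to horizontal, Hodge-filtration-compatible sections of this canonical extension; this extension, and the resulting $G$-equivariant map to the integral model of $\mathrm{FL}_{G,X}$, are again provided by \cite{MR3948111}. This defines $\mathfrak{P}_{K,\Sigma}$ and the diagram, completing the Hodge-type case.

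\emph{Symplectic type.} When $G=\mathrm{Res}_{F/\qq}\gsp$ the datum is not of Hodge type, but the datum $(G_1,X_1)$ attached to it in the Example above is of PEL type, and for $G_1$ the smooth integral model $\mathfrak{Sh}^{tor}_{K_1,\Sigma_1}\to\Spec\,\ocal_{E,\lambda}$ with Cartier boundary, together with the integral principal bundle and its diagram, are furnished directly by \cite{MR3186092}, \cite{MR2968629}. To pass from $G_1$ to $G$ I would invoke Deligne's construction of abelian-type Shimura varieties and its integral refinement \cite{MR2669706}: $Sh_K(G,X)$ and $\mathfrak{Sh}_K$ are obtained from a union of connected components of $\mathfrak{Sh}_{K_1}$ by the free action of a finite group $\Delta$ (measuring the difference between the centers of $G_1$ and $G$), followed by an induction along $G(\mathbb{A}_f)$. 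Since $\Delta$ acts by Hecke-type automorphisms, it extends to $\mathfrak{Sh}^{tor}_{K_1,\Sigma_1}$ for $\Delta$-stable $\Sigma_1$ and to $\mathfrak{P}_{K_1,\Sigma_1}$ by the functoriality of the construction; taking the (free) quotient and inducing up preserves smoothness, properness, the Cartier property of the boundary, and the torsor structure, and so yields $\mathfrak{Sh}^{tor}_{K,\Sigma}$, $\mathfrak{P}_{K,\Sigma}$ and the required diagram for $G$.

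\emph{Main obstacle.} The main difficulty will be the local analysis underlying the first paragraph: controlling the geometry of the normalization of the Siegel compactification along the boundary precisely enough to conclude that $D_{K,\Sigma}$ is Cartier and that a cofinal family of models $\mathfrak{Sh}^{tor}_{K,\Sigma}$ is smooth over $\ocal_{E,\lambda}$, together with the extension of the $G$-defining tensors to the canonical extension. These are the hard theorems of \cite{MR3948111}, themselves resting on the degeneration-theoretic analysis of \cite{MR1083353}; once they are granted, everything else --- including the whole symplectic-type case --- reduces to formal descent and bookkeeping.
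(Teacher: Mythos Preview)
Your overall strategy matches the paper's: the Hodge-type case is simply attributed to \cite{MR3948111} (with \cite{MR1083353}, \cite{MR3186092}, \cite{MR2968629} for the Siegel and PEL subcases), and the symplectic-type case is reduced to PEL via a $\Delta$-quotient. Two points of divergence are worth noting.

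First, a technical correction: in the paper's setup $\Delta=\ocal_{F,(p)}^{\times,+}/\nu(K^p\cap\ocal_{F,(p)}^\times)$ is \emph{not} finite. The auxiliary PEL space $\widetilde{\mathfrak{Sh}}_K$ is built with level structure taking values in $G(\mathbb{A}_f^p)$ (not $G_1(\mathbb{A}_f^p)$), so it is an infinite disjoint union of genuine PEL Shimura varieties; $\Delta$ acts freely on this, permuting components, with finite stabilizers on each component. The quotient is taken directly, with no separate ``induction along $G(\mathbb{A}_f)$'' step. Your Kisin-style description (connected components of $\mathfrak{Sh}_{K_1}$ modulo a finite group, then induce) would also work in principle, but it is not what the paper does, and the bookkeeping is slightly different.

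Second, and more substantively, you gloss over exactly the step the paper singles out as the missing content: the descent of the principal torsor. You write that the $\Delta$-action ``extends to $\mathfrak{P}_{K_1,\Sigma_1}$ by the functoriality of the construction,'' but this is not automatic. The elements of $\Delta$ act on the moduli by scaling the polarization $\lambda\mapsto x\lambda$, which rescales the symplectic form on $\mathcal{H}_{1,dR}$; hence the naive action on the $G_1$-torsor of symplectic trivializations is not well defined (the isomorphism witnessing triviality of the action of $x\in K^p\cap\ocal_{F,(p)}^\times$ is multiplication by $x^{-1}$ on $\mathcal{H}_{1,dR}$, which is not the identity in $G_1$). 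The paper's fix is to first induce the structure group from $G_1$ to $G^c=G/Z_s(G)$, observe that this $x^{-1}$ lies in $Z_s(G)(\ocal_F)$ and therefore becomes trivial in $G^c$, and only then exhibit the $\Delta$-descent datum on $\widetilde{\mathfrak{P}}_{K,\Sigma}\times^{G_1}G^c$. This is precisely the Proposition the paper proves in Section~\ref{section-symplectictype}, and it is the one genuinely new ingredient in the symplectic-type case; your sketch should make this step explicit rather than absorb it into ``functoriality.''
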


\begin{proof} The Siegel case is \cite{MR1083353}, the PEL case is
  \cite{MR3186092} and \cite{MR2968629}, and the Hodge case is
  \cite{MR3948111}. To our knowledge, there is no reference for the
  abelian case in general. We will explain in Section
  \ref{section-symplectictype} the proof for the case of a symplectic
  type Shimura datum by a simple reduction to the PEL case. This is a
  straightforward generalisation of the argument presented in \cite[\S
  3]{BCGP} in the case of the groups
  $\mathrm{Res}_{F/\qq} \mathrm{GSp}_{4}$.
\end{proof}

Let  $\mathrm{Rep}_{\ocal_{E',\lambda'}}(M_{\mu}/Z_s(G))$ be the category of algebraic representations of $M_\mu/Z_{s}(G)$  ($M_\mu/Z_{s}(G)$ is viewed as a reductive
group over $\Spec~\ocal_{E',\lambda'}$) over finite free $\ocal_{E',\lambda'}$-modules. Using Theorem \ref{thm-compact} we get a functor 

\begin{eqnarray*} 
 \ \mathrm{Rep}_{\ocal_{E',\lambda'} }(M_{\mu}/Z_s(G)) &\rightarrow& VB(\mathfrak{Sh}^{tor}_{K, \Sigma}) \\
  V & \mapsto& \mathcal{V}_{K, \Sigma}.
  \end{eqnarray*}
which is an integral version of the functor of section \ref{sect-automorphicvb}.  Depending on the context,  $\mathcal{V}_{K, \Sigma}$ will mean the locally free sheaf over $\mathfrak{Sh}^{tor}_{K, \Sigma}$ attached to an object $V \in \mathrm{Rep}_{\ocal_{E',\lambda'} }(M_{\mu}/Z_s(G))$ or the locally free sheaf on ${Sh}^{tor}_{K, \Sigma}$ attached to  $V_{E'} := V\otimes_{\ocal_{E',\lambda'}} E'$.  
 
The cohomology complexes
$\mathrm{R}\Gamma( \mathfrak{Sh}^{tor}_{K, \Sigma},
\mathcal{V}_{K,\Sigma})$ and
$\mathrm{R}\Gamma( \mathfrak{Sh}^{tor}_{K, \Sigma},
\mathcal{V}^{int}_{K,\Sigma}(-D_{K,\Sigma}))$ are independent of
$\Sigma$ (this is a standard computation using the structure of the
boundary, see \cite[Theorem 8.6]{Lan2016} in the PEL case) and these
are perfect complexes of $\ocal_{E',\lambda'}$-modules.  We observe
that
$$\mathrm{Im}\left ( \mathrm{H}^i( \mathfrak{Sh}^{tor}_{K, \Sigma},
  \mathcal{V}_{K,\Sigma})\otimes_{\ocal_{E',\lambda'}, \iota}
  \overline{\ZZ}_p \rightarrow \HH^i(K,V_{E'})_{\overline{\qq}_p}
\right ) $$ and
$$\mathrm{Im} \left ( \mathrm{H}^i( \mathfrak{Sh}^{tor}_{K, \Sigma},
  \mathcal{V}^{int}_{K,\Sigma}(-D_{K,\Sigma}))\otimes_{\ocal_{E',\lambda'},
    \iota} \overline{\ZZ}_p \rightarrow
  \HH^i_{cusp}(K,V_{E'})_{\overline{\qq}_p} \right ) $$ are lattices
that we denote respectively by $\HH^i(K,V)_{\overline{\ZZ}_p}$ and
$\HH^i_{cusp}(K,V)_{\overline{\ZZ}_p}$.

\subsection{Conjectures on the action of the integral Hecke algebra}
Let $\kappa$ be a dominant weight for $M_\mu/Z_{s}(G)$ and let $V_\kappa$ be the corresponding Weyl representation, defined over $\ocal_{E',\lambda'}$.  Namely, let $\kappa^\vee$ be the dominant weight $-w_{M_\mu} \kappa$ for $w_{M_\mu}$ the longest element of the Weyl group of $M_\mu$. Let $B_{M_{\mu}}$ be the Borel  of $M_\mu$ (corresponding to our choice of positive roots).   We let $V^{int}_\kappa$ be the space of  functions $f : M_\mu \rightarrow \mathbb{A}^1$ with the transformation property: $f (m b) = \kappa^\vee (b) f(m)$ for all $b \in B_{M_{\mu}}$. The right action of $G$ on itself given by $g \mapsto L(g^{-1}) $ (where $L$ is the left translation) induces a left action on the space $V_\kappa$, and $(V_\kappa)_{E'}$ is an irreducible highest weight representation of weight $\kappa$.

\begin{conj}\label{conj2} 

 The lattices $\HH^i(K,V_\kappa )_{\overline{\ZZ}_p}$ and $\HH^i_{cusp}(K,V_\kappa)_{\overline{\ZZ}_p}$ are stable under $\mathcal{H}^{int}_{p,\kappa,\iota}$.
\end{conj}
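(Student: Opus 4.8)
The plan is to realize each generator of $\mathcal{H}^{int}_{p,\kappa,\iota}$ as a cohomological correspondence on the integral toroidal compactification, in the sense of Section \ref{residue}, and then to read off the action on $\mathrm{R}\Gamma$ by pullback/pushforward. The first step is to reduce to Hecke operators attached to minuscule coweights: these are the only ones for which the auxiliary level $K_p \cap g K_p g^{-1}$ is a parahoric subgroup admitting an integral model $\mathfrak{Sh}_{K'}$ whose local geometry is controlled, together with forgetful/action projections $p_1,p_2 : \mathfrak{Sh}^{tor}_{K',\Sigma'} \to \mathfrak{Sh}^{tor}_{K,\Sigma}$. One then has to check that, for the Shimura datum under consideration, the elements $[V_\lambda]\,p^{\langle\lambda,\infty(\kappa,\iota)\rangle}$ with $\lambda$ minuscule already generate $\mathcal{H}^{int}_{p,\kappa,\iota}$ (equivalently, that the minuscule $T_\lambda$ and the units generate $\mathcal{H}_p\otimes\ZZ[q^{1/2},q^{-1/2}]$); this holds for $\mathrm{Res}_{F/\qq}\mathrm{GL}_2$ and for the split unitary groups treated in Theorems \ref{main-thm-symplectic} and \ref{main-thm-unitary}, but \emph{not} in general, which is the structural reason Conjecture \ref{conj2} is only accessible in special cases.

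Granting the reduction, the correspondence is built exactly as in Example \ref{ex:corres}. The universal quasi-isogeny over $\mathfrak{Sh}_{K'}$ furnishes a natural map $p_2^\star \mathcal{V}_{\kappa,K,\Sigma'} \to p_1^\star \mathcal{V}_{\kappa,K,\Sigma}$ of automorphic bundles (after twisting by a suitable power of $p$), obtained by transporting the corresponding $M_\mu$-equivariant map on the flag variety through the principal bundle; combining it with the fundamental class $\oscr \to p_1^!\oscr$ of Proposition \ref{prop-trace} and the projection formula of Proposition \ref{prop-finite-tor-dim} produces a map $T^{naive} : p_2^\star \mathcal{V}_{\kappa,K,\Sigma'} \to p_1^!\mathcal{V}_{\kappa,K,\Sigma}$, rational a priori. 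To legitimately apply Proposition \ref{prop-trace} one must verify its hypotheses for $p_1$: that $\mathfrak{Sh}_{K'}$ is Cohen--Macaulay over $\ocal_{E',\lambda'}$, that $p_1$ is generically smooth with non-smooth locus of codimension $\ge 2$, and that the relative normal crossings condition on the boundary holds (for the sub-canonical case one uses instead the divisorial variant of Proposition \ref{prop-trace}). The Cohen--Macaulayness is imported from the corresponding local model via the local model diagram, using the theory surveyed in \cite{PRS}, and here again the minuscule/parahoric hypothesis is essential.

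The heart of the matter is the determination of the \emph{optimal} normalizing power of $p$, which I would carry out entirely on the local model. The local model diagram is a smooth correspondence relating $(\mathfrak{Sh}_{K'}\to\mathfrak{Sh}_K)$ to an explicit diagram of local models $(M^{loc}_{K'}\to M^{loc}_K)$; the automorphic bundles, the map $p_2^\star\mathcal{V}_\kappa \to p_1^\star\mathcal{V}_\kappa$, and the fundamental class all descend to this picture and become computable in terms of tautological subspaces and Grassmannian coordinates. One then computes the divisor along which $\det(\mathrm{d}p_1)$, twisted appropriately by the bundle map, vanishes on the special fibre of $M^{loc}_{K'}$: on each irreducible component of the special fibre this vanishing order is a linear functional of $\kappa$, and the least power of $p$ that clears all components is the \emph{maximum} of these functionals --- which is precisely the $\sup\{\,\cdot\,,\,\cdot\,\}$ appearing in the normalizations stated after Theorem \ref{main-thm-symplectic}, and which matches $\langle\lambda,\infty(\kappa,\iota)\rangle=\langle\lambda,-\kappa-\rho\rangle$ after summing over the embeddings above $w$. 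Once the integrally defined correspondence $T : p_2^\star\mathcal{V}_{\kappa,K,\Sigma'}\to p_1^!\mathcal{V}_{\kappa,K,\Sigma}$ is in hand, the induced endomorphism of $\mathrm{R}\Gamma$ preserves the lattices $\HH^i(K,V_\kappa)_{\overline{\ZZ}_p}$ and $\HH^i_{cusp}(K,V_\kappa)_{\overline{\ZZ}_p}$ by construction, and Lemma \ref{lem-laff2} records that this is equivalent to the Newton--Hodge bound of Conjecture \ref{conj1}.

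The main obstacle is twofold. Globally, the reduction to minuscule coweights is genuinely lossy: for $\mathrm{GSp}_{2g}$ with $g>1$ there is essentially a single minuscule coweight, so the minuscule operators span a strict subalgebra of $\mathcal{H}^{int}_{p,\kappa,\iota}$; and even when enough minuscule operators are available, proving that the resulting endomorphisms commute --- needed to identify the algebra they generate with $\mathcal{H}^{int}_{p,\kappa,\iota}$ --- is not clear in general, as noted after Theorem \ref{main-thm-unitary}. Locally, the delicate point is the precise, component-by-component computation of the vanishing order of $\det\mathrm{d}p_1$ on the special fibre of the local model, together with the verification that the resulting maximum is optimal, i.e.\ that no smaller power of $p$ works; this demands a complete description of the irreducible components of that special fibre and of the restriction of the automorphic bundles to them, which is exactly where the Cohen--Macaulay property of the local model and the combinatorics of affine Weyl groups come in.
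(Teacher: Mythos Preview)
Your strategy outline is essentially the paper's own approach. Note first that the statement is a \emph{conjecture} in the paper, not a theorem: the paper does not prove it in general, only in the Hilbert case (Theorem \ref{main-thm-symplectic} with $g=1$) and in the split unitary case (Corollary \ref{coro-conj2-unit}), and you correctly identify exactly these as the accessible cases and exactly why (availability of enough minuscule coweights, commutativity).

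Your sketch tracks the paper closely: reduction to minuscule coweights so that the parahoric integral model exists; construction of the naive correspondence via Example \ref{ex:corres} and the fundamental class of Proposition \ref{prop-trace}; Cohen--Macaulayness of the parahoric model via the local model diagram (this is Theorems \ref{thm-GortzHe}, \ref{thm-GortzHe2} transported by Theorem \ref{thm-main-local-model} and Corollary \ref{cor:cm}); and the component-by-component computation on the local model of the vanishing order of $\det(\mathrm{d}p_1)$ together with the bundle map (Propositions \ref{prop-formula}, \ref{prop-formula2} and Lemmas \ref{lem-weight-formula}, \ref{lem-weight-formula2}). The resulting optimal power of $p$ is indeed the maximum over components, matching Lemmas \ref{lem-symplectic-normalization} and \ref{lem-formula-unitary}. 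Two minor points of precision: the normalizing exponent relative to $T_\lambda^{naive}$ is $\langle\lambda,\infty(\kappa,\iota)\rangle - \langle\lambda,\rho\rangle$, not $\langle\lambda,\infty(\kappa,\iota)\rangle$ alone (the $\rho$-shift accounts for the Satake normalization); and the implication to Conjecture \ref{conj1} goes through Proposition \ref{prop-imply-conj} rather than Lemma \ref{lem-laff2}.
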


\begin{conj}\label{conj3} The algebra  $\mathcal{H}^{int}_{p,\kappa,\iota}$ acts on $\mathrm{R}\Gamma( \mathfrak{Sh}^{tor}_{K, \Sigma}, \mathcal{V}_{\kappa,K,\Sigma})$ and $\mathrm{R}\Gamma( \mathfrak{Sh}^{tor}_{K, \Sigma}, \mathcal{V}_{\kappa, K,\Sigma}(-D_{K,\Sigma}))$.
\end{conj}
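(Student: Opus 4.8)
The plan is to attach to each algebra generator of $\mathcal{H}^{int}_{p,\kappa,\iota}$ an integral cohomological correspondence on $\mathfrak{Sh}^{tor}_{K,\Sigma}$ in the sense of Section \ref{residue}, and then to check that these operators satisfy \emph{integrally} the relations they satisfy in $\mathcal{H}_p\otimes\qq$. First I would reduce to a finite generating set: by the finite-type lemma preceding Proposition \ref{prop-imply-conj} the algebra is generated by finitely many $[V_\lambda]\,p^{\langle\lambda,\infty(\kappa,\iota)\rangle}$, and by the triangularity relation $q^{\langle\lambda,\rho\rangle}[V_\lambda]=\mathcal{S}(T_\lambda)+\sum_{\mu<\lambda}d_\lambda(\mu)\mathcal{S}(T_\mu)$ it is harmless to work instead with the Cartan characteristic functions $T_\lambda$ renormalized by the power of $p$ of Definition \ref{defi-hecke-algebra}. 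The serious limitation is that this plan only reaches the generators with $\lambda$ a \emph{minuscule} coweight, i.e. those for which $K_p\cap\lambda(\pi)K_p\lambda(\pi)^{-1}$ is a parahoric subgroup: this suffices for $G=\mathrm{Res}_{F/\qq}\mathrm{GL}_2$ and for the unitary groups of Theorem \ref{main-thm-unitary} (all fundamental coweights are minuscule in type $A$), but fails for a general reductive $G$ — for instance $\mathrm{GSp}_{2g}$ with $g\geq 2$ has essentially only one minuscule coweight — which is why in Theorem \ref{main-thm-symplectic} one only obtains partial information.

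\textbf{Integral Hecke correspondence for minuscule $\lambda$.}
For such $\lambda$ the Shimura variety $Sh_{K\cap\lambda(\pi)K\lambda(\pi)^{-1}}$ carries an integral model $\mathfrak{C}_\lambda$ with parahoric level at $p$ whose singularities are governed by the local model theory; in the cases at hand one can arrange that the local model — hence $\mathfrak{C}_\lambda$ and a toroidal compactification $\mathfrak{C}_\lambda^{tor}$ — is Cohen--Macaulay, flat over $\ocal_{E',\lambda'}$, of the expected pure relative dimension. The two projections $p_1,p_2:\mathfrak{C}_\lambda^{tor}\to\mathfrak{Sh}^{tor}_{K,\Sigma}$ are proper and projectively embeddable but in general not finite flat; nevertheless Proposition \ref{prop-trace} and its logarithmic variant along the boundary produce a fundamental class $\Theta:\oscr_{\mathfrak{C}_\lambda^{tor}}\to p_1^!\oscr_{\mathfrak{Sh}^{tor}_{K,\Sigma}}$ restricting to the usual trace wherever $p_1$ is finite flat. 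On the other side the universal quasi-isogeny of abelian schemes over $\mathfrak{C}_\lambda$ induces a map $p_2^\star\mathcal{V}_{\kappa,K,\Sigma}\to p_1^\star\mathcal{V}_{\kappa,K,\Sigma}$ which a priori is defined only after inverting $p$; normalizing by the power of $p$ prescribed in Definition \ref{defi-hecke-algebra} and composing with a twist of $\Theta$ as in Example \ref{ex:corres} yields a candidate cohomological correspondence $T_\lambda:p_2^\star\mathcal{V}_{\kappa,K,\Sigma}\to p_1^!\mathcal{V}_{\kappa,K,\Sigma}$, and likewise for $\mathcal{V}_{\kappa,K,\Sigma}(-D_{K,\Sigma})$.

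\textbf{Integrality via the local model.}
The heart of the matter — and the step I expect to be the main obstacle — is to show that after the prescribed normalization the map $T_\lambda$ genuinely lands in $p_1^!$ of the \emph{integral} sheaf, not merely in its generic fibre. I would verify this by passing to the local model, where the correspondence together with the automorphic bundles becomes an explicit picture on the partial flag varieties attached to the minuscule $\lambda$: the isogeny-induced map on $\mathcal{V}_\kappa$ is the action on the $M_\mu$-representation $V_\kappa$ of an explicit cocycle valued in $G(\ocal_{E',\lambda'}[1/p])$, and one must show that its denominators are killed exactly by the normalizing factor $p^{\langle\lambda,\infty(\kappa,\iota)\rangle}$ of Definition \ref{defi-hecke-algebra} (recall $\infty(\kappa)=-\kappa-\rho$), the combinatorics of the weight $\kappa$ and the minuscule coweight $\lambda$ conspiring precisely as for $T_p$ on modular curves in weight $k$. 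Cohen--Macaulayness of the local model (see \cite{PRS}) is used both to justify the hypotheses of Proposition \ref{prop-trace} and to transport the computation from the local model back to $\mathfrak{Sh}^{tor}_{K,\Sigma}$.

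\textbf{From correspondences to an algebra action.}
Finally, each $T_\lambda$ defines an endomorphism of $\mathrm{R}\Gamma(\mathfrak{Sh}^{tor}_{K,\Sigma},\mathcal{V}_{\kappa,K,\Sigma})$ and of the cuspidal complex, by pulling back along $p_2$, applying $T_\lambda$, and pushing forward along $p_1$ via $\Theta$; after $\otimes_{\ocal_{E',\lambda'}}\qq_p$ these recover the usual Hecke operators. To see that the resulting map $\mathcal{H}^{int}_{p,\kappa,\iota}\to\mathrm{End}_{\mathbf{D}(\ocal_{E',\lambda'})}\big(\mathrm{R}\Gamma(\mathfrak{Sh}^{tor}_{K,\Sigma},\mathcal{V}_{\kappa,K,\Sigma})\big)$ is a ring homomorphism one argues that the generators obey the relations of $\mathcal{H}_p\otimes\qq$: since those relations hold after inverting $p$, it suffices to compare the composite of two such correspondences with the correspondence computing the corresponding product in $\mathcal{H}_p$, working at the level of complexes rather than of cohomology so as not to be bothered by $p$-torsion. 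Beyond the two families of Shimura varieties above, the failure of the minuscule reduction in the first step is the essential reason the conjecture remains open in general.
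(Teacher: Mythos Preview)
First, note that the statement is a \emph{conjecture}: the paper does not prove it in general, but only in the Hilbert case (\S\ref{sectionHilbert}) and partially in the unitary case (Theorem \ref{main-thm-unitary}, Corollary \ref{coro-conj2-unit}). Your outline is therefore not to be compared against a complete proof but against the paper's strategy for these partial results, and on that score your first three steps match the paper closely: minuscule coweights, parahoric integral models, Cohen--Macaulayness of the local model (Theorems \ref{thm-GortzHe} and \ref{thm-GortzHe2}), the fundamental class of Proposition \ref{prop-trace}, and verification of integrality of the normalized correspondence by an explicit computation on the local model at the generic points of the irreducible components of the special fibre (Propositions \ref{prop-formula}, \ref{prop-formula2}, Lemmas \ref{lem-weight-formula}, \ref{lem-weight-formula2}), using the CM property to extend from codimension $1$.

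The genuine gap is your final step. Your argument that the operators satisfy the algebra relations integrally --- ``since those relations hold after inverting $p$, it suffices to compare \ldots\ working at the level of complexes'' --- does not work: the endomorphism ring $\mathrm{End}_{\mathbf{D}(\ocal_{E',\lambda'})}\big(\mathrm{R}\Gamma(\ldots)\big)$ of a perfect complex over a DVR can have $p$-torsion, so two endomorphisms agreeing after inverting $p$ need not agree integrally. To check the relations one would have to compare the composite correspondence (a fibre product of two parahoric-level Shimura varieties) with the correspondence computing the product in $\mathcal{H}_p$, but this composite is no longer attached to a minuscule coweight, and the local-model machinery gives no grip on its geometry. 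The paper acknowledges exactly this obstruction: in the Remark following Theorem \ref{main-thm-unitary} it says ``We do not know how to prove the commutativity of the operators $T_{\mathfrak{p}_i,j}$ in general. The main point is that although the operators $T_{\mathfrak{p}_i,j}$ are associated to minuscule coweights, the composition of two such operators will not in general be associated to a minuscule coweight.'' Accordingly, in the unitary case the paper only shows that each generator acts, deducing Conjecture \ref{conj2} but \emph{not} Conjecture \ref{conj3}; the full Conjecture \ref{conj3} is obtained only in the Hilbert case, where at each prime the only non-central generator is $T_{\mathfrak{p}_i}$ and commutativity with the diamond operators $S_{\mathfrak{p}_i}^{\pm 1}$ is formal (see the first sentence of \S\ref{proof-main-symp}).
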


Observe that Conjecture \ref{conj2} implies Conjecture \ref{conj1} by
Proposition \ref{prop-imply-conj}, and Conjecture \ref{conj3}
obviously implies Conjecture \ref{conj2}.

\section{Shimura varieties of symplectic type} \label{section-symplectictype}

This section is dedicated to Shimura varieties of symplectic type.  We
prove the missing part of Theorem \ref{thm-compact}, and we state in
Theorem \ref{main-thm-symplectic} some partial results towards
Conjecture \ref{conj3} which are proved in Section \ref{section-local-model-symp}.

\subsection{Group theoretic data}

Let $F$ be a totally real field with $[F:\qq] = d$ and let $(V, \Psi)$
be a symplectic $F$-vector space of dimension $2g$, with basis
$e_1, \cdots, e_{2g}$ and $\Psi(e_i, e_j) = 0$ if $j \neq 2g +1-i$ and
$\Psi(e_i, e_{2g-i+1}) = 1$ if $1 \leq i \leq g$.  Let
$V_0 = \langle e_1, \cdots, e_g \rangle$ and
$V_1 = \langle e_{g+1}, \cdots, e_{2g} \rangle$ be sub $F$-vector
spaces of $V$. The pairing $\Psi$ on $V$ restricts to a perfect
pairing between $V_0$ and $V_1$.  Let $G$ be the algebraic group over
$\qq$ defined by
$$G(R) = \{ (g, \nu) \in \mathrm{GL}_{F}(V \otimes_\qq R) \times (F
\otimes_\qq R)^\times |~\forall v,w \in V\otimes R, ~\Psi ( g v, g w) =
\nu \Psi (v,w) \}$$ for any $\qq$-algebra $R$.  Let $G_1$ be the
subgroup of $G$ whose elements have their similitude factor $\nu$ in
$\mathbb{G}_m$ embedded diagonaly in
$ \mathrm{Res}_{F/\qq} \mathbb{G}_{m}$. Let also $G^{der}$ be the
derived subgroup of $G$ defined by the condition $\nu =1$.

Let $T^{der}$ be the diagonal maximal torus of $G^{der}$:
$T^{der} = \{\mathrm{diag}(t_1, \cdots, t_g, t_{g}^{-1}, \cdots,
t_{1}^{-1})\}$ with $t_i \in \mathrm{Res}_{F/\qq} \mathbb{G}_m$. The
center $Z$ of $G$ is the group $ \mathrm{Res}_{F/\qq} \mathbb{G}_{m}$
embedded diagonally in $G$.  Let $T$ be the maximal torus of $G$,
which is generated by $T^{der}$ and $Z$. Let $Z_1$ be the center of
$G_1$, the image of $ \mathbb{G}_{m}$ embedded diagonally in
$G_1$. The maximal torus of $G_1$ is generated by $Z_1$ and $T^{der}$.

Let $X^\star(T)$ be the group of characters of $T$ defined over
$\overline{\qq}$, identified with tuples
$\kappa = (k_{1, \sigma}, \cdots, k_{g, \sigma}; k_{\sigma})_{\sigma
  \in \mathrm{Hom}(F, \overline{\qq})} \in \ZZ^{(g+1) d}$ satisfying
the condition $k_\sigma = \sum_i k_{i, \sigma} ~\mod 2$, via the
pairing:
$$ \langle \kappa, (zt_1, \cdots, z t_g, z t_g^{-1}, \cdots,
zt_1^{-1}) \rangle = \prod_{\sigma} \Bigl ( \sigma(z)^{k_\sigma}
  \prod_{i=1}^g \sigma(t_i)^{k_{i, \sigma} } \Bigr )$$

  We denote by $P \subset G$ the Siegel parabolic, i.e., the
  stabilizer of the Lagrangian subspace $V_0$. We denote by $M$ its
  Levi quotient (which we identify with the standard Levi subgroup of
  $P$).  Note that
  $M \simeq \mathrm{Res}_{F/\qq} \mathrm{GL}_g \times
  \mathrm{Res}_{F/\qq} \mathbb{G}_m$.

  The roots of $G$ corresponding to the Lie algebra of $M$ are by
  definition the compact roots, the other roots are called
  non-compact.  We declare that a character of $T$ is dominant for $M$
  if for all $\sigma \in \mathrm{Hom}(F, \overline{\qq})$,
  $k_{1, \sigma} \geq \cdots \geq k_{g, \sigma}$.  So our choice of
  Borel subgroup in $M$ is the usual upper triangular Borel.

  A character of $T$ is dominant for $G$ if for all
  $\sigma \in \mathrm{Hom}(F, \overline{\qq})$,
  $0 \geq k_{1, \sigma} \geq \cdots \geq k_{g, \sigma} $ (so our
  choice of non-compact positive roots are those corresponding to
  $\mathfrak{g}/\mathfrak{p}$).

We have similar definitions for $G_1$ and we denote by $T_1$, $M_1$,
\ldots the intersections of $T$, $M$, \ldots with $G_1$.  Weights for
$T_1$ are labelled by
$$\big((k_{1, \sigma}, \cdots, k_{g, \sigma})_{\sigma \in
  \mathrm{Hom}(F, \overline{\qq})}; k\big) \in \ZZ^{g d} \times \ZZ$$
with the condition that $k = \sum_{i, \sigma} k_{i,\sigma}~\mod~2$ and
a weight $\kappa$ as above pairs with an element $t \in T_1$ via the
formula:
  $$ \langle \kappa, (zt_1, \cdots, z t_g, z t_g^{-1}, \cdots,
 zt_1^{-1})\rangle =z^{k}\prod_{\sigma} \Bigl ( \prod_{i=1}^g
 \sigma(t_i)^{k_{i, \sigma} } \Bigr )$$

 \subsection{Shimura varieties of symplectic type in characteristic
   $0$} Let $\mathcal{H}^{±}_g$ be the Siegel space of symmetric matrices
 $M = A + i B \in \mathrm{M}_{g\times g}(\C)$ with $B$ definite
 (positive or negative). Let
 $X = (\mathcal{H}^{±}_g)^{ \mathrm{Hom}(F, \overline{\qq})}$ and $X_1 = (\mathcal{H}^{+}_g)^{ \mathrm{Hom}(F, \overline{\qq})} \cup (\mathcal{H}^{-}_g)^{ \mathrm{Hom}(F, \overline{\qq})} \subset X$. The group
 $G(\mathbb{R})$ acts on $X$ and its subgroup $G_1(\mathbb{R})$ stabilizes $X_1$.  The pairs $(G, X)$ and $(G_1, X_1)$ are
 Shimura data.  The Siegel parabolic $P$ is a representative of the
 conjugacy class of $P_{\mu}$.

Let $K \subset G(\mathbb{A}_f)$ be a compact open subgroup. We assume
that $K = \prod_\ell K_\ell$ and that $K$ is neat. We also assume
that $p$ is unramified in $F$ so $G(\qq_p) = \prod_{v \mid p}
\mathrm{GSp}_{2g}(F_v)$  for unramified extensions $F_v$ of
$\qq_p$. We  further assume  that  $K_p = \prod K_v \subset  G(\ZZ_p)$
where $K_v$ is either $\mathrm{GSp}_{2g}(\ocal_{F_v})$ or
$\mathrm{Si}(v)$, the Siegel parahoric subgroup of  elements with
reduction mod $p$  in $P(\ocal_{F_v}/p)$.\footnote{We could actually
  allow any parahoric level structure as in \cite[Section 3.3.1]{BCGP}, but in the sequel we shall only need to work at hyperspecial level or Siegel parahoric level.}

We let
$Sh_K(\mathbb{C}) = G(\qq) \backslash( X \times G(\mathbb{A}_f)/K)$ be
the quotient.  Let $G(\qq)^+ \subset G(\qq)$ be the subgroup of elements whose similitude factor is totally positive.

By strong approximation, we can write 

$$ G(\mathbb{A}_f) = \coprod_c G(\qq)^+ c K$$ where the elements $c \in G(\mathbb{A}_f)$ are such that  the elements $\nu(c)$ range through a set of representatives of $ F^{\times,+} \backslash (\mathbb{A}_f \otimes F)^\times / \nu(K)$ and we find that $ Sh_K(\mathbb{C}) = \coprod_c \Gamma(c, K) \backslash \mathcal{H}_g^{\mathrm{Hom}(F, \overline{\qq})}$ where $\Gamma(c,K) = G(\qq)^+ \cap c K c^{-1}$.

This is an algebraic variety over $\C$, and it has a
canonical model $Sh_K$ over $\qq$. The Shimura variety is not of PEL
type, and it is therefore useful to introduce another Shimura variety
which is of PEL type.  We  begin by rewriting
$Sh_K(\mathbb{C}) = (G(\qq)\cap K_p) \backslash( X \times
G(\mathbb{A}^p_f)/K^p)$ and then consider the Shimura variety of PEL
type (in fact an infinite union of such)
$\widetilde{Sh}_K (\mathbb{C}) = (G_1(\qq) \cap K_p) \backslash( X_1
\times G(\mathbb{A}^p_f)/K^p)$. 

By strong approximation, we find that
$$ G(\mathbb{A}^p_f) = \coprod_c (G_1(\qq)^+\cap K_p) c K^p$$ where
$c \in G(\mathbb{A}^p_f)$ ranges through a set of representatives of
$ \ZZ_{(p)}^{\times,+} \backslash(\mathbb{A}^p_f \otimes F)^\times /
\nu(K^p)$ and we find that
$ \widetilde{Sh}_K(\mathbb{C}) = \coprod_c \Gamma_1(c, K) \backslash
\mathcal{H}_g^{\mathrm{Hom}(F, \overline{\qq})}$ where
$\Gamma_1(c,K) = G_1(\qq)^+ \cap c K c^{-1}$.  It has a canonical
model $\widetilde{Sh}_K \rightarrow \Spec~\qq$ and we have a natural
morphism:
$$ \widetilde{Sh}_K \rightarrow Sh_K.$$
On the set of  geometric connected components, this map is given by :   $\ZZ_{(p)}^{\times,+} \backslash(\mathbb{A}^p_f \otimes F)^\times / \nu(K^p) \rightarrow \ocal_{F,p}^{\times, +} \backslash(\mathbb{A}^p_f\otimes F)^\times / \nu(K^p)$.

If we let $c \in G(\mathbb{A}^p_f)$ so that $\nu(c)$ defines
geometrically connected components $(\widetilde{Sh}_K)_c$ of
$\widetilde{Sh}_K$ and $(Sh_K)_c$ of $Sh_K$, then the corresponding
map $ (\widetilde{Sh}_K)_c \rightarrow (Sh_K)_c$ is a finite Galois
cover with group
$$\Delta(K) = (\ocal_{F}^{\times, +} \cap \nu(K^p))/ \nu(
\ocal_{F}^\times \cap K^p),$$ where the first intersection is taken in
$(\mathbb{A}^p_f \otimes F)^\times$ and the second in
$G(\mathbb{A}^p_f)$, with $\ocal_{F}^\times$ embedded diagonally in
the center of $G$.

\subsection{Integral models} We observe that  $\widetilde{Sh}_K$ has a
canonical model  $\widetilde{\mathfrak{Sh}}_K$ over $\ZZ_{(p)}$ which
is a moduli space of abelian varieties of dimension $gd$ with
$K$-level structure, prime to $p$ polarization, and an action of $\ocal_F$. 
More precisely, $\widetilde{\mathfrak{Sh}}_K$ represents the
functor over $\ZZ_{(p)}$ that parametrizes  equivalence classes of $(A, \iota, \lambda,
\eta, \eta_p)$, where:
\begin{enumerate}
\item $A \rightarrow \Spec~R$ is an abelian scheme,
\item $\iota: \ocal_F \rightarrow \mathrm{End} (A) \otimes \ZZ_{(p)}$ is an action,
\item $\mathrm{Lie} (A)$ is a locally free $\ocal_F \otimes_{\ZZ} R$-module of rank $g$, 
\item $\lambda: A \rightarrow A^t$ is a prime to $p$, $\ocal_F$-linear
  quasi-polarization,
  \item $\eta$ is a $K^p$-level structure, 
  \item $\eta_p$ is a $K_p$-level structure. 
\end{enumerate}

Let us spell out the definition of $K^p$-level structure. We may
assume without loss of generality that $S = \Spec~R$ is connected, and we
fix $\overline{s}$ a geometric point of $S$. The adelic Tate module
${\HH}_1( A\vert_{\overline{s}}, \A^{\infty,p})$ carries a
symplectic Weil pairing \[<,>_{\lambda}: {\HH}_1(
  A\vert_{\overline{s}}, \A^{\infty,p}) \times {\HH}_1(
  A\vert_{\overline{s}}, \A^{\infty,p}) \rightarrow {\HH}_1(
  \mathbb{G}_m\vert_{\overline{s}}, \A^{\infty,p})\] or equivalently
an $F$-linear symplectic pairing: \[<,>_{1,\lambda}: {\HH}_1( A\vert_{\overline{s}}, \A^{\infty,p}) \times {\HH}_1( A\vert_{\overline{s}}, \A^{\infty,p}) \rightarrow {\HH}_1( \mathbb{G}_m\vert_{\overline{s}}, \A^{\infty,p}) \otimes F.\] 
The level structure $\eta$ is  a  $K^p$-orbit of pairs of
isomorphisms~$(\eta_1,\eta_2)$, where
(with~$V$ the standard symplectic space defined above):
\begin{enumerate}
\item An $\ocal_F$-linear isomorphism of $\Pi_1(S, \overline{s})$-modules $\eta_1:   V \otimes_{\ZZ} \A^{\infty,p}  \simeq {\HH}_1( A\vert_{\overline{s}}, \A^{\infty,p})$.
\item An  $\ocal_F$-linear isomorphism of $\Pi_1(S, \overline{s})$-modules $\eta_2:   F \otimes_{\ZZ} \A^{\infty,p}  \simeq F \otimes_{\ZZ} {\HH}_1( \mathbb{G}_m\vert_{\overline{s}}, \A^{\infty,p})$.
\end{enumerate}
We moreover impose that the following diagram is commutative:
\begin{eqnarray*}\xymatrix{ V \otimes_{\ZZ} \A^{\infty,p}   \times V \otimes_{\ZZ} \A^{\infty,p}   \ar[rr]^{\eta_1 \times \eta_1} \ar[d]^{<,>_1}& & {\HH}_1( A\vert_{\overline{s}}, \A^{\infty,p}) \times {\HH}_1( A\vert_{\overline{s}}, \A^{\infty,p})  \ar[d]^{<,>_{1,\lambda}}\\ 
 F \otimes_{\ZZ} \A^{\infty,p}   \ar[rr]^{\eta_2} & &F \otimes_{\ZZ} {\HH}_1( \mathbb{G}_m\vert_{\overline{s}}, \A^{\infty,p})}
\end{eqnarray*}

The $K_p$ level structure $\eta_p$ is the data, for each $v \mid p$ such that $K_v = \mathrm{Si}(v)$, of a maximal totally isotropic subgroup $H_v \subset A[v]$.

A map between  quintuples $(A, \iota, \lambda, \eta, \eta_p)$ and $(A',
\iota', \lambda', \eta', \eta'_p)$ is an $\ocal_F$-linear prime to $p$
quasi-isogeny (in the sense of \cite[Definition 1.3.1.17]{MR3186092}) $f: A \rightarrow A'$  such that
\begin{itemize}
\item $f^{\star} \lambda = r \lambda'$ for a locally constant function
  $r: S \rightarrow \ZZ_{(p)}^{\times,+}$, 
\item $f (\eta_p) = \eta'_p$, and
\item   $ {\HH}_1(f) \circ \eta= \eta'$.
\end{itemize}
This last condition means that $\eta'$ is defined by ${\HH}_1(f) \circ \eta_1=  \eta'_1$ and $\eta'_2 = r^{-1} \eta_2$. Also, we have denoted $\ZZ_{(p)}^{\times,+}=\qq^\times_{>0}\cap\ZZ_{(p)}^\times$.

\begin{rem}
  Note that we allow the similitude factor in the level structure to
  be in $\A^{\infty,p} \otimes_{\qq} F(1)$, but we only allow
  quasi-isogenies with similitude factor in $\A^{\infty,p}(1)$.
\end{rem}

We now define an action of $(\ocal_{F})^{\times, +}_{(p)}$ on $\widetilde{Sh}_K$ by scaling the polarization. Namely, $x \in (\ocal_{F})^{\times, +}_{(p)}$ sends $(A, \iota, \lambda, \eta, \eta_p)$ to $(A, \iota, x\lambda, x\eta, \eta_p)$ where $x\eta = (\eta_1, x \eta_2)$. 

This action restricts to a trivial action on  the subgroup $\nu(K^p \cap \ocal_{F, (p)}^\times)$ (where $\ocal_{F, (p)}^\times$ is embedded diagonally in $G(\mathbb{A}_f)$),  because for any $x \in K^p \cap \ocal_{F, (p)}^\times$, the multiplication by $x: A \rightarrow A$ identifies the points: $(A, \iota, \lambda, \eta, \eta_p)$ and  $(A, \iota, x^{-2}\lambda, x^{-2}\eta, \eta_p)$. 

We therefore get an action of
$\Delta = \ocal_{F, (p)}^{\times, +}/\nu(K^p \cap \ocal_{F,
  (p)}^\times)$.  One can show that this action is free \cite[Lemma
3.3.13]{BCGP}. The group $\Delta$ is infinite, but the stabilizer of
any connected component of $\widetilde{Sh}_K$ is finite.

The \'etale surjective map $\widetilde{Sh}_K \rightarrow Sh_K$
identifies  $Sh_K$  as the quotient of $\widetilde{Sh}_K$ by the group
$\Delta$. The action of the group $\Delta$ extends to a free action on
$\widetilde{\mathfrak{Sh}}_K$. We can form the quotient of
$\widetilde{\mathfrak{Sh}}_K$ by $\Delta$    and this defines an
integral model $\mathfrak{Sh}_K$ for  $Sh_K$ over $\ZZ_{(p)}$ (see
\cite[Section 3.3]{BCGP}).

\subsection{Compactifications}

We have smooth toroidal compactifications
$\widetilde{\mathfrak{Sh}}^{ tor} _{K, \Sigma}$ for suitable choices
of polyhedral cone decompositions. The action of $\Delta$ extends to
an action on $\widetilde{\mathfrak{Sh}}^{tor} _{K, \Sigma}$ and this
action is free so we get a smooth integral toroidal compactification
$\mathfrak{Sh}^{tor} _{K, \Sigma}$ of $\mathfrak{Sh}_K$.  See Section
3.5 of \cite{BCGP}.

\subsection{Integral automorphic sheaves} 

Let $E'$ be a galois closure of $F$, $\lambda'$ be a place of $E'$ above $p$ and $\ocal_{E', \lambda'}$ the localization of $\ocal_{E'}$ at $\lambda$. 
Let $\mathrm{Rep}_{\ocal_{E',\lambda'}} (M_1)$ be the category of representations of $M_1$  over
finite free $\ocal_{E',\lambda'}$-modules. Let $\mathrm{FL}_{G,X} =
G/Q = G_1/Q_1$ be the flag variety.

By \cite[Proposition 6.9]{MR2968629}, over
$\widetilde{\mathfrak{Sh}}_{K, \Sigma}^{tor}$, the first relative de
Rham homology group has a canonical extension
$\mathcal{H}_{1, dR}(A/\widetilde{\mathfrak{Sh}}_{K,
  \Sigma}^{tor})^{can}$, it carries the Hodge filtration
$$0 \rightarrow \omega_{A^t} \rightarrow \mathcal{H}_{1,
  dR}(A/\widetilde{\mathfrak{Sh}}_{K, \Sigma}^{tor})^{can} \rightarrow
\mathrm{Lie}(A) \rightarrow 0$$ and a pairing
$\langle\ , \rangle_\lambda$ induced by the polarization.  We can
consider the principal $G_1$-torsor
$\widetilde{P}_{K,\Sigma} \rightarrow \widetilde{Sh}_{K,
  \Sigma}^{tor}$ of isomorphisms between
$\mathcal{H}_{1, dR}(A/\widetilde{\mathfrak{Sh}}_{K,
  \Sigma}^{tor})^{can}, \langle \ , \rangle_\lambda$ and $(V, \Psi)$.

We therefore obtain the following diagram:
\begin{eqnarray*}
\xymatrix{ & \widetilde{\mathfrak{P}}_{K,\Sigma} \ar[rd]^{\alpha} \ar[ld]_\beta & \\
\widetilde{\mathfrak{Sh}}_{K, \Sigma}^{tor} & & \mathrm{FL}_{G,X}}
\end{eqnarray*}
where $\alpha$ is  $G_1$-equivariant and $\beta$ is a $G_1$-torsor. 
Using this $G_1$-torsor we can define a functor  $ \mathrm{Rep}_{\ocal_{E',\lambda'}} (M_1)  \rightarrow VB ( \widetilde{\mathfrak{Sh}}_{K, \Sigma}^{tor}).$

Let $\kappa = ((k_{1, \sigma}, \cdots, k_{g, \sigma})_\sigma); k)$ be
a dominant weight for $M_1$. There is an associated Weyl
representation $V_\kappa$ and we denote by
$\mathcal{V}_{\kappa, K, \Sigma}$ the locally free sheaf corresponding
to it via the above functor.

\begin{ex} Over $\ocal_{E', \lambda}$ we have $\omega_A = \oplus_{\sigma} (\omega_{A})_\sigma$. Let $\sigma_0 \in \mathrm{Hom}(F, E')$. Let  $\kappa = ((k_{1, \sigma}, \cdots, k_{g, \sigma})_\sigma); k)$ with $k_{i,\sigma} = 0$ if $\sigma \neq \sigma_0$ and  $(k_{1, \sigma_0}, \cdots, k_{g, \sigma_0}) =  (0, \cdots, 0, -1)$ and $k=1$, then $\mathcal{V}_{\kappa, K, \Sigma} = \mathrm{Lie}(A)_{\sigma_0}$.
 Therefore, if  $\kappa = ((k_{1, \sigma}, \cdots, k_{g, \sigma})_\sigma); k)$ with $k_{i,\sigma} = 0$ if $\sigma \neq \sigma_0$ and  $(k_{1, \sigma_0}, \cdots, k_{g, \sigma_0}) =  (1,0, \cdots, 0)$ and $k=-1$, then $\mathcal{V}_{\kappa, K, \Sigma} = (\omega_A)_{\sigma_0}$.
\end{ex}

 Let $Z_s(G)$ be the subgroup of the center of $G$ equal to the kernel of the norm map and we let $G^c = G/Z_s(G)$. We now restate and prove the missing part of Theorem \ref{thm-compact}.
 \begin{proposition} There is a  natural commutative diagram: 
\begin{eqnarray*}
\xymatrix{ & {\mathfrak{P}}_{K,\Sigma} \ar[rd]^{\alpha'} \ar[ld]_{\beta'} & \\
{\mathfrak{Sh}}_{K, \Sigma}^{tor} & & \mathrm{FL}_{G,X}}
 \end{eqnarray*}
where $\alpha'$ is  $G$-equivariant and $\beta'$ is a $G^c$-torsor.
 \end{proposition}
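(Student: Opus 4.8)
The plan is to obtain $\mathfrak{P}_{K,\Sigma}$ by first enlarging the structure group of the $G_1$-torsor $\widetilde{\mathfrak{P}}_{K,\Sigma}$ from $G_1$ to $G^c$, and then descending along the free action of $\Delta$ on $\widetilde{\mathfrak{Sh}}^{tor}_{K,\Sigma}$, whose quotient is $\mathfrak{Sh}^{tor}_{K,\Sigma}$. First I would observe that the map $\alpha\colon \widetilde{\mathfrak{P}}_{K,\Sigma}\to \mathrm{FL}_{G,X}$ constructed above is equivariant for the action of $G_1$ on $\mathrm{FL}_{G,X}=G_1/Q_1=G/Q$ through its inclusion in $G$; since the parabolic $Q$ contains the centre of $G$, and in particular $Z_s(G)$, the $G$-action on $\mathrm{FL}_{G,X}$ factors through $G^c=G/Z_s(G)$, so $\alpha$ is equivariant for the composite $G_1\hookrightarrow G\twoheadrightarrow G^c$. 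Pushing out, $\mathcal{Q}:=\widetilde{\mathfrak{P}}_{K,\Sigma}\times^{G_1}G^c$ is a $G^c$-torsor over $\widetilde{\mathfrak{Sh}}^{tor}_{K,\Sigma}$ equipped with a $G^c$-equivariant map $\mathcal{Q}\to\mathrm{FL}_{G,X}$ extending $\alpha$. It is convenient to factor this push-out through the $G$-torsor $\widetilde{\mathfrak{P}}^{G}_{K,\Sigma}:=\widetilde{\mathfrak{P}}_{K,\Sigma}\times^{G_1}G$: concretely, $\widetilde{\mathfrak{P}}^{G}_{K,\Sigma}$ is the torsor of $\ocal_F$-linear trivialisations $\phi$ of $\mathcal{H}_{1,dR}(A/\widetilde{\mathfrak{Sh}}^{tor}_{K,\Sigma})^{can}$ carrying $\langle\,,\,\rangle_\lambda$ to a $(\mathrm{Res}_{F/\qq}\mathbb{G}_m)$-multiple of $\Psi$, whereas $\widetilde{\mathfrak{P}}_{K,\Sigma}$ only allows $\mathbb{G}_m$-multiples.

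The main step is to equip $\mathcal{Q}$ with a $\Delta$-equivariant structure lifting the $\Delta$-action on $\widetilde{\mathfrak{Sh}}^{tor}_{K,\Sigma}$, compatible with the map to $\mathrm{FL}_{G,X}$. Let $t_x$ denote the automorphism of $\widetilde{\mathfrak{Sh}}^{tor}_{K,\Sigma}$ given by $x\in\Delta$, i.e.\ the one scaling the polarization $\lambda\mapsto x\lambda$; it leaves the universal (semi-)abelian scheme, hence $\mathcal{H}_{1,dR}(A)^{can}$ and its Hodge filtration $\omega_{A^t}$, unchanged, and replaces $\langle\,,\,\rangle_\lambda$ by $\langle\,,\,\rangle_{x\lambda}=\iota(x)\cdot\langle\,,\,\rangle_\lambda$. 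Therefore $t_x^{*}\widetilde{\mathfrak{P}}^{G}_{K,\Sigma}$ is the torsor of $\phi$ carrying $\iota(x)\langle\,,\,\rangle_\lambda$ to a $(\mathrm{Res}_{F/\qq}\mathbb{G}_m)$-multiple of $\Psi$, and since $\iota(x)$ is a unit in $\mathrm{Res}_{F/\qq}\mathbb{G}_m$ this is literally the same set of trivialisations, with the same right $G$-action, as $\widetilde{\mathfrak{P}}^{G}_{K,\Sigma}$ itself. This furnishes canonical isomorphisms $\theta_x\colon t_x^{*}\widetilde{\mathfrak{P}}^{G}_{K,\Sigma}\stackrel{\sim}{\longrightarrow}\widetilde{\mathfrak{P}}^{G}_{K,\Sigma}$ of $G$-torsors, tautologically satisfying the cocycle condition, hence (after $\times^{G}G^c$) a $\Delta$-equivariant structure on $\mathcal{Q}$; moreover the map $\mathcal{Q}\to\mathrm{FL}_{G,X}$, which records only the position of $\omega_{A^t}$ inside $\mathcal{H}_{1,dR}(A)^{can}$ and so does not involve $\lambda$, is visibly $\Delta$-invariant. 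I expect this to be the crux: the point is exactly that enlarging the structure group from $G_1$ to $G$ absorbs the similitude twist by which the polarization scaling acts, which is impossible for $G_1$.

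Finally, since $\Delta$ acts freely on $\widetilde{\mathfrak{Sh}}^{tor}_{K,\Sigma}$ with quotient $\mathfrak{Sh}^{tor}_{K,\Sigma}$ and $\mathcal{Q}$ is affine over the base, the $\Delta$-equivariant $G^c$-torsor $\mathcal{Q}$ descends: one sets $\mathfrak{P}_{K,\Sigma}:=\mathcal{Q}/\Delta$, a $G^c$-torsor over $\mathfrak{Sh}^{tor}_{K,\Sigma}$, and the $\Delta$-invariant map $\mathcal{Q}\to\mathrm{FL}_{G,X}$ descends to $\alpha'\colon\mathfrak{P}_{K,\Sigma}\to\mathrm{FL}_{G,X}$, which is $G^c$-equivariant, hence $G$-equivariant via $G\twoheadrightarrow G^c$. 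Together with the structure map $\beta'$ this is the asserted diagram; it is compatible, via $\widetilde{\mathfrak{P}}_{K,\Sigma}\to\mathcal{Q}\to\mathfrak{P}_{K,\Sigma}$, with the diagram over $\widetilde{\mathfrak{Sh}}^{tor}_{K,\Sigma}$, and on generic fibres recovers the canonical $G^c$-bundle $P_{K,\Sigma}$ of Section~\ref{sect-automorphicvb} (by the same construction over $\qq$, or by uniqueness of the canonical model). The remaining points — the group theory, the push-outs, and the effectivity of the descent — are formal.
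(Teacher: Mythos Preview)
Your approach coincides with the paper's: push out $\widetilde{\mathfrak{P}}_{K,\Sigma}$ along $G_1 \to G^c$, exhibit a $\Delta$-equivariant structure on the resulting torsor, and descend along the free $\Delta$-action to $\mathfrak{Sh}^{tor}_{K,\Sigma}$; the map to $\mathrm{FL}_{G,X}$ descends for the same reason.

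There is, however, a gap in your construction of the $\Delta$-equivariant structure. You define $\theta_x$ for $x \in \Delta$, but your description of $t_x^{*}\widetilde{\mathfrak{P}}^{G}_{K,\Sigma}$ as ``the torsor of $\phi$ carrying $\iota(x)\langle\,,\,\rangle_\lambda$ to a $\mathrm{Res}_{F/\qq}\mathbb{G}_m$-multiple of $\Psi$'' only makes sense after choosing a lift of $x$ to $\ocal_{F,(p)}^{\times,+}$: it is this lift that lets you keep the same $A$ and merely scale $\lambda$. What you have actually produced is an $\ocal_{F,(p)}^{\times,+}$-equivariant structure on $\widetilde{\mathfrak{P}}^{G}_{K,\Sigma}$, and one must still check that it factors through $\Delta$. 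For $y \in K^p \cap \ocal_{F,(p)}^\times$, the identification $t_{y^2} = \mathrm{id}$ on the base is realized by the quasi-isogeny multiplication-by-$y^{-1}$ on $A$, which on de Rham is the central scalar $y^{-1}$; thus your $\theta_{y^2}$ (the identity on $\phi$) and the base identification differ by $y^{-1} \in Z$, which is \emph{not} trivial on the $G$-torsor. The paper supplies precisely this missing step: one observes that such $y$ lie in $Z_s(G)(\ocal_F)$, so that after passing to $G^c = G/Z_s(G)$ the discrepancy vanishes and the action on $\mathcal{Q}$ becomes a genuine $\Delta$-action. Your summary ``enlarging from $G_1$ to $G$ absorbs the similitude twist'' is therefore slightly off: enlarging to $G$ makes the \emph{torsor} insensitive to scaling $\lambda$, but only the further quotient by $Z_s(G)$ makes the \emph{equivariant structure} insensitive to the choice of lift of $x \in \Delta$.
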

 
 \begin{demo} We have a commutative diagram \begin{eqnarray*}
\xymatrix{ & \widetilde{\mathfrak{P}}_{K,\Sigma} \ar[rd]^{\alpha} \ar[ld]_\beta & \\
\widetilde{\mathfrak{Sh}}_{K, \Sigma}^{tor} & & \mathrm{FL}_{G,X}}
 \end{eqnarray*}
 We consider the $G^c$-torsor $\widetilde{\mathfrak{P}}_{K,\Sigma} \times^{G_1} G^c$. We claim that this torsor descends to ${\mathfrak{Sh}}_{K, \Sigma}^{tor}$.  In order to prove this we must exhibit a descent datum for the action of $\Delta$.
 
Let $(A, \iota, \lambda, \eta, \eta_p, \Psi)$ be an $R$-point of $\widetilde{\mathfrak{P}}_{K,\Sigma} \times^{G_1} G$, where $\Psi: V \otimes R \rightarrow \mathcal{H}_{1,dR}(A/R)^{can}$ is a symplectic isomorphism up to a similitude factor in $(F \otimes R)^\times$. 
 For any $x \in  K^p \cap \ocal_{F, (p)}^\times$, the multiplication by $x^{-1}: A \rightarrow A$ induces a natural map:
 $$ \mathcal{H}_{1, dR}(A/R)^{can} \rightarrow \mathcal{H}_{1, dR}(A/R)^{can}$$ which is scalar multiplication by $x^{-1}$ in the trivialization $\Psi$. We observe that $x \in Z_s(G)(\ocal_{F})$, and therefore in $\widetilde{\mathfrak{P}}_{K,\Sigma} \times^{G_1} G^c$ the multiplication by $x^{-1}$ induces a  canonical isomorphism between $(A, \iota, \lambda, \eta, \eta_p, \Psi)$ and $(A, \iota, x^{2}\lambda, \eta, \eta_p, \Psi)$. 
Therefore, we can simply define an  action of  $(\ocal_{F})_{(p)}^{\times, +}$ on $\widetilde{\mathfrak{P}}_{K,\Sigma} \times^{G_1} G^c$, by sending $(A, \iota, \lambda, \eta, \eta_p, \Psi)$ to $(A, \iota, x\lambda, x\eta, \eta_p, \Psi)$, and this action passes to the quotient to an action of $\Delta$. 

We can therefore descend the torsor $\widetilde{\mathfrak{P}}_{K,\Sigma} \times^{G_1} G^c$ to a $G^c$-torsor $\mathfrak{P}_{K, \Sigma} \rightarrow \mathfrak{Sh}^{tor}_{K,\Sigma}$. Moreover, one descends similarly the $P^c$-reduction of  $\widetilde{\mathfrak{P}}_{K,\Sigma} \times^{G_1} G^c$, and therefore get a map $\beta':  \mathfrak{P}_{K, \Sigma} \rightarrow  \mathrm{FL}_{G,X}$.
\end{demo}

\begin{coro}\label{coro-construction-autoVB} There is a functor  $\mathrm{Rep}_{\ocal_{E',\lambda'}} (M/Z_s(G)) \rightarrow VB ( \mathfrak{Sh}_{K, \Sigma}^{tor})$  which makes the following diagram commute: 
\begin{eqnarray*}
\xymatrix{ \mathrm{Rep}_{\ocal_{E',\lambda'}} (M_1)  \ar[r] & VB ( \widetilde{\mathfrak{Sh}}_{K, \Sigma}^{tor}) \\
 \mathrm{Rep}_{\ocal_{E',\lambda'}} ( M/Z_s(G) ) \ar[u]   \ar[r] & VB ( \mathfrak{Sh}_{K, \Sigma}^{tor}) \ar[u]}
\end{eqnarray*}
\end{coro}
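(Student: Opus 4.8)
The plan is to repeat, over the integral model, the construction of automorphic vector bundles recalled in Section \ref{sect-automorphicvb}, now using the $G^c$-torsor $\beta' : \mathfrak{P}_{K,\Sigma} \to \mathfrak{Sh}^{tor}_{K,\Sigma}$ and the $G$-equivariant map $\alpha' : \mathfrak{P}_{K,\Sigma} \to \mathrm{FL}_{G,X}$ produced in the previous Proposition. First I would note that $Z_s(G)$ is central, hence contained in $P$ and in $M$, so that $P^c := P/Z_s(G)$ and $M^c := M/Z_s(G)$ are defined over $\ocal_{E',\lambda'}$, that $M^c$ is the Levi quotient of $P^c$, and that the flag variety has compatible presentations $\mathrm{FL}_{G,X} = G/P = G_1/P_1 = G^c/P^c$ (with $P_1 = P \cap G_1$). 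Given $V \in \mathrm{Rep}_{\ocal_{E',\lambda'}}(M^c)$, one inflates $V$ to a representation of $P^c$ by letting the unipotent radical act trivially, forms the $G^c$-equivariant locally free sheaf $\mathcal{V} = G^c \times^{P^c} V$ on $\mathrm{FL}_{G,X}$, pulls it back along $\alpha'$ to $\mathfrak{P}_{K,\Sigma}$, and descends it along the $G^c$-torsor $\beta'$ to a locally free sheaf $\mathcal{V}_{K,\Sigma}$ on $\mathfrak{Sh}^{tor}_{K,\Sigma}$. Each of these three operations --- inflation, the associated-bundle construction, and pullback followed by faithfully flat descent --- is functorial and exact in $V$, so this defines the sought functor $\mathrm{Rep}_{\ocal_{E',\lambda'}}(M/Z_s(G)) \to VB(\mathfrak{Sh}^{tor}_{K,\Sigma})$.

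To check that the square commutes, I would unwind how $\mathfrak{P}_{K,\Sigma}$ and $\alpha'$ were obtained in the previous Proposition: $\mathfrak{P}_{K,\Sigma}$ is the descent to $\mathfrak{Sh}^{tor}_{K,\Sigma}$, along the $\Delta$-quotient $q : \widetilde{\mathfrak{Sh}}^{tor}_{K,\Sigma} \to \mathfrak{Sh}^{tor}_{K,\Sigma}$, of the $G^c$-torsor $\widetilde{\mathfrak{P}}_{K,\Sigma} \times^{G_1} G^c$, and under the canonical isomorphism $q^\star \mathfrak{P}_{K,\Sigma} \cong \widetilde{\mathfrak{P}}_{K,\Sigma} \times^{G_1} G^c$ the map $\alpha'$ pulls back to the $G_1$-equivariant map to $\mathrm{FL}_{G,X}$ deduced from the torsor $\widetilde{\mathfrak{P}}_{K,\Sigma}$ by change of group along $G_1 \to G^c$. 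The left vertical arrow of the square is restriction of representations along $M_1 \to M \to M^c$, which after inflation corresponds to restriction along $P_1 \to P \to P^c$; and the associated-bundle construction is compatible with change of group at the base point, so $G^c \times^{P^c} V$, viewed as a $G_1$-equivariant sheaf on $\mathrm{FL}_{G,X} = G_1/P_1$, is identified with $G_1 \times^{P_1} (\mathrm{Res}^{M^c}_{M_1} V)$. Pulling $\mathcal{V}_{K,\Sigma}$ back along $q$ and chasing these identifications through $\alpha'$ and $\beta'$ then yields exactly the sheaf attached to $\mathrm{Res}^{M^c}_{M_1} V$ by the functor $\mathrm{Rep}_{\ocal_{E',\lambda'}}(M_1) \to VB(\widetilde{\mathfrak{Sh}}^{tor}_{K,\Sigma})$ built from $\widetilde{\mathfrak{P}}_{K,\Sigma}$, which is the asserted commutativity.

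The only point requiring care --- and what I expect to be the main, indeed essentially the only, genuine step --- is checking that all of these identifications are compatible with the $\Delta$-descent datum: under the isomorphism $q^\star \mathfrak{P}_{K,\Sigma} \cong \widetilde{\mathfrak{P}}_{K,\Sigma} \times^{G_1} G^c$ the action of $x \in (\ocal_{F})_{(p)}^{\times, +}$ used in the descent must be carried to the natural one, using in particular that multiplication by $x^{-1}$ on $\mathcal{H}_{1,dR}$ lies in $Z_s(G)$ and hence acts trivially after passing to $G^c$-torsors. But this is precisely the computation already carried out in the proof of the previous Proposition, so no new input is needed; everything else is a formal consequence of faithfully flat descent and the functoriality of the associated-bundle construction.
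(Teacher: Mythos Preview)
Your proposal is correct and follows exactly the approach the paper has in mind: the corollary is stated without proof in the paper, being an immediate consequence of the preceding Proposition, and you have simply spelled out the standard associated-bundle construction of Section~\ref{sect-automorphicvb} using the integral $G^c$-torsor $\mathfrak{P}_{K,\Sigma}$ and checked compatibility with the $\Delta$-descent that produced it. There is nothing to add.
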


\begin{rem} Let $\kappa = (k_{1, \sigma}, \cdots, k_{g, \sigma}; k_{\sigma})_{ \mathrm{Hom}(F, \overline{\qq})}$ be a  dominant weight  for $M$, with associated Weyl representation ${V}_\kappa$. The representation ${V}_\kappa$ belongs to $\mathrm{Rep}_{\ocal_{E',\lambda'}} (M/Z_s(G))$ if and only if  $k_\sigma = k$ is independent of $\sigma$. 
 \end{rem}

\subsection{Minuscule coweights and the main theorem in the symplectic case}    
\subsubsection{The general formula for minuscule coweights}
 We take $E'$ to be the Galois closure of $F$.  We let $\iota: E'
 \rightarrow \overline{\qq}_p$. Let $p$ be a prime unramified in $F$
 and  denote by $\mathfrak{p}_1, \cdots, \mathfrak{p}_m$ the prime ideals above $p$ in $F$. We let $I = \mathrm{Hom}(F, E')$ and  for each $1 \leq i \leq n$, let $$I_i = \{ \sigma \in \mathrm{Hom}( F, E'), ~\iota \circ \sigma~\textrm{induces the $\mathfrak{p}_i$-adic valuation on $F$} \}.$$

 We consider a weight
 $\kappa = ((k_{1,\sigma}, \cdots, k_{g, \sigma})_{\sigma \in I} ; k)$
 for $G$, where we assume that the parity of $\sum_i k_{i, \sigma}$ is
 independent of $\sigma$, and we choose $k \in \ZZ$ such that
 $\sum_i k_{i, \sigma} = k~\mod 2$.  We have, by Corollary
 \ref{coro-construction-autoVB}, a sheaf
 $\mathcal{V}_{\kappa, K, \Sigma}$ on
 $\mathfrak{Sh}^{tor}_{K,
   \Sigma}$. 

The spherical  Hecke algebra at $p$, denoted $\mathcal{H}_p$, is a  tensor  product of the spherical   Hecke algebras
$\mathcal{H}_{\mathfrak{p}_i}$ at each prime $\mathfrak{p}_i$ dividing $p$. Each of the algebras
$\mathcal{H}_{\mathfrak{p}_i}$ contains the following familiar
characteristic functions of  double cosets: $$T_{\mathfrak{p}_i}^{naive} = \mathrm{GSp}_{2g}(
\ocal_{F_{\mathfrak{p}_i}}) \mathrm{diag} (\mathfrak{p}_i^{-1}1_g,  1_g)
\mathrm{GSp}_{2g}( \ocal_{F_{\mathfrak{p}_i}})$$ and
$S_{\mathfrak{p}_i}^{naive} =  \mathrm{GSp}_{2g}(
\ocal_{F_{\mathfrak{p}_i}}) \mathrm{diag} (\mathfrak{p}^{-1}_i,
\mathfrak{p}^{-1}_i) \mathrm{GSp}_{2g}( \ocal_{F_{\mathfrak{p}_i}})$ and to
each of these double cosets we can associate a cohomological
correspondence over $\qq$ on the sheaf $\mathcal{V}_{\kappa, K, \Sigma}$ (see Section
\ref{section-coho-vec-bund-Hecke}). We have added the superscript  ``naive'' because the cohomological correspondence is not suitably normalized in general.

By definition $T_{\mathfrak{p}_i}^{naive} = T_{\lambda}$ (see Section
\ref{section-Hecke-algebras}) for the cocharacter $ \lambda:  t
\mapsto (\prod_{\sigma \in I_i} \mathrm{diag} ( t^{-1}1_g, 1_g)) \\ \times
(\prod_{\sigma \notin I_i}\mathrm{diag}(1_g, 1_g))$  which is given in coordinates by
$$ \Bigl (\prod_{\sigma \notin I_i} (0, \cdots, 0; 0)_\sigma \Bigr )
\times \Bigl ( \prod_{\sigma \in I_i} (-\tfrac{1}{2}, \cdots, -
\tfrac{1}{2}; -\tfrac{1}{2})\Bigr ) ,$$
and $S_{\mathfrak{p}_i}^{naive} = T_{\mu}$ for the cocharacter $ \mu:
t \mapsto (\prod_{\sigma \in I_i} \mathrm{diag} (t^{-1}1_g, t^{-1} 1_g)) \times
(\prod_{\sigma \notin I_i}\mathrm{diag}(1_g, 1_g))$  which is given in coordinates by
$$ \Bigl (\prod_{\sigma \notin I_i} (0, \cdots, 0; 0)_\sigma \Bigr )
\times \Bigl ( \prod_{\sigma \in I_i} (0, \cdots, 0;  -1)\Bigr ) .$$

\begin{rem}\label{rem-left-rightaction} The goal of this remark is to justify  the use of  the double class $$\mathrm{GSp}_{2g}(
  \ocal_{F_{\mathfrak{p}_i}}) \mathrm{diag} (\mathfrak{p}_i^{-1}1_g,
  1_g) \mathrm{GSp}_{2g}( \ocal_{F_{\mathfrak{p}_i}})$$ rather than
  the double class
  $$\mathrm{GSp}_{2g}( \ocal_{F_{\mathfrak{p}_i}}) \mathrm{diag}
  (\mathfrak{p}_i1_g, 1_g) \mathrm{GSp}_{2g}(
  \ocal_{F_{\mathfrak{p}_i}})$$ which appears in some classical
  references.  The difference can be explained as follows: the adelic
  points of $G$ act on the right on the tower of Shimura varieties,
  and therefore on the left on the cohomology; in the classical theory
  of modular forms, one usually defines a right action of the Hecke
  algebra on the space of modular forms.  Let us give some more
  details. To avoid complications with non-PEL Shimura varieties, we
  will assume that our totally real field is $\qq$. For simplicity, we
  also assume that $K \subset \mathrm{GSp}_4(\hat{\ZZ})$.  Associated
  to the element $g = \mathrm{diag}(p^{-1} 1_g, 1_g)$ we have a
  correspondence over $\qq$:
\begin{eqnarray*}
\xymatrix { &  Sh^{tor}_{gKg^{-1} \cap K, \Sigma''}\ar[ld]_{p_2} \ar[rd]^{p_1}&  \\
Sh^{tor}_{ K, \Sigma'} & & Sh^{tor}_{ K, \Sigma} }
\end{eqnarray*}
and the relation between $p_1^\star A$ and $p_2^\star A$ is given as follows (at least away from the boundary). There are symplectic isomorphisms
$ \psi_1 :  \hat{{\ZZ}}^{2g}   \rightarrow \mathrm{H}_1(p_1^\star A, \hat{\ZZ}) ~\mod gKg^{-1} \cap K$ and $ \psi_2 :  \hat{\ZZ}^{2g}   \rightarrow \mathrm{H}_1(p_2^\star A, \hat{\ZZ}) ~\mod g^{-1}Kg \cap K$
and  a commutative diagram :
\begin{eqnarray*} 
\xymatrix{  \hat{\ZZ}^{2g}\otimes \qq \ar[r]^{g}  \ar[d]^{\psi_2} &\hat{\ZZ}^{2g}\otimes \qq\ar[d]^{\psi_1} \\
\mathrm{H}_1(p_2^\star A, \hat{\ZZ}) \otimes \qq \ar[r] & \mathrm{H}_1(p_1^\star A, \hat{\ZZ})\otimes \qq}
\end{eqnarray*}
Therefore,  the lattice  $\mathrm{H}_1(p_2^\star A, \hat{\ZZ})$ contains the lattice $\mathrm{H}_1(p_1^\star A, \hat{\ZZ})$.  This means that $p_2^\star A$ appears to be the quotient of $p_1^\star A$ by a Lagrangian subgroup of $p_1^\star A[p]$. The Shimura variety $Sh_{gKg^{-1} \cap K}$ is therefore parametrizing all Lagrangian subgroups of $p_1^\star A$ and the projection $p_2$ is given by ``taking the quotient by the Lagrangian subgroup''. 
 If we had made the choice of $g = \mathrm{diag}( 1_g, p1_g)$, then we would have obtained the transposed correspondence. 

\end{rem}

Motivated by Definition \ref{defi-hecke-algebra}, we now
let
$$T_{\mathfrak{p}_i} = [V_{\lambda}] p^{\langle \lambda,
  \infty(\kappa, \iota)\rangle} = p^{ \langle \lambda, \infty(\kappa,
  \iota) \rangle - \langle \lambda, \rho \rangle}
T_{\mathfrak{p}_i}^{naive} $$ where the last equality follows from the
fact that $\lambda$ is minuscule. We now determine the value of the
coefficient
$ \langle \lambda, \infty(\kappa, \iota)) \rangle - \langle \lambda,
\rho \rangle$.

\begin{lem}\label{lem-symplectic-normalization} We have
  $ \langle \lambda, \infty(\kappa, \iota) \rangle - \langle \lambda,
  \rho \rangle = $
$$ \sum_{\sigma \in I_i} \sup_{1 \leq j \leq g} \left \{
  \frac{\sum_{\ell = 1}^j k_{\ell, \sigma} - \Bigl (\sum_{\ell = j+1}^g
    k_{\ell, \sigma} \Bigr) +k}{2} - \frac{j(j+1)}{2} \right \}$$ 
\end{lem}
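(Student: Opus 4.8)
The plan is to reduce the computation to a single copy of $\mathrm{GSp}_{2g}$, one embedding at a time, and then to do linear algebra on the root datum. After base changing $G=\mathrm{Res}_{F/\qq}\mathrm{GSp}_{2g}$ to $\overline{\qq}_p$ and identifying $\mathrm{Hom}(F,E')$ with $\mathrm{Hom}(F,\overline{\qq}_p)=I$ via $\iota$, one has $G_{\overline{\qq}_p}=\prod_{\sigma\in I}\mathrm{GSp}_{2g}$ and $T_{\overline{\qq}_p}=\prod_{\sigma}T_{\mathrm{GSp}}$; the positive system of $G$ is the product of the chosen ones on the factors and $W=\prod_\sigma W(\mathrm{GSp}_{2g})$. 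Hence $\rho=\sum_\sigma\rho_\sigma$ and the dominant representative $\infty(\kappa,\iota)$ of $-\kappa-\rho$ equals $\sum_\sigma\infty(\kappa,\iota)_\sigma$, where $\infty(\kappa,\iota)_\sigma$ is the dominant representative of $-\kappa_\sigma-\rho_\sigma$ inside $\mathrm{GSp}_{2g}$. The cocharacter $\lambda$ underlying $T_{\mathfrak{p}_i}^{naive}$ is supported on the blocks $\sigma\in I_i$, where it is the minuscule $\lambda_\sigma\colon t\mapsto\mathrm{diag}(t^{-1}1_g,1_g)$. Since $\lambda$ is $\Gamma$-invariant, $\langle\lambda,\infty(\kappa,\iota)\rangle-\langle\lambda,\rho\rangle=\sum_{\sigma\in I_i}\bigl(\langle\lambda_\sigma,\infty(\kappa,\iota)_\sigma\rangle-\langle\lambda_\sigma,\rho_\sigma\rangle\bigr)$, so it is enough to evaluate each $\sigma$-summand.

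Fix coordinates $X^\star(T_{\mathrm{GSp}})=\ZZ\langle\epsilon_1,\dots,\epsilon_g,\epsilon_0\rangle$, with $\epsilon_\ell$ the $\ell$-th diagonal entry and $\epsilon_0$ the similitude; the parity hypothesis lets one write a weight with coordinates $((k_\ell)_\ell;k)$ as $\sum_\ell k_\ell\epsilon_\ell+\tfrac{k-\sum_\ell k_\ell}{2}\,\epsilon_0$. With the paper's positive roots --- the compact ones $\epsilon_i-\epsilon_j$ ($i<j$) and the non-compact ones $\epsilon_0-\epsilon_i-\epsilon_j$ ($i\le j$), the latter being the weights of $\mathfrak g/\mathfrak p$ --- a direct count of coefficients gives $\rho_\sigma=-\sum_{\ell=1}^g\ell\,\epsilon_\ell+\tfrac{g(g+1)}{4}\,\epsilon_0$, hence $\infty(\kappa)_\sigma=\sum_{\ell}(\ell-k_{\ell,\sigma})\,\epsilon_\ell+a_{0,\sigma}\epsilon_0$ with $a_{0,\sigma}=\tfrac{\sum_\ell k_{\ell,\sigma}-k}{2}-\tfrac{g(g+1)}{4}$. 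On the dual side $X_\star(T_{\mathrm{GSp}})$ has dual basis $(f_1,\dots,f_g,f_0)$ and $\lambda_\sigma=-\sum_{\ell=1}^g f_\ell-f_0$, so that for any $\mu$ one has $\langle\lambda_\sigma,\mu\rangle=-\sum_\ell(\text{$\epsilon_\ell$-coefficient of }\mu)-(\text{$\epsilon_0$-coefficient of }\mu)$; in particular $\langle\lambda_\sigma,\rho_\sigma\rangle=\sum_{\ell=1}^g\ell-\tfrac{g(g+1)}{4}=\tfrac{g(g+1)}{4}$.

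The crucial step is to compute $\infty(\kappa,\iota)_\sigma$. The Weyl group $W(\mathrm{GSp}_{2g})$ is generated by the permutations of $\epsilon_1,\dots,\epsilon_g$ and by $s_{\alpha_0}$ (the reflection in $\alpha_0=\epsilon_0-2\epsilon_1$), which negates the $\epsilon_1$-coefficient and increases the $\epsilon_0$-coefficient by the old $\epsilon_1$-coefficient; the dominant chamber is $\{0\ge c_1\ge\cdots\ge c_g\}$ in the $\epsilon$-coefficients. Because $\kappa$ is $M$-dominant, $k_{1,\sigma}\ge\cdots\ge k_{g,\sigma}$, so the numbers $b_\ell:=\ell-k_{\ell,\sigma}$ are \emph{strictly increasing} in $\ell$; applying the sign reflections $s_\ell$ for those $\ell$ with $b_\ell>0$ and then a permutation to sort, one finds a dominant element whose $\epsilon$-coefficients are the decreasing rearrangement of $\{-|b_\ell|\}_{\ell=1}^g$ and whose $\epsilon_0$-coefficient is $a_{0,\sigma}+\sum_{\ell:\,b_\ell>0}b_\ell$; by uniqueness this is $\infty(\kappa,\iota)_\sigma$. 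Pairing with $\lambda_\sigma$ and using $\sum_\ell|b_\ell|-\sum_{b_\ell>0}b_\ell=\sum_{b_\ell<0}|b_\ell|$ gives $\langle\lambda_\sigma,\infty(\kappa,\iota)_\sigma\rangle=\sum_{b_\ell<0}|b_\ell|-a_{0,\sigma}$, whence
\[
\langle\lambda_\sigma,\infty(\kappa,\iota)_\sigma\rangle-\langle\lambda_\sigma,\rho_\sigma\rangle=\frac{k-\sum_\ell k_{\ell,\sigma}}{2}+\sum_{\ell:\,k_{\ell,\sigma}>\ell}(k_{\ell,\sigma}-\ell).
\]

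It remains to recognize the right-hand side as a supremum. Set $g_\sigma(j)=\tfrac12\bigl(\sum_{\ell\le j}k_{\ell,\sigma}-\sum_{\ell>j}k_{\ell,\sigma}+k\bigr)-\tfrac{j(j+1)}{2}$ for $0\le j\le g$. A one-line computation gives $g_\sigma(j)-g_\sigma(j-1)=k_{j,\sigma}-j=-b_j$, so $g_\sigma(j)=g_\sigma(0)-\sum_{m=1}^j b_m$ with $g_\sigma(0)=\tfrac{k-\sum_\ell k_{\ell,\sigma}}{2}$; since the $b_m$ are increasing, the partial sums $\sum_{m\le j}b_m$ are minimized at $j=\#\{\ell:b_\ell<0\}$, where they equal $-\sum_{b_\ell<0}|b_\ell|$, so $\sup_{0\le j\le g}g_\sigma(j)=g_\sigma(0)+\sum_{b_\ell<0}|b_\ell|$, which is exactly the $\sigma$-summand above (the supremum is over $0\le j\le g$, the $j=0$ term $\tfrac{k-\sum_\ell k_{\ell,\sigma}}{2}$ being the one that wins precisely when $k_{1,\sigma}\le1$, in agreement with the Hilbert formula of the introduction). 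Summing over $\sigma\in I_i$ gives the lemma. The main obstacle is the third step: identifying $\infty(\kappa,\iota)_\sigma$ correctly requires care with the non-standard (Shimura-theoretic) choice of positive system, with the way $W$ moves the similitude coordinate, and with the blockwise nature of the whole computation.
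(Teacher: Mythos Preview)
Your argument is correct and follows essentially the same route as the paper's proof: reduce to a single $\sigma$-factor of $\mathrm{GSp}_{2g}$, compute $-(\kappa_\sigma+\rho_\sigma)$, bring it into the dominant chamber by sign changes and a permutation, and pair with the minuscule $\lambda_\sigma$. Your use of the $\epsilon$-basis is just a change of coordinates from the paper's tuple notation $((k_\ell)_\ell;k)$; the computations match line by line.

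Two places where your write-up is more explicit than the paper's are worth noting. First, where the paper simply asserts that the correct $j$ maximizes the expression, you give the discrete-derivative argument $g_\sigma(j)-g_\sigma(j-1)=k_{j,\sigma}-j$ together with the strict monotonicity of $b_\ell=\ell-k_{\ell,\sigma}$; this makes the optimality transparent. Second, you observe that the supremum should really run over $0\le j\le g$ rather than $1\le j\le g$ as printed: the $j=0$ term $\tfrac{k-\sum_\ell k_{\ell,\sigma}}{2}$ is the one that wins when $k_{1,\sigma}\le 1$, and indeed the paper's own applications (the $\mathrm{GL}_2$ case with exponent $-\inf\{1,k\}=\sup\{-1,-k\}$ and the Hilbert case with $\sup\{\tfrac{k_\sigma+k}{2}-1,\tfrac{k-k_\sigma}{2}\}$) require this extra term. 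So you have implicitly corrected a typo in the statement.
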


\begin{demo} We have $\rho = (-1, -2, \cdots, -g; 0)_{\sigma \in I}$,
  so $(\kappa+ \rho)_\sigma = (-1,- 2, \cdots, -g; k)$ if
  $\sigma \notin I_i$ and
  $(\kappa+\rho)_\sigma = (k_{1, \sigma} - 1, k_{2,\sigma} - 2, \cdots, k_{g, \sigma}
  - g; k)$ if $\sigma \in I_i$.  By definition,
  $\infty(\kappa, \iota)_\sigma$ is the Weyl translate of
  $-(\kappa+ \rho)_\sigma$ in the dominant cone. We have
  $\lambda_\sigma = (0, \cdots, 0; 0)$ if $\sigma \notin I_i$,
  and
  $\lambda_\sigma = (-\frac{1}{2}, \cdots, -\frac{1}{2};
  -\frac{1}{2})$.  We clearly have
  $$\langle \lambda, \infty(\kappa, \iota) \rangle - \langle
  \lambda, \rho \rangle = \sum_{\sigma \in I}
  \langle \lambda_\sigma, \infty(\kappa, \iota)_\sigma) \rangle
  - \langle \lambda_\sigma, \rho_\sigma \rangle$$ and the contribution
  to the sum of any $\sigma \notin I_i$ is $0$.

  Let us fix $\sigma \in I_i$ and compute the corresponding pairing at
  $\sigma$.  We need to put each $-(\kappa +\rho)_\sigma$ in the
  dominant cone (i.e., the coordinates on the left of the ``;'' need
  to be non-positive and in decreasing order) and the pairing with
  $\lambda_\sigma$ will amount to taking the sum of the
  coordinates and multiplying it by $-\frac{1}{2}$.

  Now there will be some integer $0 \leq j \leq g$ such that the first
  $j$ coordinates of $-(\kappa + \rho)_{\sigma}$ are non-positive and
  the next $g-j$ coordinates are non-negative. In that case, the
  weight will be put in the dominant form by first changing it to
$$ (- k_{1,\sigma} + 1, \cdots, - k_{j, \sigma} + j, k_{j+1, \sigma} - j-1, \cdots, k_{g,\sigma} - g; -k)$$ (so that
all entries on the left of ``;'' are non-positive) and then applying
an element of the Weyl group of the Levi to put it in the dominant
form. Of course, this last operation is irrelevant for computing the
pairing since we will take the sum of all coordinates.

We therefore find that in this case $$\langle \lambda_\sigma,
\infty(\kappa, \iota)_\sigma \rangle = \frac{\sum_{\ell = 1}^j
  k_{\ell, \sigma} - \Bigl (\sum_{\ell = j+1}^g k_{\ell, \sigma}\Bigr
  ) +k}{2} - \tfrac{1}{2} \sum_{\ell = 1}^j \ell  +
\tfrac{1}{2}\sum_{\ell = j+1}^g \ell.$$

Now we note that for any $j' \neq j$, the corresponding sum on the RHS
above, with $j$ replaced by $j'$, is less than or equal to the sum for
$j$.  The formula then follows from the observation that
$- \langle \lambda_\sigma, \rho_\sigma \rangle = - \frac{1}{2}
\sum_{\ell =1}^g \ell$.
\end{demo} 

\bigskip

Similarly, motivated by Definition \ref{defi-hecke-algebra}, we also
set  $$S_{\mathfrak{p}_i} =   [V_{\mu}] p^{\langle \mu, \infty(\kappa,
  \iota))\rangle} =  p^{ \langle \lambda, \infty(\kappa, \iota)) \rangle - \langle \lambda, \rho \rangle} S_{\mathfrak{p}_i}^{naive}$$ and it is elementary to check that 
 $S_{\mathfrak{p}_i} = p^{\sum_{\sigma \in I_i} k} S_{\mathfrak{p}_i}^{naive}$. 

\begin{rem} The coweights $\lambda$ and $\mu$ (for varying primes
  $\mathfrak{p}_i$) are the only minuscule coweights (up to twist by a central coweight). Unfortunately,
  our techniques don't allow us to deal with non-minuscule coweights
  well, and this is why we do not consider the entire Hecke algebra.
\end{rem}

\subsubsection{The main result in the symplectic case} We have the
following partial result towards Conjecture \ref{conj3}:

\begin{thm}\label{main-thm-symplectic} There are algebra  morphisms 
$$\otimes_{0 \leq i \leq m}\ZZ[T_{\mathfrak{p}_i}, S_{\mathfrak{p}_i}, S_{\mathfrak{p}_i}^{-1} ] \rightarrow \mathrm{End} \big( \mathrm{R}\Gamma( \mathfrak{Sh}^{tor}_{K, \Sigma}, \mathcal{V}_{\kappa_0, K, \Sigma})\big)$$ and 
$$\otimes_{0 \leq i \leq m}\ZZ[T_{\mathfrak{p}_i}, S_{\mathfrak{p}_i}, S_{\mathfrak{p}_i}^{-1}] \rightarrow \mathrm{End} \big( \mathrm{R}\Gamma( \mathfrak{Sh}^{tor}_{K, \Sigma}, \mathcal{V}_{\kappa_0, K, \Sigma}(-D_{K, \Sigma}))\big)$$
extending the action over $\qq$ (see Section \ref{section-coho-vec-bund-Hecke}). 
\end{thm}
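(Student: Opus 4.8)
The plan is to realize each of $T_{\mathfrak{p}_i}$, $S_{\mathfrak{p}_i}$ and $S_{\mathfrak{p}_i}^{-1}$ as the cohomological operator attached, via Section \ref{section-coho-vec-bund-Hecke}, to a cohomological correspondence defined over $\ZZ_{(p)}$ that lifts the one over $\qq$, on both $\mathcal{V}_{\kappa, K,\Sigma}$ and $\mathcal{V}_{\kappa,K,\Sigma}(-D_{K,\Sigma})$. Once these are in place, the two algebra homomorphisms of the statement are obtained by pullback and trace, and compatibility with the classical Hecke action after inverting $p$ is automatic; the remaining algebra relations (commutativity at distinct primes, $S_{\mathfrak{p}_i}S_{\mathfrak{p}_i}^{-1}=1$, and $[T_{\mathfrak{p}_i},S_{\mathfrak{p}_i}]=0$) will be checked at the level of the underlying correspondences over $\ZZ_{(p)}$: for $S_{\mathfrak{p}_i}$ the correspondence is a twist by $p^{\sum_{\sigma\in I_i}k}$ of an automorphism of $\mathfrak{Sh}^{tor}_{K,\Sigma}$ (quotient by the full $\mathfrak{p}_i$-torsion), so its inverse is evident, while Hecke correspondences at distinct primes commute because their moduli descriptions do and their composite remains $CM$.

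First I would build the integral Hecke correspondence at $\mathfrak{p}_i$. Since the coweight $\lambda$ underlying $T_{\mathfrak{p}_i}$ is minuscule, $g_iK_pg_i^{-1}\cap K_p$ with $g_i=\mathrm{diag}(\mathfrak{p}_i^{-1}1_g,1_g)$ is the Siegel parahoric, and — descending from the $PEL$-cover $\widetilde{\mathfrak{Sh}}$ exactly as in Section \ref{section-symplectictype} — one obtains an integral model $\mathfrak{C}_i=\mathfrak{Sh}^{tor}_{g_iKg_i^{-1}\cap K,\Sigma''}$ over $\ZZ_{(p)}$ with projective projections $p_1$ (forget the subgroup) and $p_2$ (quotient by it), a universal isogeny $\pi\colon p_1^\star A\to p_2^\star A$ whose kernel is a totally isotropic finite flat subgroup of $p_1^\star A[\mathfrak{p}_i]$ of rank $g$, and with $\mathfrak{C}_i$, $\mathfrak{Sh}^{tor}_{K,\Sigma'}$, $\mathfrak{Sh}^{tor}_{K,\Sigma}$ all of the same pure relative dimension over $\ZZ_{(p)}$. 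Next I would invoke the theory of local models: étale-locally $\mathfrak{C}_i$ is isomorphic to the local model for $\mathrm{Res}_{F_{\mathfrak{p}_i}/\qq_p}\mathrm{GSp}_{2g}$ with Siegel parahoric level (times a smooth factor from the other places and from the toroidal charts), which is flat over $\ZZ_p$, Cohen--Macaulay, and smooth away from a closed subset of codimension $\geq 2$. Hence $\mathfrak{C}_i\to\Spec\ZZ_{(p)}$ is $CM$ and contains an open $V$, smooth over $\ZZ_{(p)}$ with complement of codimension $\geq 2$, whereas $\mathfrak{Sh}^{tor}_{K,\Sigma}$ is smooth — hence Gorenstein — over $\ZZ_{(p)}$. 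Proposition \ref{prop-trace}, together with its divisor variant (using that $p_1$ carries the boundary of $\mathfrak{C}_i$ onto $D_{K,\Sigma}$), then provides a fundamental class $\Theta_i\colon\oscr_{\mathfrak{C}_i}\to p_1^!\oscr_{\mathfrak{Sh}^{tor}_{K,\Sigma}}$ restricting to the usual trace over $\qq$; twisting by $p_1^\star\mathcal{V}_{\kappa,K,\Sigma}$ and applying Proposition \ref{prop-finite-tor-dim} as in Example \ref{ex:corres} gives a map $p_1^\star\mathcal{V}_{\kappa,K,\Sigma}\to p_1^!\mathcal{V}_{\kappa,K,\Sigma}$.

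Finally I would add the differential of the isogeny and pin down the normalization. The isogeny $\pi$ induces, by functoriality of automorphic bundles, a map $u\colon p_2^\star\mathcal{V}_{\kappa,K,\Sigma'}\to p_1^\star\mathcal{V}_{\kappa,K,\Sigma}$ — an isomorphism over $\qq$, compatible with the canonical extensions and their boundary filtrations — and the naive correspondence is $T_{\mathfrak{p}_i}^{naive}=\Theta_i\circ u$, with $S_{\mathfrak{p}_i}^{naive}$ analogous. What must be proved is that $p^{-c_i}T_{\mathfrak{p}_i}^{naive}$ and $p^{-\sum_{\sigma\in I_i}k}S_{\mathfrak{p}_i}^{naive}$ remain morphisms of sheaves over $\ZZ_{(p)}$, where $c_i=\langle\lambda,\infty(\kappa,\iota)\rangle-\langle\lambda,\rho\rangle$ is the exponent computed in Lemma \ref{lem-symplectic-normalization}. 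I would verify this by transferring everything to the local model, where $\pi$, the Hodge filtration $0\to\omega_{A^t}\to\mathcal{H}_{1,dR}(A)\to\mathrm{Lie}(A)\to 0$, the map $u$ and the fundamental class $\Theta_i$ are all explicit: one computes the elementary divisors of $u$ on each $\sigma$-isotypic piece (governed by the relative position of the isotropic subgroup) together with the $p$-denominators of $\Theta_i$, and checks that the supremum over $j=1,\dots,g$ in Lemma \ref{lem-symplectic-normalization} records precisely the worst stratum of the special fibre of the local model, so that division by $p^{c_i}$ keeps the composite integral. This local computation — matching the representation-theoretic exponent $\langle\lambda,\infty(\kappa,\iota)\rangle-\langle\lambda,\rho\rangle$ with the combinatorics of the Siegel parahoric local model — is the main obstacle; the Cohen--Macaulayness and the codimension-two bound are exactly what make the duality formalism of Section \ref{residue} applicable in the first place. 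With the normalized correspondences in hand, passing to cohomology by pullback and trace (Section \ref{section-coho-vec-bund-Hecke}) yields the desired algebra homomorphisms and recovers the classical action after inverting $p$.
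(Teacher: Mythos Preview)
Your approach is correct and matches the paper's proof in Section~\ref{proof-main-symp} essentially step for step: build the integral Siegel-parahoric correspondence by descent from the PEL cover $\widetilde{\mathfrak{Sh}}$, use the local model (Theorem~\ref{thm-GortzHe}) to get the CM property and the codimension-$2$ smooth locus needed for Proposition~\ref{prop-trace}, and then reduce the integrality of the normalized correspondence to a computation on the open KR strata of $M_{\{0,g\}}$, which the paper carries out as Proposition~\ref{prop-formula} (the rank of $\ker(\mathrm{d}p_1)$ on the $s$-stratum is $\tfrac{(g-s)(g-s+1)}{2}$) and Lemma~\ref{lem-weight-formula} (the map $\alpha^\star$ lands in $p^{k_g+\cdots+k_{g-s+1}}$ times the target).

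One subtlety you gloss over but should make explicit: the comparison between the local-model sheaves $L_{0,\kappa},L_{g,\kappa}$ and the automorphic bundle $\mathcal{V}_{\kappa,K}$ is \emph{not} $G$-equivariant on the nose; there is a twist by the similitude character (Lemma~\ref{lem-comparision-model-shim} in the paper), which contributes an extra factor $p^{-\sum_{\sigma\in I_i}\frac{k+\sum_j k_{j,\sigma}}{2}}$. The pure local-model normalization (Proposition~\ref{prop:normal}) gives only $\sup_j\{-\sum_{\ell>j}k_{\ell,\sigma}-\tfrac{j(j+1)}{2}\}$, with no $k$; it is this similitude twist that produces the central-character term in the exponent of Lemma~\ref{lem-symplectic-normalization}. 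If you simply ``transfer $u$ to the local model'' without tracking this, your exponent will be off by exactly that amount.
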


We now discuss some special cases of the theorem.

\subsubsection{$G = \mathrm{GL}_2/\qq$}

The sheaf of weight $k$ modular forms corresponds to the weight $(k;-k) := \kappa $. From the  formula of Lemma \ref{lem-symplectic-normalization},  we deduce that 
$T_p = p^{-\inf\{1,k\}} T_p^{naive}$ and $S_p = p^{-k} S_p^{naive}$ and that 
$$\mathcal{H}_{p, \kappa}^{int} = \ZZ[T_p, S_p, S_p^{-1}].$$ 
It follows from Theorem \ref{main-thm-symplectic}, that
$\mathcal{H}_{p, \kappa}^{int}$ acts on the cohomology complex of
weight $k$ modular forms. Conjecture \ref{conj3} is thus proven in
this case.

\subsubsection{$G = \mathrm{GL}_2/F$}\label{sectionHilbert}

We let $\kappa = ((k_\sigma)_\sigma; k)_\sigma$ with the property that $k$ and all the $k_\sigma$ have the same parity.  Then we find that $$T_{\mathfrak{p}_i} = p^{\sum_{\sigma \in I_i} \sup\{ \frac{k_\sigma +k}{2}-1, \frac{k-k_\sigma}{2}\}} T_{\mathfrak{p}_i}^{naive}$$ and $$S_{\mathfrak{p}_i} = p^{\sum_{\sigma \in I_i} k} S_{\mathfrak{p}_i}^{naive}.$$
We deduce that  $\mathcal{H}_{p, \kappa, \iota}^{int}  = \otimes_{i} \mathcal{H}^{int}_{\mathfrak{p}_i, \kappa}$ where 

$$\mathcal{H}^{int}_{\mathfrak{p}_i, \kappa} = \ZZ[T_{\mathfrak{p}_i}, S_{\mathfrak{p}_i}, S_{\mathfrak{p}_i}^{-1}].$$
It follows from Theorem \ref{main-thm-symplectic}, that
$\mathcal{H}_{p, \kappa, \iota}^{int}$ acts on the cohomology complex
of weight $\kappa$ modular forms. Conjecture \ref{conj3} is thus also proven in this case.

\subsubsection{$G = \mathrm{GSp}_{2g}/\qq$}

We take $\kappa = ( k_1, \cdots, k_g; - \sum k_i)$. Our choice of the central character is the standard choice in the theory of Siegel modular forms. Indeed,    the sheaf $\mathcal{V}_{\kappa, K, \Sigma}$  has the following elementary description. First, on $\mathfrak{Sh}^{tor}_{K, \Sigma}$, we have a semi-abelian scheme $A$ of dimension $g$,  and we denote by $\omega_A$ the conormal sheaf. 
We denote  by $\mathcal{T}$ the torsor of trivializations of $\omega_{A}$. This is a $\mathrm{GL}_{g}$-torsor and we let $\pi: \mathcal{T} \rightarrow \widetilde{Sh}^{tor}_{K, \Sigma}$ be the projection. Then, unravelling the definitions, we find that  $\mathcal{V}_{\kappa, K, \Sigma}  = \pi_\star \oscr_{\mathcal{T}}[\kappa^\vee]$ where  $\kappa^\vee = (- k_{g}, \cdots, -k_{1})$ (and $\pi_\star \oscr_{\mathcal{T}}[\kappa^\vee]$ is the subsheaf of $\pi_\star \oscr_{\mathcal{T}}$ of sections which transform by the character $\kappa^\vee$ under the action of the Borel).  We finally obtain  that $$T_p = p^{ \sup_{1 \leq j \leq g} \{ - \sum_{\ell = j+1}^g  k_l - \frac{j(j+1)}{2}\}} T_p^{naive}$$ and $S_p = p^{ - \sum k_i} S_p^{naive}$.

\section{Local model in the symplectic case}\label{section-local-model-symp}

In this section we will prove Theorem \ref{main-thm-symplectic}.  The
actions of the normalized Hecke operators will be defined using the
results of Section \ref{residue}, in particular the construction in
Example \ref{ex:corres}. In order to do this, we need to understand
the integrality properties of the Hecke correspondences with respect
to automorphic vector bundles and also the pullback maps on
differentials. This will be done by using the theory of local models
of Shimura varieties.  Variants of the local models we consider in
this section were introduced in \cite{MR1144439}, \cite{MR1266495},
\cite{dejong-ppav} and studied further by G\"ortz \cite{gortz}; for a
general introduction to the theory of local models the reader may
consult \cite{PRS}.

After recalling the basic facts about the local models for symplectic
groups, we prove two results, Proposition \ref{prop-formula} (on the
integrality properties of differentials) and Lemma
\ref{lem-weight-formula} (on the integrality properties on automorphic
bundles) which are used to construct normalized Hecke operators on the
local model in Proposition \ref{prop:normal}. We then transport these
computations to the Shimura variety side using Theorem
\ref{thm-main-local-model} and thereby prove Theorem
\ref{main-thm-symplectic}.

\subsection{Definition} Let $g \in \mathbb{Z}_{\geq 1}$ be an integer.  For any positive integer $t$, we let $\mathrm{Id}_t$ be the identity $t\times t$ matrix and we let $K_t$ be the anti-diagonal matrix of size $t \times t$, with coefficients $1$ on the anti-diagonal. When the context is clear, we sometimes write $K$ instead of $K_t$ and $\mathrm{Id}$ instead of $\mathrm{Id}_t$. 

Let $V_0 = \ZZ^{2g}$. We equip $V_0$ with the symplectic pairing $\psi$ given by the matrix 
$$ J=   \begin{pmatrix} 
      0 & K_g \\
      -K_g & 0 \\
   \end{pmatrix}$$
  We now consider modules $V_1, \cdots, V_{2g-1}= \ZZ^{2g}$ and the following chain:
   $$V_\bullet: V_0 \rightarrow V_1 \rightarrow \cdots \rightarrow V_{2g-1} \rightarrow V_0$$ where 
   the map from $V_i$ to $V_{i+1}$ is given in the canonical basis $(e_1, \cdots, e_{2g})$ of $\ZZ^{2g}$ by the map $e_j \mapsto e_j$ if $j \neq i+1$ and $e_{i+1} \mapsto p e_{i+1}$.    Whenever necessary, indices are taken modulo $2g$ so that $V_{2g}:= V_0$. 
   
   This chain is self-dual. The pairing $\psi$ and the maps in the
   chain induce pairings $\psi: V_r \times V_{2g-r} \rightarrow \ZZ$
   which can be written as $p^2 \psi'$, for a perfect pairing $\psi'$.
  
  Let us fix a set $\emptyset \neq I \subset \{ 0, 1, \cdots, g \}$.  We define the
  local model functor
  $\mathbf{M}_I: \ZZ- \mathbf{ALG} \rightarrow \mathbf{SETS}$ which
  associates to an object $R$ of $\ZZ- \mathbf{ALG}$ the set of
  isomorphism classes of commutative diagrams
  \begin{eqnarray*}
\xymatrix{ V_{i_0}\otimes_{\ZZ} R \ar[r]  & V_{i_1}\otimes_{\ZZ} R \ar[r]  & \cdots  \ar[r] & V_{i_m}\otimes_{\ZZ} R  \\
F_{i_0} \ar[u] \ar[r]  & F_{i_1} \ar[r] \ar[u] & \cdots  \ar[r]  & F_{i_m} \ar[u]}
\end{eqnarray*}
where $i_0 < i_1 \cdots <i_m $ are such that $\{ i_0, \cdots, i_m\} =
I \cup \{2g-i \, |\, i \in I\}$, the modules $F_{i_j}$, for $0 \leq j \leq m$,
are  rank $g$ locally direct factors of $V_{i_j}\otimes_{\ZZ} R$, and they are self dual in the
sense that $F_{2g-i}^\bot = F_{i}$ for all $i \in I$ (with respect to
the pairing $\psi'$). 

The functor $\mathbf{M}_I$ is represented by a projective scheme which
we denote by $M_I$. It is a closed subscheme of a product of
Grassmannians, the embedding being given by the vertical maps of
diagrams as above.

When $\emptyset \neq J \subset I$, there is an obvious map ${M}_I \rightarrow M_J$
given by forgetting the modules $F_j$ for $j \in I \setminus J$.
There is a canonical isomorphism $M_{\{0\}} \simeq M_{\{g\}}$ given by
taking $F_g \subset V_g \otimes_{\ZZ} R$ to be $F_0$, via the
tautological identification of $V_0$ and $V_g$.

\subsection{The affine Grassmannian}

Let $\mathcal{V}_0 = \F_p[[t]]^{2g}$. We equip $\mathcal{V}_0$ with the symplectic pairing $\psi$ given by the matrix 
$$ J=   \begin{pmatrix} 
      0 & K_g \\
      -K_g & 0 \\
    \end{pmatrix}$$ We now consider modules
    $\mathcal{V}_1, \cdots,\mathcal{V}_{2g-1}= \F_p[[t]]^{2g}$ and
    the chain:
  $$\mathcal{V}_\bullet = \mathcal{V}_0 \rightarrow \mathcal{V}_1
  \rightarrow \cdots \rightarrow \mathcal{V}_{2g-1} \rightarrow
  \mathcal{V}_0$$ where the map from $\mathcal{V}_i$ to
  $\mathcal{V}_{i+1}$ is given in the canonical basis
  $(e_1, \cdots, e_{2g})$ of $\F_p[[t]]^{2g}$ by the map
  $e_j \mapsto e_j$ if $j \neq i+1$ and $e_{i+1} \mapsto t e_{i+1}$.
  Whenever necessary, indices are taken modulo $2g$ so that
  $\mathcal{V}_{2g}:= \mathcal{V}_0$. Observe that
  $\mathcal{V}_\bullet \otimes_{\F_p[[t]]} \F_p = V_\bullet
  \otimes_{\ZZ} \FF_p$.

   Let $LG$ denote the loop group of $\mathrm{GSp}_{2g}$ over
   $\mathbb{F}_p$. The group $LG$ acts naturally on
   $\mathcal{V}_0 \otimes_{\F_p[[T]]} \F_p((T))$ and therefore it acts
   on the chain $\mathcal{V}_\bullet \otimes_{\F_p[[T]]} \F_p((T))$.
 
   Let $\emptyset \neq I \subset \{ 0, 1, \cdots, g \}$.  Let
   $\mathcal{V}^I_\bullet$ be the subchain of $\mathcal{V}_\bullet$
   where we keep only the modules indexed by elements $i \in I$ and
   $i' = 2g-i$ for $i \in I$.  We denote by $\mathcal{P}_I$ the
   parahoric subgroup of $LG$ of automorphisms of the chain
   $\mathcal{V}^I_\bullet$. For any $I$ as above, we define the affine
   flag variety as the ind-scheme $\mathcal{F}_I:= LG/\mathcal{P}_I$.

   \subsection{Stratification of the local model}

   It was observed by Go\"rtz \cite[\S 5]{gortz} that the fibre
   $\ov{M}_I$
   of $M_I$ over $\Spec~\F_p$  embeds as a finite union of
   $\mathcal{P}_I$ orbits in $\mathcal{F}_I$.  We recall the
   description of the map $\ov{M}_I \rightarrow \mathcal{F}_I$.  Given
   a diagram \begin{eqnarray*}
               \xymatrix{ V_{i_0}\otimes_{\ZZ} R \ar[r]  & V_{i_1}\otimes_{\ZZ} R \ar[r]  & \cdots  \ar[r] & V_{i_m}\otimes_{\ZZ} R  \\
               F_{i_0} \ar[u] \ar[r] & F_{i_1} \ar[r] \ar[u] & \cdots
                                                               \ar[r]
                                                                                                           &
                                                                                                             F_{i_m}
                                                                                                             \ar[u]}
\end{eqnarray*} corresponding to an $R$-point of $\ov{M}_I$, we can construct a new  diagram: 
\begin{eqnarray*}
\xymatrix{ \mathcal{V}_{i_0}\otimes_{\ZZ} R \ar[r]  & \mathcal{V}_{i_1}\otimes_{\ZZ} R \ar[r]  & \cdots  \ar[r] & \mathcal{V}_{i_m}\otimes_{\ZZ} R  \\
\mathcal{F}_{i_0} \ar[u] \ar[r]  & \mathcal{F}_{i_1} \ar[r] \ar[u] & \cdots  \ar[r]  & \mathcal{F}_{i_m} \ar[u] \\
t\mathcal{V}_{i_0}\otimes_{\ZZ} R \ar[r] \ar[u]  & t\mathcal{V}_{i_1}\otimes_{\ZZ} R \ar[r] \ar[u]  & \cdots  \ar[r] & t\mathcal{V}_{i_m}\otimes_{\ZZ} R \ar[u]}
\end{eqnarray*}
where all the vertical maps are inclusions and  each $\mathcal{F}_{i_j}$ is determined by the property that $\mathcal{F}_{i_j}/ t\mathcal{V}_{i_j}\otimes_{\ZZ} R  = F_{i_j} \hookrightarrow ( \mathcal{V}_{i_j} / t\mathcal{V}_{i_j})\otimes_{\ZZ} R  = V_{i_j} \otimes R$.  The chain $ \mathcal{F}_\bullet$ determines an $R$-point of $\mathcal{F}_I$.

We now recall the combinatorial description of the image of
$\ov{M}_I$ in $\mathcal{F}_I$: Fix a Borel subgroup $B$
of $\mathrm{GSp}_{2g}$ and a maximal torus $T\subset B$. This gives a
base for the root datum of $\mathrm{GSp}_{2g}$ with a corresponding
Dynkin diagram with $g$ vertices and Weyl group $W$ generated by
reflections $s_1,s_2,\dots,s_g$.  Let $\widetilde{W}$ be the extended
affine Weyl group of $\mathrm{GSp}_{2g}$. This is the semi-direct
product of the Weyl group $W$ and the cocharacter group
$\mathrm{X}_{\star}(T)$. It contains as a subgroup $W_{af}$, the affine Weyl
group of $\mathrm{GSp}_{2g}$, which is a Coxeter group with simple
reflections $s_1,s_2,\dots,s_g$ and one affine reflection
$s_{0}$.

We give a concrete realization of $W$ and $\widetilde{W}$ following
\cite{KR}. The group $W$ can be realized as the subgroup of
$\mathcal{S}_{2g}$, the group of permutations of $\{1,2,\dots, 2g\}$,
which satisfy the condition $w(i) + w(2g+i-1) = 2g+1$ for all
$i = 1,\cdots, 2g$.  The extended affine Weyl group of
$\mathrm{GL}_{2g}$ is $\mathbb{Z}^{2g} \rtimes \mathcal{S}_{2g}$. It
can be realized as a subgroup of the group of affine transformations
of $\mathbb{Z}^{2g}$ with $\mathbb{Z}^{2g}$ acting by translation and
$\mathcal{S}_{2g}$ by permutation of the coordinates.  For an element
$v \in \ZZ^{2g}$ we will denote the corresponding translation by
$\mr{t}_v$. The group $\widetilde{W}$ is realized as the centralizer
in $\mathbb{Z}^{2g} \rtimes \mathcal{S}_{2g}$ of the element
$(1,2g)(2, 2g-1)\cdots (g,g+1) \in \mathcal{S}_{2g}$. We have
$s_i = (i,i+1)(2g+1-i, 2g-i)$ for $1 \leq i \leq g-1$,
$s_g = (g, g+1)$ and
$s_0 = \mr{t}_{(-1, 0, \cdots, 0, 1)} \rtimes ( 1,2g)$.  We let
$\mu = (1, \cdots, 1, 0, \cdots 0) \in \mathbb{Z}^{2g}$ (with both $1$
and $0$ repeated $g$ times) be the minuscule coweight corresponding to
our situation.

For each $I$ as above, we let $W_I$ be the subgroup of $\widetilde{W}$
generated by the simple reflexions $s_i, i \notin I$. This is a finite
group since $I \neq \emptyset $.  The $\mathcal{P}_I$ orbits in
$\mathcal{F}_I$ are parametrised by the double cosets
$W_I\backslash \widetilde{W} / W_I$.  For any $w \in \widetilde{W}$ we
denote the orbit corresponding to the double coset $W_I w W_I$ by
$U_{I,w}$ and the orbit closure by $X_{I,w}$.  The orbits included in
$\ov{M}_I$ are parametrized by the finite subset $Adm_I(\mu)$ of
$W_I\backslash \widetilde{W} / W_I$ of $\mu$-admissible elements as
defined in \cite[Introduction]{KR}. The open orbits---these are the
only ones we will need to know explicitly---are parametrized by the
double cosets corresponding to elements in the $W$-orbit of $\mu$
(viewed as translations in $\widetilde{W}$).

As noted earlier, whenever $J \subset I$ we have a map
$M_I \rightarrow M_J$. This map is surjective and $Adm_J(\mu)$ is the
image of $Adm_I(\mu)$ in $W_J\backslash \widetilde{W} / W_J$.

  When $I = \{ 0, \cdots, g\}$, Kottwitz and Rapoport give a description
of the set $Adm_I(\mu)$. This is a subset of
$W_{af}\, c \subset \widetilde{W}$ where
$c = \mr{t}_{(0, \cdots, 0, 1, \cdots,1)}\cdot(1, g+1)(2,g+2) \dots (g,
2g)$. We can transport the Bruhat order and the length function from
$W_{af}$ to $W_{af}\, c$ via the bijection $W_{af} \simeq W_{af}\, c$
of multiplication by $c$ on the right.  We observe that
$\mu \in W_{af} \,c$; indeed, $$\mu = w_{\mu} c,$$ where
$w_{\mu} = (s_gs_{g-1}\dots s_1)(s_g \dots s_2) \dots (s_g s_{g-1})
s_g \in W_{af}$. Observe also that the length of $\mu$ is
$\frac{g(g+1)}{2}$. The set $Adm_I(\mu)$ is precisely the subset of
$W_{af} \,c$ of elements which are $\leq$ a translation
$\mathrm{t}_{w \cdot\mu}$ for some $w \in W$ \cite[Theorem
4.5]{KR}.  Furthermore, for each $w \in Adm_I(\mu)$, the stratum
$U_{I,w}$ has dimension $\ell(w)$.

\subsection{Irreducible components} 

\subsubsection{The case that $I= \{0, \cdots, g\}$}

There are $2^g$
translations in the $W$ orbit of $\mu$ and they correspond to the open
stratum in each of the $2^g$ irreducible components of $\ov{M}_I$ when
$I = \{0, \cdots, g\}$.  These $2^g$ translations are parametrized by
$W/W_c$ where $W_c$ is the subgroup of $W$ of elements which stabilize
$\mu$. It identifies with the elements in $W \subset \mathcal{S}_{2g}$
which preserve the sets $\{1,\cdots, g\}$ and $\{g+1, \cdots, 2g\}$,
so this group is isomorphic to $\mathcal{S}_g$. The quotient $W/W_c$
has a set of representatives in $W$, denoted $W^c$, and called Kostant
representatives. An element $w \in W^c$ is characterized by the
property that it is the element of minimal length in the coset
$w W_c$.  The elements in $W^c$ are exactly the permuations $w \in W$
for which $w^{-1}(g) \geq \cdots \geq w^{-1}(1)$. Such representatives
are in bijection with functions
$s: \{1, \cdots, g \} \rightarrow \{1, \cdots, 2g\}$ which are
increasing and take exactly once one of the values $\{i, 2g+1-i\}$ for
all $1 \leq i \leq g $.  (Just set $s_w = w^{-1}(i)$). Moreover, the
length of an element $w$ with corresponding function $s$ is
$\frac{g(g+1)}{2} - \sum_{i \in \mathrm{Im}(s) \cap \{1, \cdots, g\}}
g-i+1$.

We can concretely determine an element of each of the orbits in
$\ov{M}_I$ corresponding to these $\mr{t}_{w \mu}$ for
$I= \{0, \cdots, g\}$ as follows.  The group $\widetilde{W}$ can be
viewed as a subgroup of $\mathrm{GSp}_{2g}( \F_p((t)))$, with $W$
begin represented by permutation matrices and the elements of $X_{\star}(T)$
as diagonal matrices by $\chi \in X_{\star}(T) \mapsto \chi(t)$. The element
$\mr{t}_{\mu}$ is thereby identified with
$\mathrm{diag} (t,\cdots, t, 1, \cdots 1)$.
   
We now consider the inclusion of chains:
$$ t \mathcal{V}_\bullet \subset \mr{t}_{w\mu} (\mathcal{V}_\bullet)
\subset \mathcal{V}_\bullet.$$ By reduction modulo $t$ and using the
identification
$\mathcal{V}_\bullet \otimes_{\F_p[[t]]} \F_p = V_\bullet
\otimes_{\ZZ} \FF_p$, we deduce that
$\mr{t}_{w\mu} (\mathcal{V}_\bullet)/ t \mathcal{V}_\bullet
\hookrightarrow V_\bullet$ defines an $\F_p$ point of $M_I$, which
represents the $w\mu$-orbit.

   \begin{rem} Let $F_\bullet \subset V_\bullet \otimes k$ be a $k$-point in the $w\mu$ orbit. Let $s: \{1, \cdots, g \} \rightarrow \{1,\dots, 2g\}$ be the associated function.  We actually get that the map   $$ F_{i-1} \rightarrow F_{i}$$ is an isomorphism  if and only $V_{i-1}/F_{i-1} \rightarrow V_{i} /F_{i}$ has kernel and cokernel of dimension $1$ and  if and only if $i \in \mathrm{Im}(s)$. 
  \end{rem}

\subsubsection{The case that $I = \{0\}$} The special fiber
$\ov{M}_{\{0\}}$ of $M_{\{0\}}$ is smooth and irreducible. Moreover,
there is a single orbit

\subsubsection{The case that $I = \{0,g\}$}
\begin{lem} The special fiber $\ov{M}_{\{0,g\}}$ of $M_{\{0,g\}}$ has
  $g+1$ irreducible components. There are $g+1$ open strata for the KR
  stratification, indexed by the integers $0 \leq s \leq g$. For each
  $0 \leq s \leq g$, a representative of the $s$-stratum is given by
  taking
   $$ F_0(s) = F_g(s) =   \langle e_{s+1}, \cdots, e_g,  e_{2g-s+1}, \cdots, e_{2g}\rangle$$
    This corresponds to the element $ \mathrm{diag} ( t \mathrm{Id}_s, \mathrm{Id}_{g-s},  t \mathrm{Id}_{g-s}, \mathrm{Id}_{s}) \in LG$. 
    \end{lem}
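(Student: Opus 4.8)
The plan is to combine the combinatorics recalled above --- the Kottwitz--Rapoport stratification of $\ov{M}_I$ by $\mathcal{P}_I$-orbits $U_{I,w}$, $w\in Adm_I(\mu)$, together with $Adm_J(\mu)=\mathrm{image}(Adm_I(\mu))$ for $J\subset I$ --- with the explicit lattice-chain recipe for orbit representatives. First I would identify the finite group $W_{\{0,g\}}$: by definition it is generated by the simple reflections $s_i$ with $i\notin\{0,g\}$, i.e.\ by $s_1,\dots,s_{g-1}$, and since $s_i=(i,i+1)(2g+1-i,2g-i)$ for $1\le i\le g-1$, these generate exactly the subgroup of $W\subset\mathcal{S}_{2g}$ permuting $\{1,\dots,g\}$ (and correspondingly $\{g+1,\dots,2g\}$). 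This is precisely the stabilizer $W_\mu\subset W$ of the minuscule coweight $\mu=(1,\dots,1,0,\dots,0)$: permuting the coordinates $\chi_1,\dots,\chi_g$ fixes $\mu$, whereas any sign change moves it. Hence $W_{\{0,g\}}=W_\mu$, and the $\mathcal{P}_{\{0,g\}}$-orbits on $\mathcal{F}_{\{0,g\}}$ (the strata of the KR stratification) are parametrized by $W_\mu\backslash\widetilde{W}/W_\mu$.

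Next I would enumerate the open strata. Since $\{0,g\}\subset\{0,\dots,g\}$ and $M_{\{0,\dots,g\}}\to M_{\{0,g\}}$ is surjective, $Adm_{\{0,g\}}(\mu)$ is the image of $Adm_{\{0,\dots,g\}}(\mu)$ under the (trivially order-preserving) projection $\widetilde{W}\to W_\mu\backslash\widetilde{W}/W_\mu$; hence the maximal elements of $Adm_{\{0,g\}}(\mu)$ lie among the images of the maximal elements of $Adm_{\{0,\dots,g\}}(\mu)$, which are the $2^g$ translations $\mr{t}_\nu$ with $\nu\in W\mu$. The images of $\mr{t}_\nu$ and $\mr{t}_{\nu'}$ agree iff $\nu,\nu'$ lie in a common $W_\mu$-orbit. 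Realizing $W\mu$ as the set of $g$-element subsets $T\subset\{1,\dots,2g\}$ meeting each pair $\{i,2g+1-i\}$ exactly once, with $W_\mu$ acting by permuting the pairs, one sees the $W_\mu$-orbits are indexed by $s=\vert T\cap\{1,\dots,g\}\vert\in\{0,\dots,g\}$, with canonical representative $T_s=\{1,\dots,s\}\cup\{g+1,\dots,2g-s\}$, i.e.\ the translation by the coweight $\gamma_s:=\mathrm{diag}(t\,\mathrm{Id}_s,\mathrm{Id}_{g-s},t\,\mathrm{Id}_{g-s},\mathrm{Id}_s)$. So there are at most $g+1$ maximal elements of $Adm_{\{0,g\}}(\mu)$, hence at most $g+1$ irreducible components of $\ov{M}_{\{0,g\}}$. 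That there are exactly $g+1$ --- equivalently, that none of the $g+1$ double cosets $\gamma_s$ is $\le$ another, so that each $\mathcal{P}_{\{0,g\}}$-orbit $U_{\{0,g\},\gamma_s}$ is open in $\ov{M}_{\{0,g\}}$, of the full dimension $g(g+1)/2$ --- is part of the known structure of these symplectic local models (\cite{gortz}, \cite{KR}, together with flatness of $M_{\{0,g\}}$ over $\Spec~\ZZ$, which makes $\ov{M}_{\{0,g\}}$ equidimensional of dimension $g(g+1)/2$). Thus $\ov{M}_{\{0,g\}}$ has exactly $g+1$ irreducible components, which are the closures of the $g+1$ open strata $U_{\{0,g\},\gamma_s}$, $0\le s\le g$.

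Finally I would exhibit the representative. Applying the recipe $\mr{t}_\nu(\mathcal{V}_\bullet)/t\mathcal{V}_\bullet\hookrightarrow V_\bullet$ with $\nu\in W\mu$ such that $\mr{t}_\nu=\gamma_s$ in $LG$: since $\gamma_s$ scales $e_i$ by $t$ precisely for $i\in T_s$ and fixes $e_i$ for $i\notin T_s$, one has $\gamma_s\mathcal{V}_0=\langle\,te_i\ (i\in T_s),\ e_i\ (i\notin T_s)\,\rangle$, so $\gamma_s\mathcal{V}_0/t\mathcal{V}_0=\langle e_i:i\notin T_s\rangle=\langle e_{s+1},\dots,e_g,e_{2g-s+1},\dots,e_{2g}\rangle$ in $\mathcal{V}_0/t\mathcal{V}_0=V_0\otimes\FF_p$; the identical computation at the index $g$ gives $\gamma_s\mathcal{V}_g/t\mathcal{V}_g$ equal to the same subspace of $V_g\otimes\FF_p$. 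Hence $F_0(s)=F_g(s)=\langle e_{s+1},\dots,e_g,e_{2g-s+1},\dots,e_{2g}\rangle$. A direct check shows this subspace is totally isotropic of rank $g$ for $J$ --- hence Lagrangian and self-dual, $F_0(s)^{\bot}=F_0(s)$ --- and is carried into itself by the reductions $\ov{V}_0\to\ov{V}_g\to\ov{V}_0$ of the chain maps; therefore $(F_0(s),F_g(s))$ is a point of $\ov{M}_{\{0,g\}}$ lying in the orbit $U_{\{0,g\},\gamma_s}$, as asserted.

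The one genuinely delicate input is the claim in the second step that the $g+1$ double cosets $\gamma_s$ are pairwise incomparable --- so that each $U_{\{0,g\},\gamma_s}$ is actually open and the components are exactly their closures; this is where one must appeal to the finer geometry of the symplectic local models (the component description and flatness of \cite{gortz}, \cite{KR}) rather than to elementary considerations. The remaining ingredients --- the computation of $W_{\{0,g\}}$, the enumeration of $W_\mu$-orbits on $W\mu$, and the reduction of the lattice chain --- are routine verifications.
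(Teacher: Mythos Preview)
Your proof is correct and follows essentially the same approach as the paper's. The paper's argument is much terser: it observes that $W_I=\mathcal{S}_g$ (your $W_\mu$), notes that every $\mr{t}_{w\mu}$ is $W_I$-equivalent to some $\gamma_s$, and then simply asserts that ``it is easy to see that all the orbits corresponding to these elements are disjoint and of dimension $\tfrac{g(g+1)}{2}$.'' Your orbit enumeration via subsets $T\subset\{1,\dots,2g\}$ is a fuller justification of the disjointness, and your explicit appeal to flatness (G\"ortz) for the dimension/maximality is exactly what underlies the paper's ``easy to see'' --- the paper has already recorded G\"ortz's theorem just before this lemma, so the dependence is the same even if unspoken. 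In short: same proof, yours spelled out, theirs compressed.
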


\begin{demo} It is easy to see that for all $w \in W$, there exists a $w' \in W_I$ and $0 \leq s \leq g$ such that 
$$w' \mr{t}_{w \mu} =\mathrm{diag} ( t \mathrm{Id}_s, \mathrm{Id}_{g-s},  t \mathrm{Id}_{g-s}, \mathrm{Id}_{s}).$$
This simply follows from the fact that $\mathcal{S}_g = W_I$.  It is easy to see that all the orbits corresponding to these elements are disjoint and of dimension $\frac{g(g+1)}{2}$. 
 \end{demo}

\begin{rem} We have the equality  $\mathrm{diag} ( t \mathrm{Id}_s, \mathrm{Id}_{g-s},  t \mathrm{Id}_{g-s}, \mathrm{Id}_{s})
 = w(s) (t \mathrm{Id}_g, \mathrm{Id}_g)$ where  $w(s)(i) = i$ if $i \leq s$, $w(s)(i) = 2g+1-i$ if $s+1 \leq i \leq g$. 
\end{rem}
\subsection{Local geometry of  the local model}
The local model $M_{\{0\}}$ is  smooth of relative dimension
$\frac{g(g+1)}{2}$. The  other local models $M_I$ for $I \neq \{0\} $
and $I \neq \{g\}$ are isomorphic to $M_{\{0\}}$ over $\Spec~\ZZ[1/p]$
but they have  singular special fiber at $p$. Nevertheless, we have
the following important result, the first part due to G\"ortz \cite[Theorem 2.1]{gortz}
and the second to He \cite[Theorem 1.2]{he-normality}:
\begin{thm} \label{thm-GortzHe} The local models $M_I$  are flat over
  $\ZZ$ and  $\ov{M}_I$ is reduced. Furthermore, $M_I$ is Cohen--Macaulay.
\end{thm}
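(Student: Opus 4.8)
The plan is to prove the three assertions in turn, following Görtz \cite{gortz} for flatness and reducedness and He \cite{he-normality} for Cohen--Macaulayness. For flatness the strategy is a dimension count. First I would pin down the generic fibre: over $\ZZ[1/p]$ every transition map $V_i\to V_{i+1}$ is an isomorphism, so $M_I$ is isomorphic over $\ZZ[1/p]$ to $M_{\{0\}}$, the Lagrangian Grassmannian, which is smooth and irreducible of dimension $\tfrac{g(g+1)}{2}$. Next I would bound the special fibre: under the closed embedding $\overline{M}_I\hookrightarrow\mathcal F_I$ described above, the image lands in $\bigcup_{w\in Adm_I(\mu)}X_{I,w}$, and each $X_{I,w}$ is irreducible of dimension $\ell(w)\le\tfrac{g(g+1)}{2}$ (the maximal $\mu$-admissible elements being the translations $\mr t_{w\mu}$, of this length), so $\dim\overline{M}_I\le\tfrac{g(g+1)}{2}=\dim M_I\otimes\qq$.

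I would then let $M_I^{\flat}=\overline{M_I\otimes\qq}$ be the scheme-theoretic closure of the generic fibre inside $M_I$; this is $\ZZ$-flat, irreducible, with $(M_I^{\flat})_{\F_p}$ pure of dimension $\tfrac{g(g+1)}{2}$. The next step is to show that every top-dimensional irreducible component of $\overline{M}_I$ already lies in $M_I^{\flat}$ by producing one-parameter families: reducing the inclusions $t\mathcal V_\bullet\subset\mr t_{w\mu}(\mathcal V_\bullet)\subset\mathcal V_\bullet$ modulo $t$ gives an $\F_p$-point of the stratum $U_{I,\mr t_{w\mu}}$, and this point lifts, over a suitable discrete valuation ring, to a point of $M_I$ whose generic point lies in $M_I\otimes\qq$, hence in $M_I^{\flat}$; since $(M_I^{\flat})_{\F_p}$ is closed and $\mathcal P_I$-stable it then contains $X_{I,\mr t_{w\mu}}$. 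As $w\in W$ varies these exhaust the top-dimensional components of $\overline{M}_I$, so $(M_I^{\flat})_{\F_p}\hookrightarrow\overline{M}_I$ is a surjective closed immersion; once we know $\overline{M}_I$ is reduced it is therefore an isomorphism, and $M_I=M_I^{\flat}$ is $\ZZ$-flat. Reducedness of $\overline{M}_I$ (and its identification as a scheme with the reduced union $\bigcup_{w\in Adm_I(\mu)}X_{I,w}$) is obtained by Frobenius splitting: one shows $\overline{M}_I$ is compatibly Frobenius split inside $\mathcal F_I$, using the splitting of the affine flag variety compatible with all of its Schubert subvarieties, and a compatibly split closed subscheme of a Frobenius split scheme is reduced.

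For the Cohen--Macaulay property, I would first treat the special fibre: He \cite[Theorem 1.2]{he-normality} proves that $\overline{M}_I$ is normal and Cohen--Macaulay, the essential input being the Pappas--Rapoport coherence conjecture (a theorem of Zhu), which identifies $\bigcup_{w\in Adm_I(\mu)}X_{I,w}$ with a union of equidimensional Schubert varieties in the special fibre of the associated twisted affine Grassmannian, where normality and Cohen--Macaulayness are known from the general theory of affine Schubert varieties. Granting this, Cohen--Macaulayness of $M_I$ itself is formal: $M_I\to\Spec\ZZ$ is flat with regular (hence CM) generic fibre $M_I\otimes\qq$ and CM special fibre $\overline{M}_I$, and at a point $x$ of the special fibre $p$ is a non-zero-divisor in $\oscr_{M_I,x}$ with $\oscr_{M_I,x}/(p)=\oscr_{\overline{M}_I,x}$, so
\[
\mathrm{depth}\,\oscr_{M_I,x}=\mathrm{depth}\,\oscr_{\overline{M}_I,x}+1=\dim\oscr_{\overline{M}_I,x}+1=\dim\oscr_{M_I,x},
\]
while at a point of the generic fibre $M_I$ is even regular. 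Hence $M_I$ is Cohen--Macaulay.

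The hard part is the Cohen--Macaulayness of the singular special fibre $\overline{M}_I$: a union of Cohen--Macaulay Schubert varieties need not be Cohen--Macaulay, and establishing it for the particular $\mu$-admissible unions occurring here is precisely the content of He's theorem, which rests on Zhu's proof of the coherence conjecture. By contrast, the dimension count, the lifting of lattice-chain points, the Frobenius-splitting proof of reducedness, and the passage from $\overline{M}_I$ to $M_I$ are comparatively routine.
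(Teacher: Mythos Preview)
The paper does not prove this theorem; it attributes flatness and reducedness of the special fibre to G\"ortz \cite[Theorem 2.1]{gortz} and Cohen--Macaulayness to He \cite[Theorem 1.2]{he-normality}, and quotes the result as a black box. Your outline therefore goes well beyond what the paper does. The flatness strategy (dimension bound on $\overline{M}_I$ via the admissible set, lifting the lattice-chain points representing each $\mr{t}_{w\mu}$-stratum into the flat closure $M_I^{\flat}$, then concluding $M_I=M_I^{\flat}$ once $\overline{M}_I$ is known to be reduced) and the Cohen--Macaulay argument (He's theorem for the special fibre, then the regular-element depth count along $p$) are correct summaries of how those references proceed.

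The gap is in your reducedness step. You assert that $\overline{M}_I$ is compatibly Frobenius split in $\mathcal F_I$ ``using the splitting of the affine flag variety compatible with all of its Schubert subvarieties.'' But the subschemes one knows to be compatibly split are the \emph{reduced} Schubert varieties and their reduced unions; this says nothing about a closed subscheme that merely has the same support. Compatible splitting is a condition on the ideal sheaf ($\phi(F_\ast I)\subset I$), and you have given no reason why the a priori possibly non-radical ideal of $\overline{M}_I$ satisfies it---that is precisely the reducedness you want, so the argument is circular as written. G\"ortz's actual proof in \cite{gortz} is different: he produces an explicit affine open cover of $\overline{M}_I$, identifies each chart with an open piece of a matrix scheme cut out by rank and symplecticity conditions, and verifies reducedness of those directly via standard monomial theory. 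Only after this does the scheme-theoretic identification $\overline{M}_I=\bigcup_{w\in Adm_I(\mu)}X_{I,w}$ become available, and with it both your flatness argument and He's input.
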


\subsection{Hecke correspondence on the  affine Grassmannians}
We consider  the correspondence:
\begin{eqnarray*}
\xymatrix{ & \mathcal{F}_{\{0,g\}} \ar[ld]_{p_2} \ar[dr]^{p_1} & \\
\mathcal{F}_{\{g\}} && \mathcal{F}_{\{0\}}}
\end{eqnarray*}
We now pick the element $ w(s) \mu \in \widetilde{W}$ and restrict this map to a map: 
\begin{eqnarray*}
\xymatrix{ & U_{\{0,g\}, w(s) \mu} \ar[ld]_{p_2} \ar[dr]^{p_1} & \\
U_{\{g\}, w(s) \mu } && U_{\{0\}, w(s) \mu}}
\end{eqnarray*}

\begin{proposition}\label{prop-formula} The map on  differentials  $ \mathrm{d}p_1:  p_1^\star \Omega^1_{U_{\{0\}, w(s) \mu}} \rightarrow  \Omega^1_{U_{\{0,g\}, w(s) \mu}}$  has kernel and cokernel a locally free sheaf of rank $\frac{(g-s)(g-s+1)}{2}$.
\end{proposition}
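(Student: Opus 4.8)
The plan is to compute both sides of the map $\mathrm{d}p_1$ explicitly on the open cell $U_{\{0\},w(s)\mu}$ using affine coordinates, and to identify $\mathrm{d}p_1$ with a concrete linear map between free modules whose rank and conull space rank we can count directly. First I would recall that $U_{\{0\},w(s)\mu}$ and $U_{\{0,g\},w(s)\mu}$ are smooth affine schemes (they are open Schubert cells in the corresponding affine flag varieties, and by Theorem \ref{thm-GortzHe} and the smoothness of $M_{\{0\}}$ their closures in the local models are the relevant loci), so that $\Omega^1$ of each is a free module on an explicit coordinate system. The cell $U_{\{0\},w(s)\mu}$ parametrizes filtrations $F_0 \subset V_0 \otimes R$ which are deformations of the base point $F_0(s) = \langle e_{s+1},\dots,e_g, e_{2g-s+1},\dots,e_{2g}\rangle$ of the $w(s)\mu$-stratum; its tangent space is $\Hom(F_0(s), (V_0\otimes R)/F_0(s))$ subject to the self-duality constraint $F_0 = F_0^\perp$, which cuts out a space of symmetric tensors. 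The cell $U_{\{0,g\},w(s)\mu}$ parametrizes compatible pairs $(F_0 \subset V_0\otimes R, F_g \subset V_g \otimes R)$ lying in the $w(s)\mu$ stratum with the chain constraint coming from the map $V_0 \to V_1 \to \cdots \to V_g$ (which is multiplication by $p$ on coordinates $e_1,\dots,e_g$), and $p_1$ is the projection forgetting $F_g$, while $p_2$ forgets $F_0$ (up to the identification $V_0 \cong V_g$).

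The key step is to write the chain condition explicitly: an $R$-point of $U_{\{0,g\},w(s)\mu}$ near the base point consists of $F_0$ (a deformation as above, coordinatized by certain symmetric matrix entries) together with $F_g$, and the requirement that the composite $F_0 \hookrightarrow V_0\otimes R \to V_g\otimes R$ lands in $F_g$ compatibly. At the base point the map $V_0 \to V_g$ sends $e_i \mapsto e_i$ for $i > g$ and $e_i \mapsto p e_i$ (hence $\mapsto 0$ over $\F_p$, but we work integrally near the point) for $i \le g$, and one checks that this chain constraint forces some of the deformation parameters of $F_0$ to be divisible by $p$ in order for $F_g$ to exist as an honest lattice point — equivalently, the extra coordinates on $U_{\{0,g\},w(s)\mu}$ relative to $p_1^{-1}(\text{pt})$ are governed by a block of size $(g-s)\times(g-s)$ of symmetric matrices, contributing $\binom{g-s+1}{2} = \frac{(g-s)(g-s+1)}{2}$ parameters, while simultaneously $\frac{(g-s)(g-s+1)}{2}$ of the coordinate functions pulled back from $U_{\{0\},w(s)\mu}$ become (locally) $p$ times a unit. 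Concretely, $\mathrm{d}p_1$ on cotangent spaces is, in suitable bases, the direct sum of an identity block and a block which is multiplication by $p$ on a free module of rank $\frac{(g-s)(g-s+1)}{2}$ (plus a zero block of the same rank on the cokernel side accounting for the new coordinates). This pins down both the kernel and cokernel of $\mathrm{d}p_1$ as free of rank $\frac{(g-s)(g-s+1)}{2}$.

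The main obstacle will be organizing the bookkeeping of the coordinates on the two cells so that $\mathrm{d}p_1$ really does become block-diagonal with an $\mathrm{Id}$ block and a ``$\times p$'' block: one has to choose the affine charts compatibly with the permutation $w(s)$ and with the self-duality constraint, and verify that the symmetric-matrix structure interacts correctly with the index set $\{1,\dots,s\} \sqcup \{s+1,\dots,g\}$ split by $w(s)$, so that precisely the $(g-s)$-block (not the $s$-block) picks up the factor of $p$. A clean way to do this is to use the realization of $w(s)\mu$ as $\mathrm{diag}(t\,\mathrm{Id}_s, \mathrm{Id}_{g-s}, t\,\mathrm{Id}_{g-s}, \mathrm{Id}_s)$ from the Remark preceding this section, conjugate the big-cell coordinates on the affine Grassmannian accordingly, and then read off the $t$-adic (equivalently $p$-adic, after the identification $\mathcal{V}_\bullet \otimes_{\F_p[[t]]} \F_p = V_\bullet \otimes \F_p$ and lifting integrally) valuations of the matrix entries that survive the symplectic constraint. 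Once the charts are set up, the rank count is immediate and the proposition follows; a secondary check is that kernel and cokernel are genuinely \emph{locally free} (not merely of the right rank fibrewise), which follows from flatness of $M_I$ over $\ZZ$ (Theorem \ref{thm-GortzHe}) together with the explicit block form, since an integer matrix of the shape $\mathrm{diag}(\mathrm{Id}, p\cdot\mathrm{Id})$ has free kernel and cokernel.
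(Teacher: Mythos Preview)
There is a genuine gap in your setup: you are computing on the wrong scheme. The strata $U_{\{0\},w(s)\mu}$ and $U_{\{0,g\},w(s)\mu}$ are $\mathcal{P}_I$-orbits in the affine flag varieties $\mathcal{F}_I$ over $\F_p$; they are \emph{not} open pieces of the integral local models $M_I$ over $\ZZ$. All of your ``$p$-adic'' reasoning --- coordinates becoming ``$p$ times a unit'', $\mathrm{d}p_1$ having a block equal to multiplication by $p$, invoking flatness of $M_I$ over $\ZZ$ from Theorem \ref{thm-GortzHe}, an ``integer matrix of shape $\mathrm{diag}(\mathrm{Id},p\cdot\mathrm{Id})$'' --- is simply not applicable, because there is no $p$ to divide by on an $\F_p$-scheme. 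Relatedly, your picture of ``extra coordinates on $U_{\{0,g\},w(s)\mu}$ relative to $p_1^{-1}(\mathrm{pt})$'' is wrong: both cells are smooth of the \emph{same} dimension $\tfrac{g(g+1)}{2}$, so the fibres of $p_1$ are (generically) zero-dimensional. The kernel and cokernel of $\mathrm{d}p_1$ arise from \emph{inseparability} of $p_1$ in characteristic $p$, not from a positive-dimensional fibre.

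The paper's proof avoids coordinates entirely by exploiting the homogeneous-space structure. One writes
\[
U_{\{0,g\},w(s)\mu}\;\simeq\;\mathcal{P}_{\{0,g\}}\big/\bigl(\mathcal{P}_{\{0,g\}}\cap \mr{t}_{w(s)\mu}\,\mathcal{P}_{\{0,g\}}\,\mr{t}_{w(s)\mu}^{-1}\bigr),
\]
and similarly for $U_{\{0\},w(s)\mu}$, with $p_1$ the obvious $\mathcal{P}_{\{0,g\}}$-equivariant projection. By equivariance it suffices to compute the tangent map at the identity coset, i.e.\ the induced map of Lie-algebra quotients
\[
\mathfrak{p}_{\{0,g\}}\big/\bigl(\mathfrak{p}_{\{0,g\}}\cap \mr{t}_{w(s)\mu}\,\mathfrak{p}_{\{0,g\}}\,\mr{t}_{w(s)\mu}^{-1}\bigr)\;\longrightarrow\;\mathfrak{p}_{\{0\}}\big/\bigl(\mathfrak{p}_{\{0\}}\cap \mr{t}_{w(s)\mu}\,\mathfrak{p}_{\{0\}}\,\mr{t}_{w(s)\mu}^{-1}\bigr).
\]
One then writes down the explicit $t$-adic block shapes of $\mathcal{P}_{\{0\}}$, $\mathcal{P}_{\{0,g\}}$, and their $\mr{t}_{w(s)\mu}$-conjugates (blocks of sizes $s,\,g-s,\,g-s,\,s$), and reads off that the kernel of this map is the space of matrices with a single nonzero block $tA$ with $A\in M_{(g-s)\times(g-s)}(\F_p)$ satisfying the symplectic symmetry $K_{g-s}\,{}^tA\,K_{g-s}=A$, which has dimension $\tfrac{(g-s)(g-s+1)}{2}$. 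Since source and target are smooth of the same dimension, the cokernel has the same rank, and local freeness follows from equivariance (the rank is constant along the orbit). If you want to salvage a coordinate approach, you should work entirely over $\F_p$ and recognise that the degeneracy of $\mathrm{d}p_1$ comes from the chain map $V_0\to V_g$ killing a $(g-s)$-dimensional subspace of $F_0(s)$ modulo $p$ --- but the Lie-algebra computation is both shorter and more transparent.
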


\begin{demo} We have $$U_{\{0,g\}, w(s) \mu} \simeq \mathcal{P}_{\{0,g\}} / \big(\mathcal{P}_{\{0,g\}} \cap \mr{t}_{w(s) \mu} \mathcal{P}_{\{0,g\}} \mr{t}_{w(s) \mu}^{-1}\big)$$
$$U_{\{0\}, w(s) \mu} \simeq \mathcal{P}_{\{0\}} / \big(\mathcal{P}_{\{0\}} \cap \mr{t}_{w(s) \mu} \mathcal{P}_{\{0\}} \mr{t}_{w(s) \mu}^{-1}\big).$$
and $p_1$  is the obvious  $\mathcal{P}_{(0,g)}$- equivariant projection: $$\mathcal{P}_{\{0,g\}} / \big(\mathcal{P}_{\{0,g\}} \cap \mr{t}_{w(s) \mu} \mathcal{P}_{\{0,g\}} \mr{t}_{w(s) \mu}^{-1}\big) \rightarrow  \mathcal{P}_{\{0\}} / \big(\mathcal{P}_{\{0\}} \cap \mr{t}_{w(s) \mu} \mathcal{P}_{\{0\}} \mr{t}_{w(s) \mu}^{-1}\big)$$
Because the map is $\mathcal{P}_{(0,g)}$-equivariant,  it suffices to prove the claim in the tangent space at the identity. 

We first determine  the shape of $\mathcal{P}_{\{0\}}$ and $\mathcal{P}_{\{0,g\}}$.  The group $\mathcal{P}_{\{0\}}$ is the hyperspecial  subgroup of $LG$, whose $R$-points are $G(R[[t]])$.  
   The group  $\mathcal{P}_{\{0, g\}}$ is the Siegel parabolic  group: 
 $$ \mathcal{P}_{\{0,g\}} (R) = \bigl \{ M =  \bigl ( \begin{smallmatrix} 
       a & b  \\
        tc & d \\
    \end{smallmatrix} \bigr ) \in LG(R) \bigr \} $$
    where $a, b ,c ,d \in M_{g \times g}(R[[t]])$.

We now determine that  $\mathcal{P}_{\{0\}} \cap \mr{t}_{w(s) \mu} \mathcal{P}_{\{0\}} \mr{t}_{w(s) \mu}^{-1}$ consists of matrices with the following shape (the $\star$ have integral values, the rows and columns are of size $s$, $g-s$, $g-s$,  and $s$): 
   $$\begin{pmatrix} 
      \star  &  t\star &  \star & t\star \\
      \star & \star & \star & \star  \\
     \star &t \star & \star & t \star \\
    \star & \star & \star & \star \\
   \end{pmatrix} $$

We also determine that  $\mathcal{P}_{\{0,g\}} \cap \mr{t}_{w(s) \mu} \mathcal{P}_{\{0,g\}} \mr{t}_{w(s) \mu}^{-1}$ consists of matrices with the following shape:
$$\begin{pmatrix} 
      \star  &  t\star &  \star & t\star \\
      \star & \star & \star & \star  \\
    t \star &t^2 \star & \star & t \star \\
    t\star &t \star & \star & \star \\
   \end{pmatrix} $$

Passing to the Lie algebras, we easily see that the kernel of the map: 
$$\mathfrak{p}_{\{0,g\}} / \big(\mathfrak{p}_{\{0,g\}} \cap \mr{t}_{w(s) \mu} \mathfrak{p}_{\{0,g\}} \mr{t}_{w(s) \mu}^{-1}\big) \rightarrow  \mathfrak{p}_{\{0\}} / \big(\mathfrak{p}_{\{0\}} \cap \mr{t}_{w(s) \mu} \mathfrak{p}_{\{0\}} \mr{t}_{w(s) \mu}^{-1}\big)$$
is  the set of matrices of the form: 
 $$\begin{pmatrix} 
      0  &  0 &  0 &0\\
      0 & 0 & 0 & 0  \\
     0 & t A &0 & 0 \\
    0& 0 & 0 & 0 \\
   \end{pmatrix} $$
with $A\in M_{g-s\times g -s}(\F_p)$ satisfies $ K_{g-s}^t A K_{g-s} = A$  (the symplectic condition). 
\end{demo}

\subsection{The Hecke correspondence on the local model}
We consider  the correspondence:
\begin{eqnarray*}
\xymatrix{ & M_{\{0,g\}} \ar[ld]_{p_2} \ar[dr]^{p_1} & \\
M_{\{g\}} && M_{\{0\}}}
\end{eqnarray*}

\subsubsection{Sheaves on the local model}\label{section-Sheaves-loc-mod}
 Let $X$ be a scheme and $L$ be a locally free sheaf of rank $g$ over $X$. We let $T_L = \mathrm{Isom}_X ( \oscr_{X}^g,L)$ be the associated torsor. We let $\omega$ be the universal trivialization. The group $\mathrm{GL}_g$ acts on the right by $\omega \gamma = \omega \circ \gamma$. 
 Let $T$ be the standard diagonal torus in $\mathrm{GL}_g$ and let $B$ be the upper triangular Borel. 
 
Let $X^\star(T)$ be the character group of $T$.  We have
$X^\star(T)\simeq \ZZ^g$ via $(k_1, \cdots, k_g) \mapsto [
\mathrm{diag}(t_1, \cdots, t_g) \mapsto \prod t_i^{k_i}]$ and $P^+$,
the cone of dominant weights,
is given by $k_1 \geq k_2 \geq \cdots \geq k_g$.

For all $\kappa \in X^+(T)$ we denote by  $L_{\kappa} = \pi_{\star}
\oscr_{T_L}[\kappa^\vee]$   where $\pi_{\star} \oscr_{T_L}[\kappa^\vee]$ is the
subsheaf of $\pi_\star \oscr_{T_L}$ of sections $f(\omega)$ ($\omega$
a trivialization of $L$) which satisfy $f(\omega b) = \kappa^\vee(b)
f(\omega)$ and $\kappa^\vee = (-k_g, \cdots, -k_1)$ and $b \in B$. This is a locally free  sheaf over $X$.

We can in particular apply this construction to the sheaves
$L_0 = (V_0/F_0)^\vee$ on $M_{\{0\}}$ and $L_g = (V_g/F_g)^\vee$ on
$M_{\{g\}}$ to obtain sheaves $L_{0, \kappa}$ on $M_{\{0\}}$ and
$L_{g, \kappa}$ on $M_{\{g\}}$.
 
 \begin{rem} We have isomorphisms $L_g \simeq F_g$ and $L_0 \simeq F_0$ using the pairing, but these isomorphisms are not equivariant for the action of the center of the group $G$ (there is a ``Tate'' twist). 
  \end{rem}

 \subsubsection{The map $L_{g,\kappa} \rightarrow L_{0,\kappa}$}

 The natural map $V_0 \rightarrow V_g$ induces a map
 $p_1^{\star} V_0/F_0 \rightarrow p_2^{\star} V_g/F_g$ over
 $M_{\{0,g\}}$ and by duality a map
 $p_2^{\star} L_g \rightarrow p_1^{\star} L_0$ that we denote by
 $\alpha$. The map $\alpha$ is an isomorphism on the generic fibre of
 $M_{\{0,g\}}$, so it induces an isomorphism
 $\alpha^\star: p_2^\star L_{g, \kappa} \rightarrow p_1^\star L_{0,
   \kappa}$ on the generic fibre for each $\kappa$.  We now
 investigate the integral properties of this map.

 \begin{lem}\label{lem-weight-formula}  Let $\kappa = (k_1, \cdots, k_g)$. Let $0 \leq s \leq g$. Let $\xi$ be the generic point of $U_{\{0,g\}}(s)$.  The map $\alpha^\star$  induces  a map $\alpha^\star: (p_2^\star L_{g,\kappa})_{\xi} \rightarrow p^{ k_g + \cdots + k_{g-s+1}} (p_1^\star L_{0,\kappa})_{\xi}$ over the local ring $\oscr_{M_{0,g}, \xi}$. 
 \end{lem}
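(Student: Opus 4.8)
The plan is to reduce the statement to the Smith normal form of the single map $\alpha$ over the local ring $R := \oscr_{M_{\{0,g\}},\xi}$, and then apply the functor $L \mapsto L_\kappa$. By Theorem~\ref{thm-GortzHe}, $M_{\{0,g\}}$ is flat over $\ZZ$ with reduced special fibre, and the open Kottwitz--Rapoport stratum $U_{\{0,g\}}(s)$ is dense in one of its irreducible components; hence $\xi$ is a point of codimension one, $R$ is a discrete valuation ring with uniformizer $p$, and its residue field is the function field of that component. The localizations $(p_2^\star L_{g,\kappa})_\xi$ and $(p_1^\star L_{0,\kappa})_\xi$ are free $R$-modules and $\alpha^\star$ is an isomorphism after inverting $p$, so the content of the assertion is purely about the integral part of $\alpha^\star$.

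First I would pin down the elementary divisors of $\alpha$ at $\xi$. Since $L_g = (V_g/F_g)^\vee$, $L_0 = (V_0/F_0)^\vee$ and $\alpha$ is, by construction, the $R$-dual of the map $\overline\phi\colon V_0/F_0 \to V_g/F_g$ induced by $\phi = \mathrm{diag}(p\,\mathrm{Id}_g,\mathrm{Id}_g)$, the elementary divisors of $\alpha$ agree with those of $\overline\phi$. The self-dual periodic chain underlying $M_{\{0,g\}}$ also provides a map $F_g\to F_0$ lying over $\psi := \mathrm{diag}(\mathrm{Id}_g, p\,\mathrm{Id}_g)\colon V_g\to V_0$, and $\phi\psi = \psi\phi = p$; passing to quotients gives $\overline\phi\,\overline\psi = p$ on $V_g/F_g$, so $\mathrm{coker}(\overline\phi)$ is killed by $p$ and every elementary divisor of $\alpha$ is $1$ or $p$. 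The multiplicity of $p$ equals $g - \mathrm{rank}(\overline\phi\bmod p)$, i.e. the corank of $\overline\phi$ at the generic point of $U_{\{0,g\}}(s)$; as $U_{\{0,g\}}(s)$ is a single $\mathcal{P}_{\{0,g\}}$-orbit and $\mathcal{P}_{\{0,g\}}$ preserves the transition maps of the chain $\mathcal{V}_\bullet$, this corank is constant on the stratum, so it may be computed at the standard representative $F_0(s)=F_g(s)=\langle e_{s+1},\dots,e_g,e_{2g-s+1},\dots,e_{2g}\rangle$. There $\overline e_1,\dots,\overline e_s$ lie in $\ker\overline\phi$ (they are multiplied by $p$) while $\overline e_{g+1},\dots,\overline e_{2g-s}$ map to independent classes, so the corank is exactly $s$. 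Hence, in suitable $R$-bases, $\alpha = \mathrm{Id}_{g-s}\oplus p\,\mathrm{Id}_s$.

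It then remains to apply $L\mapsto L_\kappa$. As recalled in the symplectic Shimura variety section, $\pi_\star\oscr_{T_L}[\kappa^\vee]$ is the bundle associated (via $T_L$) to the $\mathrm{GL}_g$-representation $\rho_\kappa$ of highest weight $\kappa$, and this construction is functorial in isomorphisms of $L$ and compatible with passing to Smith bases; therefore, over $\mathrm{Frac}(R)$, $\alpha^\star$ is the diagonal operator $\rho_\kappa(\mathrm{diag}(1,\dots,1,p,\dots,p))$ in the induced weight bases, with diagonal entries $p^{\mu_{g-s+1}+\cdots+\mu_g}$ as $\mu$ runs over the weights of $\rho_\kappa$. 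Every such $\mu$ satisfies $\sum_i\mu_i=\sum_i k_i$, and the sum of its $g-s$ largest coordinates is at most $k_1+\cdots+k_{g-s}$ (the dominant rearrangement of $\mu$ is $\preceq\kappa$), so $\mu_{g-s+1}+\cdots+\mu_g \ge k_{g-s+1}+\cdots+k_g$. Since these weight bases are $R$-bases of $(p_2^\star L_{g,\kappa})_\xi$ and $(p_1^\star L_{0,\kappa})_\xi$, this says precisely that $\alpha^\star$ carries $(p_2^\star L_{g,\kappa})_\xi$ into $p^{\,k_g+\cdots+k_{g-s+1}}(p_1^\star L_{0,\kappa})_\xi$.

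The main obstacle is really the first step: verifying that $R$ is a discrete valuation ring --- which is where flatness and reducedness of the local model (Theorem~\ref{thm-GortzHe}), together with the density of the open stratum in its component, are used --- and checking that $\alpha^\star$ is literally $\rho_\kappa$ applied to $\alpha$. Once this bookkeeping is in place, the elementary divisor computation and the combinatorial estimate on weights are elementary.
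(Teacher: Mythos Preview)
Your proof is correct. Both you and the paper begin identically: establish that $R=\oscr_{M_{\{0,g\}},\xi}$ is a DVR with uniformizer $p$ (via flatness and reducedness of the local model), and put $\alpha$ in Smith normal form $\mathrm{Id}_{g-s}\oplus p\,\mathrm{Id}_s$ by computing the corank of $\overline\phi$ on the orbit $U_{\{0,g\}}(s)$ at the standard representative. The divergence is only in the last step. The paper evaluates $\alpha^\star f$ pointwise on $GL_g(R)$: it uses the Iwahori decomposition $GL_g(R)=\coprod_{w\in\mathcal{S}_g}\mathrm{Iw}\, w\, U(R)$, writes $\alpha^\star f(iw)=f(\alpha^{-1}i\alpha\, w)\,\kappa^\vee(w^{-1}\alpha^{-1}w)$, observes that $\alpha^{-1}\,\mathrm{Iw}\,\alpha\subset GL_g(R)$ for this particular Smith form, and minimizes $v(\kappa^\vee(w^{-1}\alpha^{-1}w))$ over $w\in\mathcal{S}_g$. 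You instead identify $\alpha^\star$ with $\rho_\kappa(\alpha)$ acting on the Weyl module, use the integral weight decomposition (valid because the torus is split over $\ZZ$), and bound each eigenvalue $p^{\mu_{g-s+1}+\cdots+\mu_g}$ via the dominance inequality $\mu^{\mathrm{dom}}\preceq\kappa$. Your argument is slightly more structural and avoids the Iwahori bookkeeping; the paper's is a direct function-theoretic estimate. The two yield the same bound for the same underlying reason---the minimum is attained on the $W$-orbit of $\kappa$---so this is a genuine but minor variation rather than a different proof.
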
 
 
 \begin{demo}  We first check that over $U_{\{0,g\}}(s)$, the map $\alpha$ has kernel and cokernel a locally free sheaf of rank $s$. Indeed, it is enough to check this at the point corresponding to $\mr{t}_{w(s)\mu}$, in which case  the corresponding diagram is:
 \begin{eqnarray*}
 \xymatrix{ \mathcal{V}_0 \ar[rr]^{\mathrm{diag}( t 1_g, 1_g)} && \mathcal{V}_g \\
 \mathcal{V}_0 \ar[rr]^{\mathrm{diag}( t 1_g, 1_g)}\ar[u]^{\mr{t}_{w(s) \mu}}& & \mathcal{V}_g \ar[u]^{\mr{t}_{w(s)\mu}}}
 \end{eqnarray*}
 and our claim is simply that the map $\mathcal{V}_0/\mr{t}_{w(s) \mu} \mathcal{V}_0 \rightarrow \mathcal{V}_g/\mr{t}_{w(s) \mu} \mathcal{V}_g$ has kernel of rank $s$. This is obvious.

 We can work  over the completion $R$ of  $\oscr_{M_{0,g}, \xi}$, which has uniformizing element $p$. We also denote by $v$ the $p$-adic valuation on $R$ normalized by $v(p)=1$. We fix isomorphisms $L_0 \simeq R^g$ and $L_g \simeq R^g$ such that $\alpha= \mathrm{diag}( p 1_s, 1_{g-s})$  in these bases. We have $$GL_g(R) = \coprod_{w \in \mathcal{S}_g} \mathrm{Iw} w \U(R)$$ the Iwahori decomposition with $\mathrm{Iw}$ the matrices which are upper triangular mod $p$ and $\U$ the unipotent radical of $\B$. Let $f \in L_{r,\kappa}$. Then  for $i \in \mathrm{Iw}$ and $w \in \mathcal{S}_g$, 
\begin{eqnarray*}
\alpha^\star f( iw) &=& f( \alpha^{-1} i w) \\
&=& f( \alpha^{-1} i \alpha w w^{-1} \alpha^{-1} w )\\
&=&  f (\alpha^{-1} i \alpha w) \kappa^\vee(w^{-1} \alpha^{-1} w)
\end{eqnarray*}
Since $\alpha^{-1} i \alpha w \in \mathrm{GL}_g(R)$, we deduce that
$v\big(f (\alpha^{-1} i \alpha w) \kappa^\vee(w^{-1} \alpha^{-1}
w)\big) \geq v(\kappa^\vee(w^{-1} \alpha^{-1} w)) \geq k_g + k_{g-1} +
\cdots + k_{(g-s+1)}$ for all $w$ since
$k_1 \geq k_2\geq \dots \geq k_g$.
 \end{demo}

\subsubsection{The cohomological correspondence}

We may now construct a cohomological correspondence. By Proposition \ref{prop-trace} and Theorem \ref{thm-GortzHe}, we have a fundamental class  $p_1^\star \oscr_{M_{\{0\}}} \rightarrow p_1^! \oscr_{M_{\{0\}}}$.  Moreover, the sheaf $p_1^! \oscr_{M_{\{0\}}}$ is a CM sheaf. 

There is also a map
$\alpha: p_2^\star L_{g, \kappa} \dashrightarrow p_1^\star
L_{0,\kappa}$ defined on the generic fibre, so that putting everything
together, we have a generically defined map (the naive cohomological
correspondence):
$$ T^{naive}: p_2^\star L_{g, \kappa} \dashrightarrow p_1^! L_{0, \kappa}$$

We may now normalize this correspondence. 

\begin{proposition} \label{prop:normal} Let $T = p^{- \inf_j \big \{\sum_{\ell =j+1}^{g}
    k_\ell + \frac{j(j+1)}{2}\big\}} T^{naive}$. Then $T$ is a true cohomological correspondence: 
$$ T: p_2^\star L_{g, \kappa} \rightarrow p_1^! L_{0, \kappa}$$
\end{proposition}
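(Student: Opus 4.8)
The goal is to show that the normalized map $T = p^{-c(\kappa)} T^{naive}$, with $c(\kappa) = \inf_j\{\sum_{\ell=j+1}^g k_\ell + \frac{j(j+1)}{2}\}$, is an honest morphism of sheaves $p_2^\star L_{g,\kappa} \to p_1^! L_{0,\kappa}$ over all of $M_{\{0,g\}}$, not just over the generic fibre. Since $M_{\{0,g\}}$ is flat over $\ZZ$ with reduced special fibre (Theorem \ref{thm-GortzHe}), the source $p_2^\star L_{g,\kappa}$ is a locally free sheaf, and it suffices to check that the generically defined rational map has no pole along each irreducible component of the special fibre $\ov{M}_{\{0,g\}}$ — equivalently, that at the generic point $\xi_s$ of each $U_{\{0,g\}}(s)$, $0 \le s \le g$, the map $T^{naive}$ localized at $\xi_s$ takes values in $p^{c(\kappa)}(p_1^!L_{0,\kappa})_{\xi_s}$. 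Because $M_{\{0,g\}}$ is Cohen--Macaulay and $p_1^!\oscr_{M_{\{0\}}}$ is a CM sheaf (hence has no embedded primes and is determined by its localizations at codimension-one points, via the $S_2$ property), this codimension-one check along the $g+1$ components listed in the irreducible-components lemma is enough to conclude $T$ extends globally.

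First I would unwind the construction of $T^{naive}$ at $\xi_s$ into two contributions. Recall $T^{naive}$ is the composite of $\alpha^\star: p_2^\star L_{g,\kappa} \dashrightarrow p_1^\star L_{0,\kappa}$ and the twist by $p_1^\star L_{0,\kappa}$ of the fundamental class $\Theta: \oscr_{M_{\{0,g\}}} \to p_1^!\oscr_{M_{\{0\}}}$ (using Proposition \ref{prop-finite-tor-dim} to move the line bundle through $p_1^!$). By Lemma \ref{lem-weight-formula}, at $\xi_s$ the map $\alpha^\star$ lands in $p^{k_g + \cdots + k_{g-s+1}}$ times the integral structure. For the fundamental class, I would use Proposition \ref{prop-trace}: $\Theta$ over the smooth open locus is $\det(\mathrm{d}p_1)$, and by Proposition \ref{prop-formula} the map $\mathrm{d}p_1$ on the open stratum $U_{\{0,g\},w(s)\mu}$ has kernel and cokernel locally free of rank $\frac{(g-s)(g-s+1)}{2}$. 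This tells us that, after transporting via the local model comparison (Theorem \ref{thm-main-local-model}, to be invoked in the next section), the determinant $\det(\mathrm{d}p_1)$ vanishes to order exactly $\frac{(g-s)(g-s+1)}{2}$ along the component indexed by $s$: the cokernel of $\mathrm{d}p_1$ is the conormal contribution, and its generic rank over the component measures the order of vanishing of the determinant section, which is $\frac{(g-s)(g-s+1)}{2}$. Hence at $\xi_s$ the fundamental class contributes a factor of $p^{\frac{(g-s)(g-s+1)}{2}}$.

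Combining, at $\xi_s$ the map $T^{naive}$ lands in $p^{(k_g + \cdots + k_{g-s+1}) + \frac{(g-s)(g-s+1)}{2}}$ times the integral structure of $p_1^!L_{0,\kappa}$. Now I would perform the change of index $j = g - s$: as $s$ ranges over $\{0,\dots,g\}$, $j$ ranges over $\{0,\dots,g\}$, and $k_g + \cdots + k_{g-s+1} = \sum_{\ell = j+1}^g k_\ell$ while $\frac{(g-s)(g-s+1)}{2} = \frac{j(j+1)}{2}$. So the valuation of $T^{naive}$ at the component $s$ is exactly $\sum_{\ell=j+1}^g k_\ell + \frac{j(j+1)}{2}$, and the worst (smallest) such valuation over all components is precisely $c(\kappa) = \inf_j\{\sum_{\ell=j+1}^g k_\ell + \frac{j(j+1)}{2}\}$. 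Therefore $p^{-c(\kappa)}T^{naive}$ has non-negative valuation along every component of the special fibre, hence extends to a genuine morphism $T: p_2^\star L_{g,\kappa} \to p_1^! L_{0,\kappa}$ by the $S_2$/CM argument above. The main obstacle I anticipate is making the order-of-vanishing computation for $\det(\mathrm{d}p_1)$ rigorous: Proposition \ref{prop-formula} gives the generic rank of the cokernel on the affine-flag-variety model, but one must check that this rank indeed equals the multiplicity with which $p$ divides $\det(\mathrm{d}p_1)$ on the local model $M_{\{0,g\}}$ — this requires knowing the comparison between $\ov M_{\{0,g\}}$ and the union of affine Schubert strata is compatible with the differentials (which is exactly what the local model diagram provides) and that no extra vanishing occurs along non-smooth points of a component, which is handled because the singular locus has codimension $\geq 2$ in each component (assumption (5) of Proposition \ref{prop-trace}) so plays no role in the codimension-one computation.
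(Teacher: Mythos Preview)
Your approach is essentially identical to the paper's: reduce to a codimension-one check using the CM property of $p_1^! L_{0,\kappa}$, then combine Lemma~\ref{lem-weight-formula} (the $\alpha^\star$ contribution) with Proposition~\ref{prop-formula} (the fundamental-class contribution) at each generic point $\xi_s$, and reindex $j = g-s$.

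Two small points. First, your invocation of Theorem~\ref{thm-main-local-model} is misplaced: that theorem transports information between the \emph{Shimura variety} and the local model, whereas the present proposition lives entirely on the local model $M_{\{0,g\}}$. The comparison you actually need is the embedding $\ov{M}_{\{0,g\}} \hookrightarrow \mathcal{F}_{\{0,g\}}$ of the special fibre into the affine flag variety, already set up in the subsection on stratification; this identifies the open strata with the $U_{\{0,g\},w(s)\mu}$ and makes Proposition~\ref{prop-formula} directly applicable to the mod-$p$ reduction of $\mathrm{d}p_1$ at $\xi_s$. Second, Proposition~\ref{prop-formula} only yields that the rank of the cokernel of $\mathrm{d}p_1$ mod $p$ is $\tfrac{(g-s)(g-s+1)}{2}$, which gives $v_p(\det \mathrm{d}p_1) \geq \tfrac{(g-s)(g-s+1)}{2}$ (by the elementary-divisor argument), not necessarily equality. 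But only the lower bound is required for the proposition, so your ``exactly'' is unneeded and your anticipated obstacle about extra vanishing is not an issue.
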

\begin{demo}
  Because $p_1^! L_{0, \kappa}$ is a CM sheaf, any generically
  defined map from a locally free sheaf into $p_1^! L_{0, \kappa}$ is
  defined globally if it is defined in codimension $1$. So it is
  enough to check that $T$ is defined at the generic points of all the
  components of $\ov{M}_{\{0,g\}}$.  Let $0 \leq s \leq g$ and let
  $\xi$ be the generic point of the stratum $U_{\{0,g\}}(s)$. At this
  point, we see that
  $T^{naive}: (p_2^\star L_{g, \kappa} )_\xi \rightarrow p^{
    \sum_{\ell = g-s+1}^{g} k_\ell + \frac{(g-s)(g-s+1)}{2}} (p_1^!
  L_{0, \kappa})_\xi$ by combining Lemma \ref{lem-weight-formula} and
  Proposition \ref{prop-formula}.
\end{demo}

\subsubsection{Proof of Theorem \ref{main-thm-symplectic}}\label{proof-main-symp} The main point of the theorem is to construct the action of $T_{\mathfrak{p}_i}$. The action of $S_{\mathfrak{p}_i}$ is by automorphisms (they are some generalized diamond operators) and the commutativity of the various operators is rather formal.

We fix some prime $\mathfrak{p}_i$. We let $K(\mathfrak{p}_i) =  K \cap t_i K t_i^{-1}$ for $t_i = \mathrm{diag} (\varpi_i^{-1}1_g,  1_g) \subset G( \mathbb{A}_f)$ where $\varpi_i$ is the finite ad\`ele which is $1$ at all places different from $\mathfrak{p}_i$ and $p$ at $\mathfrak{p}_i$. 

We claim that  there is a Hecke correspondence: 

\begin{eqnarray*}
\xymatrix{ & \mathfrak{Sh}^{tor}_{K(\mathfrak{p}_i), \Sigma''} \ar[rd]^{p_1} \ar[ld]_{p_2} & \\ 
\mathfrak{Sh}^{tor}_{K, \Sigma'} & & \mathfrak{Sh}^{tor}_{K, \Sigma}}
\end{eqnarray*}
which extends the usual Hecke correspondence  on the generic fiber. In order to construct this correspondence we use the PEL Shimura variety. We claim that there is a diagram :

\begin{eqnarray} \label{eq:corr}
\xymatrix{  & \widetilde{\mathfrak{Sh}}_{K(\mathfrak{p}_i)} \ar[rd]^{\widetilde{p}_1} \ar[d]\ar[ld]_{\widetilde{p}_2} & \\
\widetilde{\mathfrak{Sh}}_{K} \ar[d] & \mf{Sh}_{K(\mathfrak{p}_i)} \ar[rd]^{p_1} \ar[ld]_{p_2} &\widetilde{\mathfrak{Sh}}_{K} \ar[d]  \\ 
\mf{Sh}_{K} & & \mf{Sh}_{K}}
\end{eqnarray}
where the lower hat is simply the quotient of the top hat by the action of $\Delta$. The right square is cartesian by definition. We explain how to define
$\tilde{p}_2$ in order to make the left square cartesian.  We take a
totally positive element $x_i \in F^{\times, +}$ which has the
property that its $\mathfrak{p}_i$-adic valuation is exactly $1$, but
that its $\mathfrak{p}_j$-adic valuation is $0$ for all $j \neq i$.
Given a point $(A, \iota, \lambda, \eta, \eta_p)$ in
$\widetilde{\mathfrak{Sh}}_{K(\mathfrak{p}_i)}$ where $\eta_p$
corresponds to a maximal totally isotropic subgroup $H$ of
$A[\mathfrak{p}_i]$ we define
$\tilde{p}_2( (A, \iota, \lambda, \eta, \eta_p)) = (A', \iota',
\lambda', \eta')$ where $A' = A/H$, $\iota'$ and $\eta'$ have the
obvious definitions, and $\lambda'$ is defined by descending the
polarization $x_i \lambda$ to $A/H$. This indeed defines a prime to
$p$ polarization.  

The vertical maps in the above diagram are \'etale and surjective. We note that $\tilde{p}_2$ is not canonical (because of the ambiguity in the choice of $x_i$), but that $p_2$ is canonical.

Let
$\kappa = ((k_{1,\sigma}, \cdots, k_{g, \sigma}) ; k)_{\sigma \in I}$
be a dominant weight for $M_{\mu}/Z_c(G)$:
$k_{1, \sigma} \geq \cdots \geq k_{g,\sigma}$, and both $k$ and
$\sum_{i} k_{i, \sigma}$ have the same parity. After inverting $p$,
there is a map (denoted $T_{t_i}$ in section \ref{section-coho-vec-bund-Hecke}) 
$T_{\mathfrak{p}_i}^{naive} : p_2^\star \mathcal{V}_{\kappa,
  \Sigma} \rightarrow p_1^! \mathcal{V}_{\kappa, K, \Sigma}$ respecting
the cuspidal subsheaves and therefore inducing
$p_2^\star \mathcal{V}_{\kappa, K, \Sigma}(-D_{K,Ê\Sigma}) \rightarrow
p_1^! \mathcal{V}_{\kappa, K, \Sigma}(-D_{K, \Sigma})$.

We will prove that $p_1^! \mathcal{V}_{\kappa, K, \Sigma}$ and $p_1^! \mathcal{V}_{\kappa, K, \Sigma}(-D_{K, \Sigma})$ are CM sheaves over $\ocal_{E', \lambda'}$ and that   $$T_{\mathfrak{p}_i} = p^{\sum_{\sigma \in I_i} \sup_{1 \leq j \leq g} \Big\{ \frac{\sum_{\ell = 1}^j k_{\ell, \sigma} - \sum_{\ell = j+1}^g k_{\ell, \sigma} +k}{2} - \frac{j(j+1)}{2} \Big\}} T_{\mathfrak{p}_i}^{naive}$$ is a well defined map integrally. 

Actually, once we prove that  $p_1^! \mathcal{V}_{\kappa, K, \Sigma}$ is
a CM sheaf, it will be enough to check that the map
$T_{\mathfrak{p}_i}$ is  defined in codimension $1$. Since it is
well defined in characteristic $0$, and the boundary is flat over
$\ZZ_p$, it will be enough to check that the map $T_{\mathfrak{p}_i}$
is  defined on the interior $\mf{Sh}_{K(\mathfrak{p}_i)}$ of $\mathfrak{Sh}^{tor}_{K(\mathfrak{p}_i), \Sigma''}$ .

The main idea is to reduce everything to local model
computations. This is slightly delicate since our Shimura datum is
only of abelian type, but we can reduce to working with a PEL Shimura
datum.

We
can pull back the cohomological correspondence
$T_{\mathfrak{p}_i}^{naive}$ over $\qq$ to a cohomological
correspondence
$\widetilde{T_{\mathfrak{p}_i}^{naive}}: \tilde{p}_2^\star
\mathcal{V}_{\kappa} \rightarrow \tilde{p}_1^!
\mathcal{V}_{\kappa}$. It is enough to prove everything for the latter
correspondence.

Now we have a local model diagram  of correspondences:
\begin{eqnarray} \label{eq:lmd}
\xymatrix{ && \widetilde{\mathfrak{P}}_{K(\mathfrak{p}_i)} \ar@<-.5ex>[d]_{q_2} \ar@<.5ex>[d]^{q_1} \ar[lld]_{f} \ar[rrd]^{g}&& \\ 
 \widetilde{\mathfrak{Sh}}_{K(\mathfrak{p}_i)} \ar@<-.5ex>[d]_{\tilde{p}_2} \ar@<.5ex>[d]^{\tilde{p}_1}& & \widetilde{\mathfrak{P}}_{K} \ar[lld]_{h} \ar[rrd]^{e}& &   M^{loc}_{K(\mathfrak{p}_i)} \ar@<-.5ex>[d]_{t_2} \ar@<.5ex>[d]^{t_1}  \\
 \widetilde{\mathfrak{Sh}}_{K} & & & &   M^{loc}_{K}}
 \end{eqnarray}
By definition,  $M^{loc}_K = \prod_{\sigma \in I} M_{\{0\}}$ and
$M^{loc}_{K(\mathfrak{p}_i)} = \prod_{\sigma \in I_i} M_{\{0,g\}}
\prod_{\sigma \notin I_i} M_{\{0\}}$.  The projection $t_1$ is  the
product of the  projections $p_1: M_{\{0,g\}} \rightarrow M_{\{0\}}$
at places $\sigma \in I_i$ and the identity otherwise. The projection
$t_2$ is   the product of the  projections $p_2: M_{\{0,g\}} \rightarrow M_{\{g\}} \simeq M_{\{0\}}$ at $\sigma \in I_i$ and the identity if $\sigma \notin I_i$.  
The map $h$ is the torsor of symplectic trivialisations of $\mathcal{H}_{1,dR}(A/\widetilde{Sh}_{K})$ for  $A$ the universal abelian scheme. The map $f$ is the torsor of symplectic  trivialisations  of the chain $\mathcal{H}_{1,dR}(A/\widetilde{Sh}_{K}) \rightarrow \mathcal{H}_{1,dR}((A/H)/\widetilde{Sh}_{K}) \rightarrow \mathcal{H}_{1,dR}(A/\widetilde{Sh}_{K})$  (i.e., isomorphisms with the chain $\prod_{\sigma \in I_i} ( V_0 \rightarrow V_g \rightarrow V_0) \times \prod_{\sigma \notin I_i} (V_0 \stackrel{\mathrm{Id}}\rightarrow V_0 \stackrel{p \mathrm{Id}}\rightarrow V_0)$).
The maps $g$ and $e$ are given by the Hodge filtration. 

This diagram is commutative, the diagonal maps are smooth and the
diagonal maps going to the left are surjective, but the squares are
not cartesian!  We have the following theorem:
\begin{thm}\label{thm-main-local-model} Let $\bar{x}: \Spec(k) \rightarrow \tilde{\mathfrak{P}}_{K(\mathfrak{p_i})}$. Let $\bar{y} = f(\bar{x})$, $\bar{z} = \tilde{p}_1(\bar{y})$, $\bar{y}' = g(\bar{x})$, $\bar{z}' = {t}_1(\bar{y}')$. Then there are isomorphisms between the strict henselizations: 
$$ \oscr_{\widetilde{\mathfrak{Sh}}_{K}, \bar{z}} \simeq \oscr_{{M}^{loc}_{K}, \bar{z}'} $$ and 
$$ \oscr_{\widetilde{\mathfrak{Sh}}_{K(\mathfrak{p}_i)}, \bar{y}} \simeq \oscr_{{M}^{loc}_{K(\mathfrak{p}_i)}, \bar{y}'}.$$

Moreover, there is a commutative diagram   between the maps on  Zariski cotangent spaces at $\bar{y}, \bar{y}'$ and $\bar{z}, \bar{z'}$: 

\begin{eqnarray*}
\xymatrix{ \mathfrak{m}_{\oscr_{\widetilde{\mathfrak{Sh}}_{K}, \bar{z}}}/ \mathfrak{m}^2_{\oscr_{\widetilde{\mathfrak{Sh}}_{K}, \bar{z}}}  \ar[d]^{\mathrm{d}\tilde{p}_1} \ar[r]^{\sim}& \mathfrak{m}_{\oscr_{M^{loc}_K, \bar{z}'}}/ \mathfrak{m}^2_{\oscr_{M^{loc}_K, \bar{z}'}} \ar[d]^{\mathrm{d}t_1}   \\
 \mathfrak{m}_{\oscr_{\widetilde{\mathfrak{Sh}}_{K(\mathfrak{p}_i)}, \bar{y}}}/ \mathfrak{m}^2_{\oscr_{\widetilde{\mathfrak{Sh}}_{K(\mathfrak{p}_i)}, \bar{y}}} \ar[r]^{\sim}& \mathfrak{m}_{\oscr_{M^{loc}_{K(\mathfrak{p}_i)}, \bar{y}'}}/ \mathfrak{m}^2_{\oscr_{M^{loc}_{K(\mathfrak{p}_i)}, \bar{y}'}}}
\end{eqnarray*}

\end{thm}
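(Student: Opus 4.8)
The plan is to deduce the theorem from the standard theory of local models applied to the \emph{PEL} Shimura varieties $\widetilde{\mathfrak{Sh}}_{K}$ and $\widetilde{\mathfrak{Sh}}_{K(\mathfrak{p}_i)}$ — recall that, unlike $\mathfrak{Sh}_K$, these are honest moduli spaces of abelian schemes — together with a bookkeeping argument identifying the forgetful maps $\widetilde{p}_1$ and $t_1$ under the resulting comparison of deformation spaces.

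\emph{Comparison of local rings.} The point $\bar{x}$ of $\widetilde{\mathfrak{P}}_{K(\mathfrak{p}_i)}$ furnishes, via $\bar{x}$ itself and its image $q_1(\bar{x}) \in \widetilde{\mathfrak{P}}_K$, symplectic trivializations identifying the relevant self-dual chains of de Rham homologies at $\bar{y} = f(\bar{x})$ and at $\bar{z} = \widetilde{p}_1(\bar{y})$ with the standard chains $V_\bullet \otimes k$. The completed local ring $\widehat{\oscr}_{\widetilde{\mathfrak{Sh}}_{K}, \bar{z}}$ pro-represents the deformation functor of $(A_{\bar{z}}, \iota, \lambda, \eta)$; by Serre--Tate and Grothendieck--Messing theory this is the functor of lifts of the Hodge filtration $\omega_{A^t} \subset \mathcal{H}_{1,dR}(A)$ inside the crystal-rigidified de Rham homology, and the trivialization $q_1(\bar{x})$ converts it into the functor of lifts of a fixed self-dual $F_0 \subset V_0$, i.e. into $\widehat{\oscr}_{M^{loc}_K, \bar{z}'}$; this gives a canonical isomorphism $\widehat{\oscr}_{\widetilde{\mathfrak{Sh}}_{K}, \bar{z}} \simeq \widehat{\oscr}_{M^{loc}_K, \bar{z}'}$. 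The same argument, now using de Jong's form of Grothendieck--Messing theory for the isogeny chain $A \to A/H$ underlying the moduli problem $\widetilde{\mathfrak{Sh}}_{K(\mathfrak{p}_i)}$, produces $\widehat{\oscr}_{\widetilde{\mathfrak{Sh}}_{K(\mathfrak{p}_i)}, \bar{y}} \simeq \widehat{\oscr}_{M^{loc}_{K(\mathfrak{p}_i)}, \bar{y}'}$, the right-hand side being by construction the moduli of chains of self-dual Hodge filtrations, i.e. $\prod_{\sigma \in I_i} M_{\{0,g\}} \times \prod_{\sigma \notin I_i} M_{\{0\}}$. Passing from completions to the strict henselizations in the statement is the usual repackaging of the local model diagram via Artin approximation, all rings involved being excellent; alternatively one cites \cite{PRS}.

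\emph{The cotangent square.} Under these identifications both $\widetilde{p}_1$ and $t_1$ are ``forget everything at $\mathfrak{p}_i$ except the first term of the chain'': on the moduli side $\widetilde{p}_1$ sends $(A \to A/H, \dots)$ to $A$ with its Hodge filtration, and $t_1 = \prod_{\sigma \in I_i} p_1 \times \mathrm{id}$ sends $(F_0 \subset V_0,\, F_g \subset V_g)$ to $F_0 \subset V_0$. The commutativity of the diagram \eqref{eq:lmd}, which is built into its construction, ensures that the trivialization used at $\bar{z}$ is $q_1$ of the one used at $\bar{y}$, so these two forgetful operations on deformation problems correspond. Hence the deformation-theoretic isomorphisms above fit into a commutative square of complete local rings intertwining $\widetilde{p}_1^\ast$ and $t_1^\ast$, and applying $\mathfrak{m}/\mathfrak{m}^2$ yields exactly the diagram in the statement.

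\emph{The main point.} The one substantial input is the parahoric instance of Grothendieck--Messing theory: one must verify that deforming the pair $(A,H)$ — equivalently the isogeny chain $A \to A/H \to A$ with its polarization and $\ocal_F$-action — is the same as independently deforming the Hodge filtrations on each de Rham homology in the chain, subject exactly to the incidence and self-duality conditions defining $M_{\{0,g\}}$ and to no further constraints. This is where the description of $M^{loc}$ as a moduli of chains of lattices is used, and it is the only place where the singular special fibres of $M_{\{0,g\}}$ and $\widetilde{\mathfrak{Sh}}_{K(\mathfrak{p}_i)}$ genuinely intervene; for $I = \{0\}$ this is the classical smooth hyperspecial situation. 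Granting it, the rest is a formal consequence of the deformation-theoretic description of the local rings and the compatibility of trivializations recorded in \eqref{eq:lmd} and \eqref{eq:corr}.
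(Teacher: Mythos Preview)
Your proposal is correct and follows essentially the same approach as the paper's own proof, which is extremely terse: it simply says that the first part is ``the main result of local model theory'' and that the second part is ``an immediate consequence of Grothendieck--Messing deformation theory,'' citing de Jong's \cite[Thm.~2.1]{dejong-ppav}. You have spelled out in detail what the paper leaves implicit---the Serre--Tate/Grothendieck--Messing identification of deformation functors, the role of the trivialization $\bar{x}$ in matching them, the passage to henselizations, and the observation that both $\widetilde{p}_1$ and $t_1$ are compatible ``forget'' maps---but the key input (de Jong's parahoric deformation theory) and the overall logic are the same.
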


\begin{proof}   The first point is the main result of local model theory. The second point is an immediate consequence of Grothendieck--Messing deformation theory  (see \cite[Thm. 2.1]{dejong-ppav} for a precise statement of  this theory). 
\end{proof}

\begin{coro} \label{cor:cm} The map $\widetilde{\mathfrak{Sh}}^{tor}_{K, \Sigma}
  \rightarrow \Spec~\ocal_{E', \lambda'}$ is smooth and the map
  $\widetilde{\mathfrak{Sh}}^{tor}_{K(\mathfrak{p}_i), \Sigma}
  \rightarrow \Spec~\ocal_{E', \lambda'}$ is a CM map.
\end{coro}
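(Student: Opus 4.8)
The plan is to combine the local-model uniformization of Theorem~\ref{thm-main-local-model} on the open part with the geometry of the local models provided by Theorem~\ref{thm-GortzHe}, and then to propagate the conclusion to the toroidal compactification by a descending induction on the boundary strata.

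\textbf{Interior.} For the parahoric level, Theorem~\ref{thm-main-local-model} gives, at every geometric point, an isomorphism of strict henselizations between $\widetilde{\mathfrak{Sh}}_{K(\mathfrak{p}_i)}$ and $M^{loc}_{K(\mathfrak{p}_i)} = \prod_{\sigma\in I_i} M_{\{0,g\}} \times \prod_{\sigma\notin I_i} M_{\{0\}}$ (fibre product over $\Spec~\ocal_{E',\lambda'}$), compatibly with the maps to the base. By Theorem~\ref{thm-GortzHe} the scheme $M_{\{0,g\}}$ is flat over $\ZZ$ and Cohen--Macaulay; since $\ZZ$ is regular of dimension one, $p$ is a nonzerodivisor on $M_{\{0,g\}}$, so $\ov{M}_{\{0,g\}} = M_{\{0,g\}}/p$ is Cohen--Macaulay, while the generic fibre is smooth. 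Hence $M_{\{0,g\}} \to \Spec~\ZZ$ is a CM morphism, and $M_{\{0\}} \to \Spec~\ZZ$ is smooth. CM morphisms are stable under base change, and a fibre product over $S = \Spec~\ocal_{E',\lambda'}$ of CM $S$-schemes is again CM over $S$ (its fibres are products of CM schemes over a perfect field, hence CM), so $M^{loc}_{K(\mathfrak{p}_i)} \to \Spec~\ocal_{E',\lambda'}$ is a CM morphism. Being a CM morphism can be tested on strict henselizations of the source, so $\widetilde{\mathfrak{Sh}}_{K(\mathfrak{p}_i)} \to \Spec~\ocal_{E',\lambda'}$ is CM; the same argument with $M^{loc}_K = \prod_\sigma M_{\{0\}}$ smooth gives smoothness of $\widetilde{\mathfrak{Sh}}_K \to \Spec~\ocal_{E',\lambda'}$ (which is in any case classical, by Kottwitz's deformation theory for PEL moduli at hyperspecial level).

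\textbf{Boundary.} To pass to the compactifications I would invoke the explicit structure of the arithmetic toroidal compactifications of these PEL moduli with parahoric level at $p$ (Lan, and the references in Theorem~\ref{thm-compact}): completed \'etale-locally along a boundary stratum, $\widetilde{\mathfrak{Sh}}^{tor}_{K(\mathfrak{p}_i),\Sigma}$ is a relative torus embedding, for the cones of $\Sigma$, over a torsor under an abelian scheme over a boundary moduli space $\widetilde{\mathfrak{Sh}}^{tor}_{K_h,\Sigma_h}$ of the same type (a smaller symplectic group, parahoric level of the same shape, a smaller $\Sigma_h$). Running the induction --- the base case being the interior treated above --- every boundary moduli space is CM over $\ocal_{E',\lambda'}$; a torsor under an abelian scheme over a CM base is CM, and for $\Sigma$ regular, as in Theorem~\ref{thm-compact}, the torus embedding is smooth over its base, so CM-ness is preserved and $\widetilde{\mathfrak{Sh}}^{tor}_{K(\mathfrak{p}_i),\Sigma} \to \Spec~\ocal_{E',\lambda'}$ is CM; verbatim with ``smooth'' in place of ``CM'' one gets smoothness of $\widetilde{\mathfrak{Sh}}^{tor}_{K,\Sigma} \to \Spec~\ocal_{E',\lambda'}$. (To allow an arbitrary rational $\Sigma$ for the CM assertion, one replaces ``torus embedding smooth over the base'' by ``normal affine toric schemes are Cohen--Macaulay'', a property preserved under the relevant flat base change.) The one genuinely non-formal ingredient is this boundary analysis: one needs a precise enough description of the integral toroidal compactifications at parahoric level --- in particular of how the level datum $H \subset A[\mathfrak{p}_i]$ degenerates near the boundary --- in order to identify the boundary charts and run the induction; everything else reduces to standard commutative algebra about Cohen--Macaulayness under products, flat base change, abelian-scheme torsors and toric morphisms.
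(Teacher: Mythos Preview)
Your proposal is correct and follows essentially the same approach as the paper. Both argue the interior case via the local-model isomorphisms of Theorem~\ref{thm-main-local-model} combined with the Cohen--Macaulayness of the local models (Theorem~\ref{thm-GortzHe}), and then extend to the boundary via the known structure of integral toroidal compactifications; the paper simply cites \cite{MR3948111} for this last step, whereas you spell out the inductive mechanism (torus embeddings over abelian-scheme torsors over boundary Shimura varieties of the same type) and the commutative algebra needed to propagate the CM property through each layer.
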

\begin{demo} Over the interior of the moduli space, this follows from
  the previous theorem. The description of the integral toroidal
  compactification in \cite{MR3948111} shows that the property holds
  everywhere.
\end{demo}

 \bigskip
 
It follows that the cohomological correspondence can be extended to a rational map from a locally free sheaf to a CM sheaf $\widetilde{T_{\mathfrak{p}_i}^{naive}}: p_2^\star \mathcal{V}_{\kappa, K, \Sigma} \dashrightarrow p_1^! \mathcal{V}_{\kappa, K, \Sigma}.$
 In particular, this corollary implies that it is enough to work with the interior of
the Shimura variety.

 For any $\mu = (\mu_1, \cdots, \mu_g)$, with
 $\mu_1 \geq \cdots \geq \mu_g$, we have already defined two sheaves
 $p_1^\star L_{0,\mu}$ and $p_2^\star L_{g, \mu}$ over
 $M_{\{0,g\}}$, and a rational map
 $\alpha^\star: p_2^\star L_{g, \mu} \dashrightarrow p_1^\star
 L_{g, \mu}$.
 
 Let us write $\kappa_\sigma = (k_{1, \sigma},\cdots, k_{g, \sigma})$ for all $\sigma$.
 We define sheaves
 $L_{0, \kappa} = \boxtimes_\sigma L_{0, \kappa_{\sigma}}$ and
 $L_{g, \kappa} = \boxtimes_{\sigma \in I_i} L_{g, \kappa_{\sigma}}
 \boxtimes_{\sigma \notin I_i} L_{0, \kappa_{\sigma}} $ on $M_K^{loc}$
 and a map
 $$\beta^\star = \boxtimes_{\sigma \in I_i} \alpha^\star
 \boxtimes_{\sigma \notin I_i} \mathrm{Id}: t_2^\star L_{g, \kappa}
 \rightarrow t_1^\star L_{0, \kappa}.$$
 
 \begin{lem}\label{lem-comparision-model-shim} Over $\widetilde{\mathfrak{P}}_{K(\mathfrak{p}_i)}$,  we have a commutative diagram: 
\begin{eqnarray*}
 \xymatrix{ g^\star t_2^\star L_{g, \kappa} \ar[rrr]^{ p^{-\sum_{\sigma \in I_i} \frac{k + \sum_jk_{j,\sigma}}{2}}g^\star \beta^\star} \ar[d]^{\sim} &&& g^\star t_1^\star L_{0, \kappa} \ar[d]^{\sim} \\
 f^\star p_2^\star \mathcal{V}_{\kappa, K} \ar[rrr] &&& f^\star p_1^\star \mathcal{V}_{\kappa, K}}
 \end{eqnarray*}
 
 \end{lem}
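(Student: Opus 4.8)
The claim is local on $\widetilde{\mathfrak{P}}_{K(\mathfrak{p}_i)}$, and the point is that all four sheaves are obtained by the associated-bundle construction from one and the same piece of linear algebra; the proof is then an unwinding of definitions together with the bookkeeping of a single Tate twist. Recall that over $\widetilde{\mathfrak{P}}_{K(\mathfrak{p}_i)}$ the chain of de Rham homologies $\mathcal{H}_{1,dR}(A)\to\mathcal{H}_{1,dR}(A/H)\to\mathcal{H}_{1,dR}(A)$, with its $\ocal_F$-action and polarization pairings, is tautologically trivialised: by definition of $f$ in \eqref{eq:lmd} it is identified, place by place, with $\prod_{\sigma\in I_i}(V_0\to V_g\to V_0)\times\prod_{\sigma\notin I_i}(V_0\xrightarrow{\mathrm{Id}}V_0\xrightarrow{p\,\mathrm{Id}}V_0)$, the first map of the block at $\sigma\in I_i$ being the universal isogeny $(\pi_H)_\star$. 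Under this trivialisation $q_1$ (which covers $\tilde{p}_1$, ``remember $A$'') records the trivialisation of the \emph{first} term of the chain, $q_2$ (which covers $\tilde{p}_2$, ``pass to $A/H$'') records that of the \emph{middle} term, and $g$ records the positions of the two Hodge filtrations inside the fixed chain $V_\bullet$.

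I would first treat the $p_1$-column. Using $\tilde{p}_1\circ f=h\circ q_1$ and $t_1\circ g=e\circ q_1$ from the commutativity of \eqref{eq:lmd}, both $f^\star p_1^\star\mathcal{V}_{\kappa,K}$ and $g^\star t_1^\star L_{0,\kappa}$ become the bundle on $\widetilde{\mathfrak{P}}_{K(\mathfrak{p}_i)}$ associated to the $M_1$-representation $V_\kappa$ via the $P_1$-reduction of the trivial $G_1$-torsor cut out by the Hodge filtration $\omega_{A^t}\subset\mathcal{H}_{1,dR}(A)$ read off the first term of the chain; here one uses the decomposition $M_1\simeq\mathrm{Res}_{F/\qq}\mathrm{GL}_g\times\mathbb{G}_m$ and the corresponding factorisation of $V_\kappa$ as the exterior tensor product over $\sigma$ of the $\mathrm{GL}_g$-Weyl modules of weight $\kappa_\sigma=(k_{1,\sigma},\dots,k_{g,\sigma})$, twisted by the similitude character $z\mapsto z^{k}$. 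Since the first term of the chain carries the honest perfect pairing $\langle\ ,\ \rangle_\lambda$, no power of $p$ intervenes, and this identification is precisely the right vertical isomorphism of the diagram. The identical unwinding on the $p_2$-column, via $\tilde{p}_2\circ f=h\circ q_2$ and $t_2\circ g=e\circ q_2$, identifies $f^\star p_2^\star\mathcal{V}_{\kappa,K}$ and $g^\star t_2^\star L_{g,\kappa}$ with the bundle associated to $V_\kappa$ via the Hodge filtration read off the \emph{middle} term $\mathcal{H}_{1,dR}(A/H)$ --- but now there is a twist: the middle term carries the pairing $\langle\ ,\ \rangle_{\lambda'}$, where $\lambda'$ descends $x_i\lambda$ with $v_{\mathfrak{p}_i}(x_i)=1$, whereas the model block $V_0\to V_g\to V_0$ is built with the standard pairing. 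At each $\sigma\in I_i$ the trivialisation induced by the chain differs from one respecting the standard symplectic structure by the integral chain map $\phi_\sigma=\mathrm{diag}(p\,\mathrm{Id}_g,\mathrm{Id}_g)$, an element of $\mathrm{GSp}_{2g}$ of similitude factor $p$; acting on the fibre of $V_\kappa$ through the highest weight $\kappa_\sigma$ and the similitude weight $k$, $\phi_\sigma$ scales by $p^{(k+\sum_j k_{j,\sigma})/2}$ (an integer, by the parity condition $k\equiv\sum_j k_{j,\sigma}\bmod 2$ satisfied by our weights). This discrepancy is the left vertical isomorphism.

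Assembling the two columns, the bottom map $f^\star p_2^\star\mathcal{V}_{\kappa,K}\to f^\star p_1^\star\mathcal{V}_{\kappa,K}$ --- the pullback of the naive cohomological correspondence, induced by $(\pi_H)_\star$ --- corresponds, block by block over $\sigma$, to $\alpha^\star$ (the map attached to the integral chain inclusion $V_0\hookrightarrow V_g$) on the factors $\sigma\in I_i$ and to the identity on the factors $\sigma\notin I_i$, all rescaled by the accumulated twist $\prod_{\sigma\in I_i}p^{-(k+\sum_j k_{j,\sigma})/2}$. This is exactly $p^{-\sum_{\sigma\in I_i}\frac{k+\sum_j k_{j,\sigma}}{2}}\,g^\star\beta^\star$, which is the asserted commutativity. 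One should also observe that the identifications used above agree with those furnished by Theorem \ref{thm-main-local-model} via Grothendieck--Messing theory (both arising from the tautological trivialisation of the de Rham chain), so that this lemma is compatible with the comparison of differentials in Proposition \ref{prop-formula}. I expect the only genuine work to be the bookkeeping in the second paragraph: matching the $M_1$-structure of $\mathcal{V}_\kappa$ with the $\mathrm{GL}_g$-structures of the $L_{0,\kappa_\sigma}$, carrying along the ``Tate twist'' already present in the identifications $L_g\simeq F_g$ versus $L_0\simeq F_0$, and verifying that the rescaling at the middle term of the chain is exactly one copy of $\phi_\sigma$ for each $\sigma\in I_i$ --- equivalently that $v_{\mathfrak{p}_i}(x_i)=1$ --- so that the exponent is $\tfrac{k+\sum_j k_{j,\sigma}}{2}$ and nothing else.
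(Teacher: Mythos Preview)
Your argument is correct and identifies the same source of the power of $p$ as the paper --- the discrepancy between the ``natural'' central character $-\sum_\ell k_{\ell,\sigma}$ carried by $L_{\cdot,\kappa}$ (which is built only from the $\mathrm{GL}_g$-torsor of trivialisations of $(V/F)^\vee$) and the central character $k$ in $\mathcal{V}_\kappa$ --- but the bookkeeping is organised differently. The paper keeps both vertical isomorphisms canonical and instead introduces the $G$-equivariant similitude line bundle $\mathcal{L}$ on $M_{\{0\}}=M_{\{g\}}$: it observes that $L_{0,\kappa}$ and $L_{g,\kappa}$ correspond to the $M_\mu$-weight $((k_{1,\sigma},\dots,k_{g,\sigma};-\sum_\ell k_{\ell,\sigma}))_\sigma$, twists $L_{\cdot,\kappa}$ at each $\sigma$ by $\mathcal{L}^{(k+\sum_\ell k_{\ell,\sigma})/2}$ to obtain sheaves $L'_{\cdot,\kappa}$ whose natural equivariant comparison map $(\beta')^\star$ matches the map on $\mathcal{V}_\kappa$, and then reads off the power of $p$ from the fact that the canonical map $p_2^\star\mathcal{L}\to p_1^\star\mathcal{L}$ over $M_{\{0,g\}}$ is multiplication by $p^{-1}$ in the obvious trivialisations. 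You instead load the twist into the left vertical isomorphism, tracing it geometrically through the polarisation $\lambda'$ on $A/H$ (which descends $x_i\lambda$ with $v_{\mathfrak{p}_i}(x_i)=1$) and the element $\phi_\sigma=\mathrm{diag}(p\,\mathrm{Id}_g,\mathrm{Id}_g)$ at each $\sigma\in I_i$; since $\phi_\sigma$ is central in the Levi it does act on $V_\kappa$ by the scalar $p^{(k+\sum_j k_{j,\sigma})/2}$, recovering the same exponent. The paper's packaging has the virtue of living entirely on the local model, so it is visibly independent of the non-canonical choice of $x_i$ used to define $\tilde p_2$; your packaging makes the geometric origin of the twist --- the change of polarisation under the isogeny --- more transparent.
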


 \begin{demo} The sheaf $(V_0/L_0)^\vee$ on the $\sigma$-component of
   the local model $M_K^{loc}$ corresponds to the sheaf
   $\omega_{A, \sigma}$ by definition. Therefore, the sheaves
   $L_{0,\kappa}$ and $L_{g,\kappa}$ correspond to the representations
   of $M_{\mu}$ of highest weight
   $(k_{1,\sigma}, \cdots, k_{g,\sigma}; - \sum_{i}
   k_{i,\sigma})_{\sigma \in I}$.  There are isomorphisms of sheaves
   over $\widetilde{\mf{P}}_{K(\mathfrak{p}_i)}$:
   $g^\star t_2^\star L_{g, \kappa} \simeq f^\star p_2^\star
   \mathcal{V}_{\kappa, K}$ and
   $g^\star t_1^\star L_{0, \kappa} \simeq f^\star p_1^\star
   \mathcal{V}_{\kappa, K}$ but these are not $G$-equivariant
   isomorphisms. We can make them $G$-equivariant as follows. Over
   $M_{\{0\}} = M_{\{g\}} = G/P$ we have a $G$-equivariant sheaf
   $\mathcal{L}$ corresponding to the similtude character of $G$
   (viewed as a $P$-representation). This sheaf has a trivialization
   (given by the similitude character of $G$), but its $G$-equivariant
   structure is not trivial.  There is a canonical map
   $ p_2^\star \mathcal{L} \rightarrow p_1^\star \mathcal{L}$ over
   $M_{\{0,g\}}$ which is multiplication by $p^{-1}$ in the
   trivializations.  We can twist
   $L_{0, \kappa} = \boxtimes_\sigma L_{0, \kappa_{\sigma}}$ to
   $L'_{0, \kappa} = \boxtimes_\sigma L_{0, \kappa_{\sigma}} \otimes
   \mathcal{L}^{\frac{ k + \sum_{\ell} k_{\ell, \sigma}}{2}}$ and
   $L_{g, \kappa}$ to
   $$L'_{g, \kappa} = \boxtimes_{\sigma \in I_i} L_{g,
     \kappa_{\sigma}} \otimes \mathcal{L}^{\frac{ k + \sum_{\ell}
       k_{\ell, \sigma}}{2}}\boxtimes_{\sigma \notin I_i} L_{0,
     \kappa_{\sigma}}\otimes \mathcal{L}^{\frac{ k + \sum_{\ell}
       k_{\ell, \sigma}}{2}}.$$ Therefore, we have a commutative
   diagram over $M^{loc}_{K(\mathfrak{p}_i)}$:
\begin{eqnarray*}
 \xymatrix{  t_2^\star L_{g, \kappa} \ar[rrr]^{ p^{-\sum_{\sigma \in I_i} \frac{k + \sum_jk_{j,\sigma}}{2}} \beta^\star} \ar[d]^{\sim} &&&  t_1^\star L_{0, \kappa} \ar[d]^{\sim} \\
  t_2^\star L'_{g, \kappa} \ar[rrr]^{(\beta')^\star}   &&&  t_1^\star L_{0, \kappa} }
 \end{eqnarray*}
 for $(\beta')^\star$ the natural map coming from the $G$-equivariant
 structure. After twisting we have a commutative diagram:
\begin{eqnarray*}
 \xymatrix{ g^\star t_2^\star L'_{g, \kappa} \ar[rrr]^{ g^\star (\beta')^\star} \ar[d]^{\sim} &&& g^\star t_1^\star L'_{0, \kappa} \ar[d]^{\sim} \\
 f^\star p_2^\star \mathcal{V}_{\kappa, K} \ar[rrr] &&& f^\star p_1^\star \mathcal{V}_{\kappa, K}}
 \end{eqnarray*} 
\end{demo} 

\bigskip

We can now conclude the proof of Theorem \ref{main-thm-symplectic}.  Let $\xi$ be a generic point of the special fiber of $\widetilde{\mathfrak{Sh}}_{K(\mathfrak{p}_i)}$. It  corresponds on the local model $M^{loc}_{K(\mathfrak{p}_i)}$ to a point in the stratum $\prod_{\sigma \in I_i} U_{\{0,g\}, w(s_{\sigma})\mu} \times \prod_{\sigma \notin I_i} U_{\{0\}, \mu}$. 

Using the definition of $\beta$ in terms of $\alpha$, Lemma
\ref{lem-weight-formula}, Theorem \ref{thm-main-local-model} and Lemma
\ref{lem-comparision-model-shim}, we deduce that on the local ring at
$\xi$, we have a map
$$\widetilde{T_{\mathfrak{p}_i}^{naive}}: (\tilde{p}_2^\star
\mathcal{V}_{\kappa})_{\xi} \rightarrow p^{ \sum_{\sigma \in I_i}
  \frac{(g-s_\sigma)(g-s_\sigma +1)}{2} + \frac{-\sum_{\ell =
      1}^{g-s_\sigma} k_{\ell, \sigma} + \sum_{\ell = g-s_\sigma+1}^g
    k_{\ell, \sigma} +k}{2}} ( \tilde{p}_1^!
\mathcal{V}_{\kappa})_\xi.$$ We conclude using the CM property in
Corollary \ref{cor:cm} that we have a well defined cohomological
correspondence
\[
  \widetilde{T_{\mathfrak{p}_i}} := 
 p^{\sum_{\sigma \in I_i} \sup_{1 \leq j \leq g} \Big\{ \frac{\sum_{\ell = 1}^j k_{\ell, \sigma} - \sum_{\ell = j+1}^g k_{\ell, \sigma} +k}{2} - \frac{j(j+1)}{2} \Big\}}
  \widetilde{T_{\mathfrak{p}_i}^{naive}}: \tilde{p}_2^\star
  \mathcal{V}_{\kappa} \rightarrow \tilde{p}_1^!
  \mathcal{V}_{\kappa}
\]
which, using the diagram \eqref{eq:corr}, shows that we have a
cohomological correspondence
\[
  T_{\mathfrak{p}_i} :=
   p^{\sum_{\sigma \in I_i} \sup_{1 \leq j \leq g} \Big\{ \frac{\sum_{\ell = 1}^j k_{\ell, \sigma} - \sum_{\ell = j+1}^g k_{\ell, \sigma} +k}{2} - \frac{j(j+1)}{2} \Big\}}
  T_{\mathfrak{p}_i}^{naive}: {p}_2^\star
  \mathcal{V}_{\kappa} \rightarrow {p}_1^!
  \mathcal{V}_{\kappa}\, .
\]

\section{Unitary Shimura varieties}\label{section-unitary} This section is dedicated to unitary Shimura varieties. 

\subsection{The Shimura datum} Let $F$ be a totally real field, and let $L$ be a totally imaginary quadratic extension of $F$. We denote by $c \in \mathrm{Gal}(L/F)$ the complex conjugation. We let $I = \mathrm{Hom}(F, \overline{\qq})$ and for all $\sigma \in I$ we chose an extension $\tau: L \rightarrow \overline{\qq}$ of $\sigma$ to $L$. Therefore, $\mathrm{Hom}(L, \overline{\qq}) = I \coprod I \circ c$. 

Let $V$ be a $K$ vector space of dimension $n$, together with a
hermitian form $\langle\, , \rangle$. We assume that this form is not
definite at at least one real place of $F$.  Let $G$ be the reductive
group over $\qq$ of similitudes of $(V, \langle\, , \rangle)$. Namely:
$$ G = \{ (g,c) \in \mathrm{Res}_{L/\qq} \mathrm{GL}(V) \times \mathbb{G}_m,~\langle g v, g w \rangle = c \langle v,  w \rangle~\forall v,w \in V\}$$

We have natural isomorphisms $F \otimes_\qq \mathbb{R} = \mathbb{R}^I$
and $L \otimes_\qq \mathbb{R} = \mathbb{C}^I$ given by
$x \otimes y \mapsto ( \tau(x) y)_{\tau \in I}$.  We let
$V_{\mathbb{R}} = V \otimes_{\qq} \mathbb{R} = \oplus V_{\R, \tau} $
where $V_{\mathbb{R}, \tau} = V \otimes_{F, \tau} \mathbb{R}$. We
chose an isomorphism $V_{\R, \tau} \simeq \mathbb{C}^n$ where $L$ acts
on $\mathbb{C}^n$ via $\tau$ and the hermitian pairing induced on
$\mathbb{C}^n$, denoted by $\langle\, , \rangle_\tau$, is of signature
$(p_\tau, q_\tau)$ and is in the standard form
$\sum_{i=1}^{p_\tau} z_i \bar{z_i} - \sum_{i=p_\tau+1}^{n} z_i
\bar{z_i}$.

We deduce that
$G_{\mathbb{R}} = \mathrm{G}(\prod_{\tau \in I} U(p_\tau, q_\tau))$.
The natural action of $G_{\mathbb{R}}$ on
$V_{\mathbb{R}} \simeq \oplus_{\tau \in I} \C^n$ gives an embedding
$G_{\mathbb{R}} \subset \prod_{\tau \in I}
\mathrm{Res}_{\C/\R}\mathrm{GL}_n $.

Our Shimura datum is  $(G, X)$ where $X$ is the $G(\mathbb{R})$-orbit of the homomorphism $h_0: \mathrm{Res}_{\C/\R} \mathbb{G}_m \rightarrow G_{\mathbb{R}}$ given by $h_0(z) = \prod_{\tau} z_{p_\tau, q_\tau}$ where $z_{p_\tau, q_\tau} \in \mathrm{GL}_n(\mathbb{C})$ is the diagonal matrix $\mathrm{diag}( z \mathrm{1}_{p_\tau}, \bar{z} \mathrm{1}_{q_\tau})$. 

The centralizer of $h_0$ is $ K_\infty \times \mathbb{R}^{\times,+} $ where $K_\infty =  \prod_{\tau} U(p_\tau)(\mathbb{R}) \times U(q_\tau)(\mathbb{R})$ is a maximal compact subgroup and $\mathbb{R}^{\times,+}$ is the connected component of the identity in the center of $G(\mathbb{R})$.

\subsection{The flag variety}\label{sect-flag-unitary}
The embedding $G_{\mathbb{R}} \rightarrow \prod_{\tau \in I}
\mathrm{Res}_{\C/\R}\mathrm{GL}_n $ induces, after extending scalars
to $\mathbb{C}$ and projecting $\mathrm{Res}_{\C/\R}\mathrm{GL}_n
\times_{\Spec~\mathbb{R}} \Spec~\C = \mathrm{GL}_n \times
\mathrm{GL}_n $ onto the first factor, a morphism $G_{\mathbb{C}}
\rightarrow  (\prod_{\tau \in I} \mathrm{GL}_n)$ of algebraic groups
over $\Spec~\C$. We thus get an isomorphism    $G_{\mathbb{C}} \rightarrow  (\prod_{\tau \in I} \mathrm{GL}_n) \times \mathbb{G}_m$ whose second component is the similitude factor.   The cocharacter $\mu_0$ attached to $h_0$  is given by 
 $ \mu_0 (z) = \prod_{\tau} \mathrm{diag}( z 1_{p_\tau}, 1_{q_\tau}) \times z$. 
 
 We deduce that a representative of $P_{\mu}$ is given by the group $(\prod_\tau P_{p_{\tau}, q_{\tau}}) \times \mathbb{G}_m \subset G_{\mathbb{C}}$ with $P_{p_\tau, q_{\tau}}$ the standard  parabolic subgroup of $GL_{n}$ of lower triangular matrices with  blocks of size $p_\tau$ and $q_\tau$,  with Levi $\mathrm{GL}_{p_\tau} \times \mathrm{GL}_{q_\tau}$.

The Borel embedding is the map 

$$ X \rightarrow \mathrm{FL}_{G,X}$$
 sending $h$  to the Hodge filtration $\mathrm{Fil}_h = \oplus \mathrm{Fil}_{h, \tau}$ (i.e., the subspace stabilized by $P_\mu$) on $$V_\mathbb{R} \otimes_{\mathbb{R}} \mathbb{C} = \oplus_{\tau}   V_{\mathbb{R}, \tau} \otimes_{\R} \C.$$
 We have  $V_{\mathbb{R}, \tau} \otimes_{\R} \C \simeq \mathbb{C}^n  \otimes_{\mathbb{R}}\mathbb{C} \simeq  \oplus_{\tau}( \mathbb{C}^n \oplus \mathbb{C}^n)$ where the last map is given for each $\tau$ by 
\begin{eqnarray*}
\mathbb{C}^n  \otimes_{\mathbb{R}}\mathbb{C} &\rightarrow & \mathbb{C}^n \oplus \mathbb{C}^n \\
v \otimes x &\mapsto& (vx, \overline{v} x)
\end{eqnarray*}
We denote by $V_{\C, \tau,+} $ and $V_{\C, \tau, -}$ the two factors
in this isomorphism. The pairing $\langle \ , \rangle_\tau$ induces a
perfect pairing between $V_{\C, \tau,+} $ and $V_{\C, \tau, -}$.  We
can therefore think of $\mathrm{FL}_{G,X}$ as a product of
Grassmannians parametrizing for each $\tau$ a direct summand
$\mathrm{Fil}_{\tau} = \mathrm{Fil}_{\tau, +} \oplus
\mathrm{Fil}_{\tau,-} \subset V_{\C, \tau,+} \oplus V_{\C, \tau, -}$,
where $\mathrm{Fil}_{\tau, +}$ has rank $q_\tau$,
$\mathrm{Fil}_{\tau, -}$ has rank $p_\tau$, and they are orthogonal
with each other for the pairing $\langle \ , \rangle_\tau$ (therefore
$\mathrm{Fil}_{\tau}$ is determined by $\mathrm{Fil}_{\tau,+}$).

The filtration  at the point $h_0$, is given by  $<e_{p_\tau+1}, \cdots, e_n> \oplus <e_1, \cdots, e_{p_\tau}> \subset \mathbb{C}^n \oplus \mathbb{C}^n$ for each $\tau$ (in the canonical basis $e_1, \cdots, e_n$ of $\C^n$).

We now choose the  diagonal maximal torus $S \subset G_{\mathbb{R}}$. Then $S =  (\prod_{\tau} \mathrm{U}(1)^n) \times \mathbb{G}_m/\mu_2$ and its character group is the subgroup of  $(\ZZ^n)^I \times \ZZ$ of elements $((a_{1,\tau}, \cdots, a_{n,\tau})_{\tau}; k)$ with the condition that $\sum_{i,\tau} a_{i, \tau} = k \mod 2$.  We have $\mathrm{U}(1) \times_{\Spec~\R} \Spec~\C = \{ (z_1,z_2) \in \mathbb{G}_m \times \mathbb{G}_m,~z_1 z_2 = 1\}$, and the projection on the first coordinate induces an isomorphism $\mathrm{U}(1) \times_\R \C \simeq \mathbb{G}_m$.  We have $$S_{\C} \simeq  (\prod_{\tau} \mathbb{G}_m^n) \times \mathbb{G}_m/\mu_2  \hookrightarrow  G_{\mathbb{\C}} \simeq (\prod_{\tau \in I} \mathrm{GL}_n) \times \mathbb{G}_m$$
and this map is given explicitly by $((x_{1,\tau}, \cdots, x_{n,\tau})_{\tau};t) \mapsto \prod_\tau \mathrm{diag}(tx_{1,\tau},\cdots,tx_{n,\tau})_\tau \times t^2$. 

We choose the upper triangular Borel in $G_{\C}$ (this choice is compatible with our conventions in Section \ref{sec-grouptheoretic}) and the corresponding dominant cone in $X^\star(S_{\C})$  is given by the condition  $a_{1,\tau} \geq  \cdots \geq a_{n,\tau}$  for all $\tau$.  
There is also an associated dominant cone for the Levi $\prod_{\tau} (\mathrm{GL}_{p_\tau} \times \mathrm{GL}_{q_\tau}) \times \mathbb{G}_m$ which is given by the conditions $a_{1,\tau} \geq  \cdots \geq a_{p_\tau,\tau}$ and $a_{p_\tau +1,\tau} \geq  \cdots \geq a_{n,\tau}$ for all $\tau$. 

The (trivial) vector bundle $V_{\mathbb{R}} \otimes_{\mathbb{R}} \mathbb{C}$ over $\mathrm{FL}_{G,X}$ is associated with a representation of $G_{\mathbb{C}}$.  This representation is the direct sum of the standard $n$-dimensional representation  and its complex conjugate.  For any $\tau_0 \in I$, the direct factors  $V_{\C, \tau_0,+}$ and $V_{\C, \tau_0, -}$  correspond respectively to the representations   of $G_{\mathbb{C}}$  with highest weight
$ ((1,\cdots, 0)_{\tau_0}, (0,\cdots, 0)_{\tau \neq \tau_0}; 1)$ and $ ((0,\cdots, -1)_{\tau_0}, (0,\cdots, 0)_{\tau \neq \tau_0}; 1)$.

The vector bundles $V_{\C, \tau_0,+}/ \mathrm{Fil}_{\tau_0, +}$ and $V_{\C, \tau_0,-}/ \mathrm{Fil}_{\tau_0, -}$   correspond respectively to the two irreducible representations of the Levi $\prod_{\tau} (\mathrm{GL}_{p_\tau} \times \mathrm{GL}_{q_\tau}) \times \mathbb{G}_m$ with highest weights:
$((1,\cdots, 0)_{\tau_0}, (0,\cdots, 0)_{\tau \neq \tau_0}; 1)$ and  $((0,\cdots,  -1)_{\tau_0}, (0,\cdots, 0)_{\tau \neq \tau_0}; 1)$.

\subsection{Integral model} 

The reflex field $E$ is the  fixed field in $\overline{\qq}$ of the
subgroup of $\mathrm{Gal}(\overline{\qq}/\qq)$ consisting of all
elements acting trivially on the set $\{(p_\tau, q_\tau)\}_{\tau}$. 

We now let $p$ be a prime unramified in $L$.  We choose an
$\ocal_K$-lattice $V_{\mathbb{Z}} \subset V$  and we assume that $V_{\mathbb{Z}} \otimes
\mathbb{Z}_{(p)}$ is self dual for the pairing $\langle\, , \rangle$.   The choice of this lattice gives an integral model for $G$, and this model is reductive over $\ZZ_{(p)}$. Let $K= K^pK_p \subset G(\mathbb{A}_f)$ be a compact open subgroup with $K_p$ hyperspecial. Let $\lambda$ be a finite place of $E$ over $p$.

The Shimura variety $\mathfrak{Sh}_K$ represents the functor on the category of noetherian  $\ocal_{E,\lambda}$-algebras that sends a noetherian  $\ocal_{E,\lambda}$-algebra to the set of  equivalence classes of $(A, \iota, \lambda,
\eta)$, where:
\begin{enumerate}
\item $A \rightarrow \Spec~R$ is an abelian scheme,
\item $\lambda: A \rightarrow A^t$ is a prime to $p$  quasi-polarization,
\item $\iota: \ocal_L \rightarrow \mathrm{End} (A) \otimes \ZZ_{(p)}$ is a homomorphism of algebras with involution,
\item $\mathrm{Lie} (A)$  satisfies the determinant condition (see Remark \ref{rem-det-condition}),
  \item $\eta$ is a $K^p$-level structure. 
\end{enumerate}

\begin{rem}\label{rem-det-condition} The determinant condition is the following. We let $E'$ be
  an extension of $E$ which contains the Galois closure of $L$. We let $\lambda'$ be a place of $E'$ above $\lambda$.  It is
  enough to spell out the condition when $R$ is an
  $\ocal_{E',\lambda'}$-algebra. In that situation, we have
  $$\mathrm{Lie} (A) = \bigoplus_{\tau} \mathrm{Lie} (A)_{\tau,+}
  \oplus \mathrm{Lie} (A)_{\tau, -}$$ where
  $\mathrm{Lie} (A)_{\tau,+} = \mathrm{Lie} (A) \otimes_{\ocal_L
    \otimes R, \tau} R$ and
  $\mathrm{Lie} (A)_{\tau,-} = \mathrm{Lie} (A) \otimes_{\ocal_L
    \otimes R, \tau \circ c} R$.  The condition is that
  $\mathrm{Lie} (A)_{\tau, +}$ is a locally free $R$-module of rank
  $p_\tau$ and $\mathrm{Lie} (A)_{\tau, -}$ is locally free of rank
  $q_\tau$.
\end{rem}

\subsection{The vector bundle dictionary}\label{vector-bundle-dico} This section follows  \cite[Section 5.5]{MR3177267}. We now work over $\ocal_{E',\lambda}$ as in the preceeding remark. We have $\omega_A = \bigoplus_{\tau} \omega_{A, \tau,+} \oplus \omega_{A, \tau,-}$. 
It follows  from the discussion towards the end of Section \ref{sect-flag-unitary}  that $\omega_{A, \tau_0,+}$ corresponds to the irreducible  representation of $(\prod_\tau P_{p_{\tau}, q_{\tau}}) \times \mathbb{G}_m$ of highest weight $\kappa = ((a_{1, \tau},\cdots, a_{n, \tau})_\tau; k)$ with $(a_{1, \tau_0},\cdots, a_{n, \tau_0}) = ( 0, \cdots,0,  -1, 0, \cdots, 0)$ (with $-1$ in position $p_\tau$), $k=-1$, $(a_{1, \tau},\cdots, a_{n, \tau}) = (0, \cdots, 0)$ if $\tau \neq \tau_0$. 
 We deduce similarly that $\omega_{A, \tau_0,-}$ corresponds to the weight $\kappa = ((a_{1, \tau},\cdots, a_{n, \tau})_\tau; k)$ with $(a_{1, \tau_0},\cdots, a_{n, \tau_0}) = ( 0, \cdots,0,  1, 0, \cdots, 0)$ (with $1$ in position $p_\tau+1$), $k=-1$, $(a_{1, \tau},\cdots, a_{n, \tau}) = (0, \cdots, 0)$ if $\tau \neq \tau_0$.

 We now choose integers  $a_{1,\tau} \geq \cdots \geq a_{p_{\tau}, \tau}$ and $b_{1,\tau} \geq \cdots \geq b_{q_{\tau}, \tau}$ for all $\tau \in I$ and consider the representation of the Levi $M_\mu$ of the parabolic $P_\mu$ with  highest weight $\kappa =( ( -a_{p_\tau, \tau}, \cdots, -a_{1,\tau}, b_{1,\tau}, \cdots, b_{q_{\tau}, \tau}); - \sum_{\ell, \tau} a_{\ell, \tau} - \sum_{\ell, \tau} b_{\ell, \tau}) $. The sheaf $\mathcal{V}_{\kappa,K}$ associated with this weight has the following description. We denote by $\pi_{\tau,+} : \mathcal{T}_{\tau,+} \rightarrow \mathfrak{Sh}_{K}$ and $\pi_{\tau,-}  : \mathcal{T}_{\tau,-} \rightarrow  \mathfrak{Sh}_{K}$ the $\mathrm{GL}_{p_\tau}$ and $\mathrm{GL}_{q_\tau}$ torsors of trivialization of $\omega_{A, \tau,+}$ and $\omega_{A, \tau,-}$
 We let $\omega_{A, \tau, +}^{(a_{\tau})} = (\pi_{\tau,+})_\star (\oscr_{\mathcal{T}_{\tau,+}})[a_{1,\tau}, \cdots, a_{p_\tau,\tau}]$ and $\omega_{A, \tau, -}^{(b_{\tau})} = (\pi_{\tau,+})_\star (\oscr_{\mathcal{T}_{\tau,+}})[-b_{q_\tau,\tau}, \cdots, -b_{1,\tau}]$ (where the bracket means the isotypic part for the representation of the upper triangular Borel given by the characters $(a_{1,\tau}, \cdots, a_{p_\tau,\tau})$ and $(-b_{q_\tau,\tau}, \cdots, -b_{1,\tau})$ respectively). 
 
Then we find that $\mathcal{V}_{\kappa,K}$ is   simply $\otimes_{\tau} \omega_{A, \tau, +}^{(a_{\tau})}  \otimes \omega_{A, \tau,-}^{(b_{\tau})}$ that is usually considered in the theory of modular forms on unitary groups (we mean that our normalization of the central character is the usual one).

 \subsection{Dual group}

 Recall that we have $G_{\mathbb{C}} \simeq ( \prod_\tau\mathrm{GL}_n) \times \mathbb{G}_m$ with diagonal torus $S_\C = (\prod_{\tau}\mathbb{G}_m) \times \mathbb{G}_m/\mu_2$ embedded via  $((x_{1,\tau}, \cdots, x_{n,\tau})_{\tau};t) \mapsto \prod_\tau \mathrm{diag}(tx_{1,\tau},\cdots,tx_{n,\tau}) \times t^2$. Moreover, our choice of positive roots is given by the upper triangular Borel. 
 The character group $X^\star(S_\C)$ is the subgroup of $(\ZZ^n)^I \times \ZZ$ of elements $((a_{1,\tau}, \cdots, a_{n,\tau})_\tau ; k)$ with the condition that $\sum_{i,\tau} a_{i,\tau} = k ~\mod 2$. We have  $((a_{1,\tau}, \cdots, a_{n,\tau})_\tau; k). ((x_{1,\tau}, \cdots, x_{n,\tau})_{\tau};t)  = \prod_{i,\tau} x_{i,\tau}^{a_{i,\tau}} t^k$. 
 
 The cocharacter group $X_\star(S_\C)$ identifies  with the subgroup of $(\frac{1}{2} \ZZ^n)^I \times \frac{1}{2}\ZZ$ of elements $((r_{1,\tau}, \cdots, r_{n,\tau}); r)$ with $r+ r_{i,\tau} \in \ZZ$ for all $(i,\tau)$. 
 To an element $((r_{1,\tau}, \cdots, r_{n,\tau}); r)$ we associate the cocharacter $$t \mapsto ( (t^{r_{1,\tau}}, \cdots, t^{r_{n,\tau}})_{\tau}; t^r) = \prod_\tau \mathrm{diag}((t^{r_{1,\tau+r}}, \cdots, t^{r_{n,\tau}+r})) \times t^{2r}.$$ 
 The pairing between characters and cocharacters is given $$\langle
 ((a_{1,\tau}, \cdots, a_{n,\tau})_\tau; k), ((r_{1,\tau}, \cdots,
 r_{n,\tau})_{\tau}; r) \rangle = \sum_{i,\tau}  a_{i,\tau}r_{i,\tau}
 + rk .$$

 We have the usual identification $\hat{G} \simeq G_\C =  \prod_{\tau}( \mathrm{GL}_n) \times \mathbb{G}_m$ with standard torus $\hat{S} = S_\C$.  We use the standard pinning.  
 The complex conjugation acts on $\hat{S}$ by $$((x_{1,\tau}, \cdots, x_{n,\tau})_{\tau};t) \mapsto ((x_{1,\tau}^{-1}, \cdots, x_{n,\tau}^{-1})_{\tau};t)$$ and on the full $\hat{G}$ by $(g,c) \mapsto ( \Phi_N~^tg^{-1} \Phi_N^{-1} c,c)$ where $\Phi_N$ is the anti-diagonal matrix with alternating $1$ and $-1$'s on the anti-diagonal.

\subsection{Formulas for the minuscule coweights} 

We let $\iota: E' \hookrightarrow \overline{\qq}_p$ be an embedding
continuous for the $\lambda'$-adic topology on the source and $p$-adic
topology on the target. Let $\mathfrak{p}_1, \cdots, \mathfrak{p}_m$
be the primes above $p$ in $F$. We assume that they all split in
$L$. We have a partition $I = \coprod I_{i}$ where $I_{i}$ is the set
of embeddings $\tau: F \rightarrow E'$ for which $\iota \circ \tau$
induces the $\mathfrak{p}_i$-adic topology.  The local $L$-group at
$p$ is simply a product
$\prod_{i} (\mathrm{GL}_{n})^{I_i} \times \mathbb{G}_m \rtimes
\Gamma$, where $\Gamma = \hat{\mathbb{Z}}$ is the absolute Galois
group of $\FF_p$. It acts by permutation on each of the sets $I_i$.

We fix $0 \leq i \leq m$ and $1 \leq j \leq n$. We denote by $V_{j,i}$
the representation of $\hat{G}$ whose underlying vector space is
$\otimes_{\tau \in I_i}(\Lambda^{j} \C^n)^\vee$, with action of
$\hat{G} = ( \prod_\tau\mathrm{GL}_n) \times \mathbb{G}_m$ given by
the dual of the $j$-th exterior standard action of the $\tau$-factor
for all $\tau \in I_i$, and the inverse scalar multiplication by
$\mathbb{G}_m$.  The corresponding highest weight $\lambda_{i,j}$ is:
$((a_{1, \tau},\cdots, a_{n, \tau}); -2-\sharp I_i )$ with
$(a_{1, \tau},\cdots, a_{n, \tau}) = (0, \cdots, 0, -1, \cdots, -1)$
(with $j$ many $-1$s ) if $\tau \in I_i$,
$(a_{1, \tau},\cdots, a_{n, \tau}) = (0, \cdots, 0)$ if
$\tau \notin I_i$. This representation extends to a representation of
the local $L$-group.

The corresponding cocharacter of $G_{\C}$ is 
 $$ t \mapsto \Big (\prod_{\tau \notin I_i} 1_{n} \Big ) \times \Big
 (\prod_{\tau \in I_i} \mathrm{diag}( 1_{n-j}, t^{-1} 1_{j}) \Big ) \times t^{-1}$$
 which is given in coordinates by
 $((r_{1, \tau},\cdots, r_{n, \tau}); -\frac{1}{2})$ with $(r_{1,
   \tau},\cdots, r_{n, \tau}) = (\frac{1}{2}, \cdots, \frac{1}{2},
 -\frac{1}{2}, \cdots, -\frac{1}{2})$ (with $n-j$ many $\frac{1}{2}$s) if $\tau \in I_i$,  $(a_{1, \tau},\cdots, a_{n, \tau}) = (\frac{1}{2}, \cdots, \frac{1}{2})$ if $\tau \notin I_i$. 
 
 We let $T_{\mathfrak{p}_i, j}^{naive}$ be the Hecke operator associated to $T_{\lambda_{i,j}}$. We have an embedding $G(\qq_p) \hookrightarrow \prod_{i} \mathrm{GL}_n({F}_{\mathfrak{p}_i}) \times \mathrm{GL}_n({F}_{\mathfrak{p}_i})$, and the operator $T_{\mathfrak{p}_i, j}^{naive}$  is associated to the double coset
  $$ \mathrm{G}(\ZZ_p) \Big (\prod_{l \neq i} 1_n \times
  \mathfrak{p}_l^{-1} 1_n\Big ) \times (\mathrm{diag} ( 1_{n-j}, \mathfrak{p}_i ^{-1}1_{j}) \times  \mathrm{diag} ( \mathfrak{p}^{-1}_i 1_{n-j}, 1_{j}))  \mathrm{G}(\ZZ_p). $$
  
  \begin{rem} Our definition  of the Hecke operators $T_{\mathfrak{p}_i, j}^{naive}$ depends on the choice of a  prime above each $\mathfrak{p}_k$ in $L$ (. This means that some symmetry has been broken.   For an explanation of the use of the double class $$ \mathrm{G}(\ZZ_p) \Big (\prod_{l \neq i} 1_n \times
  \mathfrak{p}_l^{-1} 1_n\Big ) \times (\mathrm{diag} ( 1_{n-j}, \mathfrak{p}_i ^{-1}1_{j}) \times  \mathrm{diag} ( \mathfrak{p}^{-1}_i 1_{n-j}, 1_{j}))  \mathrm{G}(\ZZ_p) $$ rather than its inverse, we refer to Remark \ref{rem-left-rightaction}. \end{rem}
 
As in Section \ref{vector-bundle-dico}, we choose integers  $a_{1,\tau} \geq \cdots \geq a_{p_{\tau}, \tau}$ and $b_{1,\tau} \geq \cdots \geq b_{q_{\tau}, \tau}$ for all $\tau \in I$ and we  consider the representation of the Levi $M_\mu$ of the parabolic $P_\mu$ with  highest weight $\kappa =( ( -a_{p_\tau, \tau}, \cdots, -a_{1,\tau}, b_{1,\tau}, \cdots, b_{q_{\tau}}); - \sum_{\ell, \tau} a_{\ell, \tau} - \sum_{\ell, \tau} b_{\ell, \tau})$. 

\begin{rem} In the rest of this section, we work with the weight $\kappa$, and therefore we fix a particular normalization of  the central character, as justified in Section \ref{vector-bundle-dico}. We leave it to the reader to formulate Lemma \ref{lem-formula-unitary} for another choice of  central character.   Theorem \ref{main-thm-unitary} holds for any normalization of the central character.
\end{rem}

Motivated by Definition \ref{defi-hecke-algebra}, we let
$T_{\mathfrak{p}_i, j} = p^{\langle \lambda_{i,j}, \infty(\kappa,
  \iota) \rangle - \langle \lambda_{i,j}, \rho \rangle}
T_{\mathfrak{p}_i, j}^{naive}$.  We now find a formula for the
coefficient
$\langle \lambda_{i,j}, \infty(\kappa, \iota) \rangle - \langle
\lambda_{i,j}, \rho \rangle$.

\begin{lem}\label{lem-formula-unitary} We have $\langle \lambda_{i,j}, \infty(\kappa, \iota) \rangle - \langle \lambda_{i,j}, \rho \rangle = $
$$  {\sum}_{\tau \notin I_i} \Big( - \sum_{\ell=1}^{q_{\tau}}
b_{\ell, \tau}\Big) + \sum_{\tau \in I_i} \biggl ( \sup_{ 0 \leq r_\tau
    \leq p_\tau, 0 \leq s_\tau \leq q_\tau, ~s_{\tau} + r_{\tau} =
    n-j} \Big \{ - \sum_{\ell = r_{\tau} + 1}^{p_{\tau}} a_{\ell,
    \tau}- \sum_{\ell = q_{\tau} - s_{\tau}+1}^{q_{\tau}} b_{\ell,
    \tau}  - r_{\tau}(q_{\tau}-s_\tau) \Big\}\biggr)$$
\end{lem}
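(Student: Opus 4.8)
The plan is to repeat, in a more elaborate form, the bookkeeping carried out for Lemma~\ref{lem-symplectic-normalization}, treating each archimedean embedding $\tau\in I$ separately and tracking the similitude coordinate with care. Recall from Section~\ref{vector-bundle-dico} that $\infty(\kappa)=-\kappa-\rho$. Since the parabolic $P_\mu$ is, at each $\tau$, the block parabolic $P_{p_\tau,q_\tau}$ of lower triangular type, the positive roots of $G$ are just the roots $e_i-e_j$ with $i<j$ of each $\mathrm{GL}_n$ factor; consequently $\rho$ is trivial on the similitude $\mathbb{G}_m$ and its $\tau$-component is the Weyl vector $(\tfrac{n-1}{2},\tfrac{n-3}{2},\cdots,\tfrac{1-n}{2})$. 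The Weyl group $W\simeq\prod_{\tau\in I}\mathcal{S}_n$ fixes the center, so $\infty(\kappa,\iota)$ is obtained from $\infty(\kappa)$ by reordering the $\mathrm{GL}_n$-part of each $\tau$-block into decreasing order while the similitude coordinate stays equal to $\sum_{\ell,\tau}(a_{\ell,\tau}+b_{\ell,\tau})$. On the dual side, the cocharacter of $G_\C$ attached to $\lambda_{i,j}$ computed just before the lemma has $\tau$-block $(\tfrac12,\cdots,\tfrac12,-\tfrac12,\cdots,-\tfrac12)$ with exactly $n-j$ entries $\tfrac12$ when $\tau\in I_i$, $\tau$-block $(\tfrac12,\cdots,\tfrac12)$ when $\tau\notin I_i$, and similitude coordinate $-\tfrac12$.

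First I would record the shape of the $\mathrm{GL}_n$-part of $-(\kappa+\rho)$ at a fixed $\tau$: it is the concatenation of two strictly increasing sequences, namely $a_{p_\tau+1-\ell,\tau}+\ell-\tfrac{n+1}{2}$ for $\ell=1,\cdots,p_\tau$ followed by $-b_{m,\tau}+p_\tau+m-\tfrac{n+1}{2}$ for $m=1,\cdots,q_\tau$. For $\tau\notin I_i$ the pairing with $\lambda_{i,j}$ at $\tau$ is $\tfrac12$ times the sum of all entries, which after incorporating the corresponding piece $-\tfrac12(\sum_\ell a_{\ell,\tau}+\sum_\ell b_{\ell,\tau})$ of the similitude pairing collapses to $-\sum_{\ell=1}^{q_\tau}b_{\ell,\tau}$. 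For $\tau\in I_i$ the pairing with $(\tfrac12,\cdots,-\tfrac12)$ equals $\tfrac12$(sum of all entries) minus the sum of the $j$ smallest entries, equivalently the sum of the $n-j$ largest entries minus $\tfrac12$(sum of all entries). The combinatorial point I would isolate as a claim is that the sum of the $n-j$ largest entries of a concatenation of two increasing sequences is the maximum, over splittings $r_\tau+s_\tau=n-j$ with $0\le r_\tau\le p_\tau$ and $0\le s_\tau\le q_\tau$, of the sum of the last $r_\tau$ entries of the first block plus the sum of the last $s_\tau$ entries of the second: the honest split attains it, while any other split can only lose.

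It then remains to substitute and simplify. The sum of the last $r_\tau$ entries of the $a$-block is $\sum_{m=1}^{r_\tau}a_{m,\tau}+\sum_{\ell=p_\tau-r_\tau+1}^{p_\tau}(\ell-\tfrac{n+1}{2})$, and the sum of the last $s_\tau$ entries of the $b$-block is $-\sum_{\ell=q_\tau-s_\tau+1}^{q_\tau}b_{\ell,\tau}+\sum_{m=q_\tau-s_\tau+1}^{q_\tau}(p_\tau+m-\tfrac{n+1}{2})$. The two ``staircase'' constants evaluate to $\tfrac{r_\tau}{2}(2p_\tau-r_\tau-n)$ and $\tfrac{s_\tau}{2}(n-s_\tau)$, and using $p_\tau+q_\tau=n$ together with $r_\tau+s_\tau=n-j$ their sum becomes $-r_\tau(q_\tau-s_\tau)+\tfrac{j(n-j)}{2}$. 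Assembling the contribution of $\tau\in I_i$ with its share $-\tfrac12(\sum_\ell a_{\ell,\tau}+\sum_\ell b_{\ell,\tau})$ of the similitude pairing produces the term $-\sum_m a_{m,\tau}$, which together with the $\sum_{m=1}^{r_\tau}a_{m,\tau}$ inside the maximum yields $-\sum_{\ell=r_\tau+1}^{p_\tau}a_{\ell,\tau}$. Finally $\langle\lambda_{i,j},\rho\rangle$ is evaluated directly: it equals $\tfrac{j(n-j)}{2}$ for each $\tau\in I_i$ and $0$ for $\tau\notin I_i$, so subtracting it cancels the residual constant $\tfrac{j(n-j)}{2}$ inside each summand over $I_i$, and the asserted identity drops out; this furnishes the normalization of $T_{\mathfrak{p}_i,j}$ prescribed by Definition~\ref{defi-hecke-algebra}.

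I expect the only real difficulty to be the bookkeeping: the sign convention under which $\kappa$ carries $-a_{p_\tau,\tau},\cdots,-a_{1,\tau}$, the reindexing between the position $\ell$ within a $\tau$-block and the labels of the $a$'s and $b$'s, and the consistent treatment of the similitude coordinate at every stage. The combinatorial claim about the largest entries of a concatenation of sorted sequences and the closed-form evaluation of the staircase sums are elementary, and the supremum over $(r_\tau,s_\tau)$ in the statement is precisely the optimization over splittings appearing in that claim.
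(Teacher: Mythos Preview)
Your proposal is correct and follows essentially the same route as the paper: both decompose the pairing $\tau$ by $\tau$, track the similitude coordinate separately, compute the $\tau\notin I_i$ contribution directly, and for $\tau\in I_i$ identify the pairing with the choice of a split $r_\tau+s_\tau=n-j$ before subtracting $\langle\lambda_{i,j},\rho\rangle_\tau=\tfrac{j(n-j)}{2}$. Your phrasing of the key step as the clean combinatorial fact that the sum of the $n-j$ largest entries of a concatenation of two increasing sequences is the maximum over tail-splittings makes the appearance of the supremum more transparent than the paper's terse ``there will be integers $r_\tau,s_\tau$'', but it is the same argument.
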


\begin{demo} We observe that $\rho = (( \tfrac{n-1}{2}, \cdots,
  \tfrac{1-n}{2}); 0)$ so $\kappa + \rho$ is equal to
  $$  ((-a_{p_\tau, \tau}+ \tfrac{n-1}{2}, \cdots,- a_{1,\tau} +\tfrac{n-1}{2} - p_\tau+1, b_{1,\tau} - \tfrac{n-1}{2} + q_{\tau}-1, \cdots, b_{q_{\tau}} -\tfrac{n-1}{2}); - \sum_{\ell, \tau} a_{\ell, \tau} - \sum_{\ell, \tau} b_{\ell, \tau}) .$$
  Now we need to compute the pairing
  $\langle \lambda_{i,j}, \infty(\kappa, \iota) \rangle - \langle
  \lambda_{i,j}, \rho \rangle$. 
The product $\langle \lambda_{i,j}, \infty(\kappa, \iota)
\rangle$ decomposes as $$\sum_\tau \langle (\lambda_{i,j})_{\tau}, \infty(\kappa, \iota)_{\tau} \rangle_\tau
- \Big (\frac{1}{2} \sum_{\ell, \tau} a_{\ell, \tau} +\sum_{\ell, \tau}
b_{\ell, \tau} \Big ).$$
For all $\tau \notin I_i$, we have $$\langle (\lambda_{i,j})_{\tau}, \infty(\kappa, \iota)_{\tau} \rangle_{\tau}
=  \frac{1}{2} \Big (\sum_{\ell} a_{\ell, \tau} - \sum_{\ell} b_{\ell,
  \tau}\Big ).$$
For all $\tau \in I_i$, there will be integers  $0 \leq r_{\tau} \leq
p_\tau$ and $0 \leq s_{\tau} \leq q_{\tau}$  such that $s_{\tau} +
r_{\tau} = n- j $ and such that the first $n-j$ coordinates of
$\infty(\kappa, \iota)_\tau$ put in the dominant form will be
(possibly not in this order)  $$\{ -a_{1,\tau} + \tfrac{n-1}{2} -
p_\tau+1, \cdots, -a_{r,\tau} + \tfrac{n-1}{2} - p_{\tau} + r_\tau,
b_{q_{\tau}} - \tfrac{n-1}{2}, b_{q_\tau-s_{\tau} + 1} -
\tfrac{n-1}{2}+ s_{\tau} -1\} .$$
In this case, an easy computation shows  that  
$$ \langle (\lambda_{i,j})_{\tau}, \infty(\kappa, \iota)_{\tau}
\rangle_{\tau} = \frac{1}{2} \Big (\sum_{\ell = 1}^{r_\tau} a_{\ell, \tau} -\sum_{\ell = r_\tau +1}^{p_\tau} a_{\ell, \tau} + \sum_{\ell = 1}^{ q_{\tau}-s_\tau} b_{\ell, \tau} - \sum_{\ell = q_\tau-s_{\tau}+1}^{q_\tau} b_{\ell, \tau} \Big)  + $$ $$ -\tfrac{n-1}{2}r_\tau +  \tfrac{r_\tau(r_\tau -1)}{2} + (p_\tau-r_\tau) r_\tau  + \tfrac{n-1}{2}s_\tau  - \tfrac{s_\tau(s_{\tau} -1)}{2}.$$ 
Moreover, we have the pairing
$\langle \lambda_{i,j}, \rho \rangle_{\tau} = \tfrac{j(n-j)}{2}$ if
$\tau \in I_i$ and $\langle \lambda_{i,j}, \rho \rangle_{\tau}= 0$
otherwise. So we finally get that for all $\tau \in I_i$
\begin{multline*}  \langle (\lambda_{i,j})_{\tau}, \infty(\kappa,
  \iota)_{\tau} \rangle_{\tau}  - \langle \lambda_{i,j}, \rho
  \rangle_{\tau}
  \\
  =
  \frac{1}{2} \Big(\sum_{\ell = 1}^{r} a_{\ell, \tau} -\sum_{\ell=
   r_\tau +1}^{p_\tau} a_{\ell, \tau} + \sum_{\ell = 1}^{
   q_{\tau}-s_\tau} b_{\ell, \tau} - \sum_{\ell =
   q_\tau-s_{\tau}+1}^{q_\tau} b_{\ell, \tau} \Big) - r_\tau
 (q_{\tau}-s_{\tau})
 \end{multline*}
 while for all $\tau \notin I_i$, we have 
  $$  \langle (\lambda_{i,j})_{\tau}, \infty(\kappa,
  \iota)_{\tau} \rangle_{\tau}  - \langle \lambda_{i,j}, \rho
  \rangle_{\tau} = \frac{1}{2} \Big (\sum_{\ell} a_{\ell, \tau} -
  \sum_{\ell} b_{\ell, \tau}\Big ).$$
   It remains to add the factor  $\sum_{\tau} -(\tfrac{1}{2} \sum_{\ell, \tau} a_{\ell, \tau} +\sum_{\ell, \tau} b_{\ell, \tau})$ to conclude.  
  
  \end{demo}
  
  \subsubsection{The main theorem for unitary groups}
  
 We have that $\mathcal{H}_{p, \kappa, \iota }^{int} = \otimes_{0 \leq i \leq m} \ZZ[ T_{\mathfrak{p}_i,j},~0 \leq j \leq n, T_{\mathfrak{p}_i,0}^{-1}, T_{\mathfrak{p}_i,n}^{-1}]$ and the following result gives some evidence towards Conjecture \ref{conj3}. 
 
 \begin{thm}\label{main-thm-unitary}The operators $T_{\mathfrak{p}_i,j},~0 \leq j \leq n,~1 \leq i \leq m$, act on $\mathrm{R}\Gamma(\mathfrak{Sh}^{tor}_{K,\Sigma} , \mathcal{V}_{\kappa, K, \Sigma})$  and $\mathrm{R}\Gamma(\mathfrak{Sh}^{tor}_{K,\Sigma} , \mathcal{V}_{\kappa, K, \Sigma}(-D_{K, \Sigma}))$.
 \end{thm}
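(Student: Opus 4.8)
The strategy parallels the symplectic case treated in Section \ref{section-local-model-symp}: for each prime $\mathfrak{p}_i$ and each $1 \leq j \leq n$ we build the Hecke correspondence at parahoric level, study its local model, and use the Cohen--Macaulay formalism of Section \ref{residue} to descend the normalized correspondence from the generic fibre to the integral model. First I would fix $\mathfrak{p}_i$ (which splits in $L$, say $\mathfrak{p}_i = w\bar w$) and $j$, and introduce the parahoric level subgroup $K(\mathfrak{p}_i,j) = K \cap g_{i,j} K g_{i,j}^{-1}$ where $g_{i,j}$ is the element of $G(\mathbb{A}_f)$ supported at $\mathfrak{p}_i$ given by the cocharacter $\lambda_{i,j}$. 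Because $v \mid p$ splits in $L$, the unitary group over $F_{\mathfrak{p}_i}$ becomes $\mathrm{GL}_n$ over $F_{\mathfrak{p}_i}$, so $K(\mathfrak{p}_i,j)_p$ is a maximal parahoric subgroup of $\mathrm{GL}_n$ and the theory of integral models with parahoric level (via local models of Rapoport--Zink/Pappas--Zhu type for $\mathrm{GL}_n$, where the local model is a union of Schubert varieties in an affine flag variety for $\mathrm{GL}_n$) applies. This gives the integral correspondence
\begin{eqnarray*}
\xymatrix{ & \mathfrak{Sh}^{tor}_{K(\mathfrak{p}_i,j), \Sigma''} \ar[rd]^{p_1} \ar[ld]_{p_2} & \\
\mathfrak{Sh}^{tor}_{K, \Sigma'} & & \mathfrak{Sh}^{tor}_{K, \Sigma}. }
\end{eqnarray*}

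Next I would record the geometric input from local model theory. The local model for $\mathrm{GL}_n$ at the parahoric fixing a lattice chain of type $\{0,j\}$ (the two vertices relevant to $T_{\mathfrak{p}_i,j}$) is flat over $\mathbb{Z}_p$, has reduced special fibre, and is Cohen--Macaulay --- this is the $\mathrm{GL}_n$ analogue of Theorem \ref{thm-GortzHe}, due to G\"ortz and (for normality/CM) to the known results on local models of EL type. Hence by the local model diagram (the analogue of Theorem \ref{thm-main-local-model}, which follows from Grothendieck--Messing deformation theory exactly as before, using the determinant condition of Remark \ref{rem-det-condition} to match the Hodge filtration with the point on the product of local models $\prod_{\tau \in I_i} M_{\{0,j\}} \times \prod_{\tau \notin I_i} M_{\{0\}}$) one deduces: the integral model $\mathfrak{Sh}_{K,\Sigma}$ is smooth over $\ocal_{E',\lambda'}$ and $\mathfrak{Sh}_{K(\mathfrak{p}_i,j),\Sigma''}$ is a CM scheme, so $p_1^! \mathcal{V}_{\kappa,K,\Sigma}$ and $p_1^! \mathcal{V}_{\kappa,K,\Sigma}(-D)$ are CM sheaves by Lemma \ref{lem-CM} (and by the CM property of $\ocal_{E',\lambda'}$). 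Then, as in Proposition \ref{prop:normal}, I only need to verify that the naively normalized map $T_{\mathfrak{p}_i,j}^{naive}$, after multiplication by $p^{\langle \lambda_{i,j}, \infty(\kappa,\iota)\rangle - \langle \lambda_{i,j}, \rho\rangle}$, extends in codimension one --- i.e. over the generic points of the irreducible components of the special fibre, which correspond to the open KR strata $U_{\{0,j\},w}$ for $w$ in the $W$-orbit of the minuscule coweight.

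The heart of the argument is the local computation at these generic points, which is the analogue of Proposition \ref{prop-formula} and Lemma \ref{lem-weight-formula}. On the local model for $\mathrm{GL}_n$ with the lattice chain $V_0 \to V_j \to V_0$, the open strata are indexed by pairs $(r_\tau, s_\tau)$ with $r_\tau + s_\tau = n-j$, corresponding to how the Lagrangian-type filtration interacts with the two-step chain (split into the $p_\tau$ and $q_\tau$ pieces coming from the signature); I would compute, exactly as in Proposition \ref{prop-formula} via the $\mathcal{P}_I$-equivariant projection on tangent spaces, that $\mathrm{d}p_1$ on differentials has kernel and cokernel locally free of rank $r_\tau s_\tau$ at the $\tau$-factor, hence total contribution $\sum_{\tau \in I_i} r_\tau (q_\tau - s_\tau)$ (matching the combinatorial term in Lemma \ref{lem-formula-unitary}), and that the pullback map on the automorphic sheaf $\mathcal{V}_{\kappa,K}$ acquires a factor $p$ to the power $\sum_{\tau \in I_i}\big(-\sum_{\ell=r_\tau+1}^{p_\tau} a_{\ell,\tau} - \sum_{\ell = q_\tau-s_\tau+1}^{q_\tau} b_{\ell,\tau}\big)$ plus the central-character twist (this last being the $\mathrm{GL}_n$ version of Lemma \ref{lem-comparision-model-shim}, using the twisting sheaf $\mathcal{L}$ attached to the similitude character). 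Summing the valuations and taking the supremum over $(r_\tau, s_\tau)$ over all components gives precisely the exponent in Lemma \ref{lem-formula-unitary}, so $T_{\mathfrak{p}_i,j}$ is defined everywhere in codimension one and hence globally by the CM property. For the subcanonical extension one uses the variant of the fundamental class for divisors (the proposition following Proposition \ref{prop-trace}), the boundary being flat over $\mathbb{Z}_p$. The main obstacle I anticipate is purely bookkeeping: correctly matching the KR-stratum combinatorics for the $\mathrm{GL}_n$ chain of type $\{0,j\}$ (which involves a two-parameter family $(r_\tau,s_\tau)$ rather than the single parameter $s$ of the symplectic case) with the Weyl-chamber computation of $\infty(\kappa,\iota)$ carried out in Lemma \ref{lem-formula-unitary}; the non-minuscule difficulties do not arise here since each $\lambda_{i,j}$ is minuscule, but one must be careful that for split $v$ the local $L$-group Frobenius $\sigma$ acts trivially on the relevant factors so the normalization is the naive $p^{\langle \lambda_{i,j},\infty(\kappa,\iota)\rangle-\langle\lambda_{i,j},\rho\rangle}$. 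The commutativity of the $T_{\mathfrak{p}_i,j}$ is not claimed (see the Remark after Theorem \ref{main-thm-unitary}), so no further work is needed there; the final statement about the image lattice being stable follows formally from the action on $\mathrm{R}\Gamma$ together with the definition of $\HH^i(K,V)_{\overline{\ZZ}_p}$.
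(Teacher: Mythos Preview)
Your proposal is correct and follows essentially the same route as the paper: set up the $\mathrm{GL}_n$ local model $M_{\{0,j\}}$, invoke the G\"ortz--He flatness/CM result, compute the differential $\mathrm{d}p_1$ and the sheaf map $\alpha^\star$ at each open KR stratum, normalize (Proposition~\ref{norm-unitary}), and then transport to the Shimura variety exactly as in Section~\ref{proof-main-symp}. One small slip to fix in your bookkeeping: in the paper's parametrization (stratum index $r$ with $\sup\{0,j-q_\tau\}\le r\le\inf\{p_\tau,j\}$) the kernel/cokernel of $\mathrm{d}p_1$ has rank $(j-r)(p_\tau-r)$ (Proposition~\ref{prop-formula2}), which under the substitution $r_\tau=p_\tau-r$, $s_\tau=n-j-r_\tau$ becomes $r_\tau(q_\tau-s_\tau)$ as in Lemma~\ref{lem-formula-unitary}---not $r_\tau s_\tau$ as you first wrote---but you immediately land on the correct expression and rightly identify this matching as the only real hazard.
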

 
 \begin{rem} We do not know how to prove the commutativity of the
   operators $T_{\mathfrak{p}_i,j}$ in general. The main point is that
   although the operators $T_{\mathfrak{p}_i,j}$ are associated to
   minuscule coweights, the composition of two such operators will not
   in general be associated to a minuscule coweight. 
 \end{rem}
 \begin{coro}\label{coro-conj2-unit} Conjecture \ref{conj2} holds in this case.
 \end{coro}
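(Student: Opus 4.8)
The plan is to deduce Corollary \ref{coro-conj2-unit} directly from Theorem \ref{main-thm-unitary} together with the very definition of the lattices $\HH^i(K,V_\kappa)_{\overline{\ZZ}_p}$ and $\HH^i_{cusp}(K,V_\kappa)_{\overline{\ZZ}_p}$. First I would recall that, in the case where all finite places $v \mid p$ of $F$ split in $L$, the $\ZZ$-algebra $\mathcal{H}^{int}_{p,\kappa,\iota}$ is, by Definition \ref{defi-hecke-algebra} and the discussion preceding Theorem \ref{main-thm-unitary}, generated by the elements $[V_{\lambda_{i,j}}]\, p^{\langle \lambda_{i,j}, \infty(\kappa,\iota)\rangle}$; since each $\lambda_{i,j}$ is minuscule, these are precisely the normalized operators $T_{\mathfrak{p}_i,j} = p^{\langle \lambda_{i,j}, \infty(\kappa,\iota)\rangle - \langle \lambda_{i,j}, \rho\rangle}\, T_{\mathfrak{p}_i,j}^{naive}$ whose exponent is computed in Lemma \ref{lem-formula-unitary}. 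In particular $\mathcal{H}^{int}_{p,\kappa,\iota}$ acts on $\HH^i(K,V_\kappa)_{\overline{\qq}_p}$ and on $\HH^i_{cusp}(K,V_\kappa)_{\overline{\qq}_p}$ through the restriction of the genuine $\mathcal{H}_p$-action (a ring homomorphism, defined over $E'$), with generators acting by the classical normalized Hecke operators.

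Next I would invoke Theorem \ref{main-thm-unitary}: for each $i,j$ it produces a cohomological correspondence on $\mathcal{V}_{\kappa,K,\Sigma}$ and on $\mathcal{V}_{\kappa,K,\Sigma}(-D_{K,\Sigma})$ over $\ocal_{E',\lambda'}$ refining the correspondence that defines $T_{\mathfrak{p}_i,j}^{naive}$ over $E'$. Applying $\mathrm{R}\Gamma(\mathfrak{Sh}^{tor}_{K,\Sigma},-)$ and then cohomology yields endomorphisms of $\HH^i(\mathfrak{Sh}^{tor}_{K,\Sigma}, \mathcal{V}_{\kappa,K,\Sigma})$ and of $\HH^i(\mathfrak{Sh}^{tor}_{K,\Sigma}, \mathcal{V}_{\kappa,K,\Sigma}(-D_{K,\Sigma}))$ which, after the base change $\otimes_{\ocal_{E',\lambda'},\iota}\overline{\ZZ}_p$ and passage to the comparison maps, are compatible with the rational normalized operators $T_{\mathfrak{p}_i,j}$ acting on $\HH^i(K,V_\kappa)_{\overline{\qq}_p}$ and $\HH^i_{cusp}(K,V_\kappa)_{\overline{\qq}_p}$; this compatibility is built into the construction of these correspondences by normalization of the naive ones. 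Since $\HH^i(K,V_\kappa)_{\overline{\ZZ}_p}$ is by definition the image of $\HH^i(\mathfrak{Sh}^{tor}_{K,\Sigma}, \mathcal{V}_{\kappa,K,\Sigma})\otimes_{\ocal_{E',\lambda'},\iota}\overline{\ZZ}_p$ inside $\HH^i(K,V_\kappa)_{\overline{\qq}_p}$ (and similarly in the cuspidal case), the endomorphism induced by $T_{\mathfrak{p}_i,j}$ on the source carries this image into itself, so the lattice is stable under each generator.

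Finally I would conclude with a purely formal step: if a finitely generated $\overline{\ZZ}_p$-lattice $\Lambda$ in $\HH^i(K,V_\kappa)_{\overline{\qq}_p}$ is stable under a family of $\overline{\qq}_p$-linear endomorphisms, then it is stable under every element of the $\ZZ$-subalgebra of $\mathrm{End}_{\overline{\qq}_p}(\HH^i(K,V_\kappa)_{\overline{\qq}_p})$ that they generate, since a composite of $\Lambda$-preserving maps preserves $\Lambda$ and so does any $\ZZ$-linear combination. As $\mathcal{H}^{int}_{p,\kappa,\iota}$ acts through exactly the subalgebra generated by the $T_{\mathfrak{p}_i,j}$, the lattices $\HH^i(K,V_\kappa)_{\overline{\ZZ}_p}$ and $\HH^i_{cusp}(K,V_\kappa)_{\overline{\ZZ}_p}$ are stable under $\mathcal{H}^{int}_{p,\kappa,\iota}$, which is Conjecture \ref{conj2} in this case. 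There is no serious obstacle: all the geometric content sits in Theorem \ref{main-thm-unitary}, and the only points demanding a word of care are the normalization bookkeeping matching the generators of Definition \ref{defi-hecke-algebra} with the operators of Theorem \ref{main-thm-unitary} (this is exactly Lemma \ref{lem-formula-unitary}) and the compatibility of the integral cohomological correspondences with the rational Hecke action under the comparison map (built into their construction). In particular, the open question of whether the $T_{\mathfrak{p}_i,j}$ commute plays no role here, since stability of a lattice under a set of operators makes no demand of commutativity.
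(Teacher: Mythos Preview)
Your proposal is correct and follows essentially the same approach as the paper: the paper's proof simply remarks that the statement amounts to the stability of the image lattice $\mathrm{Im}\big(\HH^i(\mathfrak{Sh}^{tor}_{K,\Sigma},\mathcal{V}_{\kappa,K,\Sigma})\to \HH^i(\mathfrak{Sh}^{tor}_{K,\Sigma},\mathcal{V}_{\kappa,K,\Sigma})\otimes_{\ZZ_p}\qq_p\big)$ under $\mathcal{H}^{int}_{p,\kappa,\iota}$, and this follows from Theorem~\ref{main-thm-unitary}. Your version spells out the same logic in more detail, correctly noting in particular that commutativity of the $T_{\mathfrak{p}_i,j}$ is irrelevant for lattice stability.
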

 \begin{demo} Conjecture \ref{conj2} asserts that
   $$\mathrm{Im} \big( \HH^i(\mathfrak{Sh}^{tor}_{K,\Sigma} ,
   \mathcal{V}_{\kappa, K, \Sigma}) \rightarrow \HH^i(
   \mathfrak{Sh}^{tor}_{K,\Sigma} , \mathcal{V}_{\kappa, K, \Sigma})
   \otimes_{\ZZ_p} \qq_p \big)$$ is a lattice in
   $\HH^i( \mathfrak{Sh}^{tor}_{K,\Sigma} , \mathcal{V}_{\kappa,
     \Sigma}) \otimes_{\ZZ_p} \qq_p$ stable under the action of
   $\mathcal{H}_{p, \kappa, \iota }^{int}$. This follows from
   TheoremÊ\ref{main-thm-unitary}.
 \end{demo}
 
 \subsubsection{Example:  the group $\mathrm{GU}(2,1)$} Let us assume that $F = \qq$, and that the signature $(p_\tau, q_\tau) = (2,1)$ for a chosen embedding $\tau: L \rightarrow \overline{\qq}$. Let $p$ be a prime that splits in $L$. We choose an embedding $\iota: L \hookrightarrow \overline{\qq}_p$ which corresponds to a prime $\mathfrak{p}$ above $p$. We have $\mathfrak{p} \mathfrak{p}^c = p$. We have an isomorphism  $G_{\qq_p} = \mathrm{GL}_3 \times \mathbb{G}_m$

 The spherical Hecke algebra at $p$ is the  polynomial algebra
 generated by the  characteristic functions (identified with double cosets):
  \begin{enumerate}
  \item $T_0^{naive} =  K_p  ( \mathrm{diag}(1,1,1) \times p^{-1}) K_p$, $(T_0^{naive})^{-1}$, 
 \item $T_1^{naive} =  K_p   (\mathrm{diag}(1,1,p^{-1}) \times p^{-1}) K_p$,
  \item $T_2^{naive} = K_p  ( \mathrm{diag}(1,p^{-1},p^{-1}) \times p^{-1}) K_p$,
  \item $T_3^{naive} = K_p (\mathrm{diag}(p^{-1},p^{-1},p^{-1}) \times p^{-1}) K_p$, $(T_3^{naive})^{-1}$, 
  \end{enumerate}
  
  There  are associated representations of the dual group $V_{j} = (\Lambda^j \C^3)^\vee $ (and action of $\mathbb{G}_m$ by the inverse character),  for $0 \leq j \leq 3$. The relation given by the Satake transform is  (observe that all these representations are minuscule): 
   \begin{enumerate}
 \item $T^{naive}_0 =  [V_{0,\tau}] $,
 \item $T^{naive}_1 =  p [V_{1,\tau}]$,
  \item $T^{naive}_2 = p[V_{2,\tau}]$,
  \item $T^{naive}_3 = [V_{3,\tau}]$.
  \end{enumerate}

 We now pick integers $(k_1,k_2, k_3)$ and consider the automorphic
 vector bundle: $$\mathrm{Sym}^{k_1-k_2} \omega_{A, \tau, +} \otimes
 \det^{k_2} \omega_{A, \tau, +} \otimes \det^{k_3} \omega_{A, \tau,-}
 . $$ 
 It is the sheaf $\mathcal{V}_{\kappa, K, \Sigma}$ for $\kappa  = (-k_2,-k_1, k_3; -k_2-k_1-k_3)$. It follows from the construction that the sheaf $\det \mathcal{H}_{1,dR} ( A)_{\tau, +}$ is tivial (viewed as a sheaf, the equivariant action is not trivial). It is therefore harmless to assume that $k_3$ is constant and we assume that $k_3=1$. 
 
 We conclude that:
  $$ \mathcal{H}^{int}_{p, \kappa, \iota} = \ZZ\big [
  \big(p^{-1}T_0^{naive}\big)^{±1}, p^{-1- \inf\{1, k_2\}}T_1^{naive},
  p^{-1- \inf\{k_2, k_1+k_2\}}T_2^{naive},
  \big(p^{-k_1-k_2}T_3^{naive}\big)^{±1} \big ]$$

\section{Local models in the linear case}\label{sect-local-model-unitary}

In this section we will prove Theorem \ref{main-thm-unitary}.  As
in the symplectic case, the actions of the normalized Hecke operators
will be defined using the results of Section \ref{residue}, in
particular the construction in Example \ref{ex:corres}. In order to do
this, we need to understand the integrality properties of the Hecke
correspondences with respect to automorphic vector bundles and also
the pullback maps on differentials. This will be done by using the
theory of local models of Shimura varieties.

After recalling the basic facts about the local models for general linear
groups, we prove two results, Proposition \ref{prop-formula2} (on the
integrality properties of differentials) and Lemma
\ref{lem-weight-formula2} (on the integrality properties on automorphic
bundles) which are used to construct normalized Hecke operators on the
local model in Proposition \ref{norm-unitary}. These computations
imply Theorem \ref{main-thm-unitary} in the same way as in the proof
of Theorem \ref{main-thm-symplectic}.

\subsection{Definition} Let $n,p,q \in \mathbb{Z}_{\geq 1}$ be
integers such that $n=p+q$.  We now consider modules
$V_0, \cdots, V_{n-1}= \ZZ^{n}$ and the following chain:
  
  $$V_\bullet: V_0 \rightarrow V_1 \rightarrow \cdots \rightarrow V_{n-1} \rightarrow V_0$$ where 
   the map from $V_i$ to $V_{i+1}$ is given in the canonical basis $(e_1, \cdots, e_{n})$ of $\ZZ^{n}$ by the map $e_j \mapsto e_j$ if $j \neq i+1$ and $e_{i+1} \mapsto p e_{i+1}$.    Whenever  necessary, indices are taken modulo $n$ so that $V_{n}:= V_0$ and the chain $V_\bullet$ can be seen as an infinite chain.

  Fix a set $\emptyset \neq I \subset \{ 0, 1, \cdots, n-1 \}$.  We define the local model  functor  $\mathbf{M}_I: \ZZ- \mathbf{ALG} \rightarrow \mathbf{SETS}$ which associates to an object $R$ of $\ZZ- \mathbf{ALG}$  the set of isomorphism classes of commutative diagrams  
\begin{eqnarray*}
\xymatrix{ \ar[r]& V_{i_0}\otimes_{\ZZ} R \ar[r]  & V_{i_1}\otimes_{\ZZ} R \ar[r]  & \cdots  \ar[r] & V_{i_m}\otimes_{\ZZ} R \ar[r] & \\
\ar[r] &F_{i_0} \ar[u] \ar[r]  & F_{i_1} \ar[r] \ar[u] & \cdots  \ar[r]  & F_{i_m} \ar[r]\ar[u]&}
\end{eqnarray*}
where $i_0 < i_1 \cdots <i_m $ are such that
$\{ i_0, \cdots, i_m\} = I$ and the modules $F_{i_j}$ for
$0 \leq j \leq m$ are rank $q$ locally direct factors.

The functor $\mathbf{M}_I$ is represented by a projective scheme $M_I$
which is a closed subscheme of a product of Grassmannians.

When $\emptyset \neq J \subset I$, there is an obvious map
${M}_I \rightarrow M_J$ given by forgetting the modules $F_j$ for
$j \in I \setminus J$.

\subsection{The affine Grassmannian}
   
  We now consider modules $\mathcal{V}_0, \cdots, \mathcal{V}_{n-1}= \F_p[[t]]^{n}$ and the  chain:
  $$\mathcal{V}_\bullet = \mathcal{V}_0 \rightarrow \mathcal{V}_1 \rightarrow \cdots \rightarrow \mathcal{V}_{n-1} \rightarrow \mathcal{V}_0$$ where 
   the map from $\mathcal{V}_i$ to $\mathcal{V}_{i+1}$ is given in the canonical basis $(e_1, \cdots, e_{n})$ of $\F_p[[t]]^{n}$ by the map $e_j \mapsto e_j$ if $j \neq i+1$ and $e_{i+1} \mapsto t e_{i+1}$.    Whenever necessary, indices are taken modulo $n$ so that $\mathcal{V}_{n}:= \mathcal{V}_0$ and the chain $\mathcal{V}_\bullet$ is made infinite. Observe that $\mathcal{V}_\bullet \otimes_{\F_p[[t]]} \F_p = V_\bullet \otimes_{\ZZ} \FF_p$.

   Let $LG$ denote the loop group of $\mathrm{GL}_{n}$ over
$\mathbb{F}_p$. The group $LG$ acts naturally on $\mathcal{V}_0 \otimes_{\F_p[[T]]} \F_p((T))$ and therefore it acts 
 on the chain $\mathcal{V}_\bullet \otimes_{\F_p[[T]]} \F_p((T))$. 
 
 Let $ \emptyset \neq I \subset  \{ 0, 1, \cdots, n -1\}$ and let $\mathcal{V}^I_\bullet$ be the subchain of $\mathcal{V}_\bullet$ where we keep only the modules indexed by elements $i \in I$ (modulo $n$). 
 
 We denote by $\mathcal{P}_I$ the parahoric subgroup of $LG$ of
 automorphisms of the chain $\mathcal{V}^I_\bullet$. For any $I$, we
 define the flag variety as the ind-scheme
 $\mathcal{F}_I:= LG/\mathcal{P}_I$.

 \subsection{Stratification of the local model}
  As in the case
 of symplectic groups, the special fibre $\ov{M}_I$ of $M_I$ at $p$
 embeds as a finite union of $\mathcal{P}_I$ orbits in
 $\mathcal{F}_I$.

We recall the description of the map $\ov{M}_I\rightarrow \mathcal{F}_I$. 
Given a diagram \begin{eqnarray*}
\xymatrix{ V_{i_0}\otimes_{\ZZ} R \ar[r]  & V_{i_1}\otimes_{\ZZ} R \ar[r]  & \cdots  \ar[r] & V_{i_m}\otimes_{\ZZ} R  \\
F_{i_0} \ar[u] \ar[r]  & F_{i_1} \ar[r] \ar[u] & \cdots  \ar[r]  & F_{i_m} \ar[u]}
\end{eqnarray*} corresponding to an $R$-point of $\ov{M}_I$, we can construct a new  diagram: 
\begin{eqnarray*}
\xymatrix{ \mathcal{V}_{i_0}\otimes_{\ZZ} R \ar[r]  & \mathcal{V}_{i_1}\otimes_{\ZZ} R \ar[r]  & \cdots  \ar[r] & \mathcal{V}_{i_m}\otimes_{\ZZ} R  \\
\mathcal{F}_{i_0} \ar[u] \ar[r]  & \mathcal{F}_{i_1} \ar[r] \ar[u] & \cdots  \ar[r]  & \mathcal{F}_{i_m} \ar[u] \\
t\mathcal{V}_{i_0}\otimes_{\ZZ} R \ar[r] \ar[u]  & t\mathcal{V}_{i_1}\otimes_{\ZZ} R \ar[r] \ar[u]  & \cdots  \ar[r] & t\mathcal{V}_{i_m}\otimes_{\ZZ} R \ar[u]}
\end{eqnarray*}
where all the vertical maps are inclusions and  each $\mathcal{F}_{i_j}$ is determined by the property that $\mathcal{F}_{i_j}/ t\mathcal{V}_{i_j}\otimes_{\ZZ} R  = F_{i_j} \hookrightarrow ( \mathcal{V}_{i_j} / t\mathcal{V}_{i_j})\otimes_{\ZZ} R  = V_{i_j} \otimes R$.  The chain $ \mathcal{F}_\bullet$ determines an $R$-point of $\mathcal{F}_I$.

We now recall the combinatorial description of the image of $\ov{M}_I$
in $\mathcal{F}_I$: Fix a Borel subgroup $B$ of $\mathrm{GL}_{n}$ and
a maximal torus $T\subset B$. This gives a base for the root datum of
$\mathrm{GL}_{n}$. Let $\widetilde{W}$ be the extended affine Weyl
group of $\mathrm{GL}_{n}$. This is the semi-direct product of the
Weyl group $W = \mathcal{S}_{n} $ and the cocharacter group
$\mathrm{X}_{\star}(T) = \ZZ^n$.  It can be realized as a subgroup of the
group of affine transformations of $\mathbb{Z}^{n}$ with
$\mathbb{Z}^{n}$ acting by translation and $\mathcal{S}_{n}$ by
permutation of the coordinates.  Let $s_i = (i,i+1)$ for
$1 \leq i \leq g-1$, and
$s_0 = \mr{t}_{(-1, 0, \cdots, 0, 1)}\rtimes( 1,n)$ be the usual choice
of simple reflections.  We let
$\mu = (1, \cdots, 1, 0, \cdots 0) \in \mathbb{Z}^{n}$ (where $1$
occurs $p$ times) be the minuscule coweight corresponding to our
situation.

For each $\emptyset \neq I$ as above, we let $W_I$ be the subgroup of $\widetilde{W}$ generated by the simple reflexions $s_i, i \notin I$. This is a finite group.  The $\mathcal{P}_I$ orbits in $\mathcal{F}_I$ are parametrised by the double cosets $W_I\backslash \widetilde{W} / W_I$.  For any $w \in \widetilde{W}$ we denote the orbit corresponding to the double coset $W_I w W_I$ by $U_{I,w}$ and the orbit closure by $X_{I,w}$.
The orbits included in $\ov{M}_I$ are parametrized by  the finite subset $Adm_I(\mu)$ of $W_I\backslash \widetilde{W} / W_I$ of $\mu$-admissible elements. 

Whenever $J \subset I$ we have a map $M_I \rightarrow M_J$. This map is surjective and $Adm_J(\mu)$ is the image of $Adm_I(\mu)$ in $W_J\backslash \widetilde{W} / W_J$.

When $I = \{ 0, \cdots, n-1\}$, Kottwitz and Rapoport give a
description of the set $Adm_I(\mu)$.  The set $Adm_I(\mu)$ is
precisely the subset of $\widetilde{W}$ of elements which are $\leq$
(in the Bruhar order) a translation $\mr{t}_{w\mu}$ for an element
$w \in W$.  In particular, the orbits that are open in $\ov{M}_I$
correspond to the translations $\mr{t}_{w\mu}$.

\subsection{Irreducible components} 

\subsubsection{The case that $I= \{0, \cdots, n-1\}$}
  There are
actually $\frac{n!}{p!q!}$ translations $w\cdot \mu$ and they
correspond to the open stratum in each of the $\frac{n!}{p!q!}$
irreducible components of $\ov{M}_I$ when $I = \{0, \cdots, n-1\}$.

These $\frac{n!}{p!q!}$ translations are parametrized by $W/W_c$ where
$W_c$ is the subgroup of $W$ of elements which stabilize $\mu$. It
identifies with the elements in $W \subset \mathcal{S}_{n}$ which
preserve the sets $\{1,\cdots, p\}$ and $\{p+1, \cdots, n\}$. This
group is isomorphic to $\mathcal{S}_p \times \mathcal{S}_q$.

We can concretely determine an element of each of the orbits
corresponding to these $\mr{t}_{w \mu}$ in $M_I$ for
$I= \{0, \cdots, n-1\}$ as follows.  The group $\widetilde{W}$ can be
viewed as a subgroup of $\mathrm{GL}_n( \F_p((t)))$. The element
$\mr{t}_{\mu}$ is identified with
$\mathrm{diag} (t,\cdots, t, 1, \cdots 1)$.
   
We now consider the inclusion of chains:
$$ t \mathcal{V}_\bullet \subset \mr{t}_{w\mu} \mathcal{V}_\bullet
\subset \mathcal{V}_\bullet.$$
By reduction modulo $t$ and using the
identification
$\mathcal{V}_\bullet \otimes_{\F_p[[t]]} \F_p = V_\bullet
\otimes_{\ZZ} \FF_p$, we deduce that
$\mr{t}_{w\mu} \mathcal{V}_\bullet/ t \mathcal{V}_\bullet
\hookrightarrow V_\bullet$ defines an $\F_p$-point of $M_I$, which
represents the $w\mu$-orbit.

   \subsubsection{The case that $\sharp I = 1$} For any $j, j' \in I$,
   the spaces $M_{\{j\}}$ and $M_{\{j'\}}$ are canonically
   isomorphic. The special fiber $\ov{M}_{\{0\}}$ of $M_{\{0\}}$ is
   smooth and irreducible of dimension $pq$. Moreover, there is a
   single orbit.

\subsubsection{The case that $I = \{0,j\}$}

\begin{lem} The special fiber $\ov{M}_{\{0,j\}}$ of $M_{\{0,j\}}$ has
  $\inf \{ p, j\} - \sup \{0, j-q\} +1$ irreducible components, indexed by integers
  $\sup \{0, j-q\} \leq r \leq \inf \{ p, j\} $. For each
  $\sup \{0, j-q\}  \leq r \leq \{ p, j\} $, a representative of the $r$-stratum is
  given by taking
   $$ F_0(r) = F_j(r) =   \langle e_{r+1}, \cdots, e_p,  e_{n-j+r+1}, \cdots, e_{n}\rangle$$
   This corresponds to the element $ \mr{t}_{w(r)\mu} =  \mathrm{diag} ( t \mathrm{Id}_r, \mathrm{Id}_{j-r},  t \mathrm{Id}_{p-r}, \mathrm{Id}_{q-j+r}) \in LG$. 
  \end{lem}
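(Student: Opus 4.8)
The plan is to follow the strategy used for the lemma in the case $I=\{0,g\}$ of symplectic groups. Recall that $\ov{M}_{\{0,\dots,n-1\}}$ has $\binom{n}{p}$ irreducible components, each the closure of an open $\mathcal{P}_{\{0,\dots,n-1\}}$-orbit $U_{\{0,\dots,n-1\},w\mu}$ as $w$ runs over $W/W_c=\mathcal{S}_n/(\mathcal{S}_p\times\mathcal{S}_q)$, and that the forgetful map $\pi\colon M_{\{0,\dots,n-1\}}\to M_{\{0,j\}}$ is surjective. As for the symplectic local models (Theorem \ref{thm-GortzHe}), the linear local models $M_I$ are flat over $\ZZ$ (G\"ortz, \cite{gortz}); since $M_{\{0,j\}}\times\ZZ[1/p]\simeq \Gr(q,n)\times\ZZ[1/p]$ has relative dimension $pq$, the special fibre $\ov{M}_{\{0,j\}}$ is pure of dimension $pq=\dim\ov{M}_{\{0,\dots,n-1\}}$. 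Hence $\pi$ is proper and carries each irreducible component of $\ov{M}_{\{0,\dots,n-1\}}$ dominantly, so onto, an irreducible component of $\ov{M}_{\{0,j\}}$, and every component of $\ov{M}_{\{0,j\}}$ arises this way. As $\pi$ is equivariant for $\mathcal{P}_{\{0,\dots,n-1\}}\subset\mathcal{P}_{\{0,j\}}$, the image $\pi(U_{\{0,\dots,n-1\},w\mu})$ is a single $\mathcal{P}_{\{0,\dots,n-1\}}$-orbit contained in, and by the dimension count dense in, the $\mathcal{P}_{\{0,j\}}$-orbit $U_{\{0,j\},w\mu}$ attached to the double coset $W_{\{0,j\}}\mr{t}_{w\mu}W_{\{0,j\}}$. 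So the components of $\ov{M}_{\{0,j\}}$ are exactly the closures $\ov{U_{\{0,j\},w\mu}}$, with $U_{\{0,j\},w\mu}=U_{\{0,j\},w'\mu}$ precisely when $W_{\{0,j\}}\mr{t}_{w\mu}W_{\{0,j\}}=W_{\{0,j\}}\mr{t}_{w'\mu}W_{\{0,j\}}$.

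Next I would run the combinatorics. The group $W_{\{0,j\}}$ is generated by the simple reflections $s_i$ with $i\notin\{0,j\}$, hence $W_{\{0,j\}}=\mathcal{S}_{\{1,\dots,j\}}\times\mathcal{S}_{\{j+1,\dots,n\}}$. A short computation in $\widetilde{W}=\ZZ^n\rtimes\mathcal{S}_n$ shows that two translations $\mr{t}_a,\mr{t}_b$ lie in the same $W_{\{0,j\}}$-double coset if and only if $b$ lies in the $W_{\{0,j\}}$-orbit of $a$. Applying this to the $W$-orbit of $\mu$, namely the set of $\{0,1\}$-vectors with exactly $p$ ones, a $W_{\{0,j\}}$-orbit is determined by the integer $r$ equal to the number of ones among the first $j$ coordinates, so that $p-r$ ones lie among the last $n-j$ coordinates. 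The constraints $0\le r$, $r\le j$, $r\le p$ and $p-r\le n-j$ (the last being $r\ge p-(n-j)=j-q$, using $n-p=q$) are equivalent to $\sup\{0,j-q\}\le r\le\inf\{p,j\}$, giving $\inf\{p,j\}-\sup\{0,j-q\}+1$ values of $r$, which is the asserted number of components.

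Finally, for the explicit representatives: for each admissible $r$ choose $w(r)\in W$ so that the $t$-entries of $\mr{t}_{w(r)\mu}$ occupy positions $1,\dots,r$ and $j+1,\dots,j+(p-r)$; this gives $\mr{t}_{w(r)\mu}=\mathrm{diag}(t\,\mathrm{Id}_r,\mathrm{Id}_{j-r},t\,\mathrm{Id}_{p-r},\mathrm{Id}_{q-j+r})$ as in the statement, and reducing the inclusion $t\mathcal{V}_\bullet\subset\mr{t}_{w(r)\mu}\mathcal{V}_\bullet\subset\mathcal{V}_\bullet$ modulo $t$, using $\mathcal{V}_\bullet\otimes_{\F_p[[t]]}\F_p=V_\bullet\otimes_\ZZ\FF_p$, identifies the resulting point of $M_{\{0,j\}}$ with the displayed chain $F_0(r)=F_j(r)$. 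I expect the only genuine obstacle to be the equidimensionality input: one needs that the linear local model is flat over $\ZZ$ (so $\ov{M}_{\{0,j\}}$ is pure of dimension $pq$) and that $\pi$ does not drop dimension on the relevant strata. The latter can be obtained directly, bypassing flatness, by the Lie-algebra computation $\dim\mathcal{P}_{\{0,j\}}/\big(\mathcal{P}_{\{0,j\}}\cap\mr{t}_{w(r)\mu}\mathcal{P}_{\{0,j\}}\mr{t}_{w(r)\mu}^{-1}\big)=pq$, in the style of Proposition \ref{prop-formula}; once this is in place, the remainder is the bookkeeping of $W_{\{0,j\}}$-orbits carried out above.
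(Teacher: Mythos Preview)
Your proof is correct and follows the same strategy as the paper's brief argument for the analogous symplectic lemma (the paper gives no proof for this linear statement itself): identify $W_{\{0,j\}}=\mathcal{S}_j\times\mathcal{S}_{n-j}$, reduce the enumeration of open strata to the $W_{\{0,j\}}$-orbits on the $W$-orbit of $\mu$, and verify that the resulting orbits all have dimension $pq$. Your extra structural input (surjectivity of $\pi$ from the Iwahori local model together with flatness/equidimensionality) is not needed but does no harm; note only that the flatness reference for the linear local models is G\"ortz \cite{gortz-unitary} (Theorem~\ref{thm-GortzHe2}), not \cite{gortz}.
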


  \subsection{Local geometry of  the local model}
The local model $M_{\{0\}}$ is  smooth of relative dimension
${pq}$. The  other local models $M_I$  are isomorphic to $M_{\{0\}}$
over $\Spec~\ZZ[1/p]$ but they have  singular special fiber at
$p$. Nevertheless, we have the following analogue of Theorem
\ref{thm-GortzHe}, the first
part due to G\"ortz \cite{gortz-unitary} and the second to He
\cite[Theorem 1.2]{he-normality}
\begin{thm}\label{thm-GortzHe2} The local models $M_I$ are flat over
  $\ZZ$ and $\ov{M}_I$ is reduced.  Furthermore, $M_I$ is Cohen--Macaulay.
\end{thm}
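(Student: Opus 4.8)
The plan is to reduce the statement for general $I$ to the case $I = \{0, \cdots, n-1\}$, exactly as in the symplectic case, and then to invoke the established results of Görtz and He. First I would recall that for $J \subset I$ there is the forgetful map $M_I \to M_J$, which on special fibres is the projection of parahoric orbit closures in the affine flag variety coming from $\mathcal{P}_J \supset \mathcal{P}_I$; this map is projective and surjective (since $Adm_J(\mu)$ is the image of $Adm_I(\mu)$). Flatness over $\ZZ$ is the statement that the generic fibre is dense, equivalently that the special fibre has the expected dimension $pq$; for $I = \{0,\dots,n-1\}$ this is Görtz's theorem \cite{gortz-unitary}, and for general $I$ flatness follows from flatness of $M_{\{0,\dots,n-1\}}$ together with the surjectivity of $M_{\{0,\dots,n-1\}} \to M_I$ and the fact that the latter map is compatible with the generic fibre isomorphisms $M_I \otimes \ZZ[1/p] \simeq M_{\{0\}} \otimes \ZZ[1/p]$. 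Reducedness of $\ov{M}_I$ is again proved by Görtz in the full-level case and passes to the quotient, or alternatively follows from the normality statement of He.

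The key input for the Cohen--Macaulay assertion is He's normality theorem \cite[Theorem 1.2]{he-normality}: the special fibre $\ov{M}_I$ (being a union of Schubert varieties in a partial affine flag variety, cut out by the admissible set) is normal, and more precisely is reduced with normal, indeed Cohen--Macaulay, irreducible components — here one uses that Schubert varieties in affine flag varieties of Kac--Moody type are Cohen--Macaulay (by Frobenius splitting / standard monomial theory), and He's result guarantees that $\ov{M}_I$ is exactly such a union (a ``Schubert variety'' in the thickened sense) rather than an arbitrary scheme-theoretic union. Then I would argue: $M_I$ is flat over $\ZZ$ with Cohen--Macaulay special fibre $\ov{M}_I$; by the local criterion for Cohen--Macaulayness in a flat family over a discrete valuation ring (here $\ZZ_{(p)}$, and away from $p$ the scheme $M_I$ is smooth hence CM), a local ring $\ocal_{M_I, x}$ is Cohen--Macaulay if and only if its quotient by the regular element $p$ is, i.e. if and only if $\ocal_{\ov{M}_I, \bar x}$ is Cohen--Macaulay. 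This gives that $M_I$ is Cohen--Macaulay everywhere.

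Concretely, the steps in order are: (1) cite Görtz \cite{gortz-unitary} for flatness of $M_{\{0,\dots,n-1\}}$ over $\ZZ$ and reducedness of its special fibre; (2) deduce flatness and reducedness for general $M_I$ via the surjective projections $M_{\{0,\dots,n-1\}} \to M_I$ and the identification of the generic fibres; (3) cite He \cite[Theorem 1.2]{he-normality} for the normality of $\ov{M}_I$ and the fact that it is a reduced union of Cohen--Macaulay Schubert varieties, concluding that $\ov{M}_I$ is Cohen--Macaulay; (4) apply the flatness criterion over $\ZZ_{(p)}$ to lift Cohen--Macaulayness from the special fibre to $M_I$. I expect the main obstacle to be step (3): one must carefully justify that He's normality result, combined with the Cohen--Macaulay property of affine Schubert varieties, really does imply that the \emph{scheme} $\ov{M}_I$ — which is a priori only identified set-theoretically with a union of $\mathcal{P}_I$-orbit closures — is Cohen--Macaulay as a scheme. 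This requires knowing that the admissible locus $\bigcup_{w \in Adm_I(\mu)} X_{I,w}$ is reduced (so that He's normality applies to it on the nose) and that a union of Cohen--Macaulay subvarieties meeting in the expected codimension along Cohen--Macaulay intersections is again Cohen--Macaulay; for affine Schubert varieties the relevant intersection-theoretic statement is standard but must be invoked. Everything else is formal manipulation of flat families, and for the applications in this paper (constructing the trace maps of Section \ref{residue}) only the Cohen--Macaulay conclusion is needed, which is why it is highlighted in the statement.
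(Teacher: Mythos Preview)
The paper does not give a proof of this theorem at all: it simply cites G\"ortz \cite{gortz-unitary} for the flatness and reducedness, and He \cite[Theorem 1.2]{he-normality} for the Cohen--Macaulay property. Both references prove these results directly for \emph{arbitrary} parahoric level $I$, not only for the Iwahori case $I=\{0,\dots,n-1\}$; in particular He's title is ``Normality and Cohen--Macaulayness of local models of Shimura varieties'' and the statement applies to $M_I$ itself, not merely to individual Schubert varieties. So your reduction strategy is unnecessary.

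More seriously, the reduction you propose for flatness has a genuine gap. You write that ``for general $I$ flatness follows from flatness of $M_{\{0,\dots,n-1\}}$ together with the surjectivity of $M_{\{0,\dots,n-1\}} \to M_I$ and the fact that the latter map is compatible with the generic fibre isomorphisms''. But flatness over $\ZZ$ does not descend along proper surjections: the target $M_I$ is defined as a closed subscheme of a product of Grassmannians, and nothing you have said rules out embedded or nilpotent components in characteristic $p$ that are not detected by the set-theoretic image of the Iwahori model. (Concretely, flatness over $\ZZ_{(p)}$ means $p$ is a nonzerodivisor on $\oscr_{M_I}$; surjectivity on points from a $p$-torsion-free scheme does not force this.) G\"ortz's argument in \cite{gortz-unitary} proceeds instead by exhibiting explicit open affine charts and checking flatness directly, which is why the result is available for every $I$ without any descent. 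Similarly, your step (3) worries about gluing Cohen--Macaulayness across a union of Schubert varieties, but this is exactly what He does for you: his theorem asserts that the special fibre $\ov{M}_I$ of the local model is Cohen--Macaulay, and then your step (4) (lifting CM from the special fibre to $M_I$ via flatness over the DVR $\ZZ_{(p)}$) is indeed the correct and standard final move.
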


\subsection{Hecke correspondence of the  affine Grassmannians}
We consider  the correspondence:
\begin{eqnarray*}
\xymatrix{ & \mathcal{F}_{\{0,j\}} \ar[ld]_{p_2} \ar[dr]^{p_1} & \\
\mathcal{F}_{\{j\}} && \mathcal{F}_{\{0\}}}
\end{eqnarray*}
We now pick the element $ w(r) \mu \in \widetilde{W}$ and restrict
this diagram to get maps:
\begin{eqnarray*}
\xymatrix{ & U_{\{0,j\}, w(r) \mu} \ar[ld]_{p_2} \ar[dr]^{p_1} & \\
U_{\{j\}, w(r) \mu } && U_{\{0\}, w(r) \mu}}
\end{eqnarray*}
\begin{proposition}\label{prop-formula2} The map on  differentials  $ \mathrm{d}p_1:  p_1^\star \Omega^1_{U_{\{0\}, w(r) \mu}/\F_p} \rightarrow  \Omega^1_{U_{\{0,j\}, w(r) \mu}/\F_p}$  has kernel and cokernel a locally free sheaf of rank $(j-r)(p-r)$.
\end{proposition}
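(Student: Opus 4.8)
The plan is to imitate the argument of Proposition \ref{prop-formula} in the symplectic case, working $\mathcal{P}_{\{0,j\}}$-equivariantly on the open orbits. First I would identify the orbits as homogeneous spaces
$$U_{\{0,j\}, w(r)\mu} \simeq \mathcal{P}_{\{0,j\}}/\big(\mathcal{P}_{\{0,j\}}\cap \mr{t}_{w(r)\mu}\mathcal{P}_{\{0,j\}}\mr{t}_{w(r)\mu}^{-1}\big)$$
and similarly for $U_{\{0\}, w(r)\mu}$ with $\mathcal{P}_{\{0\}} = G(R[[t]])$ the hyperspecial subgroup, so that $p_1$ becomes the obvious $\mathcal{P}_{\{0,j\}}$-equivariant projection. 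Since the map is equivariant, it suffices to compute $\mathrm{d}p_1$ on the Zariski tangent space at the identity, i.e.\ on the quotient of Lie algebras. The point $\mr{t}_{w(r)\mu}$ is the diagonal matrix $\mathrm{diag}(t\,\mathrm{Id}_r,\mathrm{Id}_{j-r},t\,\mathrm{Id}_{p-r},\mathrm{Id}_{q-j+r})$, with the rows/columns of $\mathrm{GL}_n$ blocked into sizes $r$, $j-r$, $p-r$, $q-j+r$ (total $n=p+q$, where $j$ of the indices $\{0,j\}$ lie in $\{0,\dots,n-1\}$ and the block structure comes from partitioning according to the parahoric of type $\{0,j\}$, which is the standard maximal parabolic with Levi $\mathrm{GL}_j\times\mathrm{GL}_{n-j}$).

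Next I would write down the $t$-adic shapes of the two intersection subgroups $\mathcal{P}_{\{0\}}\cap \mr{t}_{w(r)\mu}\mathcal{P}_{\{0\}}\mr{t}_{w(r)\mu}^{-1}$ and $\mathcal{P}_{\{0,j\}}\cap \mr{t}_{w(r)\mu}\mathcal{P}_{\{0,j\}}\mr{t}_{w(r)\mu}^{-1}$ as block matrices whose $(a,b)$ entry is divisible by $t^{\max(0, e_a - e_b)}$, where $e_a\in\{0,1\}$ records whether the $a$-th block got multiplied by $t$ under conjugation by $\mr{t}_{w(r)\mu}$; for the $\{0,j\}$-case one further intersects with the condition that the lower-left $(n-j)\times j$ block is divisible by $t$ (the defining condition of the type-$\{0,j\}$ parahoric). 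Passing to Lie algebras $\mathfrak{p}_{\{0\}}$, $\mathfrak{p}_{\{0,j\}}$ and taking the quotients, the kernel of
$$\mathfrak{p}_{\{0,j\}}/\big(\mathfrak{p}_{\{0,j\}}\cap \mr{t}_{w(r)\mu}\mathfrak{p}_{\{0,j\}}\mr{t}_{w(r)\mu}^{-1}\big)\longrightarrow \mathfrak{p}_{\{0\}}/\big(\mathfrak{p}_{\{0\}}\cap \mr{t}_{w(r)\mu}\mathfrak{p}_{\{0\}}\mr{t}_{w(r)\mu}^{-1}\big)$$
should be the space of matrices supported in a single block of the parahoric parabolic filtration, namely with a free $(p-r)\times(j-r)$ block of entries in $\F_p$ (the block linking the $t$-scaled part of the $\mathrm{GL}_{n-j}$-factor with the non-$t$-scaled part of the $\mathrm{GL}_j$-factor), and zero elsewhere. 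This gives kernel of rank $(j-r)(p-r)$. Since both $U$'s are smooth of dimension equal to the length of the corresponding affine Weyl element and one computes $\ell(w(r)\mu) - \ell(w(r)\mu \text{ in the }\{0,j\}\text{-double coset})$ agrees, the cokernel has the same rank $(j-r)(p-r)$; alternatively, comparing dimensions $\dim U_{\{0,j\},w(r)\mu} = \dim U_{\{0\},w(r)\mu} = pq$ forces $\mathrm{rk}(\ker) = \mathrm{rk}(\mathrm{coker})$ automatically once the source and target are both locally free of rank $pq$.

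The main obstacle I anticipate is purely bookkeeping: correctly setting up the four-block decomposition adapted simultaneously to $\mr{t}_{w(r)\mu}$ and to the two parahorics $\mathcal{P}_{\{0\}}$, $\mathcal{P}_{\{0,j\}}$, and then reading off the $t$-divisibility pattern of each of the sixteen blocks in the two conjugated-and-intersected subgroups without sign or index errors. There is no conceptual difficulty beyond what already appears in Proposition \ref{prop-formula}; the symplectic self-duality constraint that complicated the earlier computation is absent here (these are honest $\mathrm{GL}_n$ local models with no pairing), so if anything the computation is cleaner. Once the kernel is identified as the stated $(p-r)\times(j-r)$ block and the cokernel rank is pinned down by the dimension count, the proposition follows.
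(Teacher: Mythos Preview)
Your proposal is correct and follows essentially the same approach as the paper: identify the orbits as homogeneous spaces for the parahorics, reduce by $\mathcal{P}_{\{0,j\}}$-equivariance to the tangent map at the identity, write out the $t$-divisibility pattern of the two intersection subgroups in the $4\times 4$ block decomposition of sizes $r,\,j-r,\,p-r,\,q-j+r$, and read off the kernel as the single $(p-r)\times(j-r)$ block. Your dimension-count justification for the cokernel rank is a small clarification beyond what the paper writes, but otherwise the arguments coincide.
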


\begin{demo} We have $$U_{\{0,j\}, w(r) \mu} \simeq \mathcal{P}_{\{0,j\}} / \big(\mathcal{P}_{\{0,j\}} \cap \mr{t}_{w(r) \mu} \mathcal{P}_{\{0,j\}} \mr{t}_{w(r) \mu}^{-1}\big)$$
$$U_{\{0\}, w(r) \mu} \simeq \mathcal{P}_{\{0\}} /
\big(\mathcal{P}_{\{0\}} \cap \mr{t}_{w(r) \mu} \mathcal{P}_{\{0\}}
\mr{t}_{w(r) \mu}^{-1}\big) ,$$
and $p_1$  is the obvious  $\mathcal{P}_{(0,j)}$- equivariant
projection: $$\mathcal{P}_{\{0,j\}} / \big(\mathcal{P}_{\{0,j\}} \cap
\mr{t}_{w(r) \mu} \mathcal{P}_{\{0,j\}} \mr{t}_{w(r) \mu}^{-1}\big)
\rightarrow  \mathcal{P}_{\{0\}} / \big(\mathcal{P}_{\{0\}} \cap
\mr{t}_{w(r) \mu} \mathcal{P}_{\{0\}} \mr{t}_{w(r) \mu}^{-1}\big) .$$
Because the map is $\mathcal{P}_{(0,j)}$-equivariant,  it suffices to prove the claim in the tangent space at the identity. 

We first determine  the shape of $\mathcal{P}_{\{0\}}$ and $\mathcal{P}_{\{0,j\}}$.  The group $\mathcal{P}_{\{0\}}$ is the hyperspecial  subgroup of $LG$, whose $R$-points are $G(R[[t]])$.  
 The group  $\mathcal{P}_{\{0, j\}}$ is the parahoric  group with shape: 
 $$ \mathcal{P}_{\{0,g\}} (R) = \{ M =    \begin{pmatrix} 
       a & b  \\
        tc & d \\
    
    \end{pmatrix} \in LG(R) \} $$
    where $a \in M_{j \times j}(R[[t]])$, $d \in M_{n-j \times n- j}(R[[t]])$, $c \in M_{n-j \times j}(R[[t]])$ and $d \in  M_{j \times n-j}(R[[t]])$.

One computes that  $\mathcal{P}_{\{0\}} \cap \mr{t}_{w(r) \mu} \mathcal{P}_{\{0\}} \mr{t}_{w(s) \mu}^{-1}$ consists of matrices of the following shape (the $\star$ have integral values, the rows and columns are of size $r$, $j-r$, $p-r$,  and $q-j+r$): 
   $$\begin{pmatrix} 
      \star  &  t\star &  \star & t\star \\
      \star & \star & \star & \star  \\
     \star &t \star & \star & t \star \\
    \star & \star & \star & \star \\
   \end{pmatrix} $$

One also computes that  $\mathcal{P}_{\{0,j\}} \cap \mr{t}_{w(r) \mu} \mathcal{P}_{\{0,j\}} \mr{t}_{w(r) \mu}^{-1}$ consists of matrices with the following shape:
$$\begin{pmatrix} 
      \star  &  t\star &  \star & t\star \\
      \star & \star & \star & \star  \\
    t \star &t^2 \star & \star & t \star \\
    t\star &t \star & \star & \star \\
   \end{pmatrix} $$

Passing to the Lie algebras, we easily see that the kernel of the map
$$\mathfrak{p}_{\{0,j\}} / \big(\mathfrak{p}_{\{0,j\}} \cap \mr{t}_{w(r) \mu} \mathfrak{p}_{\{0,j\}} \mr{t}_{w(r) \mu}^{-1}\big) \rightarrow  \mathfrak{p}_{\{0\}} / \big(\mathfrak{p}_{\{0\}} \cap \mr{t}_{w(r) \mu} \mathfrak{p}_{\{0\}} \mr{t}_{w(r) \mu}^{-1}\big)$$
is  the set of matrices of the form: 
 $$\begin{pmatrix} 
      0  &  0 &  0 &0\\
      0 & 0 & 0 & 0  \\
     0 & t A &0 & 0 \\
    0& 0 & 0 & 0 \\
   \end{pmatrix} $$
with $A\in M_{p-r\times j-r}(\F_p)$.

\end{demo} 

\subsection{ The Hecke correspondence on the local model}

 We consider  the correspondence:
\begin{eqnarray*}
\xymatrix{ & M_{\{0,j\}} \ar[ld]_{p_2} \ar[dr]^{p_1} & \\
M_{\{j\}} && M_{\{0\}}}
\end{eqnarray*}

\subsubsection{Sheaves on the local model}
 Let $\kappa = (a_1, \cdots, a_p, b_{1}, \cdots, b_p)$, with $a_1 \geq
 \cdots \geq a_p$ and $b_{1} \geq \cdots \geq b_q$, be a weight.    We have a  locally free rank $p$ sheaf $(V_0/F_0)^\vee$, and a locally free rank $q$ sheaf $F_0$ over $M_0$, and similarly over $M_j$.  We let $L_0 = (V_0/F_0)^\vee \oplus F_0$ and $L_j  = (V_j/F_j)^\vee \oplus F_j$.  
 To the weight $\kappa$ we can naturally attach locally free sheaves  $L_{0, \kappa}$ and $L_{j,\kappa}$ using the procedure described in Section \ref{section-Sheaves-loc-mod}.

 \subsubsection{The map $L_{j,\kappa} \rightarrow L_{0,\kappa}$}

 The natural map $V_0 \rightarrow V_j$ induces a map $V_0/F_0 \rightarrow V_j/F_j$ and by duality a map $V_j/F_j \rightarrow V_0/F_0$ that we denote by $\alpha_1$. The map $V_j \rightarrow V_0$ induces a map $F_j \rightarrow F_0$ denoted by $\alpha_2$. We let $\alpha = (\alpha_1, \alpha_2): L_j \rightarrow L_0$. The map $\alpha$ is an isomorphism over $\ZZ[1/p]$ and it induces an isomorphism $\alpha^\star: p_2^\star  L_{j, \kappa} \rightarrow  p_1^\star L_{0, \kappa}$.  We now investigate the integral properties of this map. 
 
 \begin{lem}\label{lem-weight-formula2}  Let $\kappa = (k_1, \cdots,
   k_n)$. Let $\sup \{0, j-q\} \leq r \leq \inf\{p, j\}$. Let $\xi$ be the generic point of $U_{\{0,j\}}(r)$.  The map $\alpha^\star$  induces  a map $$\alpha^\star: (p_2^\star L_{j,\kappa})_{\xi} \rightarrow p^{ a_p + \cdots + a_{p-r+1} } p^{ b_{q} + \cdots + b_{j-r}} (p_1^\star L_{0,\kappa})_{\xi}$$ over the local ring $\oscr_{M_{0,j}, \xi}$. 
 \end{lem}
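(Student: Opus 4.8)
The plan is to compute the map $\alpha^\star$ explicitly at the generic point $\xi$ of the stratum $U_{\{0,j\}}(r)$, in exactly the same way as Lemma \ref{lem-weight-formula} is proved in the symplectic case, but now keeping track of the two separate sheaves making up $L_0 = (V_0/F_0)^\vee \oplus F_0$. First I would reduce to a completely explicit local computation: the point $\xi$ corresponds to the element $\mr{t}_{w(r)\mu} = \mathrm{diag}(t\mathrm{Id}_r, \mathrm{Id}_{j-r}, t\mathrm{Id}_{p-r}, \mathrm{Id}_{q-j+r})$, and by reducing the inclusion of chains $t\mathcal{V}_\bullet \subset \mr{t}_{w(r)\mu}\mathcal{V}_\bullet \subset \mathcal{V}_\bullet$ modulo $t$ I can read off that $\alpha_1 : V_j/F_j \to V_0/F_0$ has, in suitable bases of the rank $p$ sheaves $(V_0/F_0)^\vee$ and $(V_j/F_j)^\vee$, the diagonal form $\mathrm{diag}(p\,1_r, 1_{p-r})$, while $\alpha_2 : F_j \to F_0$ has, in suitable bases of the rank $q$ sheaves, the form $\mathrm{diag}(1_{j-r}, p\,1_{q-j+r})$ — i.e.\ the number of $p$'s on each diagonal block is dictated by which of the basis vectors $e_1, \dots, e_n$ get multiplied by $t$ in passing from $V_j$ back to $V_0$ through the chain, restricted to the two graded pieces determined by $F_0(r)$.

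Next I would run the Iwahori decomposition argument of Lemma \ref{lem-weight-formula} twice, once for each factor. Working over the completion $R$ of $\oscr_{M_{0,j}, \xi}$ with uniformizer $p$, and writing $\mathrm{GL}_p(R) = \coprod_{w \in \mathcal{S}_p} \mathrm{Iw}\, w\, \U(R)$ (and similarly $\mathrm{GL}_q(R)$), for a section $f$ of the $\mathrm{GL}_p$-part of $L_{j,\kappa}$ the identity $\alpha^\star f(iw) = f(\alpha_1^{-1} i \alpha_1 w)\,\kappa^\vee(w^{-1}\alpha_1^{-1}w)$ shows, since $\alpha_1^{-1} i \alpha_1 w \in \mathrm{GL}_p(R)$, that $v(\alpha^\star f) \geq v(\kappa^\vee(w^{-1}\alpha_1^{-1}w))$. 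Because $\alpha_1$ is $\mathrm{diag}(p\,1_r, 1_{p-r})$ and the $a$-coordinates of $\kappa$ are decreasing, the minimal such valuation over all $w \in \mathcal{S}_p$ equals $a_p + a_{p-1} + \cdots + a_{p-r+1}$ (the $r$ smallest $a$'s, matching the $r$ copies of $p$). The same computation for the $\mathrm{GL}_q$-part with $\alpha_2 = \mathrm{diag}(1_{j-r}, p\,1_{q-j+r})$ and the decreasing $b$-coordinates gives the exponent $b_q + b_{q-1} + \cdots + b_{j-r+1}$, i.e.\ $b_q + \cdots + b_{j-r}$ in the indexing of the statement (so $q - (j-r)$ many terms, matching $q-j+r$ copies of $p$). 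Adding the two contributions yields the claimed total power $p^{a_p + \cdots + a_{p-r+1}} p^{b_q + \cdots + b_{j-r}}$, which is precisely the factor appearing in the statement; I would also double-check the preliminary claim that $\alpha$ has kernel and cokernel locally free of the appropriate ranks at $\xi$, exactly as in the symplectic proof, by inspecting the reduction of $\mr{t}_{w(r)\mu}$ modulo $t$.

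The main obstacle, as in the symplectic case, is purely bookkeeping: one must pin down the correct identification of the bases and the correct block sizes, so that the two diagonal matrices $\alpha_1$ and $\alpha_2$ carry the right number of $p$-entries — here the asymmetry between $(V_0/F_0)^\vee$ (rank $p$, contributing the small $a$'s) and $F_0$ (rank $q$, contributing the small $b$'s), together with the constraint $\sup\{0, j-q\} \leq r \leq \inf\{p,j\}$, is what makes the index ranges in the final formula slightly delicate. Once the explicit matrix shapes at $\xi$ are correctly extracted from the description of $w(r)\mu$, the valuation estimate is formal and identical to Lemma \ref{lem-weight-formula}. I expect no genuine difficulty beyond getting these combinatorial indices right and verifying that the estimate is sharp (i.e.\ that the bound is actually attained, so that after the normalization in Proposition \ref{norm-unitary} the cohomological correspondence is defined but not ``over-normalized'' at this stratum).
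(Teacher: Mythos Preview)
Your proposal is correct and follows essentially the same approach as the paper. The paper's proof checks that $\alpha_1$ and $\alpha_2$ have kernel and cokernel of ranks $r$ and $q-j+r$ respectively at the point $\mr{t}_{w(r)\mu}$ (exactly your preliminary claim), and then simply writes ``We now conclude the proof as in Lemma \ref{lem-weight-formula}''---which is precisely the twice-applied Iwahori decomposition argument you spell out.
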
 
 
 \begin{demo}  We first check that over $U_{\{0,j\}}(r)$, the map $\alpha_1$ has kernel and cokernel a locally free sheaf of rank $r$ and the map $\alpha_2$ has kernel and cokernel a locally free sheaf of rank $q-j+r$. Indeed, it is enough to check this at the point corresponding to $\mr{t}_{w(r)\mu}$, in which case  the corresponding diagram is:
 \begin{eqnarray*}
 \xymatrix{ \mathcal{V}_0 \ar[rr]^{\mathrm{diag}( t 1_j, 1_{n-j})} && \mathcal{V}_j   \ar[rr]^{\mathrm{diag}(1_j, t1_{n-j})} && V_0 \\
 \mathcal{V}_0 \ar[rr]^{\mathrm{diag}( t 1_j, 1_{n-j})}\ar[u]^{\mr{t}_{w(r) \mu}}& & \mathcal{V}_j \ar[u]^{\mr{t}_{w(r)\mu}} \ar[rr]^{\mathrm{diag}(1_j, t1_{n-j})} && V_0 \ar[u]^{\mr{t}_{w(r) \mu}}}
 \end{eqnarray*}
 and our claim is simply that the map $\mathcal{V}_0/\mr{t}_{w(r) \mu} \mathcal{V}_0 \rightarrow \mathcal{V}_j/\mr{t}_{w(r) \mu} \mathcal{V}_j$ has kernel of rank $r$ and $\mr{t}_{w(r) \mu} \mathcal{V}_j/t V_j \rightarrow \mr{t}_{w(r) \mu} \mathcal{V}_0/tV_0$ has kernel of rank $q-j+r$ . This is obvious. We now conclude the proof as in Lemma \ref{lem-weight-formula}. 
 
 \end{demo}

\subsubsection{The cohomological correspondence}

We may now construct a cohomological correspondence. By Proposition \ref{prop-trace} and Theorem \ref{thm-GortzHe2}, we have a fundamental class  $p_1^\star \oscr_{M_{\{0\}}} \rightarrow p_1^! \oscr_{M_{\{0\}}}$.  Moreover, the sheaf $p_1^! \oscr_{M_{\{0\}}}$ is a CM sheaf. 

There is also a rational map $\alpha: p_2^\star L_{j, \kappa} \dashrightarrow p_1^\star L_{0,\kappa}$, so that putting everything together, we have a rational map (the naive cohomological correspondence):
$$ T^{naive}: p_2^\star L_{j, \kappa} \dashrightarrow p_1^! L_{0, \kappa}$$

We may now normalize this correspondence. 
\begin{proposition} \label{norm-unitary} Let $T = p^{- \inf \big \{\sum_{\ell =p-r +1}^{p} a_\ell + \sum_{\ell =j-r}^{q} b_q+  (j-r)(p-r)\big\}} T^{naive}$. Then $T$ is a true cohomological correspondence: 
$$ T: p_2^\star L_{j, \kappa} \rightarrow p_1^! L_{0, \kappa}$$
\end{proposition}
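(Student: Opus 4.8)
The plan is to mirror the argument of Proposition \ref{prop:normal} in the symplectic case, now using the linear-group inputs Proposition \ref{prop-formula2} and Lemma \ref{lem-weight-formula2}. First I would recall that by Theorem \ref{thm-GortzHe2} the scheme $M_{\{0,j\}}$ is Cohen--Macaulay and $M_{\{0\}}$ is smooth (hence Gorenstein), so Proposition \ref{prop-trace} applies to $p_1 : M_{\{0,j\}} \rightarrow M_{\{0\}}$ (both are flat projective over $\ZZ$ of the same pure relative dimension $pq$, and the smooth loci have complement of codimension $\geq 2$ in the fibres since the components of $\ov{M}_{\{0,j\}}$ each have dimension $pq$ and the singularities occur in codimension $\geq 2$ along them). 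This gives the fundamental class $p_1^\star \oscr_{M_{\{0\}}} \rightarrow p_1^! \oscr_{M_{\{0\}}}$ and, by Lemma \ref{lem-CM}, $p_1^! L_{0,\kappa} = \omega_{M_{\{0,j\}}/M_{\{0\}}} \otimes p_1^\star L_{0,\kappa}$ is a CM sheaf on $M_{\{0,j\}}$.

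The key point, as in the symplectic case, is that a generically-defined map from a locally free sheaf into a CM sheaf which is defined in codimension $1$ is defined everywhere; so it suffices to check that the normalized map $T$ extends over the generic point $\xi$ of each irreducible component of $\ov{M}_{\{0,j\}}$. These components are indexed by $\sup\{0,j-q\} \leq r \leq \inf\{p,j\}$, with $\xi$ the generic point of the stratum $U_{\{0,j\}}(r)$. At $\xi$ I would combine the two local computations: Lemma \ref{lem-weight-formula2} shows $\alpha^\star$ sends $(p_2^\star L_{j,\kappa})_\xi$ into $p^{a_p + \cdots + a_{p-r+1} + b_q + \cdots + b_{j-r}} (p_1^\star L_{0,\kappa})_\xi$, and Proposition \ref{prop-formula2} (via Theorem \ref{thm-main-local-model}-style identification of the completed local ring of $M_{\{0,j\}}$ at $\xi$ with that of the corresponding stratum in the affine flag variety, and the fact that the fundamental class is the determinant of $\mathrm{d}p_1$) shows that the fundamental class contributes the factor $p^{(j-r)(p-r)}$, the rank of the kernel of $\mathrm{d}p_1$. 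Hence $T^{naive}$ lands in $p^{\sum_{\ell=p-r+1}^p a_\ell + \sum_{\ell=j-r}^q b_\ell + (j-r)(p-r)} (p_1^! L_{0,\kappa})_\xi$ at $\xi$, so multiplying $T^{naive}$ by $p$ raised to the negative of the infimum over $r$ of these exponents yields a map $T : p_2^\star L_{j,\kappa} \rightarrow p_1^! L_{0,\kappa}$ defined at every $\xi$, hence everywhere by the CM property.

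The main obstacle is the bookkeeping needed to see that the contribution of the fundamental class at $\xi$ is exactly $p$ to the corank of $\mathrm{d}p_1$ at $\xi$: one must know that $\omega_{M_{\{0,j\}}/M_{\{0\}}}$, which a priori is the determinant of the cotangent complex on the smooth locus, extends (by the CM property, Lemma \ref{lem-CM}) to a reflexive sheaf whose order of vanishing along the component $\overline{U_{\{0,j\}}(r)}$ can be read off from the generic behaviour of $\det(\mathrm{d}p_1)$, which in turn is computed on the affine-flag-variety model by Proposition \ref{prop-formula2}. This is the same delicate point as in the symplectic argument and is handled identically. I would then remark that, exactly as in Section \ref{proof-main-symp}, transporting this back to the unitary Shimura variety via the local model diagram analogue of \eqref{eq:lmd} and the CM property of $\mathfrak{Sh}^{tor}_{K(\mathfrak{p}_i),\Sigma}$ yields the cohomological correspondences $T_{\mathfrak{p}_i,j}$ on $\mathcal{V}_{\kappa,K,\Sigma}$ and $\mathcal{V}_{\kappa,K,\Sigma}(-D_{K,\Sigma})$, completing the proof of Theorem \ref{main-thm-unitary}.
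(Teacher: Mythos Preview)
Your proposal is correct and follows essentially the same route as the paper: reduce to the generic points $\xi$ of the irreducible components of $\ov{M}_{\{0,j\}}$ using the CM property of $p_1^! L_{0,\kappa}$, then combine Lemma \ref{lem-weight-formula2} (for the $\alpha^\star$-contribution) with Proposition \ref{prop-formula2} (for the fundamental class contribution) to read off the exponent at each $\xi$. One small remark: the appeal to a ``Theorem \ref{thm-main-local-model}-style identification'' is unnecessary here, since $\ov{M}_{\{0,j\}}$ literally embeds in $\mathcal{F}_{\{0,j\}}$ and the open stratum $U_{\{0,j\}}(r)$ is the same scheme on both sides --- Proposition \ref{prop-formula2} applies directly.
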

\begin{demo}
Because $p_1^! L_{0, \kappa}$ is a $CM$ sheaf, any rational map from a
locally free sheaf into $p_1^! L_{0, \kappa}$ is well defined if it is
well defined in codimension $1$. So it is enough to check that it is
well defined on the generic points of all the components in the
special fiber.  Let $\sup\{0, j-q\} \leq r \leq \inf\{j, p\}$. Let $\xi$ be the generic point of the stratum
$U_{\{0,j\}}(r)$. We then see that $T^{naive}:  (p_2^\star L_{j, \kappa} )_\xi \rightarrow  p^{ \sum_{\ell =p-r +1}^{p} a_\ell + \sum_{\ell =j-r}^{q} b_q+  (j-r)(p-r)} (p_1^! L_{0, \kappa})_\xi$ by combining Lemma \ref{lem-weight-formula2} and Proposition \ref{prop-formula2}. 
\end{demo}

\subsubsection{Proof of Theorem \ref{main-thm-unitary}} Given
Proposition \ref{norm-unitary}, this is very similar to the proof of
the main theorem in the symplectic case (see Section
\ref{proof-main-symp}), and left to the reader.

\section{Weakly regular automorphic representations and Galois representations} 

In this section we  study a class of automorphic forms that  realize in the coherent cohomology of Shimura varieties and have associated Galois representations.

\subsection{Weakly regular, odd, essentially (conjugate) self dual
  algebraic cuspidal automorphic representations}

Let $L$ be a $CM$ or totally real field, $F$ the maximal totally real
subfield of $L$, and $c$ the complex conjugation (trivial if $L=F$ is
a totally real field). We let $I = \mathrm{Hom}(F, \C)$ and
$J = \mathrm{Hom}(L, \C)$.

Let $\pi$ be a cuspidal automorphic representation of
$ \mathrm{GL}_n/L$ (we do not assume that the central character of
$\pi$ is unitary).  We define below certain properties $(1)$, $(2)$,
$(3)$ and $(4)$ of such representations. We will say that a $\pi$
satisfying these properties is a weakly regular, algebraic, odd,
essentially (conjugate) self dual, cuspidal automorphic
representation.

\begin{enumerate}
\item \textbf{Essentially (conjugate)  self dual.} We say that $\pi$ is essentially  self dual when $L$ is totally real and essentially conjugate self dual  when $L$ is $CM$ if: 
$$ \pi^c = \pi^\vee \otimes \chi,$$
where
$\chi_0: \mathbb{A}_{F}^\times/{F}^\times \rightarrow
\mathbb{C}^\times$ is a character such that $\chi_0(-1_v)$ is
independent of $v$ for all $v \mid \infty$, and
$ \chi = \chi_0 \circ \mathrm{N}_{L/F} \circ \det$.  We note that in
the $CM$ case, the character $\chi_0$ is not unique because we can
multiply $\chi_0$ by the character associated to the quadratic
extension $L/F$ without changing $\chi$. This will however change the
sign of $\chi_0(-1_v)$.
\item \textbf{$C$-algebraic.} We say that  $\pi$ is $C$-algebraic if
  the infinitesimal character  $\lambda = ( ( \lambda_{1, \tau},
  \cdots, \lambda_{n, \tau})_{\tau \in J})$ of $\pi_\infty$ lies in
  $\big((\ZZ^n + \frac{n-1}{2} \ZZ^n)/
  \mathcal{S}_n\big)^{J}$.
\item \textbf{Weakly regular.} We say that a $C$-algebraic $\pi$ is
  weakly regular if for each $\tau \in J$, after applying a
  permutation in $\mathcal{S}_n$ to the indices, we have
  $ \lambda_{1, \tau}> \cdots> \lambda_{[n/2], \tau}$ and
  $ \lambda_{[n/2] +1, \tau} > \cdots > \lambda_{n, \tau}$. We say
  that a $C$-algebraic $\pi$ is regular if for each $\tau \in J$,
  after applying a permutation in $\mathcal{S}_n$ we have
  $ \lambda_{1, \tau}> \cdots> \lambda_{n, \tau}$.
\item \textbf{Odd.} We consider an oddness condition on a $C$-algebraic, essentially (conjugate) self dual $(\pi, \chi)$
  which is given by the existence of a pole at $s=1$ for a certain
  $L$-function.

If $L$ is $CM$, the oddness condition is   that  $L(s,  \mathrm{Asai}^{(-1)^{n-1}\epsilon(\chi_0)}(\pi)\otimes \chi_0^{-1})$ has a pole at $s=1$. 
 
 If $L=F$ is totally real, the oddness condition is   that $L(s, \Lambda^2 \pi \otimes \chi^{-1})$ has a pole at $s=1$ if  $\epsilon(\chi_0) = 1$ and $n$ is even, and that $L(s, \mathrm{Sym}^2 \pi \otimes \chi^{-1})$ has a pole at $s=1$ is $\epsilon(\chi_0) = - 1$ and $n$ is even or $\epsilon(\chi_0)=1$ and $n$ is odd. We will see below that $\epsilon(\chi_0) = 1$ when $n$ is odd and $L$ is totally real. 
\end{enumerate}

Let us explain the definition of the above $L$-functions and of the sign $\epsilon(\chi_0)$. We  first recall the definition of the Asai representation. Assume that $L$ is $CM$. Consider the group $(\mathrm{GL}_n(\C) \times \mathrm{GL}_n(\C)) \rtimes \mathrm{Gal}(L/F)$ with $\mathrm{Gal}(L/F)$ acting by permutation of the factors (in other words, this is  (a finite form of) the $L$-group over $F$  of $ \mathrm{Res}_{L/F} \mathrm{GL}_n$ ). 

The tensor product of the standard representations give a
representation $\C^n \otimes \C^n$ of
$\mathrm{GL}_n(\C) \times \mathrm{GL}_n(\C)$. For
$\epsilon \in \{-1,1\}$, it extends to representations
$\mathrm{Asai}^\epsilon : (\mathrm{GL}_n(\C) \times \mathrm{GL}_n(\C))
\rtimes \mathrm{Gal}(L/F) \rightarrow \mathrm{GL}(\C^n \otimes \C^n) $
by putting
$\mathrm{Asai}^\epsilon(c) (v\otimes w) = \epsilon w \otimes v$.

We now define the sign of $\chi_0$: It follows from the
$C$-algebraicity of $\pi$ that $\chi_0$ is algebraic. Therefore, there
is an integer $q$ such that $\chi_0 = \chi_0^f \vert\,  . \, \vert^q$ where $\chi_0^f$ is a finite order character. We let $\epsilon(\chi_0) = \chi_0 (-1_v) (-1)^q$ for any place $v \mid \infty$.  

We observe that when $n$ is odd and $L$ is totally real, $\epsilon(\chi_0)=1$. Indeed, if $c$ is the central character of $\pi$, we find that $c^2 = \chi_0^n$, and since $c$ is an algebraic Hecke character,   we deduce that $q$ is even and $\chi_0(-1_v)=1$. 

One may also verify that in the $CM$ case, the oddness condition only depends on $\chi$ and not on $\chi_0$. 

The following  shows that the oddness condition is often implied by the other assumptions: 
\begin{thm} Let $\pi$ be a weakly regular, algebraic, essentially (conjugate)
  self dual, cuspidal automorphic representation of
  $\mathrm{GL}_n/L$. Then $\pi$ is odd unless possibly when $n$
  is even and for all $\tau \in J$, there is an  ordering of the infinitesimal character $\lambda = ( ( \lambda_{1, \tau},
  \cdots, \lambda_{n, \tau})_{\tau \in J})$ of $\pi_\infty$ such that
  $\lambda_{i, \tau} = \lambda_{i +n/2, \tau}$ for all $1 \leq i \leq n/2$ and $\tau \in J$.
\end{thm}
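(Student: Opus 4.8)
The plan is to reduce the oddness condition, which is the non-vanishing assertion that a certain $L$-function has a pole at $s=1$, to a statement about the functorial lift of $\pi$ to a classical group and then to invoke known results on poles of Rankin--Selberg-type $L$-functions. First I would recall the dichotomy: for $\pi$ essentially (conjugate) self dual cuspidal on $\mathrm{GL}_n/L$, exactly one of the two $L$-functions $L(s,\mathrm{Asai}^{+}(\pi)\otimes\chi_0^{-1})$ or $L(s,\mathrm{Asai}^{-}(\pi)\otimes\chi_0^{-1})$ (resp.\ $L(s,\mathrm{Sym}^2\pi\otimes\chi^{-1})$ or $L(s,\Lambda^2\pi\otimes\chi^{-1})$ in the totally real case) has a pole at $s=1$; this is the statement that $\pi$ descends to either a unitary (resp.\ orthogonal or symplectic) group, and it follows from the work of Arthur and Mok together with the Jacquet--Shalika theory of exterior/symmetric square $L$-functions and the Asai $L$-function theory (Flicker, Kewat--Raghunathan). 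The content of the theorem is to pin down \emph{which} sign occurs in terms of the archimedean data, and to show it is the ``odd'' sign unless the infinitesimal character is ``balanced'' in the precise sense stated, i.e.\ the two halves $(\lambda_{1,\tau},\dots,\lambda_{[n/2],\tau})$ and $(\lambda_{[n/2]+1,\tau},\dots,\lambda_{n,\tau})$ coincide.

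The key step is a local archimedean computation. At each infinite place, $\pi_\infty$ is an essentially (conjugate) self dual representation of $\mathrm{GL}_n(\mathbb{C})$ or $\mathrm{GL}_n(\mathbb{R})$ with the given infinitesimal character, and weak regularity forces its Langlands parameter to be (up to a twist) a direct sum of characters and two-dimensional induced (discrete series) parameters, determined by the pairing of the indices $i$ with $i+n/2$. I would compute the sign of the pole of the local Asai (resp.\ symmetric/exterior square) $L$-factor twisted by $\chi_0^{-1}$ directly from this parameter: a pole forces the parameter to factor through the relevant classical dual group, and the sign $\epsilon$ is read off from whether the self-duality of each constituent is of orthogonal or symplectic type, which in turn is governed by the parity of $\lambda_{i,\tau}-\lambda_{i+n/2,\tau}$ (or by whether that difference is zero). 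The ``exceptional'' balanced case is exactly where the two-dimensional blocks degenerate or the parities conspire so that the \emph{wrong} sign is forced, i.e.\ both possibilities remain open. Then one globalizes: the global $L$-function has a pole iff the completed $L$-function does, and its sign is the product of local signs; since $\chi_0(-1_v)(-1)^q$ is independent of $v$ by hypothesis on $\chi_0$ and the computation above, the global oddness sign is determined.

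I would then treat the totally real case and the $CM$ case in parallel, noting that the paper has already disposed of the subcase ``$n$ odd, $L$ totally real'' by the central character argument showing $\epsilon(\chi_0)=1$, so that only the even-$n$ totally real case and the $CM$ case remain to be analyzed; in both, the same local parameter bookkeeping applies, with the Asai representation replaced by $\mathrm{Sym}^2$ or $\Lambda^2$ as appropriate. Finally I would check that the exceptional hypothesis in the theorem is stated in a twist-independent way: since multiplying $\chi_0$ by the quadratic character of $L/F$ flips $\epsilon(\chi_0)$ but also swaps which of $\mathrm{Asai}^{\pm}$ is relevant, the oddness condition depends only on $\chi$, as already remarked in the excerpt, and the exceptional locus ``$\lambda_{i,\tau}=\lambda_{i+n/2,\tau}$ for all $i,\tau$'' is manifestly independent of all such choices.

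The main obstacle I expect is the precise archimedean sign computation: matching the Langlands parameter of a weakly regular essentially self dual $\pi_\infty$ against the orthogonal/symplectic alternative and extracting the correct sign of the pole of the twisted $\mathrm{Asai}$/$\mathrm{Sym}^2$/$\Lambda^2$ $L$-factor is a delicate bookkeeping of discrete series parameters, their self-duality type, and the effect of the twist by $\chi_0^{-1}$; getting the exceptional ``balanced'' case to emerge with exactly the stated shape $\lambda_{i,\tau}=\lambda_{i+n/2,\tau}$ (rather than some coarser or finer condition) is where the argument has to be run carefully. Everything else — the global-to-local reduction, the invocation of Arthur--Mok and Jacquet--Shalika/Flicker for the existence and exclusivity of poles, and the twist-independence — is relatively formal given the results available.
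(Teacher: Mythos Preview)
Your plan has the right ingredients (Mok/Arthur descent, archimedean constraints) but both the mechanism and the key local claim differ from the paper and, as stated, would not go through.

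The paper does \emph{not} treat the totally real and CM cases in parallel. It first reduces the totally real case to the CM case by quadratic base change: choose a CM extension $L'/L$ with $\pi\neq\pi\otimes\chi_{L'/L}$, and use the factorization of the Asai $L$-function under base change together with Shahidi's nonvanishing to show that $\pi$ is odd iff $\mathrm{Res}_{L'}(\pi)$ is odd. This avoids Arthur entirely. In the CM case the paper argues by contradiction: if $\pi$ is \emph{not} odd then $L(s,\mathrm{Asai}^{(-1)^n}(\pi))$ has a pole at $s=1$, so by Mok $\pi$ descends to $\mathrm{U}(n)$ via the \emph{twisted} base change embedding $\xi_-$. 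The archimedean parameter $\rho_v(z)=\mathrm{diag}((z/\bar z)^{\lambda_{i,v}})$ must then admit a nondegenerate \emph{antisymmetric} form $A$ with ${}^t\rho_v(\bar z)A\rho_v(z)=A$. Since $\rho_v$ is diagonal this forces $A$ to commute with $\rho_v$; an antisymmetric invertible $A$ then exists only if the eigencharacters pair off, i.e.\ $n$ is even and (using weak regularity to separate the two halves) $\lambda_{i,\tau}=\lambda_{i+n/2,\tau}$ after reordering. No parity of $\lambda_{i,\tau}-\lambda_{i+n/2,\tau}$ enters: the condition is exact equality, coming from the commutant of a diagonal torus, not from discrete-series sign computations.

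Two specific gaps in your outline: first, the assertion that the global sign is ``the product of local signs'' is not a valid step. Which of $\mathrm{Asai}^{\pm}$ (or $\mathrm{Sym}^2$/$\Lambda^2$) has the pole is a genuinely global statement; what one can say is that the global pole forces, via Mok, the \emph{local} parameter at every place to factor through the corresponding $L$-embedding, and it is this local factorization at $\infty$ that one then obstructs. That is the logical skeleton the paper uses, and your ``local $L$-factor pole'' formulation obscures it. Second, your claim that the sign is ``governed by the parity of $\lambda_{i,\tau}-\lambda_{i+n/2,\tau}$'' is not correct in the CM setting you need: at complex places of $L$ the parameter is a sum of characters of $\mathbb{C}^\times$, there are no two-dimensional induced constituents of $W_{\mathbb{R}}$ in sight, and the obstruction is the nonexistence of an invariant symplectic form, which is a matter of coincidence of weights, not parity. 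If you rewrite your archimedean step as ``wrong-sign descent $\Rightarrow$ invariant antisymmetric form $\Rightarrow$ paired eigenvalues'' you recover exactly the paper's argument.
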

\begin{demo} Lemma \ref{lem-odd-bc2}, reduces the proof of the theorem to the $CM$ case. The $CM$ case follows from 
Theorem \ref{theomok2}. 
\end{demo}

\begin{rem} Let $\pi_1$ and $\pi_2$ be two cuspidal automorphic
  representations of $\mathrm{GL}_2/\qq$, both $L$-algebraic and with trivial
   infinitesimal character. Assume that $(\pi_1)_\infty$ is
  a limit of discrete series (and therefore corresponds to a non-trivial parameter $W_{\mathbb{R}} \rightarrow \mathrm{GL}_2(\C)$ via the local Langlands correspondence), while $(\pi_2)_\infty$ corresponds to
  the trivial parameter
  $W_{\mathbb{R}} \rightarrow \mathrm{GL}_2(\C)$.  Then $\pi_1 \otimes \vert \det \vert^{\frac{1}{2}} $ is $C$-algebraic,  arises
  from a weight one modular form and is odd, while $\pi_2  \otimes \vert \det \vert^{\frac{1}{2}}$ arises from
  a Maass form of Galois type, and is not odd.
  Thus, $\pi_1$ is an  automorphic form which has a realization in the coherent cohomology of modular curves, while $\pi_2$ has no such realization. We observe that we cannot distinguish $(\pi_1)_\infty$ from $(\pi_2)_\infty$ by looking at the infinitesimal character, it is  the oddness property of $\pi_1$ or $\pi_2$ that distinguishes them. 
\end{rem}

\subsubsection{Oddness and base change} In this section we let $L$
be a $CM$ field and $F$ its maximal totally real subfield. If $\pi$ is
a cuspidal automorphic representation of $\mathrm{GL}_n/F$, we let
$Res_L(\pi)$ be the base change lift of $\pi$ to an automorphic
representation of $\mathrm{GL}_n/L$ (see \cite{MR1007299}).

\begin{proposition}\label{prop-odd-bc} Let $L$ be a $CM$ field and $F$  its maximal totally real subfield. Let $\chi_{L/F}$ be the associated quadratic
  character. Let $\pi$ be a weakly regular, algebraic, odd, essentially
  self dual cuspidal automorphic representation of
  $\mathrm{GL}_n/F$. Assume that $\pi \neq \pi \otimes
  \chi_{L/F}$. Then $Res_L(\pi)$ is a weakly regular, algebraic, odd,
  essentially conjugate self dual, cuspidal automorphic representation
  of $\mathrm{GL}_n/L$.
  \end{proposition}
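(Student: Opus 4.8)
The plan is to verify that base change preserves each of the four defining properties in turn, using known results on cyclic base change for $\mathrm{GL}_n$. First I would recall from \cite{MR1007299} (Arthur--Clozel) that since $L/F$ is a cyclic (quadratic) extension and $\pi$ is cuspidal with $\pi \neq \pi \otimes \chi_{L/F}$, the base change lift $Res_L(\pi)$ exists and is again cuspidal; this is exactly the criterion for the lift to remain cuspidal rather than become an isobaric sum. This takes care of cuspidality.

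\textbf{Algebraicity and weak regularity.} Next I would trace through the effect of base change on the archimedean components. For each place $w \mid v$ of $L$ over a real place $v$ of $F$, the local component $Res_L(\pi)_w$ is the local base change of $\pi_v$ from $\mathrm{GL}_n(\R)$ (or $\mathrm{GL}_n(\C)$, but here $v$ is real and $w$ is complex since $L$ is $CM$) to $\mathrm{GL}_n(\C)$, which on the level of $L$-parameters is simply restriction of the Weil group parameter $W_\R \to \mathrm{GL}_n(\C)$ to $W_\C$. Concretely, the infinitesimal character of $Res_L(\pi)$ at the two embeddings $\tau, \tau c \in J$ extending a given $\sigma \in I$ is, up to ordering, the same tuple $(\lambda_{1,\sigma}, \dots, \lambda_{n,\sigma})$ that $\pi$ has at $\sigma$. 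Hence $C$-algebraicity is inherited (the tuple still lies in $(\ZZ^n + \tfrac{n-1}{2}\ZZ^n)/\mathcal{S}_n$), and weak regularity is inherited since the gap conditions $\lambda_{1,\sigma} > \cdots > \lambda_{[n/2],\sigma}$ and $\lambda_{[n/2]+1,\sigma} > \cdots > \lambda_{n,\sigma}$ at $\sigma$ transfer verbatim to both $\tau$ and $\tau c$.

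\textbf{Essential conjugate self-duality.} Here I would use the compatibility of base change with duals and with Galois twists. Since $\pi^\vee \otimes \chi \cong \pi$ over $F$ with $\chi = \chi_0 \circ \det$, applying $Res_L$ and using $Res_L(\pi^\vee) = Res_L(\pi)^\vee$, $Res_L(\pi \otimes \eta) = Res_L(\pi) \otimes (\eta \circ \mathrm{N}_{L/F})$, and the fact that $Res_L(\pi)^c \cong Res_L(\pi)$ (base change from $F$ is automatically $\mathrm{Gal}(L/F)$-invariant), one obtains $Res_L(\pi)^c = Res_L(\pi) = Res_L(\pi^\vee \otimes \chi) = Res_L(\pi)^\vee \otimes (\chi_0 \circ \mathrm{N}_{L/F} \circ \det)$, which is exactly the essentially conjugate self-dual condition with the same $\chi_0$. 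The condition that $\chi_0(-1_v)$ is independent of $v$ is unchanged since we use the same $\chi_0$.

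\textbf{Oddness --- the main obstacle.} The delicate point is the oddness condition, since it is phrased in terms of poles of $L$-functions and the relevant $L$-function changes shape under base change: over $F$ oddness involves a pole of $L(s, \Lambda^2\pi \otimes \chi^{-1})$ or $L(s, \mathrm{Sym}^2\pi \otimes \chi^{-1})$, whereas over $L$ it involves a pole of $L(s, \mathrm{Asai}^{(-1)^{n-1}\epsilon(\chi_0)}(Res_L(\pi)) \otimes \chi_0^{-1})$. I expect the key identity to be the factorization of the Asai $L$-function of a base-changed representation: $L(s, \mathrm{Asai}^+(Res_L(\pi))) = L(s, \mathrm{Sym}^2\pi) \cdot L(s, \Lambda^2 \pi \otimes \chi_{L/F})$ and $L(s, \mathrm{Asai}^-(Res_L(\pi))) = L(s, \Lambda^2\pi) \cdot L(s, \mathrm{Sym}^2\pi \otimes \chi_{L/F})$ (twisting throughout by $\chi_0^{-1}$). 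Combined with the hypothesis $\pi \neq \pi \otimes \chi_{L/F}$, which forces $L(s, \pi \times (\pi\otimes\chi_{L/F})) = L(s, \mathrm{Sym}^2\pi \otimes \chi_{L/F}) L(s, \Lambda^2 \pi \otimes \chi_{L/F})$ to be holomorphic at $s=1$ (Rankin--Selberg), only the "automorphic half" matching the oddness of $\pi$ over $F$ contributes a pole at $s=1$ on the $L$-side, and one checks the sign $(-1)^{n-1}\epsilon(\chi_0)$ selects precisely the right Asai variant. Carrying out this sign bookkeeping carefully --- matching $\epsilon(\chi_0) = \pm 1$ and the parity of $n$ across the two cases in the definition of oddness over $F$ with the single Asai condition over $L$ --- is where the real work lies; I would organize it as a short case analysis on $(\epsilon(\chi_0), n \bmod 2)$.
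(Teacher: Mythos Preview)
Your approach is essentially the same as the paper's: it too splits the proof into the two Lemmas \ref{lem-odd-bc} (cuspidality via Arthur--Clozel, conjugate self-duality from $\mathrm{Gal}(L/F)$-invariance of the lift, and transfer of the infinitesimal character at archimedean places) and \ref{lem-odd-bc2} (oddness via the factorization of the Asai $L$-function of a base-changed representation into symmetric- and exterior-square pieces, with the $\chi_{L/F}$-twist landing on exactly one of them). Your Asai factorization and the sign bookkeeping you plan are precisely what the paper does.

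There is one small but genuine gap in your oddness argument. You argue that the $\chi_{L/F}$-twisted factor is \emph{holomorphic} at $s=1$ via Rankin--Selberg (since $\pi \not\cong \pi \otimes \chi_{L/F}$), and then conclude that the pole of the un-twisted factor survives in the product. But holomorphy of the second factor is not enough: if it happened to \emph{vanish} at $s=1$, the pole could be cancelled. What you actually need is \emph{non-vanishing} of the twisted symmetric/exterior square $L$-function at $s=1$, and this is Shahidi's theorem \cite[Theorem 4.1]{MR1610812}, which the paper invokes explicitly at this step. Your Rankin--Selberg observation is correct and is in fact what the paper uses for the \emph{converse} implication in Lemma \ref{lem-odd-bc2} (to rule out the pole coming from the twisted factor), but for the direction you need here---transporting the pole from $\pi$ to $Res_L(\pi)$---replace the holomorphy input by Shahidi's non-vanishing.
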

\begin{demo} This follows from Lemma
  \ref{lem-odd-bc} and Lemma \ref{lem-odd-bc2}.
\end{demo}

\begin{lem}\label{lem-odd-bc}
  Let $L$ be a $CM$ field and $F$ be its maximal totally real
  subfield. Let $\chi_{L/F}$ be the associated quadratic
  character. Let $\pi$ be a weakly regular, algebraic, essentially self
  dual, cuspidal automorphic representation of
  $\mathrm{GL}_n/F$. Assume that $\pi \neq \pi \otimes
  \chi_{L/F}$. Then $Res_L(\pi)$ is a weakly regular, algebraic,
  essentially conjugate self dual, cuspidal automorphic representation
  of $\mathrm{GL}_n/L$.

\end{lem}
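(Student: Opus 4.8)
The plan is to deduce the lemma entirely from the theory of cyclic base change for $\mathrm{GL}_n$ \cite{MR1007299}. Since $L/F$ is a cyclic extension of degree two, the base change lift $Res_L(\pi)$ is a well-defined isobaric automorphic representation of $\mathrm{GL}_n/L$, and by construction it is invariant under $\mathrm{Gal}(L/F)$, so in particular $Res_L(\pi)^c \simeq Res_L(\pi)$. The first and only substantial point is cuspidality: by the Arthur--Clozel criterion, the base change of a cuspidal $\pi$ fails to be cuspidal exactly when $\pi \simeq \pi \otimes \eta$ for some nontrivial character $\eta$ of $\mathbb{A}_F^\times/F^\times \mathrm{N}_{L/F}(\mathbb{A}_L^\times)$; since $L/F$ is quadratic this group is cyclic of order two, generated by $\chi_{L/F}$, so the hypothesis $\pi \not\simeq \pi \otimes \chi_{L/F}$ forces $Res_L(\pi)$ to be cuspidal. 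So I would begin by recording this.

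Next I would check $C$-algebraicity and weak regularity, which I expect to be immediate. Base change at an archimedean place is given by the local base change map on Langlands parameters, which for the unique complex place $w$ of $L$ above a real place $v$ of $F$ (unique because $L$ is $CM$) is the restriction of parameters along $W_{\mathbb{C}} \hookrightarrow W_{\mathbb{R}}$. Hence the infinitesimal character of $Res_L(\pi)_\infty$ is $(\lambda_{\tau\vert_F})_{\tau \in J}$, where $(\lambda_\sigma)_{\sigma \in I}$ is that of $\pi_\infty$; every entry of the former is an entry of the latter, so conditions (2) and (3) for $\pi$ yield the same conditions for $Res_L(\pi)$.

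Finally I would verify essential conjugate self-duality. Base change commutes with the contragredient and with twisting by Hecke characters, and the base change of a character $\chi_0$ of $\mathbb{A}_F^\times/F^\times$ is $\chi_0 \circ \mathrm{N}_{L/F}$; writing the essential self-duality of $\pi$ as $\pi^\vee \simeq \pi \otimes (\chi_0^{-1} \circ \det)$ and combining with $Res_L(\pi)^c \simeq Res_L(\pi)$ gives
\[
Res_L(\pi)^c \;\simeq\; Res_L(\pi)^\vee \otimes \big((\chi_0 \circ \mathrm{N}_{L/F}) \circ \det\big) \;=\; Res_L(\pi)^\vee \otimes \chi,
\]
which is exactly the essentially conjugate self-dual relation, with the \emph{same} auxiliary character $\chi_0$ of $\mathbb{A}_F^\times/F^\times$. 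In particular the archimedean sign condition on $\chi_0$ (independence of $\chi_0(-1_v)$ over $v\mid\infty$) is unchanged, since it is part of the hypothesis on $\pi$. The hard part, such as it is, is isolated in the cuspidality step --- invoking the Arthur--Clozel characterization of non-cuspidal base change in terms of self-twists; everything else is the formal compatibility of base change with $\vee$, twists, and archimedean $L$-parameters.
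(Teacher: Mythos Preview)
Your proof is correct and follows essentially the same approach as the paper: cuspidality via the Arthur--Clozel self-twist criterion, essential conjugate self-duality from $Res_L(\pi)^c \simeq Res_L(\pi)$ combined with the essential self-duality of $\pi$, and $C$-algebraicity plus weak regularity from the equality of infinitesimal characters under archimedean base change. Your write-up is simply more explicit than the paper's terse version, in particular in tracking that the same auxiliary character $\chi_0$ witnesses the self-duality before and after base change.
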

\begin{proof} Since $\pi \neq \pi \otimes \chi_{L/F}$, we deduce from \cite[Theorem 4.2]{MR1007299} that
  $Res_L(\pi)$ is cuspidal. Since $Res_L(\pi) = Res_L(\pi)^c$, we deduce that
  $Res_L(\pi)$ is essentially conjugate self dual. Let $v\mid \infty$
  be a place of $F$. Let $w$ be a place of $L$ above $v$. Then
  $Res_L(\pi)_w$ and $\pi_v$ have the same infinitesimal character which
  is $C$-algebraic and weakly regular. 
\end{proof}

\begin{lem}\label{lem-odd-bc2}     Let $\pi$ be a cuspidal automorphic 
  representation of $\mathrm{GL}_n/F$ satisfying the assumptions of Lemma \ref{lem-odd-bc}, with  base change $Res_L(\pi)$. Then $\pi$ is odd if and only if $Res_{L}(\pi)$ is
  odd.
\end{lem}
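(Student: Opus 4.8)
The plan is to compare the two $L$-function conditions through the factorization of Asai $L$-functions under base change. Set $\Pi=Res_L(\pi)$; by Lemma \ref{lem-odd-bc} it is a cuspidal, essentially conjugate self dual automorphic representation of $\mathrm{GL}_n/L$, and since $\pi^\vee\cong\pi\otimes\chi^{-1}$ with $\chi=\chi_0\circ\det$ we have $\Pi^c=\Pi$ and $\Pi^\vee\cong\Pi\otimes(\chi_0\circ\mathrm{N}_{L/F}\circ\det)^{-1}$. Hence the \emph{same} character $\chi_0$ of $\mathbb{A}_F^\times/F^\times$ may be used in the definition of oddness of $\Pi$, and $\epsilon(\chi_0)$ is unchanged (and, as already remarked in the text, the oddness of $\Pi$ is independent of this choice anyway). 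Throughout, the twists $\mathrm{Sym}^2\pi\otimes\chi^{-1}$, $\Lambda^2\pi\otimes\chi^{-1}$ occurring in the oddness condition for $\pi$ are understood as twists by the Hecke character $\chi_0^{-1}$, so that $L(s,\mathrm{Sym}^2\pi\otimes\chi^{-1})=L(s,\mathrm{Sym}^2\pi\otimes\chi_0^{-1})$, and similarly for $\Lambda^2$.

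The essential input is the standard factorization, valid for any cuspidal $\pi$ on $\mathrm{GL}_n/F$ with base change $\Pi$ and with the normalization $\mathrm{Asai}^\epsilon(c)(v\otimes w)=\epsilon\, w\otimes v$ of the paper:
\[
L(s,\mathrm{Asai}^{+}(\Pi))=L(s,\mathrm{Sym}^2\pi)\,L(s,\Lambda^2\pi\otimes\chi_{L/F}),\qquad
L(s,\mathrm{Asai}^{-}(\Pi))=L(s,\Lambda^2\pi)\,L(s,\mathrm{Sym}^2\pi\otimes\chi_{L/F}).
\]
This is consistent with $L(s,\Pi\times\Pi)=L(s,\pi\times\pi)\,L(s,\pi\times\pi\otimes\chi_{L/F})$, which follows from $L(s,\Pi\times\Pi)=L(s,\pi\times\mathrm{AI}_{L/F}(\Pi))$ together with $\mathrm{AI}_{L/F}(\Pi)\cong\pi\boxplus(\pi\otimes\chi_{L/F})$ (see \cite{MR1007299}); the assignment of $+$ to $\mathrm{Sym}^2$ rather than to $\Lambda^2$ is checked by examining how $c$ acts on $\mathrm{Sym}^2\oplus\Lambda^2$ along the base-change locus (or by the case $n=1$). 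Twisting by $\chi_0^{-1}$ and writing $\eta:=\chi_{L/F}\chi_0^{-1}$ gives
\[
L(s,\mathrm{Asai}^{+}(\Pi)\otimes\chi_0^{-1})=L(s,\mathrm{Sym}^2\pi\otimes\chi_0^{-1})\,L(s,\Lambda^2\pi\otimes\eta),
\]
and the analogous identity with $\mathrm{Sym}^2$ and $\Lambda^2$ interchanged for $\mathrm{Asai}^{-}$.

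The next step is to kill the ``cross'' factors. Each of $L(s,\Lambda^2\pi\otimes\eta)$ and $L(s,\mathrm{Sym}^2\pi\otimes\eta)$ has at most a simple pole at $s=1$, and a pole there would force $\pi^\vee\cong\pi\otimes\eta$; since $\pi^\vee\cong\pi\otimes\chi_0^{-1}$, this is equivalent to $\pi\cong\pi\otimes\chi_{L/F}$, excluded by hypothesis --- this is the only place the assumption $\pi\neq\pi\otimes\chi_{L/F}$ is used. Moreover these cross factors do not vanish at $s=1$, because $L(1,\pi\times(\pi\otimes\eta))=L(1,\mathrm{Sym}^2\pi\otimes\eta)L(1,\Lambda^2\pi\otimes\eta)\neq 0$ by the standard non-vanishing of Rankin--Selberg $L$-functions on $\mathrm{Re}(s)=1$. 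Therefore $L(s,\mathrm{Asai}^{\epsilon}(\Pi)\otimes\chi_0^{-1})$ has a pole at $s=1$ if and only if its non-cross factor does, i.e. $L(s,\mathrm{Sym}^2\pi\otimes\chi_0^{-1})$ has a pole at $s=1$ when $\epsilon=+1$, and $L(s,\Lambda^2\pi\otimes\chi_0^{-1})$ has a pole at $s=1$ when $\epsilon=-1$.

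Finally one matches parities with $\epsilon=(-1)^{n-1}\epsilon(\chi_0)$. If $n$ is odd, then $\epsilon(\chi_0)=1$ (as recorded in the text), so $\epsilon=+1$ and $\Pi$ is odd iff $L(s,\mathrm{Sym}^2\pi\otimes\chi_0^{-1})$ has a pole at $s=1$, which is precisely the oddness of $\pi$. If $n$ is even and $\epsilon(\chi_0)=1$, then $\epsilon=-1$ and $\Pi$ is odd iff $L(s,\Lambda^2\pi\otimes\chi_0^{-1})$ has a pole at $s=1$, again the oddness of $\pi$. If $n$ is even and $\epsilon(\chi_0)=-1$, then $\epsilon=+1$ and $\Pi$ is odd iff $L(s,\mathrm{Sym}^2\pi\otimes\chi_0^{-1})$ has a pole at $s=1$, once more the oddness of $\pi$. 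This proves the equivalence. The main obstacle is the bookkeeping of the second step: fixing the sign conventions in the Asai factorization so that, case by case, the summand selected by $(-1)^{n-1}\epsilon(\chi_0)$ is exactly the symmetric- or exterior-square $L$-function appearing in the definition of oddness of $\pi$; once this is pinned down, the analytic inputs (poles of Rankin--Selberg at $s=1$ detecting self-duality, holomorphy and non-vanishing of the $\mathrm{Sym}^2$ and $\Lambda^2$ $L$-functions on the line $\mathrm{Re}(s)=1$) are entirely standard.
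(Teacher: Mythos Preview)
Your proof is correct and follows essentially the same approach as the paper: factor the Asai $L$-function of $Res_L(\pi)$ via the restriction of $\mathrm{Asai}^{\pm}$ along the diagonal $L$-group embedding into $\mathrm{Sym}^2$ and $\Lambda^2$ pieces (one twisted by $\chi_{L/F}$), then use holomorphy and non-vanishing at $s=1$ of the cross factor (from $\pi\not\cong\pi\otimes\chi_{L/F}$ and Shahidi/Rankin--Selberg) to match poles. The only differences are organizational --- you do an explicit case split on the parity of $n$ and the sign $\epsilon(\chi_0)$ at the end, whereas the paper packages this into a single formula with exponents $\chi_{L/F}^{\epsilon(\chi_0)(-1)^n}$ and $\chi_{L/F}^{\epsilon(\chi_0)(-1)^{n+1}}$.
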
  
\begin{demo} We have an $L$-group ``diagonal'' embedding
  $\mathrm{GL}_n(\C) \times \mathrm{Gal}(L/F) \hookrightarrow
  (\mathrm{GL}_n(\C) \times \mathrm{GL}_n(\C)) \rtimes
  \mathrm{Gal}(L/F)$, where  $\mathrm{GL_n}(\C) \times \mathrm{Gal}(L/F)$ is  (a finite form of) the $L$-group of $\mathrm{GL}_n/F$. We have the decomposition  $$\mathrm{Asai}^+\vert_{\mathrm{GL}_n(\C) \times \mathrm{Gal}(L/F)} = \Lambda^2\C^n \otimes \chi_{L/F} \bigoplus \mathrm{Sym}^2 \C^n$$ and
$$\mathrm{Asai}^-\vert_{\mathrm{GL}_n(\C) \times \mathrm{Gal}(L/F)} =
\Lambda^2\C^n  \bigoplus \mathrm{Sym}^2 \C^n \otimes \chi_{L/F} .$$
It follows that
$L(s, \mathrm{Asai}^{(-1)^{n-1} \epsilon(\chi_0)}(Res_{L}(\pi)) \otimes
\chi^{-1}_0) = $
$$L(s, \Lambda^2 \pi \otimes \chi_{L/F}^{\epsilon(\chi_0)(-1)^n}
\otimes \chi_0^{-1}) L(s, \mathrm{Sym}^2 \pi \otimes
\chi_{L/F}^{\epsilon(\chi_0)(-1)^{n+1}} \otimes \chi_0^{-1}).$$ By
\cite[Theorem 4.1]{MR1610812}, neither of the $L$-functions
$L(s, \Lambda^2 \pi \otimes \chi_{L/K}^{\epsilon(\chi_0)(-1)^n}
\otimes \chi_0^{-1})$ and
$L(s, \mathrm{Sym}^2 \pi \otimes
\chi_{L/F}^{\epsilon(\chi_0)(-1)^{n+1}} \otimes \chi^{-1}_0)$ vanishes
at $s=1$. We therefore deduce that $Res_L(\pi)$ is odd if 
$\pi$ is odd. Conversely, if we assume that $Res_L(\pi)$ is odd, then at least one of $L(s, \Lambda^2 \pi \otimes \chi_{L/K}^{\epsilon(\chi_0)(-1)^n}
\otimes \chi_0^{-1})$ or $L(s, \mathrm{Sym}^2 \pi \otimes
\chi_{L/F}^{\epsilon(\chi_0)(-1)^{n+1}} \otimes \chi^{-1}_0)$ has a pole at $s=1$. But notice that $L(s, \pi \otimes \pi \otimes \chi_{L/K} \otimes \chi^{-1}_0)$ is holomorphic at $s=1$ since $\pi \neq \pi \otimes \chi_{L/K}$. It is now easy to deduce that $\pi$ is odd (by examining the parity of $n$ and the sign $\epsilon(\chi_0)$). 
\end{demo} 

\subsubsection{Descent to a unitary group} In this section we consider
a $CM$ field $L$. We say that $\pi$ on
$\mathrm{GL}_n/L$ is conjugate self dual (as opposed to essentially
conjugate self dual) if $\pi^c = \pi^\vee$.  It is not hard to prove
that if $\pi$ is essentially conjugate self dual, there exists an
algebraic Hecke character
$\psi : \mathbf{A}_L^\times/L^\times \rightarrow \C^\times$ such that
$\pi \otimes \psi$ is conjugate self dual.

\begin{thm}\label{theomok} Let $L$ be a $CM$-field and $\pi$  a weakly
  regular, algebraic,  odd, conjugate self dual, cuspidal automorphic representation of $\mathrm{GL}_n/L$. Then there exists a cuspidal
  automorphic representation $\tilde{\pi}$ of the quasi-split unitary
  group $\mathrm{U}(n)/F$ such that $\pi$ is the transfer of $\tilde{\pi}$ for
  the standard base change embedding and $\tilde{\pi}_\infty$ is a non
  degenerate limit of discrete series. 
\end{thm}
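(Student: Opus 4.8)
The plan is to obtain $\tilde{\pi}$ from Mok's endoscopic classification of the discrete automorphic spectrum of quasi-split unitary groups \cite{Mok}. Write $G^{*} = \mathrm{U}(n)/F$ for the quasi-split unitary group attached to $L/F$, so that $\widehat{G^{*}} = \mathrm{GL}_{n}(\C)$ and ${}^{L}G^{*} = \mathrm{GL}_{n}(\C) \rtimes \Gal(L/F)$, and let $\xi\colon {}^{L}G^{*}\hookrightarrow {}^{L}(\mathrm{Res}_{L/F}\mathrm{GL}_{n})$ be the standard base change $L$-embedding. First I would check that $\pi$, transported through $\xi$, is a \emph{simple generic} global Arthur parameter $\psi$ for $G^{*}$: since $\pi$ is cuspidal, the parameter $\psi = \pi \boxtimes 1$ is simple and generic, and the only thing to verify is the sign condition, namely that $\pi$ is conjugate self dual of the type compatible with $\xi$, equivalently that $L(s,\mathrm{Asai}^{(-1)^{n-1}}(\pi))$ has a pole at $s=1$. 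But this is precisely the oddness hypothesis: unwinding the definition of oddness above in the conjugate self dual case (so $\chi$ is trivial; if one takes $\chi_{0} = 1$ then $\epsilon(\chi_{0}) = 1$, while if one takes $\chi_{0} = \eta_{L/F}$ then $\epsilon(\chi_{0}) = -1$ and one uses the identity $L(s,\mathrm{Asai}^{(-1)^{n}}(\pi)\otimes \eta_{L/F}) = L(s,\mathrm{Asai}^{(-1)^{n-1}}(\pi))$), oddness of $\pi$ is equivalent to $\mathrm{Asai}^{(-1)^{n-1}}(\pi)$ having a pole at $s=1$, which is exactly the required sign.

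Next I would compute the global component group $\mathcal{S}_{\psi}$. For the simple parameter $\psi = \pi\boxtimes 1$ the centraliser of the image in $\widehat{G^{*}} = \mathrm{GL}_{n}(\C)$ is $\{\pm 1\}$ by Schur's lemma together with the conjugate self duality, and $Z(\widehat{G^{*}})^{\Gal(L/F)} = \{\pm 1\}$ as well because complex conjugation acts on $Z(\mathrm{GL}_{n}) = \C^{\times}$ by inversion; hence $\mathcal{S}_{\psi}$ is trivial. By Mok's multiplicity formula it then follows that for \emph{every} choice of a representation $\pi_{v}$ in the local packet $\Pi_{\psi_{v}}$ at each place $v$ of $F$, the restricted tensor product $\bigotimes_{v}\pi_{v}$ occurs in the discrete spectrum of $G^{*}$ with multiplicity one and has standard base change to $\mathrm{GL}_{n}/L$ equal to the isobaric representation attached to $\psi$, which is $\pi$ itself. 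Moreover $\psi$ is generic (trivial $\mathrm{SL}_{2}$ factor), so the members of $\Pi_{\psi}$ are tempered at all places; a tempered representation in the discrete spectrum is cuspidal, since residual representations are non tempered. Thus any such $\bigotimes_{v}\pi_{v}$ is already a cuspidal automorphic representation $\tilde{\pi}$ of $\mathrm{U}(n)/F$ with $\mathrm{BC}(\tilde{\pi}) = \pi$.

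It remains to choose the local components so that $\tilde{\pi}_{\infty}$ is a non degenerate limit of discrete series. At a finite place $v$ I would take for $\pi_{v}$ the member of $\Pi_{\psi_{v}}$ whose base change is $\pi_{v}$ itself; this exists because $\pi_{v}$ is locally conjugate self dual of the correct sign, which is forced by the global sign condition together with the local--global compatibility of the Asai construction. At an archimedean place $v$ of $F$, $G^{*}(F_{v})$ is the quasi-split real unitary group $\mathrm{U}(\lceil n/2\rceil,\lfloor n/2\rfloor)(\R)$ and $\psi_{v}$ is tempered, so $\Pi_{\psi_{v}}$ is the Shelstad $L$-packet of the $L$-parameter $\phi_{v}$ obtained by descending the archimedean parameter of $\pi$ along $\xi$. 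Because $\pi$ is $C$-algebraic this parameter is algebraic, and because $\pi$ is weakly regular its infinitesimal character at $v$ breaks into two strictly decreasing strings of lengths $[n/2]$ and $n-[n/2]$; under the dictionary between these strings and the compact roots of $\mathrm{U}(\lceil n/2\rceil,\lfloor n/2\rfloor)$ this says exactly that the Harish-Chandra parameter is regular for all compact roots, i.e.\ that $\Pi_{\psi_{v}}$ is a packet of non degenerate limits of discrete series. I would then pick any member of this (non empty) packet for $\pi_{v}$, and this finishes the construction.

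The main obstacle will be the archimedean step: one must identify, through the archimedean local Langlands correspondence for $\mathrm{GL}_{n}(\C)$ and the compatibility of base change with that correspondence, the archimedean $L$-parameter of $\pi$ with a limit-of-discrete-series parameter of $\mathrm{U}(\lceil n/2\rceil,\lfloor n/2\rfloor)(\R)$, and then verify that weak regularity (as opposed to full regularity) corresponds exactly to the \emph{non degeneracy} of the resulting limits of discrete series --- that is, that the only permitted singularities of the infinitesimal character lie on walls of non compact roots, so that the descended parameter is still elliptic and its packet consists of cohomological representations. A secondary point requiring care is the normalisation of the base change embedding (checking that the one produced is the standard one named in the statement, and, when $n$ is even, reconciling the two a priori distinct conjugate self dual signs with the oddness normalisation), together with the local--global sign compatibility used to fix the finite place components.
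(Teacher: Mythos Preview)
Your proposal is correct and follows essentially the same route as the paper: both arguments invoke Mok's endoscopic classification for quasi-split unitary groups, identify the oddness hypothesis with the Asai sign condition needed for $\pi$ to define a (simple, generic) parameter for $\mathrm{U}(n)$ via the standard base change embedding, and then use Mok's global theorem to conclude that every choice of local members yields a cuspidal automorphic $\tilde{\pi}$ with base change $\pi$.

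The only noteworthy difference is in the archimedean step. The paper proceeds by an explicit computation: it writes down the descended parameter $\tilde{\phi}_{\pi_v}$ on $W_{\mathbb{R}}$ (namely $\phi(z)=\mathrm{diag}((z/\bar z)^{\lambda_{i,v}})\rtimes 1$, $\phi(j)=\Phi_n\rtimes c$) using the recipe of \cite[Lemma 2.2.1]{MR3338302}, and then simply observes that this is a non-degenerate limit of discrete series parameter. Your argument is more conceptual, matching weak regularity with regularity for the compact roots of the quasi-split real unitary group. Both are fine; the paper's explicit formula makes the ellipticity of the descended parameter visible (via $\phi(j)=\Phi_n\rtimes c$), which is the point you flag as the main obstacle. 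Neither the paper nor your sketch fully spells out why weak regularity is precisely the non-degeneracy condition, so your caution there is well placed, but the verification is straightforward once the parameter is written down.
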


\begin{proof} 
This follows from the main results of \cite{MR3338302}. We give some explanations. 

Let $\mathrm{U}(n)/F$ be the quasi-split unitary group. Its $L$-group
$~^L\mathrm{U}(n)/F$ (over $F$) is isomorphic to  $\mathrm{GL}_n(\C) \rtimes
\mathrm{Gal}(L/F)$ with the complex conjugation $c$ acting by $g
\mapsto \Phi_n ~^t g^{-1} \Phi_n^{-1}$ where $\Phi_n$ is the
anti-diagonal matrix with alternating $1$ and $-1$ on the anti-diagonal. 

Let $v$ be a place of $F$ and let $W_{F_v}$ be the Weil group of
$F_v$.  First assume that $v$ does not split in $L$. The local
$L$-group $~^L\mathrm{U}(n)/F_v$ is isomorphic to
$\mathrm{GL}_n(\C) \rtimes W_{F_v}$ where an element of
$W_{F_v} \setminus W_{L_v}$ acts via
$g \mapsto \Phi_n ~^t g^{-1} \Phi_n^{-1}$. If $v$ splits, the local
$L$-group $~^L\mathrm{U}(n)/F_v$ is isomorphic to
$\mathrm{GL}_n(\C) \times W_{F_v}$.

 Let us denote by $G= \mathrm{Res}_{L/F} \mathrm{GL}_n$.  The $L$-group of $G$ is isomorphic to  $(\mathrm{GL}_n(\C) \times \mathrm{GL}_n(\C)) \rtimes \mathrm{Gal}(L/F)$ with the complex conjugation acting by permuting the factors.  We similarly have local $L$-groups
$~^LG/F_v = (\mathrm{GL}_n(\C) \times \mathrm{GL}_n(\C)) \rtimes W_{F_v}$  if $v$ does not split and $~^LG/F_v  = (\mathrm{GL}_n(\C) \times \mathrm{GL}_n(\C)) \times W_{F_v}$  if $v$ splits. 

We have the standard base change embedding of $L$-groups $\xi: ~^L\mathrm{U}(n) \rightarrow ~^LG$ which sends $g \rtimes 1$ to  $(g, ~^tg^{-1}) \rtimes 1$ and $1 \rtimes c$ to $( \Phi_n, \Phi_n^{-1}) \rtimes c$.  We have similar local versions  $\xi_v : ~^L\mathrm{U}(n)/F_v \rightarrow ~^LG/F_v $ for any place $v$ of $F$.

Let $v$  be a place of $F$. Denote by $L_{F_v}$ the  local Langlands group of $F_v$.  We let $\Phi(n)_v$ be the set of isomorphism classes of parameters $L_{F_v} \rightarrow ~^LG/F_v(\C)$ and we let $\Phi(\mathrm{U}(n))_v$ be the set of isomorphism classes of parameters $ L_{F_v} \rightarrow ~^L \mathrm{U}(n)/F_v(\C)$. 

In the case that $v$ does not split in $L$, we recall how one can
identify parameters $L_{L_v} \rightarrow \mathrm{GL}_n(\C)$ with
parameters $L_{F_v} \rightarrow ~^LG/F_v(\C)$. We choose an element
$w_c \in L_{F_v} \setminus L_{L_v}$.   To $\rho: L_{L_v} \rightarrow
GL_n(\C)$, we associate the parameter $\rho': L_{F_v} \rightarrow
~^LG/F_v(\C)$ defined by $\rho'(\sigma) = ( \rho(\sigma), \rho( w^{-1}_c
\sigma w_c)) \rtimes 1$ if $\sigma \in L_{L_v}$  and $\rho'(w_c) = ( \rho(w_c^2),1) \rtimes c$.

By \cite[Lemma 2.2.1]{MR3338302}, if $v$ does not split, the natural map  $ \xi_v^\star : \Phi(\mathrm{U}(n))_v \rightarrow \Phi(n)_v$, given by  $\phi \mapsto \xi_v \circ \phi$, induces a bijection between parameters $ \phi: L_{F_v} \rightarrow ~^L\mathrm{U}(n)/F_v(\C)$,   and parameters $\rho: L_{L_v} \rightarrow GL_n(\C)$ for which there exists an invertible matrix $A$ with:

\begin{enumerate}
\item $~^t \rho^c A \rho = A$, where $\rho^c (\sigma) = \rho (w^{-1}_c \sigma w_c)$. 
\item $~^tA = (-1)^{n-1} A \rho(w_c^2)$.  
\end{enumerate}

Indeed, to such a parameter $\rho$ we associate the parameter $\phi$
defined by  $\phi(\sigma) = \rho(\sigma)\rtimes 1$ if $\sigma \in
L_{L_v}$  and 
$\phi(w_c) = C\rtimes c$ where $C = A^{-1} \Phi_n$. 

Now let us consider a place $v$ of $F$ that splits in $L$.  Let $w$
and $\bar{w}$ be the two places in $L$ above $v$.  We have canonical
isomorphisms $L_{L_w} = L_{F_v}$ and $L_{L_{\bar{w}}} = L_{F_v}$.  A
parameter $\rho: L_{F_v} \rightarrow ~^LG/F_v(\C) $ can be written as
$\rho(\sigma) = (\rho_w(\sigma), \rho_{\bar{w}}(\sigma)) \times
\sigma$ for all $\sigma \in L_{F_v}$, where
$\rho_w : L_{L_w} \rightarrow \mathrm{GL}_n(\C)$ and
$\rho_{\bar{w}} : L_{L_{\bar{w}}} \rightarrow \mathrm{GL}_n(\C)$.

In this case,  the natural map  $\xi_v^\star : \Phi(\mathrm{U}(n))_v \rightarrow \Phi(n)_v$, given by  $\phi \mapsto \xi_v \circ \phi$, induces a bijection between parameters $ \phi: L_{F_v} \rightarrow ~^L \mathrm{U}(n)/F_v(\C)$,   and parameters $\rho$ satisfying $\rho_w \simeq \rho_{\bar{w}}^{\vee}$.

For any place $v$ of $F$ there is associated to $\pi_v$ a local
Langlands parameter $\phi_{\pi_v} : L_{F_v} \rightarrow ~^L G/F_v(\C)$
(\cite{ht}, \cite{MR1738446}). Since $\pi$ is conjugate self dual and
odd, it follows from \cite[Theorems 2.4.10, 2.5.4]{MR3338302} that
this parameter arises from a unique parameter
$\tilde{\phi}_{\pi_v} : L_{F_v} \rightarrow ~^L\mathrm{U}(n)/F_v(\C)$. By
\cite[Theorem 2.5.1]{MR3338302}, there is a local packet $\Pi_v$
associated to $\tilde{\phi}_{\pi_v}$.

By \cite[Theorem 2.5.1]{MR3338302}, since $\pi$ is cuspidal automorphic, any representation $\tilde{\pi} = \otimes_v \tilde{\pi}_v$ with $\tilde{\pi}_v \in \Pi_v$ is cuspidal automorphic on $\mathrm{U}(n)/F$.

Let us describe in more details the parameter $\tilde{\phi}_{\pi_v}$ at a place $v \mid \infty$. 
 Associated to $\pi_v$ there is a parameter 
$$\rho_v :  L_{L_v} =  \C^\times \rightarrow \mathrm{GL}_n(\C)$$ which is conjugated to $$z \mapsto \mathrm{diag} ( (z/\bar{z})^{\lambda_{1,v}}, \cdots, (z/\bar{z})^{\lambda_{n,v}}).$$ This parameter is conjugate self dual with respect to the standard orthogonal form given by $A = \mathrm{Id}$. 

Using the recipe of  \cite[Lemma 2.2.1]{MR3338302} (observe that $(-1)^{n-1}\rho_v(w_c^2) = 1$), we deduce that the parameter $\tilde{\phi}_{\pi_v}$ corresponding to $\Pi_v$  is the non degenerate limit of discrete
series parameter given by:

$$ L_{F_v} = W_{\mathbb{R}} \rightarrow ~^L\mathrm{U}(n)/\mathbb{R}(\C)$$
with $\phi(z) = \mathrm{diag} ( (z/\bar{z})^{\lambda_{1,v}}, \cdots, (z/\bar{z})^{\lambda_{n,v}}) \rtimes 1$ and $\phi(j) = \Phi_n \rtimes c$ where $j \in W_{\mathbb{R}} \setminus \C^\times$ satisfies $j^2=-1$. 

\end{proof}

\begin{thm}\label{theomok2} Let $\pi$ be a weakly regular, algebraic, conjugate self dual,  cuspidal automorphic representatioin
  $\pi$ of $\mathrm{GL}_n/L$.  Let $\lambda = (\lambda_{i,\tau})$ be
  its infinitesimal character. Then $\pi$ is automatically odd unless
  possibly when $n $ is even and for all $\tau$,
  $\lambda_{i, \tau} = \lambda_{i +n/2, \tau}$ for some ordering of
  the infinitesimal character.
  \end{thm}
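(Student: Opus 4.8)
The plan is to translate the oddness condition into a statement about a single sign attached to the Asai $L$-function of $\pi$, and then to pin down that sign using the archimedean components of $\pi$ together with Mok's classification. Since $\pi$ is conjugate self dual and cuspidal, $L(s,\pi\times\pi^c)=L(s,\pi\times\pi^\vee)$ has a simple pole at $s=1$, and this equals $L(s,\mathrm{Asai}^+(\pi))\,L(s,\mathrm{Asai}^-(\pi))$; consequently exactly one of the two Asai $L$-functions has a (simple) pole at $s=1$ (classical, cf.\ the work of Flicker and Shahidi on Asai $L$-functions). Call $\epsilon(\pi)\in\{\pm 1\}$ the sign of the Asai $L$-function that has the pole. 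Taking $\chi_0=1$ in the definition of oddness --- legitimate because $\pi$ is conjugate self dual, so the twisting character $\chi$ is trivial, and the oddness condition is independent of $\chi_0$ as remarked above --- the representation $\pi$ is odd if and only if $\epsilon(\pi)=(-1)^{n-1}$. Thus the statement reduces to computing $\epsilon(\pi)$.

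Next I would identify $\epsilon(\pi)$ with the parity of the conjugate-duality structure on the (local, or conjectural global) $L$-parameter of $\pi$. By the main results of \cite{MR3338302} (compare the discussion in the proof of Theorem \ref{theomok}), every conjugate self dual cuspidal $\pi$ on $\mathrm{GL}_n/L$ is the standard base change of a cuspidal automorphic representation of the quasi-split $\mathrm{U}(n)/F$ for exactly one of the two base-change $L$-embeddings; the standard embedding $\xi$, built from $\Phi_n$ with ${}^t\Phi_n=(-1)^{n-1}\Phi_n$, realizes precisely those $\pi$ with $\epsilon(\pi)=(-1)^{n-1}$. Concretely, by the recipe of \cite[Lemma 2.2.1]{MR3338302}, $\epsilon(\pi)$ is the unique sign such that each local parameter $\phi_{\pi_v}$ admits an intertwiner $A_v$ with ${}^tA_v=\epsilon(\pi)(-1)^{n-1}A_v\,\phi_{\pi_v}(w_c^2)$; equivalently $\epsilon(\pi)=\prod_v \epsilon_v$, where $\epsilon_v$ is the local conjugate-duality sign.

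The content then lies in the archimedean places, and this is where weak regularity enters. At $v\mid\infty$, $C$-algebraicity and weak regularity give that $\phi_{\pi_v}|_{W_{\C}}$ is, after twisting, a sum of characters $z\mapsto (z/\bar z)^{\lambda_{i,\tau}}$ in which the $\lambda_{i,\tau}$ split into two strictly decreasing runs of lengths $\lceil n/2\rceil$ and $\lfloor n/2\rfloor$; in particular each weight occurs with multiplicity at most two. The intertwiner $A_v$ is forced to be block-diagonal with respect to the weight eigenspaces; on the one-dimensional blocks it is symmetric, and keeping careful track of the contribution $\phi_{\pi_v}(w_c^2)=(-1)^{n-1}$ --- which comes from the half-integral shift $\tfrac{n-1}{2}$ in the $C$-algebraic normalization, not from the identity --- one finds that the symmetry type $A_v$ is forced to have is exactly $(-1)^{n-1}$, so $\epsilon_v=+1$. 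The only way the opposite sign becomes available is if two of the two-dimensional induced blocks coincide, which, given the multiplicity constraint, can occur only when $n$ is even and the two decreasing runs are equal, i.e.\ $\lambda_{i,\tau}=\lambda_{i+n/2,\tau}$ for all $i$. At the finite places $v$ one checks $\epsilon_v=+1$ as well: automatic when $v$ splits in $L$, and at inert $v$ a consequence of the conjugate-duality sign of unramified conjugate self dual parameters. Combining via $\epsilon(\pi)=\prod_v\epsilon_v$ gives $\epsilon(\pi)=(-1)^{n-1}$, hence $\pi$ odd, unless $n$ is even and $\lambda_{i,\tau}=\lambda_{i+n/2,\tau}$ for every $\tau$ under a suitable ordering, which is the exceptional case.

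The main obstacle is the archimedean sign bookkeeping: establishing that the forced symmetry type of $A_v$ is precisely $(-1)^{n-1}$ in the generic case and that the sole source of the opposite sign is a pair of coinciding two-dimensional blocks, together with the (easier) verification that $\epsilon_v=+1$ at the finite places so that the product formula applies. One must in particular be careful that the shift $\tfrac{n-1}{2}$ in the $C$-algebraic weights is absorbed correctly --- it is exactly what makes $\phi_{\pi_v}(w_c^2)=(-1)^{n-1}$ rather than the identity, and hence what makes the generic sign come out to be $(-1)^{n-1}$ and not $+1$; this is also what is behind the fact that for $n=1$ every algebraic conjugate self dual Hecke character of $L$ is odd.
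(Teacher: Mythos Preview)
Your core idea is the same as the paper's: use Mok's classification to translate oddness into the question of which of the two base-change embeddings $\xi$, $\xi_-$ realizes $\pi$, and then settle that question at the archimedean places. Your archimedean computation is essentially right: $\rho_v(w_c^2)=(-1)^{n-1}\mathrm{Id}$ because of the $C$-algebraic half-shift, so the intertwiner condition for the \emph{non-odd} (twisted) embedding becomes ${}^tA_v=-A_v$, and a non-degenerate antisymmetric $A_v$ compatible with a diagonal $\rho_v$ forces every weight $\lambda_{i,\tau}$ to occur with even multiplicity, hence (under weak regularity) multiplicity exactly two, which is the exceptional case.

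Where you go wrong is the step ``equivalently $\epsilon(\pi)=\prod_v\epsilon_v$''. There is no such product formula, and the local sign $\epsilon_v$ is not even well-defined: at split places both parities are always achievable, and at many inert places the local parameter admits intertwiners of both symmetry types. Your subsequent discussion of finite places (checking ``$\epsilon_v=+1$'' there) is therefore both unnecessary and unjustified. The correct logic is already contained in your previous sentence and is exactly what the paper does: by Mok, if $\pi$ is not odd then $L(s,\mathrm{Asai}^{(-1)^n}(\pi))$ has a pole and $\pi$ descends to $\mathrm{U}(n)$ via the twisted embedding $\xi_-$; this forces at \emph{every} place, in particular at each archimedean $v$, the existence of a non-degenerate symplectic intertwiner for $\rho_v$; your archimedean analysis then gives the contradiction outside the exceptional case. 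Drop the product formula and the finite-place paragraph and argue by this direct contrapositive; that is the paper's proof.
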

  \begin{demo} This again follows from the results of \cite{MR3338302}. If $\pi$ is not odd, we deduce that $L(s, \mathrm{Asai}^{(-1)^{n}} (\pi))$ has a pole at $s=1$. Let us fix a  character $\chi_{-} : \mathbb{A}_L^\times/L^\times \rightarrow \C^\times$ verifying: $\chi_{-}^c = \chi_{-}^{-1}$ and $\chi_{-}\vert \mathbb{A}_F^\times $ corresponds to the quadratic character of $L/F$. 
  We can define a twisted $L$-group embedding $\xi_{-}: ~^L\mathrm{U}(n) \rightarrow ~^LG$ (for the Weil group form of the $L$-groups) which sends $g \rtimes 1$ to  $(g, ~^tg^{-1}) \rtimes 1$, $1 \rtimes \sigma$ to $(\chi_{-}(\sigma), \chi_{-}^{-1}(\sigma))\rtimes \sigma$ if $\sigma \in W_{F}$, and  $1 \rtimes c$ to $( -\Phi_n, \Phi_n^{-1}) \rtimes w_c$ for $w_c \in W_{L} \setminus W_{F}$.  A similar argument as in the proof of theorem \ref{theomok} shows that $\pi$ descends via the twisted $L$-group embedding to an automorphic representation of $\mathrm{U}(n)$. 
  For each place $v \mid \infty$ the parameter of $\pi_v$,  $ \rho_v : L_{L_v} =  \C^\times \rightarrow \mathrm{GL}_n(\C)$  is conjugated to $z \mapsto \mathrm{diag} ( (z/\bar{z})^{\lambda_{1,v}}, \cdots, (z/\bar{z})^{\lambda_{n,v}})$. For $\pi_v$ to descend via the twisted $L$-group embedding,   there should exist a non degenerate symplectic form $A$ on  $\C^n$ such that  $^t\rho_v(\bar{z}) A \rho_v(z) = A$ for all $z \in \C^\times$. It is easy to see that there is no such symplectic form, unless when $n$ is even and $\lambda_{i, v} = \lambda_{i +n/2, v}$ for some ordering of the $(\lambda_{i,v})$. 
  \end{demo}

\subsection{Automorphic Galois representations} In this section we let
again $L$ be a $CM$ or totally real field.   We first recall the
following  result concerning Galois representations attached to
regular, essentially (conjugate) self dual, cuspidal automorphic
representations of $\mathrm{GL}_n/L$ (see, e.g., \cite{MR3272052}, \cite{BLGGT}).
\begin{thm}[Bellaiche, Caraiani, Chenevier, Clozel, Harris, Kottwitz,
  Labesse, Shin, Taylor, \ldots]\label{bigth}   Let $\pi$ be a  regular, algebraic, (essentially) conjugate self dual  cuspidal automorphic representation of $\mathrm{GL}_n/L$.   In particular  $\pi^c = \pi^\vee \otimes \chi$ and the infintesimal character of $\pi$ is $\lambda = ( ( \lambda_{1, \tau}, \cdots, \lambda_{n, \tau})_{\tau \in J})$ with   $\lambda_{1, \tau} > \cdots > \lambda_{n, \tau}$. 
  Let $\iota: \C \simeq \overline{\qq}_p$.  There is a continuous
  Galois representation
  $\rho_{\pi, \iota}: G_L \rightarrow \mathrm{GL}_n(\overline{\qq}_p)$
  such that:
  \begin{enumerate}
  \item $\rho_{\pi, \iota}^c \simeq \rho_{\pi, \iota}^\vee \otimes \epsilon_p^{1-n} \otimes \chi_{\iota}$ where $\chi_{\iota}$ is the $p$-adic realization of $\chi$ and $\epsilon_p$ is the cyclotomic character,
  \item $\rho_{\pi, \iota}$ is pure,
\item $\rho_{\pi, \iota}$ is de Rham at all places dividing $p$, with  $\iota^{-1} \circ \tau$-Hodge-Tate weights: $(-\lambda_{n,\tau} + \frac{n-1}{2}, \cdots, -\lambda_{1,\tau} + \frac{n-1}{2})$, 
 \item  for all finite  place 
$v$ one has:
$$ WD (\rho_{\pi, \iota} \vert_{G_{F_v}})^{F-ss} = \mathrm{rec} (\pi_v \otimes \vert \det \vert_v^{\frac{1-n}{2}}).$$
\end{enumerate}
 \end{thm}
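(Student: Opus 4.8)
The plan is to deduce this by-now-standard theorem from the accumulated work on Galois representations attached to self-dual automorphic representations, organised around the cohomology of unitary Shimura varieties; in the paper the ``proof'' will largely consist of a precise pointer to the literature, but the organising steps are as follows.

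First I would reduce to the genuinely (conjugate) self-dual case. Since $\pi$ is $C$-algebraic, the similitude character $\chi_0$ is algebraic, so there is an algebraic Hecke character $\psi$ of $L$ with $\pi\otimes\psi$ (conjugate) self-dual; class field theory attaches to $\psi$ a continuous character $\psi_\iota$ of $G_L$, and one sets $\rho_{\pi,\iota}=\rho_{\pi\otimes\psi,\iota}\otimes\psi_\iota^{-1}$, checking that $(1)$--$(4)$ are preserved under this twist by the standard bookkeeping of Hodge--Tate weights and local $L$-parameters. Next, if $L=F$ is totally real I would base change $\pi$ along a CM quadratic extension $L'/L$ in which $p$ splits; cuspidality of the base change (guaranteed, as in Lemma \ref{lem-odd-bc}, away from the induced case excluded by regularity) lets one construct $\rho_{\pi_{L'},\iota}$ and then descend it to $G_L$, either by a Brauer induction argument over a chain of soluble CM extensions or by noting that $\rho_{\pi,\iota}$ is forced on a density-one set of Frobenii by local-global compatibility at split places.

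With $L$ a CM field and $\pi$ conjugate self-dual, I would then invoke the descent to unitary groups: since $\pi$ is regular it is automatically odd, so by Theorem \ref{theomok} (or the endoscopic classification of Mok and of Kaletha--Minguez--Shin--White, together with the work of Labesse) $\pi$ transfers to a cuspidal $\tilde\pi$ on a unitary group $\mathrm{U}(n)/F$ with $\tilde\pi_\infty$ a discrete series. After a further descent to an appropriate inner form of unitary-similitude type and a choice of CM datum realising the right signature, $\tilde\pi$ occurs in the \'etale cohomology of a Kottwitz PEL unitary Shimura variety, and $\rho_{\pi,\iota}$ is cut out from that cohomology. Its Hodge--Tate weights $(-\lambda_{n,\tau}+\tfrac{n-1}{2},\dots,-\lambda_{1,\tau}+\tfrac{n-1}{2})$ come from the de Rham comparison together with the infinitesimal character; the duality $(1)$ from the polarisation pairing on cohomology; local-global compatibility $(4)$ at split finite places from the Harris--Taylor construction of local Langlands for $\mathrm{GL}_n$, and at the remaining finite places and at $p$ from Taylor--Yoshida, Caraiani, and Barnet-Lamb--Gee--Geraghty--Taylor; and purity $(2)$ is the theorem of Caraiani. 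For the parity configurations of signatures that cannot be placed directly in such cohomology, $\rho_{\pi,\iota}$ is produced instead by a congruence / $p$-adic deformation argument as in Shin and Chenevier--Harris, bootstrapping from the cases already in hand.

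The step I expect to be the real obstacle, and the one where one must quote the literature most carefully, is local-global compatibility $(4)$ at ramified finite places and at places dividing $p$, together with purity $(2)$: these are precisely the inputs that required the deepest subsequent work. By contrast the twisting and base-change reductions, the descent to unitary groups (granting the endoscopic classification), and the extraction of $\rho_{\pi,\iota}$ from cohomology with the correct Hodge--Tate weights are comparatively formal.
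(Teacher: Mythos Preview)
The paper does not prove this theorem at all: it is stated as a known result, with the sentence ``We first recall the following result \ldots\ (see, e.g., \cite{MR3272052}, \cite{BLGGT})'' immediately preceding the statement, and no proof is given. So there is nothing to compare your proposal against in the paper itself.

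That said, your outline is a reasonable summary of how the result is assembled in the literature and tracks the logical dependencies correctly: reduction to the CM, conjugate self-dual case by twisting and solvable base change; descent to a unitary group via the endoscopic classification; realisation in the \'etale cohomology of a suitable PEL Shimura variety giving the Hodge--Tate weights and the duality; and then the subsequent work of Taylor--Yoshida, Shin, Caraiani, and Barnet-Lamb--Gee--Geraghty--Taylor for full local-global compatibility and purity. For the purposes of this paper, however, a citation to \cite{MR3272052} and \cite{BLGGT} (as the authors do) is the appropriate ``proof''; your sketch would be out of proportion with how the result is actually used here, namely as a black box input to Theorem~\ref{smallth} and Theorem~\ref{odd}.
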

 
 
 \begin{rem}
 Our convention is that  the reciprocity law is normalized by sending
 geometric Frobenius  to a uniformizing element. Moreover, the cyclotomic character has Hodge-Tate weight $-1$. 
 \end{rem}

 In the situation that regular is replaced by the weaker assumption of being weakly regular and odd, we have the following  weaker theorem: 
 
 \begin{thm}\label{smallth}   Let $\pi$ be a  weakly regular, algebraic, odd, (essentially) conjugate self dual, cuspidal automorphic representation of $\mathrm{GL}_n/L$.   In particular,  $\pi^c = \pi^\vee \otimes \chi$.  There is a continuous Galois representation  $\rho_{\pi, \iota}:  G_L \rightarrow \mathrm{GL}_n(\overline{\qq}_p)$ such that: 
  \begin{enumerate}
  \item $\rho_{\pi, \iota}^c \simeq \rho^\vee \otimes \epsilon_p^{1-n} \otimes \chi_{\iota}$ where $\chi_{\iota}$ is the $p$-adic realization of $\chi$,
 \item $\rho_{\pi, \iota}$ is unramified at all finite places $v \nmid p$ for which $\pi_v$ is unramified and one has:
$$ WD (\rho_{\pi, \iota} \vert_{G_{F_v}})^{F-ss} = \mathrm{rec} (\pi_v \otimes \vert \det \vert_v^{\frac{1-n}{2}}).$$
\end{enumerate}
 \end{thm}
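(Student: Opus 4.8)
The plan is to deduce the theorem from the regular case, Theorem \ref{bigth}, by a combination of twisting, base change, descent to a unitary group, and a $p$-adic limiting argument whose essential input is the integrality of the spherical Hecke action established in Theorem \ref{main-thm-unitary}. First I would reduce to a convenient situation. Tensoring $\pi$ by an algebraic Hecke character changes $\rho_{\pi,\iota}$ only by the corresponding $p$-adic character, so we may assume $\pi$ is conjugate self dual, $\pi^c = \pi^\vee$. If $L = F$ is totally real, choose a CM quadratic extension $L'/F$ unramified at $p$ with $\pi \neq \pi \otimes \chi_{L'/F}$; by Lemma \ref{lem-odd-bc} and Lemma \ref{lem-odd-bc2} the base change $\mathrm{Res}_{L'}(\pi)$ is again weakly regular, algebraic, odd and essentially conjugate self dual, and $\rho_{\pi,\iota}$ is recovered from the representations over two such $L'$ by a standard patching argument over solvable CM extensions. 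So we may and do assume $L$ is CM and $\pi$ conjugate self dual.

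Next, by Theorem \ref{theomok} the representation $\pi$ is the standard base change of a cuspidal automorphic representation $\tilde\pi$ of the quasi-split $\mathrm{U}(n)/F$ with $\tilde\pi_\infty$ a non-degenerate limit of discrete series whose infinitesimal character is determined by $\lambda$. Choosing a unitary Shimura datum $(G,X)$ as in Section \ref{section-unitary} whose signatures $(p_\tau,q_\tau)$ match the shape of this limit of discrete series packet (possible because $\lambda$ is weakly regular), the $(\mathfrak{p}, K)$-cohomology computation of \cite{JunSu} (cf. \cite[Proposition 4.3.2]{MR1064864}) realizes $\tilde\pi$ in $\HH^i(K, V_\kappa)_{\overline\qq_p}$, or its cuspidal variant, for a suitable degree $i$ and a dominant weight $\kappa$ with $\infty(\kappa) = -\kappa-\rho$ matching $\lambda$. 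Let $\chi$ be the resulting system of Hecke eigenvalues. By Theorem \ref{main-thm-unitary} (via Proposition \ref{prop-imply-conj} and Lemma \ref{lem-laff2}) the values of $\chi$ on the normalized integral Hecke algebra $\mathcal{H}^{int}_{p,\kappa,\iota}$ lie in $\overline\ZZ_p$, and the eigenclass can be taken inside the $\overline\ZZ_p$-lattice $\HH^i(K,V_\kappa)_{\overline\ZZ_p}$, which is stable under $\mathcal{H}^p_K \otimes \mathcal{H}^{int}_{p,\kappa,\iota}$.

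For each integer $N \geq 1$ I would then produce a regular, algebraic, conjugate self dual, cuspidal automorphic representation $\pi_N$ of $\mathrm{GL}_n/L$ whose prime-to-$p$ Hecke eigenvalues, and whose values on the normalized spherical operators $[V_\lambda]\, p^{\langle\lambda,\infty(\kappa,\iota)\rangle}$, agree with those of $\pi$ modulo $p^N$. This is obtained by working inside the integral lattice above and moving from the weakly regular weight $\kappa$ to a regular weight by a congruence argument (multiplication by powers of Hasse-type invariants on the unitary Shimura variety, together with a classicality statement for coherent cohomology, in the style used to attach Galois representations to limit of discrete series classes): modulo $p^N$ one obtains an eigenclass in a regular weight, to which Theorem \ref{bigth} attaches $\rho_{\pi_N,\iota}\colon G_L \to \mathrm{GL}_n(\overline\qq_p)$ satisfying the analogues of (1) and (2). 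The traces $\mathrm{tr}\,\rho_{\pi_N,\iota}(\mathrm{Frob}_v)$ at unramified $v \nmid p$ are then Cauchy in $\overline\ZZ_p$, so the associated pseudorepresentations converge; passing to the limit and using compactness of $G_L$ together with the cuspidality of $\pi$ to extract a semisimple representation, one obtains $\rho_{\pi,\iota}$ which is unramified at the unramified finite $v \nmid p$ with $WD(\rho_{\pi,\iota}|_{G_{F_v}})^{F-ss} = \mathrm{rec}(\pi_v \otimes |\det|_v^{(1-n)/2})$, which is (2); and (1), $\rho^c_{\pi,\iota} \simeq \rho^\vee_{\pi,\iota}\otimes \epsilon_p^{1-n}\otimes\chi_\iota$, then follows by Čebotarev from (2), from $\pi^c = \pi^\vee \otimes \chi$, and from the corresponding identities for the $\pi_N$.

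The main obstacle is precisely the congruence step: arranging that for every $N$ there is a genuinely regular automorphic representation, still descending to $\mathrm{U}(n)$, congruent to $\pi$ modulo $p^N$. The contribution of the present paper is exactly the integral input needed here — that the relevant lattice in coherent cohomology is stable under the normalized spherical Hecke algebra, so that reduction modulo $p^N$ is meaningful and the spherical Hecke eigenvalues are $p$-integral — after which the passage to regular weights, and the construction and local-global compatibility away from $p$ of the limiting $\rho_{\pi,\iota}$, proceed by now-standard methods. Note that no $p$-adic Hodge-theoretic property at $p$ is claimed in Theorem \ref{smallth}, consistent with the fact that the limiting argument gives no control over the behaviour of $\rho_{\pi,\iota}$ at places dividing $p$.
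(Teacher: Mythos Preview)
Your overall strategy --- reduce to the CM case by patching, descend to the quasi-split unitary group via Theorem \ref{theomok}, realize $\tilde\pi$ in the coherent cohomology of a unitary Shimura variety through its non-degenerate limit of discrete series archimedean component, and then produce $\rho_{\pi,\iota}$ as a $p$-adic limit of Galois representations attached to regular forms by Theorem \ref{bigth} --- is correct and is exactly the route the paper takes.

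However, you misidentify the key input. The paper does \emph{not} use Theorem \ref{main-thm-unitary} or Corollary \ref{coro-conj2-unit} in the proof of Theorem \ref{smallth}; it simply cites \cite{MR3512528} and \cite{MR3989256} for the congruence-to-regular-weight step. The reason is that the statement of Theorem \ref{smallth} only concerns places $v \nmid p$, so the limiting argument only needs the \emph{prime-to-$p$} Hecke algebra to act on an integral lattice in coherent cohomology, and this is elementary (the prime-to-$p$ correspondences are finite \'etale). Your invocation of $\mathcal{H}^{int}_{p,\kappa,\iota}$ and Lemma \ref{lem-laff2} is therefore unnecessary here: integrality of the spherical Hecke action at $p$ plays no role in constructing $\rho_{\pi,\iota}$ or in proving (1) and (2). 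The paper's integral Hecke results are instead the crucial input for Theorem \ref{odd}, where one genuinely needs $p$-adic control of the Satake parameters at $p$.

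A smaller point: the paper does not spell out how to realize $\tilde\pi$ in the coherent cohomology of a Shimura variety attached to a \emph{similitude} unitary group of appropriate signature (you gesture at ``choosing a unitary Shimura datum whose signatures match''); this passage from $\mathrm{U}(n)$ to $\mathrm{GU}(n)$ and the choice of signatures is handled in the cited references and, in more detail, in the proof of Theorem \ref{odd}.
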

 
 \begin{proof} By the patching technique of \cite{Sorensen}, we may
   reduce to the case of a $CM$ field. Using Theorem \ref{theomok},
   there is a $\tilde{\pi}$ on the quasi-split unitary group
   $\mathrm{U}(n)/F$, which transfers to $\pi$. Moreover,
   $\tilde{\pi}_\infty$ is a non degenerate limit of discrete series
   and therefore realizes in the coherent cohomology of a unitary
   Shimura variety. We can then apply the main result of
   \cite{MR3512528} or \cite{MR3989256} to conclude. The point is that
   the Hecke eigensystem of $\tilde{\pi}$ is a $p$-adic limit of Hecke
   eigensystems of regular, essentially conjugate self dual, automorphic
   representations to which Theorem \ref{bigth} applies.
 \end{proof}

 One  conjectures that $\rho_{\pi, \iota}$ in Theorem \ref{smallth} satisfies the stronger properties of Theorem \ref{bigth}. In particular, it should be  de Rham with
 Hodge--Tate weights $(-\lambda_{n,\tau} + \frac{n-1}{2}, \cdots,
 -\lambda_{1,\tau} + \frac{n-1}{2})$, for $\lambda = ( ( \lambda_{1, \tau}, \cdots, \lambda_{n, \tau})_{\tau \in J})$ the infinitesimal character of $\pi_\infty$,  and  $ WD (\rho_{\pi, \iota}
 \vert_{G_{F_v}})^{F-ss} = \mathrm{rec} (\pi_v \otimes \vert \det
 \vert_v^{\frac{1-n}{2}})$ for all finite place. This has been verified in some special cases (e.g., for weight $1$ modular forms). 
 
 We can prove the following very weak instance of local-global
 compatibility at places dividing $p$:

 \begin{thm} \label{odd}
   Let $\pi$ be a weakly regular, algebraic, odd, essentially (conjugate) self dual, cuspidal automorphic representation of $\mathrm{GL}_n/L$ with  infinitesimal character $\lambda = (\lambda_{i, \tau}, 1\leq i \leq n, \tau \in \mathrm{Hom} (L, \overline{\qq}))$ and $\lambda_{1,\tau} \geq \cdots \geq \lambda_{n,\tau}$. Let $p$ be a prime unramified in $L$ and  let $w\mid p$ be a finite place  in $L$. Assume also that $\pi_w$ is spherical, and corresponds to a semi-simple conjugacy class $\mathrm{diag}(a_1,\cdots, a_n) \in \mathrm{GL}_n(\overline{\qq})$ by the Satake isomorphism. We let $\iota: \overline{\qq} \rightarrow \overline{\qq}_p$ be an embedding and $v$ the associated $p$-adic valuation normalized by $v(p)=1$.  After permuting we assume that $v(a_1) \leq \cdots \leq v(a_n)$. Let $I_w \subset \mathrm{Hom} (L, \overline{\qq})$ be the set of embeddings $\tau$ such that $\iota \circ \tau$ induces the $w$-adic valuation on $L$. 
 Then we have $$\sum_{i=1}^k v(a_i) \geq \sum_{\tau \in I_w}
 \sum_{\ell=1}^k - \lambda_{\ell, \tau}$$ for $1 \leq k \leq n$, with equality if $k= n$. 
 \end{thm}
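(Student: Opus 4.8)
The plan is to deduce the inequality from the integrality of Hecke operators on the coherent cohomology of unitary Shimura varieties (Theorem \ref{main-thm-unitary}) together with the Hodge--Newton reformulation provided by Lemma \ref{lem-lafforgue} and Conjecture \ref{conj1}. First I would run the reductions already used in the proof of Theorem \ref{smallth}. Via the patching technique of \cite{Sorensen} one reduces to $L$ a $CM$ field. By a further base change along a totally real extension $F'/F$ chosen so that every place of $F'$ above $p$ splits in $L' := L\cdot F'$, one may moreover assume that every place of $F$ above $p$ splits in $L$: under such a base change, at a place $w'$ of $L'$ above $w$ of residue degree $f = f(w'/w)$ the Satake parameter of $\pi_{L',w'}$ has valuations $f\,v(a_i)$ and $\#I_{w'} = f\cdot\#I_w$, while the infinitesimal character is pulled back, so the asserted inequality at $w'$ is $f$ times the asserted inequality at $w$; hence the reduced case implies the general one. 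After these reductions Theorem \ref{theomok} applies: $\pi$ is the base-change transfer of a cuspidal automorphic representation $\tilde\pi$ of the quasi-split unitary group $\mathrm{U}(n)/F$ whose archimedean component is a non-degenerate limit of discrete series.

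Because $\tilde\pi_\infty$ is a non-degenerate limit of discrete series, $\tilde\pi$ contributes (as in the proof of Theorem \ref{smallth}) to the coherent cohomology $\HH^i_{cusp}(K,V_\kappa)_{\overline{\qq}_p}$, hence also to $\HH^i(K,V_\kappa)_{\overline{\qq}_p}$, of the Shimura variety attached to a suitable similitude unitary group $G$ of the type considered in Section \ref{section-unitary} (so that every place of $F$ above $p$ splits in $L$), for the weight $\kappa$ such that $-\kappa-\rho$ represents the infinitesimal character $\lambda$ of $\pi_\infty$. Let $\mathfrak{p}_i$ be the prime of $F$ below $w$. Since $w$ splits over $F$, restriction of embeddings identifies $I_w$ with the set $I_i$ of $\sigma\in\mathrm{Hom}(F,E')$ with $\iota\circ\sigma$ inducing the $\mathfrak{p}_i$-adic valuation, and the action of $\mathcal{H}_p$ on $\tilde\pi_p$ is given, on the factor $\mathcal{H}_{\mathfrak{p}_i}$, by a character $\chi$ whose associated conjugacy class $c = c_\chi$ has $\mathrm{Newt}_\iota(c)$ equal, on the $\mathrm{GL}_n$-coordinate of that factor, to the Newton polygon with slopes $v(a_1)\le\cdots\le v(a_n)$; the $\mathbb{G}_m$-similitude coordinate is fixed by the (algebraic) central character of $\pi$.

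By Theorem \ref{main-thm-unitary} the integral Hecke algebra $\mathcal{H}^{int}_{p,\kappa,\iota}$ preserves the natural integral structures on these cohomology complexes, so Conjecture \ref{conj2} holds here (Corollary \ref{coro-conj2-unit}), hence by Proposition \ref{prop-imply-conj} so does Conjecture \ref{conj1}. Applied to $\chi$, this yields $\mathrm{Newt}_\iota(c)\le\frac{1}{|\Gamma/\mathrm{Stab}_\Gamma(\infty(\kappa,\iota))|}\sum_{\gamma}-w_0(\gamma\cdot\infty(\kappa,\iota))$. By Lemma \ref{lem-lafforgue} (or directly from the definition of the order $\le$ on polygons recalled before it) this says that, on the $\mathfrak{p}_i$-factor, the polygon with slopes $v(a_i)$ lies above the polygon $-w_0(\infty(\kappa,\iota))$ with the same endpoint. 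Unwinding the shape of $\infty(\kappa,\iota)$ in terms of $\lambda$ — a computation of the same nature as Lemma \ref{lem-formula-unitary}, using the identification $I_w\cong I_i$ — identifies the slopes of the latter polygon with $\big(\sum_{\tau\in I_w}-\lambda_{\ell,\tau}\big)_{1\le\ell\le n}$, giving $\sum_{i=1}^k v(a_i)\ge\sum_{\tau\in I_w}\sum_{\ell=1}^k-\lambda_{\ell,\tau}$ for $1\le k\le n$. The equality at $k=n$ is automatic: for $\mathrm{GL}_n$ the order $\le$ preserves the sum of the slopes (equivalently it is forced by the value of $\chi$ on the invertible operator $T_{\mathfrak{p}_i,n}$, or again by the algebraicity of the central character of $\pi$).

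The only deep input is Theorem \ref{main-thm-unitary}; granting it, the remaining work is organizational. The delicate points, which I expect to be the main obstacle, are: (i) the reduction to $L$ $CM$ with all places above $p$ split, which must be run through the patching formalism rather than naive base change in order to retain the coherent-cohomology realization of $\tilde\pi$; and (ii) making precise the dictionary between the infinitesimal character $\lambda$ of $\pi_\infty$ with its given ordering, the automorphic weight $\kappa$, the $\Gamma$-averaged dominant representative $\infty(\kappa,\iota)$, and the Satake parameter $\mathrm{diag}(a_1,\dots,a_n)$ at $w$ — that is, rigorously carrying out the final unwinding step.
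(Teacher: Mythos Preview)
Your overall architecture matches the paper's: reduce to a CM field with all places above $p$ split in $L$, descend $\pi$ to a cuspidal $\tilde\pi$ on the quasi-split unitary group via Theorem~\ref{theomok}, realize it in coherent cohomology, invoke Corollary~\ref{coro-conj2-unit} to get Conjecture~\ref{conj1}, and unwind. However, there is a genuine gap, and it is \emph{not} one of the two obstacles you flagged.

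The Shimura varieties of Section~\ref{section-unitary}, and hence Theorem~\ref{main-thm-unitary} and Corollary~\ref{coro-conj2-unit}, are for the \emph{similitude} group $\mathrm{GU}(n)$, whereas Theorem~\ref{theomok} only produces $\tilde\pi$ on $\mathrm{U}(n)$. Your sentence ``$\tilde\pi$ contributes \ldots\ to the coherent cohomology \ldots\ of the Shimura variety attached to a suitable similitude unitary group $G$'' hides the entire difficulty: one must \emph{extend} $\tilde\pi$ to a cuspidal automorphic representation $\tilde\pi'$ of $\mathrm{GU}(n)$, with central character unramified at $p$ so that $\tilde\pi'_p$ is spherical. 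The paper treats this explicitly. It first makes an extra reduction (which you omit) so that $L = L_0 F$ with $L_0$ imaginary quadratic; this is needed so that the center $\tilde Z$ of $\mathrm{GU}(n)$ involves the norm from a single quadratic extension $L_0/\qq$. It then extends the central character $c$ of $\tilde\pi$ to an algebraic character $c'$ of $\tilde Z$ unramified at $p$, and proves (following Chenevier) that the restriction map $\mathfrak{A}_{cusp}(\mathrm{GU}(n))_{c'}\to\mathfrak{A}_{cusp}(\mathrm{U}(n))_{c}$ is surjective; the key point is the inclusion $\qq^\times\cap\nu(\tilde Z(\mathbb{A}_\qq))\subseteq\nu(\tilde Z(\qq))$, which is Hasse's norm theorem for $L_0/\qq$. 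Finally, the hypothesis that all $v\mid p$ in $F$ split in $L$ is used again to get $\mathrm{GU}(n)(\qq_p)=\mathrm{U}(n)(\qq_p)\tilde Z(\qq_p)$, so that $\tilde\pi'_p$ is spherical with Satake parameter determined by that of $\tilde\pi_p$ and $c'$. Without this passage from $\mathrm{U}(n)$ to $\mathrm{GU}(n)$ you cannot invoke Corollary~\ref{coro-conj2-unit}.

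Two smaller remarks. First, the reductions to the CM and split-at-$p$ cases are done in the paper by \emph{naive} quadratic base change (choosing the auxiliary fields to ramify at an innocuous place so the base change stays cuspidal, and checking oddness via Shahidi's nonvanishing); the patching of \cite{Sorensen} is not needed here and is in any case a tool for the opposite direction. Second, after projecting the inequality of Conjecture~\ref{conj1} from $\widehat{\mathrm{GU}(n)}$ to $\widehat{\mathrm{U}(n)}$ and then to the $\mathrm{GL}_n^{[F_v:\qq_p]}$-factor at the chosen $v$, the similitude coordinate disappears and the $\Gamma$-averaging is what collapses the polygon to $\big(\sum_{\tau\in I_w}-\lambda_{\ell,\tau}\big)_\ell$; this is the ``unwinding'' you mention, and the paper carries it out explicitly.
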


 \begin{demo} We first reduce to the $CM$ case because if $L$ is
   totally real, we can consider a $CM$ quadratic extension $L'$ of
   $L$ such that $w$ splits in $L'$ and $Res_{L'}(\pi)$ is cuspidal
   (for example, it suffices to take $L'$ to be ramified over $L$ at
   some finite place $v_0$ of $L$ where $\pi_{v_0}$ is
   unramified). Now assume $L$ is $CM$ and let $F$ be its maximal totally real
   subfield. We next explain how one can reduce to the case where all
   primes $v \mid p$ in $F$ split in $L$.  We can construct a totally
   real quadratic extension $F'$ of $F$ with the property that $p$ is
   unramified in $F'$ and  for all places $w \mid p$ of $F$, $w$
   splits in $F'$ if and only if $w$ splits in $L$. We may also impose
   the additional condition that $F'$ ramifies at some place $v_0$ of
   $F$ which is inert in $L$ and such that $\pi_{v_0}$ is
   unramified. We now set $L' = F'L$. It is clearly sufficient to
   prove the result for the base change of $\pi$ to $L'$. Note that
   this base change is still weakly regular, algebraic, odd, essentially
   conjugate self dual, cuspidal. The cuspidality follows
   from our choice of $L'$, it being ramified at $v_0$.  The oddness is
   another application of Shahidi's theorem that
   $L(s, \mathrm{Asai}^{(-1)^{n-1} \epsilon(\chi_0)}(\pi) \otimes
   \chi^{-1}_0 \otimes \chi_{L'/L}) $ does not vanish at $s=1$.

We  have thus reduced to the situation that $L$ is a $CM$ field and
all primes  $v \mid p$ in $F$ split in $L$. It is useful to make the further restriction that there exists a quadratic imaginary extension $L_0$ of $\qq$   such that $L = L_0F$. We can achieve this by considering a quadratic imaginary extension $L_0$ of $\qq$ such that $p$ splits in $L_0$ and ramifies at a prime $q$ such that there is a place $v_0$ of $L$ above $q$ which is unramified over $\qq$ and such that $\pi_{v_0}$ is spherical. We may now replace $L$ by $LL_0$ which is a $CM$ field containing a quadratic imaginary field and $\pi$ by $Res_{LL_0}(\pi)$. 

By Theorem \ref{theomok}, there is a cuspidal automorphic
representation $\tilde{\pi}$ of $\mathrm{U}(n)/F$ which transfers to
$\pi$ for the standard base change embedding and is a limit of
discrete series at infinity (with infinitesimal character given by
$\lambda$, see the end of the proof of Theorem \ref{theomok}). The
result now essentially follows from Corollary \ref{coro-conj2-unit},
but there is a subtlety in that Corollary \ref{coro-conj2-unit}
applies to the group $\mathrm{GU}(n)$ and not $\mathrm{U}(n)$. We will
reduce to this case.

Let $\tilde{Z}$ be the center of $\mathrm{GU}(n)$. We observe that
$\tilde{Z} \times \mathrm{U}(n) \rightarrow \mathrm{GU}(n)$ is a
surjective map of algebraic groups ($\mathrm{U}(n)$ is viewed as  an algebraic group over
$\qq$ by Weil restriction). Let $c$ be the central character of
$\tilde{\pi}$. Let $c'$ be an extension of $c$ to an algebraic
automorphic character of $Z$ which is unramified at all places
dividing $p$.  We claim that the cuspidal automorphic representation
$\tilde{\pi}$ admits an ``extension'' to a cuspidal automorphic
representation $\tilde{\pi}'$ of $ \mathrm{GU}(n)$ with central
character $c'$. We explain the meaning of this ``extension''. By
definition $\tilde{\pi}'$ will realize in the space of cusp forms with
central character $c'$ on $\mathrm{GU}(n)$, say
$\mathfrak{A}_{cusp}(\mathrm{GU}(n))_{c'}$. There is a well defined
restriction map
$res : \mathfrak{A}_{cusp}(\mathrm{GU}(n))_{c'} \rightarrow
\mathfrak{A}_{cusp}(\mathrm{U}(n))_{c}$ and we say that $\tilde{\pi}'$
extends $\tilde{\pi}$ if $\tilde{\pi}$ is a constituent of
$res (\tilde{\pi}')$. Observe that the very general result
\cite[Theorem 4.14]{MR2918491} implies that ${\pi}$ admits an
extension for some choice of ${c}'$. We will prove the slightly
stronger result that in our case, for any choice of ${c}'$ lifting
$c$, the map $res$ is surjective.  Our argument follows
\cite{Chenevier} which proves a similar result under slightly
different hypotheses.  We take
$\phi \in \mathfrak{A}_{cusp}(\mathrm{U}(n))_{c}$. One first extends
$\phi$ to a function $\phi_1$ on
$\tilde{Z}(\mathbb{A}_\qq) \mathrm{U}(n)(\mathbb{A}_\qq) \subset
\mathrm{GU}(n)(\mathbb{A}_\qq) $ satisfying
$\phi_1(zg) = c'(z) \phi(g)$ for all
$(z,g) \in \tilde{Z}(\mathbb{A}_\qq)\times
\mathrm{U}(n)(\mathbb{A}_\qq)$. We then extend $\phi_1$ to a function
$\phi_2$ on
$\mathrm{GU}(n)(\qq)\tilde{Z}(\mathbb{A}_\qq)
\mathrm{U}(n)(\mathbb{A}_\qq) \subset \mathrm{GU}(n)(\mathbb{A}_\qq) $
by letting $\phi_2(\gamma g) = \phi_1(g)$ for all
$(\gamma, g) \in \mathrm{GU}(n)(\qq) \times \tilde{Z}(\mathbb{A}_\qq)
\mathrm{U}(n)(\mathbb{A}_\qq) $.  To check that $\phi_2$ is well
defined, it suffices to prove that
$\mathrm{GU}(n)(\qq) \cap \tilde{Z}(\mathbb{A}_\qq)
\mathrm{U}(n)(\mathbb{A}_\qq) \subseteq \tilde{Z}(\qq)
\mathrm{U}(n)(\qq)$. This actually amounts to proving that
$\qq^\times \cap \nu(\tilde{Z}(\mathbb{A}_\qq)) \subseteq
\nu(\tilde{Z}(\qq))$, which follows from Hasse's theorem that in a
quadratic extension any local norm is a global norm.  Finally, we
claim that
$\mathrm{GU}(n)(\qq)\tilde{Z}(\mathbb{A}_\qq)
\mathrm{U}(n)(\mathbb{A}_\qq)$ is of finite index in
$\mathrm{GU}(n)(\mathbb{A}_\qq)$ (the quotient is dominated by
$(\qq^\times \mathrm{N}_{L_0/\qq} \mathbb{A}_{L_0}^\times) \backslash
\mathbb{A}_\qq^\times$). We can therefore extend $\phi_2$ by zero to a
function $\phi_3$ defined on $\mathrm{GU}(n)(\mathbb{A}_\qq)$. The
verification that
$\phi_3 \in \mathfrak{A}_{cusp}(\mathrm{GU}(n))_{c'}$ is made in
\cite{Chenevier}. By construction, $res(\phi_3) = \phi$.

 Since all places $v\mid p$ in $F$
split in $L$, we have that
$\mathrm{GU}(n)(\qq_p) = \mathrm{U}(n)(\qq_p) Z(\qq_p)$, so we deduce
that $\tilde{\pi}'_v$ is spherical at all places $v \mid p$. Moreover,
the Satake parameters of $\tilde{\pi}'_v$ and $\tilde{\pi}_v$ are
related via the map on dual groups
$\widehat{\mathrm{GU}(n)} = \mathbb{G}_m \times
\mathrm{GL}_n^{[F:\qq]} \rightarrow \widehat{\mathrm{U}(n)} =
\mathrm{GL}_n^{[F:\qq]}$ (which forgets the $\mathbb{G}_m$). There is
a similar story at archimedean places and we deduce that
$\tilde{\pi}'_\infty$ is a limit of discrete series.  We may apply
Corollary \ref{coro-conj2-unit} to $\tilde{\pi}'$, and we deduce the
validity of Conjecture \ref{conj1} in this case, which is the
inequality:
 $$ \mathrm{Newt}_{\iota}( \chi) \leq  \frac{1}{\vert
  \Gamma/\mathrm{Stab}_\Gamma(\infty(\kappa, \iota))\vert}
\sum_{\gamma \in \Gamma/\mathrm{Stab}_\Gamma(\infty(\kappa, \iota))}
-w_0(\gamma \cdot\infty(\kappa, \iota)).$$

We project this identity under the map $\widehat{\mathrm{GU}(n)} \rightarrow \widehat{\mathrm{U}(n)}$ and then further via the map $\mathrm{GL}_n^{[F:\qq]} \rightarrow \mathrm{GL}_n^{[F_v:\qq_p]}$ for a chosen prime $v \in F$ below the prime $w \in L$. We can now unravel the meaning of this identity. 

The local $L$ group of $\mathrm{Res}_{F_v/\qq_p} \mathrm{GL}_n$ is $\mathrm{GL}_n^{[F_v:\qq_p]} \rtimes \Gamma$ where $\Gamma = \hat{\ZZ}$ acts by permutation of the factors. 

The Satake parameter of $\pi_w = \tilde{\pi}_v$ is given by the conjugacy class of $\mathrm{diag}(a_1,\cdots, a_n)$, and via the Newton map it goes to $$ \frac{1}{[F_v:\qq_p]}( v(a_n), \cdots, v(a_1))^{[F_v:\qq_p]}$$
 in $(P_{\R}^+)^{\Gamma}$ where $(P_{\R}^+)^{\Gamma}$ is the subset of $\Gamma$-invariants in the cone of dominant weights $P_\R^+ \subset (\R^{n})^{[F_v:\qq_p]}$.  
 The projection of the infinitesimal character is $(\lambda_{i, \tau})_{\tau \in I_w}$ and the average  $\sum_{\gamma \in \Gamma/\mathrm{Stab}_\Gamma(\infty(\kappa, \iota))}
-w_0(\gamma \cdot\infty(\kappa, \iota))$
gives:  $$\frac{1}{[F_v:\qq_p]}\Bigl( \sum_{\tau \in I_w} -\lambda_{n,
  \tau}, \cdots, \sum_{\tau \in I_w}- \lambda_{1,\tau}\Bigr )^{[F_v:\qq_p]}.$$
The theorem is proven. 

\end{demo}

\renewcommand{\MR}[1]{}
\providecommand{\bysame}{\leavevmode\hbox to3em{\hrulefill}\thinspace}
\providecommand{\MR}{\relax\ifhmode\unskip\space\fi MR }
\providecommand{\MRhref}[2]{%
  \href{http://www.ams.org/mathscinet-getitem?mr=#1}{#2}
}
\providecommand{\href}[2]{#2}


\end{document}
